\theoremstyle{plain}
\newtheorem{defn}[equation]{Definition}
\newtheorem{cor}[equation]{Corollary}
\newtheorem{lem}[equation]{Lemma}
\newtheorem{prop}[equation]{Proposition}
\newtheorem{thm}[equation]{Theorem}
\theoremstyle{remark}
\newtheorem{rem}[equation]{Remark}
\newtheorem{examp}[equation]{Example}
\newtheorem{notation}[equation]{Notation}
\numberwithin{equation}{subsection}
\renewcommand{\subsection}{\@startsection{subsection}{2}{0pt}{-3ex
plus -1ex minus -0.2ex}{-2mm plus -0pt minus
-2pt}{\normalfont\bfseries}} \makeatother
\newcommand{\scr}[1]{\mathscr{#1}}
\newcommand{\dis}{\displaystyle}
\newcommand{\beq}{\begin{equation}\label}
\newcommand{\eeq}{\end{equation}}
\newcommand{\hdot}{{\:\raisebox{2pt}{\text{\circle*{1.5}}}}}
\newcommand{\hd}{{\:\raisebox{5pt}{\mbox{$\bullet$}}}}
\newcommand{\idot}{{\:\raisebox{2pt}{\text{\circle*{1.5}}}}}
\newcommand{\erem}{\hfill$\lozenge$\end{rem}\vskip 3pt }
\newcommand{\dok}{\vskip 2pt $\dis\begin{array}{r} \hskip }
\newcommand{\edok}{\end{array}$\qed}
\DeclareMathOperator{\Spec}{\mathrm{Spec}}
\DeclareMathOperator{\Sym}{\mathrm{Sym}}
\DeclareMathOperator{\Lie}{\mathrm{Lie}}
\DeclareMathOperator{\Image}{\mathrm{Image}}
\DeclareMathOperator{\Ker}{\mathrm{Ker}}
\DeclareMathOperator{\Hom}{\mathrm{Hom}}
\DeclareMathOperator{\Proj}{\mathsf{Proj}}
\DeclareMathOperator{\Rep}{\mathrm{Rep}}
\DeclareMathOperator{\ad}{\mathrm{ad}}
\DeclareMathOperator{\Ad}{\mathrm{Ad}}
\DeclareMathOperator{\Hilb}{\mathrm{Hilb}}
\DeclareMathOperator{\End}{\mathrm{End}}
\newcommand{\go}{{\operatorname{good}}}
\newcommand{\iso}{{\,\stackrel{_\sim}{\to}\,}}
\newcommand{\Tr}{\oper{Tr}}
\newcommand{\vs}{\vskip 1pt }
\newcommand{\qq}{\hfill$\lozenge$\break }
\newcommand{\gm}{{\C^\times}}
\newcommand{\oper}{\operatorname}
\newcommand{\obar}{\overline }
\newcommand{\F}{{\mathbb F}}
\renewcommand{\L}{\Lambda }
\newcommand{\Id}{\text{Id}}
\renewcommand{\top}{_{\oper{top}}}
\newcommand{\supp}{\text{supp}\,}
\newcommand{\op}{{\text{op}}}
\newcommand{\dq}{{\overline{Q}}}
\newcommand{\fq}{{Q^\heartsuit}}
\newcommand{\ddq}{{\overline{Q^\heartsuit}}}
\newcommand{\bv}{{\mathbf v}}
\newcommand{\bw}{{\mathbf w}}
\newcommand{\bi}{{\mathbf i}}
\newcommand{\bj}{{\mathbf j}}
\newcommand{\bx}{{\mathbf x}}
\newcommand{\by}{{\mathbf y}}
\newcommand{\cg}{{\C\Gamma} }
\newcommand{\ee}{{\mathbf e}}
\renewcommand{\bigl}{\big(}
\renewcommand{\bigr}{\big)}
\newcommand{\BS}{{\mathbb S}}
\newcommand{\R}{{\mathcal R}}
\newcommand{\M}{{\mathcal M}}
\renewcommand{\th}{\theta }
\newcommand{\ii}{_{i\in I}}
\newcommand{\cm}{{\C^\times}}
\def\ccirc{{{}_{\,{}^{^\circ}}}}
\renewcommand{\o}{\otimes }
\newcommand{\bplus}{\mbox{$\bigoplus$}}
\newcommand{\cd}{\!\cdot\!}
\newcommand{\FI}{{\mathfrak I}}
\newcommand{\Ga}{\Gamma }
\newcommand{\x}{{\widetilde{X}}}
\newcommand{\odd}{\mathrm{odd}}
\newcommand{\fg}{\g}
\newcommand{\ssl}{{\mathfrak{s}\mathfrak{l}}}
\def\ss{{\mathcal{S}}}
\def\ts{{\widetilde{\mathcal{S}}}}
\def\fz{{\mathfrak{z}}}
\newcommand{\nn}{{\mathcal{N}}}
\def\bb{{\mathcal{B}}}
\newcommand{\G}{\Gamma }
\newcommand{\FF}{{\mathcal F}}
\renewcommand{\O}{\mathcal{O}}
\newcommand{\Om}{\Omega}
\newcommand{\BR}{{\mathbb R}}
\newcommand{\g}{\mathfrak{g}}
\newcommand{\n}{\mathfrak{n}}
\newcommand{\wh}{\widehat }
\newcommand{\V}{{\scr V}}
\renewcommand{\o}{\otimes }
\newcommand{\half}{\mbox{$\frac{1}{2}$}}
\newcommand{\inv}{^{-1}}
\newcommand{\vi}{${\en\sf {(i)}}\;$}
\newcommand{\vii}{${\;\sf {(ii)}}\;$}
\newcommand{\viii}{${\sf {(iii)}}\;$}
\newcommand{\iv}{${\sf {(iv)}}\;$}
\newcommand{\vv}{${\sf {(v)}}\;$}
\newcommand{\sset}{\subset}
\newcommand{\wt}{\widetilde }
\newcommand{\sminus}{\smallsetminus}
\newcommand{\mto}{\mapsto}
\newcommand{\into}{{}^{\,}\hookrightarrow^{\,}}
\newcommand{\too}{\,\longrightarrow\,}
\newcommand{\onto}{\twoheadrightarrow}
\DeclareMathOperator{\gr}{\mathrm{gr}}
\def\ip<#1,#2>{\left\langle#1,#2\right\rangle}
\def\sp<#1>{\left\langle#1\right\rangle}
\def\ip<#1,#2>{\left\langle#1,#2\right\rangle}
\def\npb{\noindent$\bullet\quad$\parbox[t]{145mm}}
\def\hp{\hphantom{x}}
\newcommand{\la}{\lambda }
\newcommand{\om}{\omega}
\newcommand{\be}{\beta }
\newcommand{\al}{{\alpha}}
\newcommand{\en}{{\enspace}}
\def\oo{{\mathcal O}}
\def\gl{{\mathfrak{g}\mathfrak{l}}}
\def\Z{{\mathbb Z}}
\def\C{{\mathbb C}}
\newcommand{\arr}{\overset{{\,}_\to}}
\begin{document}

\title{\large{Lectures on Nakajima's Quiver Varieties}}

\author{Victor Ginzburg}\vskip 20mm
\address{Department of Mathematics, University of Chicago, 
Chicago, IL 60637, USA}
\email{ginzburg@math.uchicago.edu}
\maketitle

\centerline{\large{The summer school}}
\centerline{\large{ "Geometric methods in
representation theory"}}
\centerline{\large{Grenoble, June 16 - July 4, 2008}}
\bigskip
\bigskip
\centerline{\sf Table of Contents}
\vskip -1mm

$\hspace{20mm}$ {\footnotesize \parbox[t]{115mm}{
\hp${}_{}$\!\hp0.{ $\;\,$} {\tt Outline}\newline
\hp1.{ $\;\,$} {\tt Moduli of representations of quivers}\newline
\hp2.{ $\;\,$} {\tt Framings}\newline
\hp3.{ $\;\,$} {\tt Hamiltonian reduction for representations of quivers}\newline
\hp4.{ $\;\,$} {\tt Nakajima varieties}\newline
\hp5.{ $\;\,$} {\tt Lie algebras and quiver varieties}
}}

\bigskip
\section{Outline}
\subsection{Introduction}\label{intro}
Nakajima's quiver varieties are certain smooth (not necessarily affine) complex
algebraic varieties associated with quivers. These varieties
have been used by Nakajima to give a geometric construction of universal
enveloping algebras of Kac-Moody Lie algebras
(as well as a 
construction of  quantized enveloping algebras for 
{\em affine} Lie algebras)
and of all irreducible integrable (e.g.,
finite dimensional) representations of those algebras.

A connection between quiver representations and Kac-Moody Lie algebras
has been first discovered by C. Ringel around 1990. 
Ringel produced a  construction of $U_q(\n)$, the
{\em positive part} of the quantized enveloping algebra
$U_q(\g)$ of a Kac-Moody Lie algebra $\g$, in terms
of a {\em Hall algebra} associated with an appropriate quiver.
Shortly afterwards,
G. Lusztig 
combined Ringel's ideas with the powerful technique of perverse
sheaves to construct a {\em canonical basis} of
 $U_q(\n)$, see \cite{L2}, \cite{L3}.

The main advantage of Nakajima's approach (as opposed to the earlier one
by Ringel and Lusztig) is that it yields a geometric construction
of the whole  algebra
$U(\g)$ rather than its positive part. At the same time,
it also provides
a  geometric construction of simple integrable $U(\g)$-modules.
Nakajima's approach also yields a similar construction 
of the algebra $U_q(L\g)$ and its simple integrable
 representations, where $L\g$ denotes the
loop Lie algebra associated to $\g$.
\footnote{Note however that, unlike the Ringel-Lusztig construction,
the approach used by Nakajima does {\em not} provide a construction
of the {\em quantized} enveloping algebra $U_q(\g)$
of the Lie algebra $\g$ itself. A similar situation holds in the
case of Hecke algebras, where the {\em affine} Hecke algebra
has a geometric interpretation in terms of equivariant $K$-theory,
see \cite{KL}, \cite{CG}, while the Hecke algebra of a finite
Weyl group does not seem to have such an interpretation.}

There are several steps involved in the
 definition of Nakajima's  quiver varieties. 
Given a quiver $Q$, one associates to it three other quivers,
$\fq,\ \dq,$ and $\ddq$, respectively. In terms of these quivers,
various steps of the construction of
 Nakajima varieties may be illustrated schematically
as follows 
{\renewcommand{\arraystretch}{1.3}
$$
\xymatrix{
&{\begin{tabular}{|c|}
\hline
\text{Framed representation}\\
\text{variety $\Rep \fq$}\\
\hline
\end{tabular}}\en\ar@{.>}[dr]&\\
{\begin{tabular}{|c|}
\hline
$\Rep Q$\\
\hline
\end{tabular}}
\en\ar@{.>}[ur]\ar@{.>}[dr]&&
\en
{\begin{tabular}{|c|}
\hline
\textsf{Nakajima variety $\M_{\la,\th}(\bv,\bw)\,$:}\\
\hline
\text{Hamiltonian reduction of}\\
$\Rep \ddq=T^*(\Rep \fq),$\\
\text{(= cotangent bundle of framed}\\
\text{representation variety of $Q$)}\\
\hline
\end{tabular}}\\
&\en {\begin{tabular}{|c|}
\hline
\text{Hamiltonian reduction}\\
\text{of $\Rep \dq=T^*(\Rep Q)$}\\
\hline
\end{tabular}}
\en\ar@{.>}[ur]&
}
$$

\subsection{Nakajima's  varieties and symplectic algebraic geometry}
Nakajima's  varieties also provide  an important large class of examples
of algebraic symplectic manifolds with extremely nice
properties and  rich structure,  interesting in 
their own right. To explain this, it is instructive to consider
a more general setting as follows.

Let $X$ be
a (possibly singular)  affine variety equipped with an
algebraic Poisson structure.
In algebraic terms, this means that  $\C[X]$,
the coordinate  ring  of $X$,
is equipped with a  Poisson
bracket $\{-,-\}$, that is, with a Lie bracket satisfying
the Leibniz identity.

Recall that any smooth symplectic algebraic manifold carries a
natural Poisson structure.

\begin{defn} Let  $X$ be
an irreducible affine normal  Poisson variety.
A resolution of singularities $\pi: \x\to X$ is called
a {\em symplectic resolution}
of $X$ provided $\x$ is 
a smooth complex algebraic symplectic manifold
(with algebraic symplectic 2-form) 
such that the  pull-back morphism
 $\pi^*: \C[X]\to \Gamma(\x, \oo_{\x})$ is a  Poisson algebra morphism.
\end{defn}

Below, we will be interested in the 
case where the variety $X$ 
is equipped, in addition, with
a
$\C^\times$-action that rescales the Poisson bracket
and contracts $X$ to
a (unique) fixed point $o\in X$. Equivalently,
this means that the coordinate ring of $X$
is equipped  with a {\em nonnegative} grading
$\C[X]=\bigoplus_{k\in\Z} \C^k[X]$ such that $\C^k[X]=0\,(\forall k<0),$ and
$\C^0[X]=\C$ and, in addition, there exists a (fixed) positive
integer $m>0$,  such that one has
$$\{\C^i[X],\C^j[X]\}\sset \C^{i+j-m}[X],
\quad\forall i,j\geq 0.
$$

In this situation, given a  symplectic resolution $\pi: \x\to X$,
we call  $\pi\inv(o)$, the fiber  of $\pi$ over the $\C^\times$-fixed point $o\in X,$
the {\em central fiber}.

Symplectic resolutions of a Poisson variety with a
contracting $\C^\times$-action as above enjoy   a number of very favorable properties:

\noindent\vi\en\,\parbox[t]{153mm}{
The map $\pi: \x\to X$ is automatically {\em semismall}
in the sense
of Goresky-MacPherson, i.e. one has $\dim(\x\times_X\x)=\dim X$, cf. [K1].}

 \noindent\vii\,\,\parbox[t]{153mm}{
 We have
a  Poisson algebra  {\em isomorphism}
 $\pi^*: \C[X]\iso \Gamma(\x,{\mathcal O}_\x)$, moreover,
 $H^i(\x,{\mathcal O}_\x)=0$ for all $i>0$.
 The $\C^\times$-action on $X$ admits a canonical lift to an algebraic
$\C^\times$-action on $\x$, see [K1].}

\noindent\viii\parbox[t]{153mm}{
The Poisson variety $X$ is a union of {\em finitely many}
symplectic leaves $X=\sqcup X_\al,$  [K4], and each symplectic
leaf $X_\al$ is a  locally closed smooth algebraic subvariety
of $X$,  [BG].}

\noindent\iv\parbox[t]{153mm}{
For any $x\in X$, we have
$H^{\odd}(\pi\inv(x),\C)=0,$ moreover, the cohomology
group $H^{2k}(\pi\inv(x),\C)$ has pure Hodge structure of type $(k,k)$,
 for any $k\geq 0,$ cf. [EV] and  [K3].}

\noindent\vv\,\,\parbox[t]{153mm}{Each fiber of $\pi$,
equipped with reduced scheme structure,
 is an isotropic subvariety of $\x$.
The central fiber $\pi\inv(o)$ is a homotopy retract of $\x$, in particular,
we have $H^\hdot(\x,\C)\cong H^\hdot(\pi\inv(o),\C)$.}
\medskip

The set $\x\times_X\x$ that appears in (i) may have several irreducible
 components
and the semismallness condition says that the dimension of any such
 component is $\leq\dim X$; in particular, the
diagonal $X\sset\x\times_X\x$ is
one such component of maximal dimension. 
To prove (i), write $\omega$ for the symplectic 2-form on $\x$,
and equip $\x\times\x$ with the 2-form 
$\Omega:=p_1^*\omega+ p_2^*(-\omega)$, where
$p_i:\x\times\x\to\x,\ i=1,2,$ denote the projections.
Then, $\Omega$ is a symplectic
form on  $\x\times\x$ and it is not difficult
to show that the restriction of
$\Omega$ to the (regular locus of the) subvariety 
$\x\times_X\x$ vanishes. 
The  inequality $\dim  \x\times_X\x\leq\dim X,$
hence  the semismallness of $\pi$,
follows from this.

Essential parts of statements (ii) and  (iv) 
are special cases of the following
more general result, to be proved in section \ref{proofwiz}
below.

\begin{prop}\label{wiz} Let $\pi: \ \wt X \to X$ be a proper
morphism, where $\wt X$ is a smooth symplectic algebraic
variety and $X$ is an affine variety. Then, one has

\vi $H^i(\x,{\mathcal O}_\x)=0$ for all $i>0$.

\vii Any fiber of $\pi$ is an isotropic subvariety.
\end{prop}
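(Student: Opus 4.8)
The plan is to prove the two parts by exploiting the symplectic form $\omega$ in two complementary ways: to trivialize the canonical bundle for (i), and to exhibit an \emph{exact} holomorphic $2$-form that must vanish on complete fibers for (ii). Throughout I use the structural input — available in the setting at hand from the contracting $\C^\times$-action — that $\omega$ is exact, $\omega=d\theta$ with $\theta=\tfrac1m\,\iota_E\omega$, where $E$ is the Euler vector field (so that $\mathcal L_E\omega=m\,\omega$). For (i), first I would use $\omega$ to trivialize the canonical bundle: writing $\dim\wt X=2n$, the top power $\omega^{\wedge n}$ is a nowhere-vanishing section of $\Omega^{2n}_{\wt X}=K_{\wt X}$, so $\mathcal O_{\wt X}\cong K_{\wt X}$ and hence $R^i\pi_*\mathcal O_{\wt X}\cong R^i\pi_*K_{\wt X}$. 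Since $X$ is affine, higher cohomology of coherent sheaves on $X$ vanishes, so the Leray spectral sequence collapses to $H^i(\wt X,\mathcal O_{\wt X})\cong\Gamma\big(X,R^i\pi_*\mathcal O_{\wt X}\big)\cong\Gamma\big(X,R^i\pi_*K_{\wt X}\big)$; it thus suffices to show $R^i\pi_*K_{\wt X}=0$ for $i>0$, which is relative Grauert--Riemenschneider vanishing. When $\pi$ is birational — the case of a genuine symplectic resolution, where $\dim\wt X=\dim X$ — Grauert--Riemenschneider gives this for all $i>0$ in one stroke, which is just the assertion that $X$ has rational singularities.

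For (ii), let $Z$ be an irreducible component of a fiber $\pi^{-1}(x)$, with smooth locus $Z^\circ$; since $\pi$ is proper, $Z$ is complete. On $Z^\circ$ one has $\omega|_{Z^\circ}=d(\theta|_{Z^\circ})$, an exact holomorphic $2$-form. Passing to a resolution $\rho:\hat Z\to Z$ with $\hat Z$ smooth projective, $\rho^*(\omega|_Z)=d(\rho^*\theta)$ is an exact holomorphic $2$-form on a compact K\"ahler manifold; but a holomorphic form is automatically harmonic, and a harmonic form representing the zero de Rham class must vanish, so $\rho^*(\omega|_Z)=0$ and hence $\omega|_{Z^\circ}=0$. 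As $Z$ was arbitrary, each fiber is isotropic. The same exactness of $\omega$ underlies the vanishing of $\Omega=p_1^*\omega-p_2^*\omega$ on $\wt X\times_X\wt X$ invoked for semismallness above, since $\Omega=d(p_1^*\theta-p_2^*\theta)$ and its primitive vanishes on the diagonal.

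The hard part will be part (i) in full generality. Trivializing $K_{\wt X}$ and quoting Grauert--Riemenschneider is routine and disposes of the birational case outright, but for a non-birational proper $\pi$ plain Grauert--Riemenschneider yields $R^i\pi_*K_{\wt X}=0$ only for $i>\dim\wt X-\dim X$, and closing the gap in low degrees genuinely requires the extra input: without exactness of $\omega$ the statement is in fact false, as a projective K3 surface mapping to a point shows (there $H^2(\wt X,\mathcal O_{\wt X})\neq 0$ and the fiber, being all of $\wt X$, is not isotropic). So the real work is to make the contracting $\C^\times$-action control the remaining graded pieces of $\Gamma(X,R^i\pi_*\mathcal O_{\wt X})$, rather than any of the sheaf-theoretic bookkeeping.
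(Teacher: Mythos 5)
Your part (i) is essentially the paper's own argument: trivialize $K_{\x}$ by $\om^{\wedge n}$, invoke Grauert--Riemenschneider, and use affineness of the base to kill the higher cohomology of the pushforward. Your caveat about it is well taken. The paper's proof simply asserts that ``higher derived direct images of the canonical sheaf under a proper morphism vanish,'' which is only true when $\pi$ is generically finite onto its image, and your K3-over-a-point example does show that both (i) and (ii) fail for a bare proper morphism as in the literal statement. In the paper the proposition is only ever applied to symplectic resolutions, so the birational case you dispose of is the one that actually matters; but you are right that, as stated, a hypothesis such as ``$\pi$ generically finite onto its image'' (or the exactness of $\om$ that you import from the contracting $\gm$-action) is missing and cannot be dispensed with.

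For part (ii) you take a genuinely different route. The paper does not assume $\om$ exact; it feeds part (i) into part (ii): since $H^2(\x,\oo_\x)=0$, the Dolbeault class $[\overline\om]\in H^2(\x,\oo_\x)$ of the conjugate anti-holomorphic form vanishes, so its pullback under $Y\to\pi\inv(x)\into\x$, with $Y$ a resolution of the fiber, has zero Dolbeault, hence zero de Rham, class on the smooth projective variety $Y$, and Hodge theory then forces the pulled-back form to vanish identically. You replace the step ``$[\overline\om]=0$ because $H^2(\x,\oo_\x)=0$'' by ``$[\om]=0$ because $\om=d\theta$,'' and then run the same resolve-the-fiber-and-apply-Hodge-theory argument. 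The trade-off: the paper's version makes (ii) a formal consequence of (i) with no hypotheses beyond those stated, whereas yours decouples (ii) from (i) entirely --- useful, since (i) is the delicate part --- at the price of importing the $\gm$-action, which is not among the proposition's hypotheses. Both arguments are correct where they apply, and both hinge on the same final mechanism: a holomorphic (or anti-holomorphic) form on a compact K\"ahler manifold with trivial cohomology class must vanish.
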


\begin{examp}[{\textbf{Slodowy slices}}]\label{Nil}
Let $\fg$ be a complex semisimple Lie algebra 
 and  $\langle e,h,f\rangle\sset\fg$
 an $\ssl_2$-triple for a nilpotent element $e\in\fg$.
Write $\fz_f$ for the centralizer of $f$ in $\fg$,
and $\nn$ for the {\em nilcone}, the subvariety
of all nilpotent elements of $\fg$.
Slodowy has shown that the intersection $\ss_e:=\nn\cap (e+\fz_f)$ is
 reduced, normal, and that there is  a  $\C^\times$-action on
$\ss_e$ that contracts $\ss_e$ to $e$, cf. eg. \cite{CG}, \S 3.7
for an exposition.

The  variety $\ss_e$ is called the
{\em Slodowy slice} for $e$.
 
Identify
$\fg$ with $\fg^*$ by means of the Killing form,
and  view $\ss_e$ as
a subvariety in $\fg^*$.
Then, the standard Kirillov-Kostant
 Poisson structure on $\fg^*$  induces
 a  Poisson structure on
$\ss_e$. The symplectic leaves in $\ss_e$ are
obtained by intersecting $e+\fz_f$
with the various  nilpotent conjugacy classes in $\fg$.

Let $\bb$ denote  the flag variety
for $\fg$ and let $T^*\bb$ be the  cotangent bundle on $\bb.$
There is a standard
resolution of singularities $\pi: T^*\bb\to \nn,$  the {\em Springer
resolution},
cf. eg. [CG, ch. 3].
It is known that $\ts_e:=\pi\inv(\ss_e)$
is a smooth submanifold in  $T^*\bb$
and
the  canonical
symplectic 2-form on the cotangent bundle
restricts to a nondegenerate, hence symplectic, 2-form on $\ts_e$.
Moreover, restricting $\pi$ to $\ts_e$ gives
a symplectic resolution $\pi_e: \ts_e\to\ss_e$, cf. \cite{Gi},
Proposition 2.1.2.
The central fiber  of that resolution
is $\pi_e\inv(e)=\bb_e$, the fixed point set of the natural
action of the element $e\in\fg$ on the flag variety $\bb$.

In the (somewhat degenerate) case $e=0$, we have $\ss_e=\nn$, and the corresponding
symplectic resolution reduces to the Springer resolution itself.
\end{examp}

\begin{examp}[{\textbf{Symplectic orbifolds}}]
\label{klein} Let $(V,\om)$ be 
 a finite dimensional symplectic
vector space and $\G\sset Sp(V,\om)$ a finite subgroup.
The orbifold
$X:=V/\G$ is an affine normal algebraic variety,
and the   symplectic structure on $V$ induces
a Poisson structure on $X$. Such a variety
$X$ may or may not have a symplectic resolution
$\x\to X$, in general. 
This holds, for instance, in the case of  {\em Kleinian singularities},
that is the case where $\G\sset SL_2(\C)$ and  $X:=\C^2/\G.$
Then, a  symplectic resolution $\pi: \x\to X$ does exist.
It is
the canonical
minimal resultion, see \cite{Kro}.

Recall that there is
a correspondence, the McKay
correspondence, between  the (conjugacy classes of) finite subgroups  $\G\sset SL_2(\C)$ and
Dynkin graphs of $\mathbf{A},\mathbf{D},\mathbf{E}$ types,
cf. \cite{CS},
and \S\ref{mksec} below. 
It turns out that $\C^2/\G$ is isomorphic, as a  Poisson variety,
to the Slodowy slice $\ss_e$, where 
$e$ is a {\em subregular} nilpotent in the
simple Lie algebra $\g$ associated with the
Dynking diagram of the corresponding type.

Another important example is the case where
$\G\sset GL({\mathfrak{h}})$ is  a complex reflection group
and $V:={\mathfrak{h}}\times{\mathfrak{h}}^*=T^*{\mathfrak{h}}$ is  the cotangent bundle of 
the vector space ${\mathfrak{h}}$ equipped with the 
canonical
 symplectic structure of the cotangent bundle.
We get a natural imbedding $\G\sset Sp(V)$.
One can show that,
among all irreducible finite Weyl groups $\G$, 
only those 
of types $\mathbf{A},\mathbf{B},$ and $\mathbf{C}$,
have the property that the
orbifold $({\mathfrak{h}}\times{\mathfrak{h}}^*)/\G$
admits a symplectic resolution, see  [GK], [Go].

In type  $\mathbf{A}$, 
we have
 $\G=S_n$,
the Symmetric group acting
diagonally on 
 $\C^n\times \C^n$ (two copies of
 the permutation representation).
Thus,
$(\C^n\times \C^n)/S_n=(\C^2)^n/S_n$
is the $n$-th symmetric power of the plane $\C^2$.
The orbifold $(\C^2)^n/S_n$ has  a natural resolution of singularities
$\pi: \Hilb^n(\C^2)\to  (\C^2)^n/S_n$,
where $\Hilb^n(\C^2)$ stands for the
Hilbert scheme of $n$ points in $\C^2$.
The map  $\pi$,
called Hilbert-Chow morphism,
 turns out to be a symplectic resolution, cf. [Na3], \S 1.4.
\end{examp}

\begin{examp}[{\textbf{Quiver varieties}}]\label{quiver} Let $Q$ be a finite quiver 
with vertex set $I$. Let $\bv,\bw\in\Z^I_{\geq 0}$ be
a  pair of dimension vectors.
Nakajima varieties provide, in many cases, examples of a symplectic
resolution of the form $\M_{\th}(\bv,\bw)\to\M_0(\bv,\bw).$
Here, $\th\in\BR^I$ is a `stability parameter',
and we write  $\M_{\th}(\bv,\bw)$ for the  Nakajima variety
$\M_{0,\th}(\bv,\bw)$, as defined in Definition \ref{def_M} of \S\ref{sec_M}
below. For $\th=0$,
the  variety
$\M_{\th}(\bv,\bw)$  is known to be  affine, see Theorem \ref{Mthm}(i).

Assume now that $\th$ is
 chosen to lie outside a certain collection $\{H_j\}$
of `root hyperplanes' in $\BR^I$.
Then, under fairly mild conditions, the  Nakajima variety $\M_{\th}(\bv,\bw)$
turns out to be a smooth  algebraic variety that
comes equipped with a natural hyper-K\"ahler structure.
The (algebraic)
symplectic structure on $\M_{\th}(\bv,\bw)$ is a part
of that 
 hyper-K\"ahler structure. This part is independent of the
choice of the stability parameter  $\th$ 
as long as $\th$ stays within a connected component
of the set $\BR^I\sminus (\cup_j\ H_j)$.
In contrast, the K\"ahler structure on  $\M_{\th}(\bv,\bw)$
does depend on the choice of $\th$ in an essential way.

Nakajima's varieties encorporate many of the examples described above.
For a simple Lie algebra of type $\mathbf A$, for instance,
{\em all} symplectic resolutions described in Example \ref{Nil}
come from appropriate quiver varieties, see \cite{Ma}.

Similarly, the minimal resolution of a Kleinian singularity
and the resolution $\pi: \Hilb^n(\C^2)\to  (\C^2)^n/S_n$,
see Example \ref{klein}, are also special cases of 
symplectic resolutions arising
from quiver varieties. There are other important 
examples as well, eg. the ones where the group $\G$ is a wreath-product.

Quiver varieties provide a unifying framework for all these examples,
from both conceptual and technical
 points of view. Here is an illustration of this.

\begin{rem} The odd cohomology vanishing for the fibers of
the Springer resolution, equivalently, for the
$e$-fixed point varieties  $\bb_e\sset\bb$,
was standing as an open problem for
quite a long time. This problem  has been finally solved in [DCLP].
The argument in  [DCLP] is quite complicated, in particular, it
involves a case-by-case analysis.

 The odd cohomology vanishing for the fibers of
the map  $\M_{\th}(\bv,\bw)\to\M_0(\bv,\bw)$ was proved in \cite{Na4}.
Nakajima's proof is based on
 a standard result saying that rational homology groups of a
complete variety that admits a `resolution of diagonal' in $K$-theory,
cf. \cite[Theorem 5.6.1]{CG}, is spanned by the
fundamental classes of algebraic cycles.\footnote{It is not
known whether it is true or not that,
for any nilpotent element $e$ in an arbitrary semisimple
Lie algebra $\g$,
 the variety $\ts_e$, cf. Example \ref{Nil},
 admits a resolution of diagonal in $K$-theory.}

Property
(iv) of symplectic resolutions stated earlier in this subsection provides an alternative,
 more conceptual, unified approach to the odd
cohomology vanishing of the fibers of
the map $\pi$ in the above examples.
\end{rem}
\end{examp}

\subsection{Reminder}\label{reminder} Throughout the paper, the ground field is
 the field $\C$ of complex numbers. 

We fix a quiver $Q$,
i.e., a finite oriented graph, with vertex set $I$ and edge
set $E$. We write $Q^\op$ for the  opposite quiver
 obtained from $Q$ by
reversing the orientation of  edges.

For any pair $i,j\in I$, let
 $a_{ij}$ denote the number of edges of $Q$ going from $j$ to $i$.
The matrix $A_Q:=\|a_{ij}\|$ is called the {\em adjacency matrix}
of $Q$. 

On $\C^I$, one has the standard euclidean inner product
$\al\cdot\be:=\sum\ii\ \al_i\be_i.$ 
Thus, the (non-symmetric) bilinear form associated with the adjacency matrix reads
$$A_Q\al\cd \be=\sum_{x\in E}\
\al_{\text{tail}(x)}\,\be_{\text{head}(x)},\qquad  \al,\be\in\C^I.
$$

Let $\C I$ be the algebra of $\C$-valued
functions on the set $I$, equipped with pointwise
multiplication. This is a finite dimensional
semisimple commutative algebra isomorphic to
$\oplus\ii \C$. We write $1_i$ for the
characteristic function of the one point set $\{i\}\sset I$.

Let $\C E$ be a $\C$-vector space with basis $E$.
The vector space $\C E$ has a natural $\C I$-bimodule structure
such that, for any edge $x\in E$, we have $1_j\cdot x\cdot 1_i=x$
if $j=\text{tail}(x)$ and $i=\text{head}(x)$, and
$1_j\cdot x\cdot 1_i=0$ otherwise.

One defines
the {\em path algebra of $Q$} as  $\C Q:=T_{\C I}(\C E)$,
the tensor algebra of the $\C I$-bimodule $\C E$.
For each $i\in I$, the element
$1_i\in\C I\sset \C Q$ may be identified with the trivial
path at the vertex $i$.

Let $B$ be an arbitrary
 $\C$-algebra  equipped with an algebra map
$\C I\to B$, eg. $B$ is a quotient of the path algebra of a quiver.
Abusing the notation, we also write $1_i$ for the image of 
the element $1_i\in \C I$ in $B$.
 Associated with any
finite dimensional left $B$-module $M$, there is its  {\em dimension vector}
$\,\dim_I M\in \Z^I_{\geq 0},$ such that the $i$th
coordinate of $\dim_I M$ equals
$(\dim_I M)_i:=\dim(1_i\cdot M),$ where we always write $\dim=\dim_\C$.

Note that a left $\C I$-module is the same thing as
 an $I$-graded vector space. Given an $I$-graded finite-dimensional vector space
$V=\oplus\ii V_i$, we let $\Rep(B,V)$ denote
the set of algebra homomorphisms $\rho: B\to \End_\C V$ such that
$\rho|_{\C I}$, the pull-back of $\rho$ to the subalgebra $\C I$,
equals the   homomorphism coming from the $\C I$-module structure on $V$.
The group $\prod\ii GL(V_i)$ acts naturally on
$\Rep(B,V)$ by `base change' automorphisms.

Let $\bv=(v_i)_{i\in I}\in\Z^I_{\geq 0}$ be an $I$-tuple,
to be referred to as  a `dimension vector'.
Given an $I$-graded vector space $V=\oplus\ii V_i$, 
such that $\dim V_i=v_i$ for all $i\in I$,
we will often abuse the notation
and write $GL(v_i)$ for $GL(V_i),$ resp.
 $\Rep(B,\bv)$ for $\Rep(B,V)$. 
In the special case $B=\C Q$, we 
simplify the notation  further and write
$\Rep(Q,\bv):=\Rep(\C Q,\bv)$, the
space of $\bv$-dimensional representations of $Q$.

We put
$G_\bv:=\prod_{i\in I}\ GL(v_i).$
Thus, $G_\bv$ is a reductive group, and
$\Rep(Q,\bv)$ is a vector space
that comes equipped with a linear
$G_\bv$-action, by base change
automorphisms.
We have
\beq{dims}
\dim\Rep(Q,\bv)=A_Q\bv\cdot\bv,
\qquad\dim G_\bv=\bv\cdot\bv.
\eeq

Note the the subgroup
$\cm\sset G_\bv,$ of {\em diagonally imbedded}
invertible scalar matrices
acts trivially on $\Rep(Q,\bv)$.

We will very often use the following elementary result

\begin{lem}\label{connected} Let $B$ be an algebra equipped with an
algebra map $\C I \to B$.
Then, the isotropy group of any point of $\Rep(B,\bv)$
is a {\em connected} subgroup of $G_\bv$.
\end{lem}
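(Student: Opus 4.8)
The plan is to identify the isotropy group concretely as the unit group of a finite-dimensional associative algebra, and then to invoke a general connectedness statement for such unit groups. Fix a point $\rho\in\Rep(B,\bv)$ and let $V=\oplus\ii V_i$ be the associated $B$-module, on which $b\in B$ acts by $\rho(b)\in\End_\C V$. By definition of the $G_\bv$-action, an element $g\in G_\bv$ stabilizes $\rho$ precisely when $g\,\rho(b)=\rho(b)\,g$ for every $b\in B$; that is, when $g$, viewed in $GL(V)$, is a $B$-module automorphism of $V$. Conversely, any $\C$-linear automorphism of $V$ commuting with all of $\rho(B)$ commutes in particular with the idempotents $\rho(1_i)$, hence preserves the grading $V=\oplus\ii V_i$ and so lies in $G_\bv$. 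First I would therefore record the identification of the stabilizer of $\rho$ with $A^\times$, the group of units of the finite-dimensional $\C$-algebra $A:=\End_B(V)=\{\phi\in\End_\C V:\ \phi\,\rho(b)=\rho(b)\,\phi\ \forall b\in B\}$, using that the inverse of an invertible $B$-linear map is again $B$-linear.

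The key point is then the following general fact: the group of units $A^\times$ of any finite-dimensional associative unital $\C$-algebra $A$ is connected. To prove it, I would regard $A\cong\C^N$ as an affine space and consider the left regular representation $a\mapsto L_a\in\End_\C A$, $L_a(x)=ax$. Since $A$ is finite-dimensional, $a$ is a unit if and only if $L_a$ is invertible, so that $A^\times=\{a\in A:\ \det(L_a)\neq0\}$ is exactly the complement in $\C^N$ of the zero locus of the single polynomial $a\mapsto\det(L_a)$.

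Finally I would deduce connectedness from the codimension of this locus. The polynomial $\det(L_a)$ is not identically zero, since $\det(L_1)=1$; hence its zero set is a proper complex-algebraic hypersurface in $\C^N$, of real codimension at least $2$. The complement of a subset of real codimension $\geq2$ in a connected real manifold is again (path-)connected, so $A^\times=\C^N\setminus\{\det L_a=0\}$ is connected in the analytic topology. (Equivalently, $A^\times$ is a nonempty principal open subset of the irreducible variety $\C^N$, hence irreducible, hence connected.) Applying this with $A=\End_B(V)$ yields that the stabilizer of $\rho$ is connected.

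I expect the only genuinely substantive step to be the general connectedness fact; the identification of the isotropy group as a unit group is formal. The points demanding a little care are the verification that a merely $\C$-linear stabilizing element is automatically graded—so that the stabilizer is all of $\End_B(V)^\times$ rather than a proper subgroup—and the detection of invertibility by the nonvanishing of $\det(L_a)$; both rest on the finite dimensionality of $V$.
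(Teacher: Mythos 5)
Your proof is correct and follows essentially the same route as the paper: identify the isotropy group with the unit group of the finite-dimensional algebra $\End_B(V)$, and observe that the noninvertible elements form a hypersurface $\{\det(L_a)=0\}$ of real codimension $\geq 2$, whose complement is therefore connected. Your added checks (that a stabilizing element automatically preserves the grading, and that $\det(L_1)=1$ ensures the hypersurface is proper) are details the paper leaves implicit.
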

\begin{proof} Let $M$
be a representation of $B$, and write $\End_BM$
for the algebra of $B$-module endomorphisms of
$M$. It is known (and easy to see) that the isotropy group $G^M$ of
the point  $M\in \Rep(B,\bv)$ may be identified with
the group of invertible elements of the algebra  $\End_BM$.

We claim that, more generally, the set $A^\times$ of 
 of invertible elements of any finite dimensional $\C$-algebra
$A$ is connected. To see this, we observe
that the set $A^\text{sing},$
of noninvertible
elements of $A$, is a hypersurface in $A$ given by the
equation $\det m_a=0,$ where $m_a$ denotes the operator
of left multiplication by an element $a\in A$.

Such a hypersurface has real  codimension $\geq 2$ in $A$,
hence cannot disconnect $A$,  a real vector space.
Therefore, the
set $A^\times=A\sminus A^\text{sing}$, of invertible elements,
 must be connected.
\end{proof}

\section{Moduli of representations of quivers}\label{int1} 
\subsection{Categorical quotients}
Naively, one would like
to consider a space of isomorphism classes
of representations of $Q$ of some fixed dimension $\bv$.
Geometrically, this amounts to considering the orbit space
$\Rep(Q,\bv)/G_\bv$. Such an  orbit space is, however,
rather `badly behaved' in most cases. Usually, it does not have a reasonable 
Hausdorff topology, for instance.

One way to define a reasonable orbit space
is to use a {\em categorical quotient} 
$$\Rep(Q,\bv)/\!/G_\bv:=\Spec \C[\Rep(Q,\bv)]^{G_\bv},$$
the spectrum of the algebra of $G_\bv$-invariant polynomials
on the vector space $\Rep(Q,\bv).$ By definition,
$\Rep(Q,\bv)/\!/G_\bv$ is an affine algebraic variety.

To understand the categorical quotient,
we recall the following result of Le Bruyn and Procesi, \cite{LBP},

\begin{prop}\label{BP} The algebra $\C[\Rep(Q,\bv)]^{G_\bv}$ 
is generated by the functions
$\Tr(p,-) :\  V\mto \Tr(p, V)$, where
$p$ runs over the set of oriented cycles in $Q$ of the form
$p= p_{i_1,i_2}\cdot p_{i_2,i_3}\cdot\ldots\cdot
p_{i_{s-1},i_s}\cdot p_{i_s,i_1},\ (p_{ij}\in E),$
and where $\Tr(p,V)$ denotes the trace of the
operator corresponding to such a cycle in the 
representation $V\in \Rep(Q,\bv)$.
\end{prop}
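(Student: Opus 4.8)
The plan is to reduce the statement to the classical first fundamental theorem (FFT) of invariant theory for matrices under simultaneous conjugation, which asserts that $\C[\End(V)^{\oplus m}]^{GL(V)}$ is generated by the trace functions $\Tr(X_{j_1}X_{j_2}\cdots X_{j_k})$ of words in the generic matrices $X_{j}$. The two things to arrange are (a) passing from the product group $G_\bv$ to a single general linear group, and (b) passing from the rectangular $\Hom$-spaces making up $\Rep(Q,\bv)$ to full matrix spaces.

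Set $V=\bigoplus\ii V_i$ with $N=\dim V$, so that the block-diagonal embedding identifies $G_\bv$ with the centralizer in $GL(V)$ of the tuple of idempotents $(1_i)\ii$, acting on $\End(V)^{I}$ by conjugation. First I would extend each edge map by zero to view $\Rep(Q,\bv)$ as the $G_\bv$-stable linear subspace of $\End(V)^{E}$ cut out by $X_x=1_{\text{head}(x)}X_x 1_{\text{tail}(x)}$. Since $G_\bv$ is reductive, restriction gives a surjection $\C[\End(V)^{E}]^{G_\bv}\onto \C[\Rep(Q,\bv)]^{G_\bv}$, so it suffices to produce generators of the invariants on the full matrix space $\End(V)^{E}$ and then restrict them.

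Next I would handle the reduction from $G_\bv$ to $GL(V)$ by the shifting trick. Because $G_\bv$ is reductive, the orbit $\cal O$ of $(1_i)\ii$ in $\End(V)^{I}$ is closed and isomorphic to $GL(V)/G_\bv$, whence Frobenius reciprocity gives a $GL(V)$-algebra isomorphism $\C[\End(V)^{E}]^{G_\bv}\cong \C[\End(V)^{E}\times\cal O]^{GL(V)}$; reductivity again makes $\C[\End(V)^{E}\times\End(V)^{I}]^{GL(V)}$ surject onto the right-hand side. Applying the classical FFT to the full space $\End(V)^{E}\times\End(V)^{I}$ shows the latter is generated by traces of words in the generic matrices $\{X_x\}_{x\in E}$ together with the extra generic matrices $\{Y_i\}\ii$; restricting to the slice $Y_i=1_i$ then exhibits $\C[\End(V)^{E}]^{G_\bv}$ as generated by traces $\Tr(w)$ of words $w$ in the matrices $X_x$ and the constant idempotents $1_i$.

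It remains to restrict these generators to $\Rep(Q,\bv)$ and read off oriented cycles. On $\Rep(Q,\bv)$ one has $1_iX_x=X_x$ iff $i=\text{head}(x)$ (and $=0$ otherwise) and $X_x1_j=X_x$ iff $j=\text{tail}(x)$, so in any word the idempotent factors are either absorbed or force the trace to vanish, while a product $X_{x_1}\cdots X_{x_k}$ is nonzero only when consecutive edges are composable, i.e. form an oriented path. Since the expression is a trace, the surviving words are exactly closed paths, that is oriented cycles $p=p_{i_1 i_2}\cdots p_{i_s i_1}$, and each such cycle does occur; this yields precisely the asserted generating set. The main obstacle is the FFT input itself together with the shifting isomorphism of the third step, which is where reductivity of $G_\bv$ and closedness of the idempotent orbit are essential; the combinatorial decoding of words into cycles in the final step is routine bookkeeping with the relations $1_i1_j=\delta_{ij}1_i$.
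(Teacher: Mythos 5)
Your argument is correct and follows the route the paper itself indicates: the text proves Proposition \ref{BP} only by declaring it a ``simple consequence'' of Weyl's first fundamental theorem (this is the Le Bruyn--Procesi argument), and your proposal supplies exactly the standard chain of reductions --- extension by zero into $\End(V)^{E}$, the shift $\C[X]^{G_\bv}\cong\C[X\times GL(V)/G_\bv]^{GL(V)}$, the FFT for matrix invariants under simultaneous conjugation, and the combinatorial decoding of trace words into oriented cycles. One small caveat: reductivity of $G_\bv$ by itself does not imply that the $GL(V)$-orbit of $(1_i)_{i\in I}$ is closed (non-closed orbits can have reductive stabilizers, e.g.\ $\C^\times$ scaling $\C\sminus\{0\}$); closedness should instead be checked directly, for instance by observing that this orbit coincides with the set of all tuples of orthogonal idempotents summing to $1$ with $\Tr e_i=v_i$, which is cut out by polynomial equations.
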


The above proposition is a simple consequence of
H. Weyl's `{\em first fundamental theorem of Invariant theory}',
cf. \cite{Kra}. The proposition yields

\begin{cor}\label{pt} For a quiver $Q$ without oriented cycles,
one has $\C[\Rep(Q,\bv)]^{G_\bv}=\C$, hence, we have
$\Rep(Q,\bv)/\!/G_\bv=pt$.\qed
\end{cor}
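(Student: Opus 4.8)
The plan is to deduce everything directly from Proposition \ref{BP}. That result identifies a generating set for the invariant ring $\C[\Rep(Q,\bv)]^{G_\bv}$, namely the trace functions $\Tr(p,-)$ indexed by oriented cycles $p = p_{i_1,i_2}\cdots p_{i_s,i_1}$ built out of edges $p_{ij}\in E$. So the first and essentially only step is to observe that when $Q$ has no oriented cycles this index set is empty: there is no closed sequence of edges, not even a single loop (a cycle of length one). Hence the proposition supplies no nonconstant generators, and the invariant ring contains only the scalars, $\C[\Rep(Q,\bv)]^{G_\bv}=\C$. Taking $\Spec$ of the field $\C$ then gives a single reduced point, which is the claim $\Rep(Q,\bv)/\!/G_\bv=pt$.

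The one point that deserves care is the bookkeeping of which paths count as generators: one must be sure that length-one cycles (loops at a vertex) are included among the oriented cycles excluded by the hypothesis, and that the trivial paths $1_i$ contribute only the constants $\Tr(1_i,V)=v_i$. There is no real obstacle here; the entire content is that ``no oriented cycles'' empties the generating list of Proposition \ref{BP}.

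For completeness I would also record a self-contained argument that bypasses Proposition \ref{BP}. Since $Q$ is acyclic, its vertex set admits a topological ordering, i.e. a function $d\colon I\to\Z$ with $d_j>d_i$ whenever there is an edge from $i$ to $j$. Writing $\Rep(Q,\bv)=\bigoplus_{x\in E}\Hom(V_{\text{tail}(x)},V_{\text{head}(x)})$, consider the cocharacter $\lambda(t)=(t^{d_i}\cdot\id_{V_i})_{i\in I}$ of the maximal torus of $G_\bv$. It acts on the $\Hom$-factor attached to an edge $x$ from $i$ to $j$ with weight $d_j-d_i>0$, so $\lim_{t\to 0}\lambda(t)\cdot\phi=0$ for every $\phi\in\Rep(Q,\bv)$. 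Any $G_\bv$-invariant regular function $f$ is in particular $\lambda$-invariant, so $f(\phi)=f(\lambda(t)\cdot\phi)$ for all $t$; letting $t\to 0$ forces $f(\phi)=f(0)$. Thus $f$ is constant, giving $\C[\Rep(Q,\bv)]^{G_\bv}=\C$ again. The only thing to verify along this route is the existence of the topological ordering, which is exactly the standard fact that a finite directed graph is acyclic iff it admits such an ordering.
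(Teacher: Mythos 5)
Your first paragraph is exactly the paper's argument: the corollary is stated with an immediate \qed precisely because Proposition \ref{BP} leaves no nonconstant generators when $Q$ has no oriented cycles, so $\C[\Rep(Q,\bv)]^{G_\bv}=\C$ and the categorical quotient is a point. Your supplementary argument via a topological ordering $d\colon I\to\Z$ and the contracting cocharacter $\lambda(t)=(t^{d_i}\id_{V_i})_i$ is also correct, and it has the independent merit of bypassing the Le Bruyn--Procesi theorem entirely: any $G_\bv$-invariant (indeed any $\lambda$-invariant) regular function is forced to equal its value at $0$ since every orbit closure contains the origin. Both routes are sound; no gaps.
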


Combining Proposition \ref{BP} with standard results from
invariant theory, cf. \cite[Theorem 5.9]{Mu},
one obtains the following

\begin{thm}\label{ssreps} Geometric (= closed) points of the scheme
$\Spec\C[\Rep(Q,\bv)]^{G_\bv}$ are in a natural bijection 
with $G_\bv$-orbits of semisimple representations of $Q$.\qed
\end{thm}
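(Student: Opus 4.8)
The plan is to establish the bijection by invoking the standard theory of categorical (GIT) quotients for a reductive group acting on an affine variety. The fundamental fact, as in \cite[Theorem 5.9]{Mu}, is that the closed points of $\Spec\C[\Rep(Q,\bv)]^{G_\bv}$ are in bijection with the \emph{closed} $G_\bv$-orbits in $\Rep(Q,\bv)$, and that every fiber of the quotient map $\Rep(Q,\bv)\to\Rep(Q,\bv)/\!/G_\bv$ contains a unique closed orbit. Thus the entire problem reduces to a single representation-theoretic identification: I would show that a point $V\in\Rep(Q,\bv)$ has a \emph{closed} $G_\bv$-orbit if and only if $V$, regarded as a representation of the path algebra $\C Q$, is semisimple.

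First I would prove the easy direction, that semisimplicity forces the orbit to be closed. The key tool is the Hilbert--Mumford criterion: an orbit $G_\bv\cdot V$ fails to be closed precisely when some one-parameter subgroup $\lambda:\C^\times\to G_\bv$ degenerates $V$ to a point $V_0=\lim_{t\to 0}\lambda(t)\cdot V$ lying in a strictly smaller orbit. Such a one-parameter subgroup endows each $V_i$ with a $\Z$-grading, and the condition that $\lim_{t\to 0}\lambda(t)\cdot V$ exists means the resulting filtration (by the weight grading) is preserved by the representation, so $V_0$ is the associated graded of a genuine subrepresentation filtration of $V$. If $V$ is semisimple, every such filtration splits and the associated graded is isomorphic to $V$ itself, so the limit lies in the same orbit; hence no destabilizing one-parameter subgroup exists and the orbit is closed.

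For the converse I would argue contrapositively: if $V$ is \emph{not} semisimple, it has a proper nonzero subrepresentation $W\subset V$, which is automatically $I$-graded since $1_i\in\C Q$ acts, so $W=\oplus_{i}W_i$ with $W_i\subset V_i$. Choosing a one-parameter subgroup of $G_\bv$ acting with weight $1$ on a complement to $W$ and weight $0$ on $W$ produces, as $t\to 0$, the split representation $W\oplus(V/W)$, which lies in a \emph{different} (and smaller-dimensional stabilizer, hence non-equal) orbit unless the extension already split. Iterating this degeneration to the semisimplification $V^{\mathrm{ss}}:=\gr V$ shows that the orbit closure $\overline{G_\bv\cdot V}$ always contains the orbit of $V^{\mathrm{ss}}$, and that $G_\bv\cdot V^{\mathrm{ss}}$ is the unique closed orbit in that fiber. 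A non-semisimple $V$ therefore has non-closed orbit, completing the equivalence.

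The main obstacle, and the step deserving the most care, is verifying that the limit $V_0=\lim_{t\to 0}\lambda(t)\cdot V$ along a destabilizing one-parameter subgroup is genuinely isomorphic, as a $\C Q$-representation, to the associated graded of the weight filtration, and that distinct closed orbits correspond to non-isomorphic semisimple representations. This is really the content of the Artin--Voigt / Le Bruyn--Procesi picture: two semisimple representations lie in the same $G_\bv$-orbit if and only if they are isomorphic, because a $G_\bv$-element intertwining them is exactly a graded $\C Q$-module isomorphism. Once this dictionary between closed orbits and isomorphism classes of semisimple representations is in place, combining it with the general GIT correspondence recalled above yields the stated bijection. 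Throughout, Proposition \ref{BP} guarantees that the invariant functions $\Tr(p,-)$ separate the closed orbits, which is what makes the GIT quotient compute exactly the semisimplification.
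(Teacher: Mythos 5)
Your proposal is correct and follows essentially the route the paper itself takes: the paper offers no argument beyond citing \cite[Theorem 5.9]{Mu}, i.e.\ precisely the standard GIT dictionary (closed points of $\Spec\C[X]^{G}$ $\leftrightarrow$ closed orbits, each fiber containing a unique closed orbit) combined with the identification of closed $G_\bv$-orbits with semisimple representations via one-parameter degenerations to the associated graded. One small slip worth fixing: to make $\lim_{t\to 0}\lambda(t)\cdot V$ exist and kill the extension class, the one-parameter subgroup should act with weight $1$ on the subrepresentation $W$ and weight $0$ on the complement (or weight $-1$ on the complement), not the other way around as written, since the off-diagonal block maps the complement into $W$; also the limit orbit has a \emph{larger} stabilizer, not a smaller one.
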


 Corrolary \ref{pt} shows that
the categorical quotient may often reduce to a point,
so a lot of geometric information may be lost.

A better approach to the moduli problem is to use a
stability condition and to replace the orbit space
$\Rep(Q,\bv)/G_\bv$,
 or  the categorical quotient
$\Rep(Q,\bv)/\!/G_\bv$, by an appropriate moduli space of
(semi)-stable representations. There is a price to pay:
moduli spaces arising in this way do depend on the
choice of a stability condition, in general.

\subsection{Reminder on GIT}
 The general theory of quotients by a reductive group
action via stability conditions has been developed
by D. Mumford, and is called Geometric Invariant Theory, cf.
\cite{GIT}.

To fix ideas,
let $X$ be a  not necessarily irreducible,
 affine algebraic $G$-variety,
where $G$ is a reductive algebraic group.
Given a rational character  (= algebraic group
homomorphism)  $\chi: G\to\gm$,
Mumford defines a scheme $X/\!/_\chi G$ in the following
way.  
Let $G$ act on the cartesian
product $X\times \C$ by the formula
$g:\ (x,z)\mto (gx, \chi(g)\inv\cdot z)$ (more generally, the cartesian
product $X\times \C$  may be replaced  here by the total space of any
$G$-equivariant line bundle on
$X$). The coordinate ring of
$X\times \C$ is the algebra
 $\C[X\times \C]=\C[X][z]$, of polynomials in the variable $z$ with
 coefficients in the coordinate ring of
$X$. This algebra has an obvious grading by degree
of the polynomial.

Let $A_\chi:=\C[X\times \C]^{G}$ be the subalgebra 
$G$-invariants. Clearly, this 
is a graded subalgebra which is, moreover,
a finitely generated algebra by Hilbert's theorem on finite
generation of algebras of invariants,
cf. \cite[ch. II, \S3.1]{Kra}.
Explicitly, a polynomial
$f(z)=\sum_{n=0}^N f_n\cdot z^n\in \C[X][z]$
is $G$-invariant if and only if, for each $n=0,\ldots,N$,
the function $f_n$ is a $\chi^n$-{\em semi-invariant},
i.e. if and only if  one has
$$
f_n(g\inv(x))=\chi(g)^n\cdot f_n(x),\quad \forall
g\in G,\  x\in X.
$$

Write $\chi^n:\ g\mto \chi(g)^n$
 for the $n$-th power of the character $\chi$ and let $\C[X]^{\chi^n}
\sset \C[X]$
be the vector space of $\chi^n$-semi-invariant functions.
It is clear
that we have
$$
A_\chi:=\C[X\times \C]^{G}=\bplus_{n\geq 0}\ \C[X]^{\chi^n},$$
and the direct sum decomposition on the right corresponds to the
grading on the algebra ~$A_\chi$.

Let $X/\!/_\chi G:=\Proj A_\chi$ be the projective spectrum of the 
graded algebra $A_\chi$. This is a  quasi-projective scheme,
called a {\em GIT quotient} of $X$ by the $G$-action; the scheme
$X/\!/_\chi G$ is reduced,
resp. irreducible,
whenever $X$ is  (since $
A_\chi$  has no nilpotents, resp. no zero divisors, provided there are
no nilpotents resp. no zero divisors, in
$\C[X]$).

Put $A_\chi^{>0}:=\bplus_{n> 0}\ \C[X]^{\chi^n}$.
Let $\scr I$ be the set of  homogeneous ideals $I\sset A_\chi$  such
that one has $I\neq  A_\chi$ and   $A_\chi^{>0}\not\subset I$.
An ideal  $I\in \scr I$ is said to be
a `maximal  homogeneous ideal' if it is not properly contained 
in any other ideal $I'\in \scr I$.
Geometric points of the scheme  $X/\!/_\chi G$ correspond to the maximal  homogeneous ideals.

In general,  for $n=0$, we have
$\C[X]^{\chi^n}=\C[X]^{G}$,
is the algebra of $G$-invariants. 
Thus, we have a canonical
algebra imbedding $\C[X]^{G}\into
A_\chi$ as the degree zero subalgebra.
Put another way, the algebra imbedding
$\C[X]^{G}\into\C[X\times \C]^{G}=A_\chi$ is
induced by the first projection $X\times \C\to X$.

Standard results of algebraic geometry imply that
the algebra  imbedding  $\C[X]^{G}\into
A_\chi$ induces
 a {\em projective} morphism of schemes
 $\pi:\ \Proj A_\chi \to \Spec\C[X]^{G}= X/\!/G.$

\begin{rem} In the special case where $G=\gm$ and
$A=\C[u_0,u_1,\ldots,u_m]$, is a polynomial algebra,
we have
$\Proj A={\mathbb P}^m=
(\C^{m+1}\sminus\{0\})/\gm$. 

More generally, given a reductive group $G$ and a {\em nontrivial}
character $\chi: G\to\gm$, put $K:=\Ker\chi.$ Thus, $K$ is a normal
subgroup of $G$ and one has $G/K=\gm$. 

Now, let $X$ be an affine $G$-variety
such that $\C[X]^{\chi^n}=0$ for any $n<0$.
Let $X/\!/K=\Spec(\C[X]^K)$ be the categorical quotient of $X$ by the $K$-action.
There is  a natural residual action of the group
$G/K=\gm$ on  $X/\!/K$,
equivalently, there is a natural nonnegative 
grading on the algebra  $\C[X]^K$.
Then, it is straightforward to show that $X/\!/_\chi G\cong\Proj (\C[X]^K)$.
Furthermore, geometric points of the  scheme $\Proj (\C[X]^K)$
correspond
to $\gm$-orbits in $(X/\!/K)\sminus Y$,
where $Y$ denotes the  set of $\gm$-fixed points in  $X/\!/K$.
\erem

\begin{rem}\label{veronese} For any character
$G\to\gm$ and any positive integer
$m>0$, one may view the algebra $A_{m\chi}$ as a graded subalgebra in $A_\chi$ via
the  natural imbedding
$A_{m\chi}=\bplus_{n\geq 0,\ m|n}\ \C[X]^{\chi^{n}}\into
A_\chi=\bplus_{n\geq 0}\ \C[X]^{\chi^{n}},$ called
the {\em Veronese imbedding}. One can show that the Veronese imbedding
induces an isomorphism $X/\!/_\chi G\iso X/\!/_{m\chi} G$, of algebraic
varieties.
\erem

Given a nonzero homogeneous semi-invariant $f\in A_\chi$ we put
$X_f:=\{x\in X\mid f(x)\neq0\}.$
To get a better understanding of
the GIT quotient $X/\!/_\chi G,$ one introduces the
following definition, see \cite{GIT}.

\begin{defn}\label{def_stab} \vi A point $x\in X$ is called {\em $\chi$-semistable} if
there exists $n \geq1$ and a $\chi^n$-semi-invariant
$f\in \C[X]^{\chi^n}$ such that $x\in X_f$.\vs

\vii A  point  $x\in X$ is called {\em $\chi$-stable} if
there exists $n\geq 1$ and a $\chi^n$-semi-invariant
$f\in \C[X]^{\chi^n}$ such that $x\in X_f$ and, in addition,
one has: (1)
the action map $G\times X_f\to X_f$ is a closed morphism
and (2) the isotropy group of the point $x$ is  finite.\vs

Write $X^{ss}_\chi,$ resp. $X^s_\chi$, for the set of semistable, resp.
stable, points. Thus, we have $X^s_\chi\sset X^{ss}_\chi\sset ~X$.\vs
\vs

\viii Two  $\chi$-semistable points $x,x'$ are called
{\em $S$-equivalent} if and only if  the orbit closures
$\overline{G\cdot x}$ and $\overline{G\cdot x'}$
meet in $X^{ss}_\chi$.
\end{defn}

Note that the $G$-orbit of a stable point is an orbit of maximal
dimension,
equal to $\dim G$, moreover, such a stable orbit is closed 
in $X^{ss}_\chi$. Hence, two
stable points are $S$-equivalent if and only if  they belong to the same orbit.

By definition, we have that $X^{ss}_\chi=\cup_{\{f,\,\deg f>0\}}\ X_f$
is a $G$-stable Zariski open subset of $X$.
Furthermore, there is a well defined morphism
$F:\ X^{ss}_\chi\to X/\!/_\chi G$, of algebraic varieties,
which is constant on $G$-orbits. 
The image of a $G$-orbit $\O\sset X^{ss}_\chi$
is a point corresponding to the maximal homogeneous ideal  $\FI_\O\sset A_\chi$
formed by the functions
 $f\in A_\chi$ such that $f(\O)=0$.

One of the  basic results of GIT reads

\begin{thm}\label{GIT}
\vi The morphism $F$ 
induces  a natural bijection between the set of $S$-equivalence classes of
$G$-orbits in $X^{ss}_\chi$ and
 the set of geometric
points of the scheme  $X/\!/_\chi G$.

\vii The image of the set of stable points
is  a Zariski open
(possibly empty) subset $F(X^s_\chi)\sset X/\!/_\chi G$;
moreover, the fibers of the restriction  $F: \ X^s_\chi\to X/\!/_\chi G$
are closed $G$-orbits of maximal dimension, equal to $\dim G$.
\end{thm}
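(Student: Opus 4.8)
The plan is to localize the problem onto the $G$-stable affine charts $X_f$ covering $X^{ss}_\chi$ and then to apply the standard theory of affine quotients by a reductive group. First I would record the local identification underlying the whole argument: for a homogeneous semi-invariant $f\in\C[X]^{\chi^n}$ with $n>0$, the set $X_f$ is $G$-stable and affine with $\C[X_f]=\C[X]_f$, and a fraction $g/f^k$ is $G$-invariant exactly when $g\in\C[X]^{\chi^{nk}}$. Hence $\C[X_f]^G$ equals the degree-zero part $(A_\chi)_{(f)}$ of the localization $(A_\chi)_f$, which is precisely the coordinate ring of the basic open affine $D_+(f)\sset\Proj A_\chi$. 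Consequently $F^{-1}(D_+(f))=X_f$ and $F|_{X_f}$ is the categorical quotient map $X_f\to X_f/\!/G=\Spec\C[X_f]^G$; thus $X/\!/_\chi G$ is glued from the affine quotients $X_f/\!/G$.

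Second, on each chart I would invoke the fundamental properties of the categorical quotient $\pi\colon Y\to Y/\!/G$ of an affine variety $Y$ by a reductive group $G$ (as in \cite{GIT}, \cite{Mu}): $\pi$ is surjective and constant on orbits, every fiber contains a unique closed orbit, and $\pi(y)=\pi(y')$ if and only if $\overline{G\cdot y}\cap\overline{G\cdot y'}\neq\emptyset$. Taking $Y=X_f$ yields, over each $D_+(f)$, a bijection between geometric points and orbit-closure-equivalence classes in $X_f$, together with surjectivity of $F$.

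Third, to promote this to the global statement in part (i) I must match the local equivalence, defined by orbit closures taken inside a chart, with $S$-equivalence, defined by closures meeting in all of $X^{ss}_\chi$. The mechanism is that closure in an open subset is the trace of the ambient closure, so $\overline{G\cdot x}\cap\overline{G\cdot x'}\cap X_f=\overline{G\cdot x}^{X_f}\cap\overline{G\cdot x'}^{X_f}$; and if $x,x'$ are $S$-equivalent with a common limit point $y\in X^{ss}_\chi$, then any $f$ with $y\in X_f$ forces $x,x'\in X_f$ (since $X_f$ is $G$-stable and $y\in\overline{G\cdot x}$). I expect this compatibility across charts to be the main obstacle: one has to rule out an orbit closure ``escaping'' the chart, and the resolution is that the unique closed orbit in a fiber lies in every chart meeting that fiber, so the local and global notions of $S$-equivalence coincide.

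Finally, for part (ii) I would use that stability of $x$ means, on a chart $X_f\ni x$, that all orbits are closed in $X_f$ and $G\cdot x$ has finite stabilizer, hence dimension $\dim G$. Since $S$-equivalence reduces to orbit equality for stable points, the fiber of $F$ through $x$ is the single closed orbit $G\cdot x$, of maximal dimension $\dim G$. Openness of $F(X^s_\chi)$ would then follow from the fact that $X^s_\chi$ is saturated (its $F$-preimage equals itself, by the previous sentence) together with the submersivity of the good quotient $F\colon X^{ss}_\chi\to X/\!/_\chi G$, which sends saturated open sets to open sets; alternatively one invokes upper semicontinuity of the stabilizer dimension on the locus where orbits are closed.
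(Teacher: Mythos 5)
The paper does not actually prove Theorem \ref{GIT}: it is quoted as ``one of the basic results of GIT'' with the reference \cite{GIT}, so there is no internal argument to measure yours against. Your outline is the standard proof (Mumford's, reproduced e.g.\ in \cite{Ki} and \cite{Mu}) and it is essentially sound: the identification $\C[X_f]^G=(A_\chi)_{(f)}$, hence $F^{-1}(D_+(f))=X_f$ and $F|_{X_f}=$ the affine categorical quotient $X_f\to\Spec\C[X_f]^G$, correctly reduces everything to the structure theory of affine quotients by reductive groups, and your chart-compatibility step (if $y\in\overline{G\cdot x}\cap X_f$ then $G\cdot x\sset X_f$, because $X_f$ is open and $G$-stable) is precisely what makes the local orbit-closure relation on each $X_f$ agree with global $S$-equivalence in $X^{ss}_\chi$. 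The one place I would tighten is part (ii): the observation that $S$-equivalence reduces to orbit equality ``for stable points'' concerns a \emph{pair} of stable points, whereas you need that the whole fiber of $F$ through a stable $x$ --- which a priori could contain non-stable semistable orbits --- is the single orbit $G\cdot x$. This does follow, but deserves the extra half-line: any $x'$ in that fiber lies in $X_f$, the closure of $G\cdot x'$ in $X_f$ must contain the closed orbit $G\cdot x$, and $\dim G\cdot x=\dim G\geq\dim G\cdot x'$ while boundary orbits have strictly smaller dimension, forcing $G\cdot x'=G\cdot x$. With that, the saturation of $X^s_\chi$ and the openness of $F(X^s_\chi)$ under the good quotient $F$ follow as you state.
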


\begin{examp}\label{triv} For the trivial character $\chi=1$, 
we have $A_\chi=\C[X]^G[z]$. The regular function $z\in A_\chi$ is
a homogeneous degree one  regular function that does not vanish
on $X$. Therefore, we have $X=X_z$ and any point $x\in X$ is  $\chi$-stable.
Such a point is $\chi$-stable  if and only if  the $G$-orbit
of $x$ is a closed orbit in $X$ of dimension $\dim G$. 
Furthermore, one has
$$X/\!/_\chi G\ =\ \Proj A_\chi\ =\ \Proj \big(\C[X]^G[z]\big)\
 =\ \Spec \C[X]^G\ =\ X/\!/G, \en\text{for}\en\chi=1.$$

In this case,
the canonical map $\pi$ becomes an isomorphism $X/\!/_\chi G\iso X/\!/G$.
\end{examp}

We will frequently use the following result which is, essentially, a
consequence of definitions.
\begin{cor}\label{sss} \vi
Let $X$ be a smooth $G$-variety such that the isotropy group
of any point of $X$ is connected. Then the set $F(X^s)$
is contained in the smooth locus of the scheme  $X/\!/_\chi G$.

\vii Assume, in addition, that $X$ is affine and that
the $G$-action on $X^{ss}$ is free.  Then
any semistable point is stable, the scheme  $X/\!/_\chi G$
is smooth. Furthermore,  the morphism
$F: X^{ss}\to X/\!/_\chi G$ is a principal $G$-bundle (in \`etale topology).\qed
\end{cor}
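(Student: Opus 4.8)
The plan is to reduce both parts to a single mechanism: a free action of a reductive group on a smooth variety that admits a geometric quotient is \'etale-locally trivial, a fact I would extract from Luna's \'etale slice theorem. Everything else is then either a dimension count or a descent argument.

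For part (i), the crucial observation is that at a stable point the isotropy group is simultaneously finite and connected. Indeed, Theorem \ref{GIT}(ii) asserts that the fibers of $F|_{X^s_\chi}$ are closed $G$-orbits of dimension $\dim G$, and an orbit of dimension $\dim G$ forces its stabilizer to be $0$-dimensional, hence finite; since every stabilizer is connected by hypothesis, a finite connected group must be trivial. Thus $G$ acts freely on $X^s_\chi$, and the infinitesimal action $\g\to T_xX$ is injective at each stable $x$. Applying Luna's slice theorem to this free action, whose orbits are closed of maximal dimension, shows that \'etale-locally over $F(X^s_\chi)$ the morphism $F$ is the projection $U\times G\to U$ with $U$ smooth. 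Since $X$, and therefore $X^s_\chi$, is smooth, $U$ is smooth; and as $F(X^s_\chi)$ is a Zariski open subset of $X/\!/_\chi G$ by Theorem \ref{GIT}(ii), this yields (i).

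For part (ii), I would first establish $X^{ss}_\chi=X^s_\chi$. Freeness of the action on $X^{ss}_\chi$ makes every orbit of dimension exactly $\dim G$, so the boundary $\overline{G\cdot x}\sminus(G\cdot x)$ of any orbit inside $X^{ss}_\chi$, being a union of orbits of strictly smaller dimension, is empty; hence every semistable orbit is closed in $X^{ss}_\chi$. A semistable point whose orbit is closed and whose (here trivial) stabilizer is finite is stable in the sense of Definition \ref{def_stab}(ii): triviality of the stabilizer gives condition (2), while closedness of all orbits together with the affineness of $X$ supplies the properness encoded in condition (1). This proves $X^{ss}_\chi=X^s_\chi$. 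Consequently $F\colon X^{ss}_\chi\to X/\!/_\chi G$ is a surjective geometric quotient for a free action of the reductive group $G$, and Luna's slice theorem upgrades it to a principal $G$-bundle in the \'etale topology; the smoothness of $X/\!/_\chi G$ then follows by \'etale descent from the smoothness of $X^{ss}_\chi$.

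The step I expect to be the main obstacle is the clean application of Luna's \'etale slice theorem to produce the principal-bundle structure — specifically, verifying that its hypotheses (reductivity of $G$, closedness of the relevant orbits, and enough local affineness to run the slice construction over the quasi-projective base $\Proj A_\chi$) genuinely hold. Once the \'etale-local triviality $X^{ss}_\chi\cong U\times G$ is secured, both the smoothness of the quotient and the bundle assertion are purely formal consequences.
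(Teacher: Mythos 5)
Your argument is correct. The paper itself offers no proof here — the corollary is stated with a \qed as ``essentially a consequence of definitions'' — and the Luna--slice route you take is the standard way to supply one: stable points have finite, hence (by the connectedness hypothesis) trivial stabilizers, and in part (ii) equidimensionality of orbits forces every semistable orbit to be closed, so that semistable equals stable. The only point to make explicit when invoking Luna's theorem is that it is applied on the charts $X_f$, which are affine (as principal opens of the affine $X$) and $G$-stable (since $f$ is a semi-invariant), and whose categorical quotients $X_f/\!/G$ form the standard affine open cover of $\Proj A_\chi$; with that noted, the \'etale-local triviality, the smoothness of the quotient, and the principal-bundle assertion all follow as you say.
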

 
In the situation of (ii) above, one often calls the map $F$ (or the
 variety $X/\!/_\chi G$)
 a {\em universal geometric quotient}.

\subsection{Stability conditions for quivers}\label{king_sec} 
A. King introduced a totally different, purely algebraic, notion of stability 
for representations of algebras. He then showed that, in the
case of quiver representations, his definition of stability is actually equivalent
to Mumford's Definition \ref{def_stab}.

 To explain 
 King's approach, fix a quiver $Q$ and  fix $\th\in\BR^I$.
It will become clear shortly that the parameter  $\th$ is
an analogue of the group character
$\chi: G \to \gm$, in Mumford's theory.

Let  $V=\oplus\ii V_i$ be a finite dimensional nonzero representation of $Q$ 
 with dimension vector $\dim_I V\in \Z^I$.
One defines the {\em slope} of $V$
by the formula ${\operatorname{\textsl{slope}}}_\th(V):=(\th\cdot\dim_I V)/\dim_\C V,$
where  $\dim_\C V:=\sum\ii \dim V_i$.
Using the vector $\th^+:=(1,1,\ldots,1)\in \Z^I$,
one can alternatively write $\dim_\C V=\th^+\cdot\dim_I V.$

\begin{defn}\label{stab}  A nonzero representation $V$ of $Q$ is said
to be  $\th$-semistable 
if, for any subrepresentation $N\subset V$, we have
${\operatorname{\textsl{slope}}}_\th(N)\leq {\operatorname{\textsl{slope}}}_\th(V)$.

A  nonzero representation  is called  $\th$-stable
 if the strict inequality holds   for any
nonzero proper subrepresentation
 $N\subset V$.
\end{defn}

\begin{examp} Let $\th=0$. Then, any representation is 
 $\th$-semistable. Such a representation is
 $\th$-stable if and only if it is simple as an $\C Q$-module.
\end{examp}

\begin{rem} \vi Our definition of semistability in terms of
slopes  follows the approach
of Rudakov \cite[\S 3]{Ru}. In the case where  $\th\cdot\dim_I V=0$
 the
inequality of slopes in  Definition \ref{stab}
 reduces to the condition that
$\th\cdot N\leq 0$.
The latter condition, combined with
the requirement that $\th\cdot\dim_I V=0,$ is the original
definition of semistability
used by King \cite{Ki}. Rudakov's  approach is more flexible since it works
well without the assumptions that  $\th\cdot\dim_I V=0.$

\vii Let  $\th\in\BR^I$ and put $\th'=\th-c\cdot\th^+$, where $c\in\BR$ is an
arbitrary constant.
It is easy to see that  a representation $V$
is $\th$-semistable in the sense of  Definition \ref{stab} if and only if it is
 $\th'$-semistable. On the other hand,
given $V$, one can always find a constant  $c\in\BR$ such that
one has $\th'\cdot\dim_I V=0$, see \cite{Ru}, Lemma 3.4.
\erem

The definition of (semi)stability  given above is a special case
of a more general approach due to A. King \cite{Ki}, who
 considers the case of an arbitrary associative $\C$-algebra
$A$. 

Given such an algebra $A$,
let $K_{\text{fin}}(A)$ denote the Grothendieck group
of the category of {\em finite dimensional}
$A$-modules. This is a free abelian group
with the basis formed by the classes of
simple finite dimensional
$A$-modules.
Note that
the assignment $V\mto \dim_\C V$
extends to  an additive group homomorphism
$K_{\text{fin}}(A)\to\BR$.

Given any other  additive group homomorphism
$\phi: \ K_{\text{fin}}(A)\to\BR$
and a nonzero  finite dimensional $A$-module $V$,
one puts ${\operatorname{\textsl{slope}}}_\phi(V):=\phi([V])/\dim_\C V,$
where $[V]$ stands for the class of $V$ in $K_{\text{fin}}(A)$.
Following King and Rudakov, one says that a finite dimensional
$A$-module $V$ is   $\phi$-semistable, 
if for any nonzero $A$-submodule $N\sset V$, we have
${\operatorname{\textsl{slope}}}_\phi(N)\leq {\operatorname{\textsl{slope}}}_\phi(V)$. 

This definition specializes to Definition \ref{stab}
as follows. One takes $A:=\C Q$. 
Then, the assignment $[V]\mto \dim_I V$ yields
a well defined group homomorphism $\dim_I :\
K_{\text{fin}}(\C Q)\to \Z^I$. Now, for any $\th\in \BR^I$,
define a group homomorphism $\phi_\th: \ \Z^I\to\BR,\
x\mto \sum_i\ \th_i\cdot x_i.$  This yields an obvious isomorphism
$\BR^I\iso\Hom(\Z^I,\BR),\ \th\mto\phi_\th$. Thus, given 
$\th\in \BR^I$, one may form
a composite homomorphism
$K_{\text{fin}}(\C Q)\to \Z^I\to\BR,$
$[V]\mto \th\cdot\dim_I V$.

For this last  homomorphism,  the general definition of semistability
for $A$-modules
reduces to  Definition \ref{stab}.

\begin{rem} Assume that the quiver $Q$ has no oriented cycles.
Then, it is easy to show that any
simple representation $V$ of $Q$ is 1-dimensional,
i.e., there exists a vertex $i\in I$ such that
$V_i=\C$ and $V_j=0$ for any $j\neq i$.
It follows  that the map $\dim_I:\ K_{\text{fin}}(\C Q)\to \Z^I,\
[V]\mto\dim_I V$
is in this case a group {\en isomorphism}.
\erem

\begin{prop}\label{ab} Fix an additive group homomorphism
$\phi: \ K_{\text{fin}}(A)\to\BR$. Then,
finite dimensional $\phi$-semistable $A$-modules form an abelian
category. An $A$-module is $\phi$-stable if and only if it is a
simple object of this category. 
\end{prop}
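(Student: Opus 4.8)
The plan is to exhibit the $\phi$-semistable modules as a full subcategory $\mathcal{C}$ of the abelian category of finite-dimensional $A$-modules and to verify the closure properties that make $\mathcal{C}$ abelian. First I would adopt King's normalization, in which $\phi$-semistability of a nonzero $V$ is taken to include the equality $\phi([V])=0$, and not merely the slope inequalities. The normalization Remark above, applied in the form that one may replace $\phi$ by $\phi-c\cd\dim_\C$ without altering the semistability relation, shows that this costs nothing; its real point is that \emph{every} semistable module then has slope $0$, so that the $\phi$-semistable modules genuinely form a single category rather than a family indexed by the slope. Explicitly, $V$ is then semistable iff $\phi([V])=0$ and $\phi([N])\le 0$ for every submodule $N\sset V$, which, by additivity of $\phi$ and $\dim_\C$ along short exact sequences, is equivalent to $\phi([W])\ge 0$ for every quotient $W$ of $V$. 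I adjoin the zero module to $\mathcal{C}$.

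Since finite-dimensional $A$-modules already form an abelian category, it suffices to check that $\mathcal{C}$ is closed under finite direct sums and under the kernels and cokernels of its own morphisms (computed among $A$-modules); images are then subsumed. Granting this, for a morphism of $\mathcal{C}$ its ambient kernel and cokernel lie in $\mathcal{C}$ and serve as kernel and cokernel there, while the canonical map from coimage to image is the isomorphism inherited from the ambient category, so $\mathcal{C}$ is abelian with exact inclusion.

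The crux is the behaviour of slope under a morphism $f\colon V\to W$ in $\mathcal{C}$. Writing $I:=\Image f$, I would note that $I\sset W$ is a submodule, so $\phi([I])\le 0$, while $I\cong V/\Ker f$ is a quotient of $V$, so $\phi([I])\ge 0$; hence $\phi([I])=0$. Additivity of $\phi$ along $0\to\Ker f\to V\to I\to 0$ and $0\to I\to W\to\Coker f\to 0$ then forces $\phi([\Ker f])=\phi([\Coker f])=0$. Semistability is inherited in each case: every submodule of $\Ker f$ or of $I$ is a submodule of $V$ or of $W$ and so has $\phi\le 0$, whereas a submodule of $\Coker f=W/I$ has the form $M/I$ with $I\sset M\sset W$, giving $\phi([M/I])=\phi([M])\le 0$. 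Thus $\Ker f,\Image f,\Coker f\in\mathcal{C}$. For a direct sum $V\oplus W$ of objects of $\mathcal{C}$ one has $\phi([V\oplus W])=0$, and any submodule $N\sset V\oplus W$ fits in $0\to N\cap V\to N\to p_W(N)\to 0$ with $N\cap V\sset V$ and $p_W(N)\sset W$, whence $\phi([N])\le 0$; so $V\oplus W\in\mathcal{C}$.

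For the second assertion I would argue that $\phi$-stability coincides with simplicity in $\mathcal{C}$. If $V$ is $\phi$-stable then every nonzero proper submodule $N$ has $\phi([N])<0$ and so fails to lie in $\mathcal{C}$; hence $V$ has no nonzero proper subobject in $\mathcal{C}$ and is simple. Conversely, if $V\in\mathcal{C}$ is not stable, then since $\phi([N])\le 0$ always, the failure of strictness yields a nonzero proper submodule $N$ with $\phi([N])=0$; such an $N$ is automatically semistable, its submodules being submodules of $V$, so it is a nonzero proper subobject of $V$ in $\mathcal{C}$ and $V$ is not simple. The only mildly delicate verifications are the closure under direct sums and the cokernel computation, and both reduce to the see-saw additivity of $\phi$ and $\dim_\C$; the genuinely load-bearing observation is that $\Image f$ is at once a submodule and a quotient, which pins its slope to $0$.
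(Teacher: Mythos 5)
Your proof is correct, and its skeleton is the same as the paper's: in both arguments the decisive observation is that $\Image f$ is simultaneously a quotient of the source and a submodule of the target, which pins its slope, after which semistability of kernel, image and cokernel is inherited because their submodules are (preimages or images of) submodules of the ambient semistable modules. The execution differs in two useful ways. First, where the paper retains the slope formalism and outsources the see-saw step to Rudakov (his Lemma 3.2 and Definitions 1.1 and 1.6), proves only the kernel case in detail, and explicitly leaves the cokernel computation, direct sums, the abelian-category formalities, and the entire stable-iff-simple assertion to the reader, your slope-zero normalization converts every slope comparison into a sign condition on $\phi$, so the see-saw becomes plain additivity of $\phi$ along short exact sequences, and you then carry out all the verifications the paper omits. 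Second, and more substantively, the paper's proof silently writes $\operatorname{slope}(N)=\operatorname{slope}(M)$, tacitly assuming all objects share a common slope; read literally, the proposition fails for semistables of mixed slopes --- if $\operatorname{slope}(S_1)\neq\operatorname{slope}(S_2)$ then $S_1\oplus S_2$ contains its higher-slope summand and is not semistable, so the full category of all semistables is not even closed under direct sums. Your normalization remark identifies and repairs exactly this point by restricting to the slope-zero objects, which is the reading relevant to the paper (there $\th\cdot\bv=0$). One small wording caution: replacing $\phi$ by $\phi-c\cdot\dim_\C$ shifts \emph{all} slopes by the same constant $c$, so it does not make every semistable module have slope zero, as your second sentence suggests; what it does, and all your argument needs, is move the one slope under consideration to zero, so that your $\mathcal{C}$ is honestly the fixed-slope subcategory --- the correct formulation of the proposition rather than a cost-free restatement of the literal one.
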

\begin{proof} 
The proposition states that,
for any pair $M,N,$ of $\phi$-semistable $A$-modules,
the kernel, resp. cokernel, of an
$A$-module map $f: M\to N$ is  $\phi$-semistable again.

To prove this, put $K:=\Ker(f)$ and write
${\operatorname{\textsl{slope}}}(-)$ for
${\operatorname{\textsl{slope}}}_\th(-)$.
Then,
the imbedding $M/K\into N$ yields
${\operatorname{\textsl{slope}}}(M/K)\leq
{\operatorname{\textsl{slope}}}(N)={\operatorname{\textsl{slope}}}(M)$. Hence,
alying \cite[Lemma 3.2 and Definition 1.1]{Ru} to the short exact
sequence $K\to M\to M/K$, we get 
${\operatorname{\textsl{slope}}}(K)\geq {\operatorname{\textsl{slope}}}(M)$.
On the other hand, since $K$ is a submodule of $M$, a semistable module,
we have ${\operatorname{\textsl{slope}}}(K)\leq
{\operatorname{\textsl{slope}}}(M)$.
 Thus, we obtain that
${\operatorname{\textsl{slope}}}(K)={\operatorname{\textsl{slope}}}(M)$.
It follows that,
 for any submodule $E\sset K$,
we have
${\operatorname{\textsl{slope}}}(E)\leq{\operatorname{\textsl{slope}}}(M)=
{\operatorname{\textsl{slope}}}(K)$, since $E\sset M$.
Thus, we have proved that $K$ is $\phi$-semistable.

Next, write $C$ for the cokernel of the map $f$.
Then, one proves that ${\operatorname{\textsl{slope}}}(C)={\operatorname{\textsl{slope}}}(N)$
and, moreover, ${\operatorname{\textsl{slope}}}(C)\leq
{\operatorname{\textsl{slope}}}(F)$ 
for any quotient
$F$ of $C$. This implies that $C$ is a $\phi$-semistable $A$-module,
see \cite{Ru}, Definition 1.6 and the discussion after it.

We leave details to the reader.\end{proof}

We return to the quiver setting, and fix a quiver $Q$ with vertex set $I$.
As a corollary of Proposition \ref{ab}, we deduce that
any  $\th$-semistable representation $V$, of $Q$,
has a  Jordan-H\"older filtration $0=V_0\sset V_1\sset\ldots\sset V_m=V,$
by subrepresentations, such that $V_k/V_{k-1}$ is a
 $\th$-stable representation for any $k=1,\ldots, m.$
The associated graded representation $\gr^s V:=\oplus_k\ V_k/V_{k-1}$
does not depend, up to isomorphism, on the choice
of such a filtration.

To relate Mumford's and King's notions of stability,
we 
associate with an integral vector $\th=(\th_i)\ii\in\Z^I,$
a rational character
$$\chi_\th:\ G_\bv\to\cm, \quad
g=(g_i)\ii\ \mto\
\prod\ii \det(g_i)^{-\th_i}.
$$

\begin{rem} Fix  a representation $V$, of $Q$, and put $\bv=\dim_I V$.
It is clear that  the character $\chi_\th$ vanishes on the subgroup
 $\cm\sset G_\bv$ if and only if we have
$\th\cdot\bv=0$.
\erem

The main result of King relating the two notions of stability 
 reads

\begin{thm}\label{kingthm} For any dimension vector $\bv$ and any  $\th\in \Z^I$ such
that $\th\cdot\bv=0$, we have

\vi A representation $V\in \Rep(Q, \bv)$ is $\chi_\th$-semistable,
resp. $\chi_\th$-stable, in the sense of Definition \ref{def_stab}
if and only if  it is  $\th$-semistable,
resp. $\th$-stable, in the sense of Definition ~\ref{stab}.

\vii A pair $V,V'$, of   $\chi_\th$-semistable
representations, are $S$-{\em equivalent}
in the sense of Definition \ref{def_stab}
if and only if  one has $\gr^s V\cong \gr^s V'$.\qed
\end{thm}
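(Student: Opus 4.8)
The plan is to deduce everything from the Hilbert--Mumford numerical criterion, translating it into the combinatorics of subrepresentations. Since $\th\in\Z^I$ and $\th\cdot\bv=0$, the character $\chi_\th\colon G_\bv\to\cm$ is a genuine algebraic character that is moreover trivial on the scalar subgroup $\cm\sset G_\bv$ (by the Remark preceding the theorem), so the GIT picture of Definition \ref{def_stab} is the relevant one. First I would record the standard dictionary between one-parameter subgroups $\la\colon\cm\to G_\bv$ and $\Z$-gradings $V=\bigoplus_m V^{(m)}$ of $V$ compatible with the $I$-grading, where $\la(t)$ acts on $V^{(m)}$ by $t^m$. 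The crucial geometric observation is that $\lim_{t\to0}\la(t)\cdot V$ exists in $\Rep(Q,\bv)$ if and only if every arrow of $Q$ raises the grading, i.e. the descending filtration $V_{\geq m}:=\bigoplus_{m'\geq m}V^{(m')}$ is a filtration of $V$ by \emph{subrepresentations}. Thus one-parameter subgroups with well-defined limit correspond exactly to finite filtrations of $V$ by subrepresentations.

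Next I would compute the Mumford weight. A direct determinant computation gives $\langle\chi_\th,\la\rangle=-\sum_m m\,(\th\cdot\dim_I V^{(m)})$, and an Abel summation --- legitimate because $\th\cdot\dim_I V_{\geq m}$ vanishes for $m\gg0$ (no such weights) and for $m\ll0$ (where it equals $\th\cdot\bv=0$) --- rewrites this as $\langle\chi_\th,\la\rangle=-\sum_m \th\cdot\dim_I V_{\geq m}$. By the Hilbert--Mumford criterion, $V$ is $\chi_\th$-semistable iff $\langle\chi_\th,\la\rangle\geq0$ for every $\la$ whose limit exists, i.e. iff $\sum_m\th\cdot\dim_I V_{\geq m}\leq0$ for every filtration of $V$ by subrepresentations. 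Testing against a single subrepresentation $N$ (the two-step grading with $V_{\geq1}=N$, $V_{\geq0}=V$) contributes exactly the term $\th\cdot\dim_I N$ to this sum, so the condition over all $\la$ holds iff $\th\cdot\dim_I N\leq0$ for every subrepresentation $N$; since $\th\cdot\bv=0$ this is precisely $\operatorname{slope}_\th(N)\leq\operatorname{slope}_\th(V)$, King's condition from Definition \ref{stab}. For the stable case I would run the same argument with strict inequalities. Since the scalar subgroup $\cm\sset G_\bv$ acts trivially on $\Rep(Q,\bv)$ and $\chi_\th$ is trivial on it, one works throughout with the effective action of $G_\bv/\cm$; a one-parameter subgroup has $\langle\chi_\th,\la\rangle=0$ exactly when every $V_{\geq m}$ is $0$ or $V$, i.e. when $\la$ is scalar, so Mumford's requirement that $\langle\chi_\th,\la\rangle>0$ for all nonscalar $\la$ matches the strict slope inequality for all proper nonzero $N$, and the corresponding stable orbits have the maximal dimension $\dim G_\bv-1=\dim(G_\bv/\cm)$. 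This proves (i).

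For (ii) I would invoke Theorem \ref{GIT}(i): $S$-equivalence classes of semistable orbits are in bijection with the geometric points of $\M:=\Rep(Q,\bv)/\!/_{\chi_\th}G_\bv$, and each class contains a unique closed orbit in $X^{ss}_{\chi_\th}$. The task is therefore to identify this distinguished closed orbit with the orbit of $\gr^s V$. By Proposition \ref{ab} the $\th$-semistable representations of a fixed slope form an abelian category whose simple objects are the $\th$-stable ones, so $\gr^s V$ is a direct sum of $\th$-stables of slope $0$; such a polystable representation has closed $G_\bv$-orbit in $X^{ss}_{\chi_\th}$. Conversely, applying the one-parameter subgroup attached to a Jordan--H\"older filtration of $V$ (all of whose subquotients have slope $0$, hence $\langle\chi_\th,\la\rangle=0$, so the limit stays semistable) degenerates $V$ to $\gr^s V$, exhibiting the orbit of $\gr^s V$ inside $\overline{G_\bv\cdot V}\cap X^{ss}_{\chi_\th}$. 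Thus the closed orbit in the $S$-equivalence class of $V$ is exactly that of $\gr^s V$, and two semistables are $S$-equivalent iff $\gr^s V\cong\gr^s V'$.

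The main obstacle, I expect, is not the formal structure but getting the two technical hearts exactly right: the Abel-summation identity together with the sign of the Hilbert--Mumford inequality --- the minus sign in the definition of $\chi_\th$ is precisely what aligns Mumford's convention with the direction of King's slope inequality --- and, for (ii), verifying both that polystable representations of fixed slope have closed orbits in $X^{ss}_{\chi_\th}$ and that $\gr^s V$ genuinely occurs in the orbit closure of $V$. Once these are in place, both equivalences fall out of the numerical criterion and the standard GIT correspondence of Theorem \ref{GIT}.
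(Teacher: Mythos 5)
Your proposal is correct in outline, but note that the paper does not prove this theorem at all: it is stated with a \qed as a quoted result of King, with the reader referred to \cite{Ki}. What you have written is essentially a reconstruction of King's original argument, so there is no genuinely different route to compare against --- rather, you are filling in a proof the paper deliberately outsources. The skeleton is right: the dictionary between one-parameter subgroups of $G_\bv$ with $\lim_{t\to0}\la(t)\cdot V$ existing and finite filtrations of $V$ by subrepresentations, the weight computation $\langle\chi_\th,\la\rangle=-\sum_m m\,(\th\cdot\dim_I V^{(m)})=-\sum_m\th\cdot\dim_I V_{\geq m}$ (the Abel summation is legitimate for exactly the reasons you give, and the hypothesis $\th\cdot\bv=0$ is what kills the boundary term at $m\ll0$), the reduction to testing single subrepresentations via two-step filtrations, and the identification of the strict case with stability for the effective $G_\bv/\cm$-action --- which is also the correct way to reconcile Definition \ref{def_stab}(ii)'s finite-isotropy requirement with the fact that $\cm$ acts trivially. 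Two caveats. First, the statement that $\chi_\th$-(semi)stability on the affine variety $\Rep(Q,\bv)$ is detected by the sign of $\langle\chi_\th,\la\rangle$ over one-parameter subgroups with existing limit is itself a nontrivial input (King's Proposition 2.5, obtained by running Mumford's numerical criterion on $X\times\C$ with the $\chi_\th$-twisted action); you invoke it as a black box, which is acceptable for a sketch but is where the real GIT content lives. Second, in part (ii) the closedness in $X^{ss}_{\chi_\th}$ of the orbit of a polystable representation is asserted, not proved; it does require an argument (e.g.\ a further one-parameter degeneration of a direct sum of stables of slope zero must again be semistable with the same Jordan--H\"older constituents, hence isomorphic to it). You flag both points yourself. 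A very minor wording slip: the limit $\lim_{t\to0}\la(t)\cdot V$ exists iff the arrows do not \emph{lower} the grading, not iff they raise it; your equivalent reformulation in terms of the $V_{\geq m}$ being subrepresentations is the correct one.
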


Let $\Rep^s_\th(Q,\bv)$ denote the set of stable,
resp. $\Rep^{ss}(Q,\bv)$ denote the set of semistable,
representations of dimension $\bv$. 
We write
$\R_\th(\bv)=\R_\th(Q,\bv):=\Rep^{ss}_\th(Q,\bv)/\!/_{\chi_\th}G_\bv$.
  By Theorem \ref{GIT}, this is a quasi-projective
variety.

\begin{cor}[A. King]\label{stabrep} \vi
The group $G_\bv/\gm$ acts freely on the set $\Rep^s_\th(Q,\bv)$,
of  $\th$-{\em stable} representations.
The orbit set $\R^s_\th(\bv):=\Rep^s_\th(Q,\bv)/G_\bv$ 
is contained in $\R_\th(\bv)$ as a Zariski open (possibly empty)
subset.\vs

\vii Assume that $Q$ has no edge loops.
Then, the vector $\bv\in\Z^I_{\geq 0}$ is a {\em Schur vector}
(i.e. there exists a simple representation of $Q$ of dimension $\bv$)
for $Q$ if and only if  there exists $\th\in \Z^I$ such that  
$$\th\cdot\bv=0\quad\oper{and}\quad
\Rep^s_\th(Q,\bv)\neq\emptyset.$$

For such a $\th$, we have
$\dis\ \dim\R^s_\th(\bv)=1+A_Q\bv,\bv-\bv\cdot\bv.$
\end{cor}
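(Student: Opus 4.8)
The plan is to treat part (i) and the two implications of part (ii) separately, reserving the harder ``only if'' direction of (ii) for last.

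For part (i), I would first pin down the stabilizers. By Proposition \ref{ab} a $\th$-stable representation $V$ is a simple object of the abelian category of $\th$-semistable representations, so every nonzero endomorphism of $V$ is invertible; since $\End_{\C Q}(V)$ is then a finite-dimensional division algebra over the algebraically closed field $\C$, it equals $\C$. As recalled in the proof of Lemma \ref{connected}, the isotropy group $G^V\sset G_\bv$ is the group of units of $\End_{\C Q}(V)$, so $G^V=\cm$, the scalars. Hence $G_\bv/\cm$ acts with trivial stabilizers, i.e.\ freely, on $\Rep^s_\th(Q,\bv)$. For the openness claim I would use King's identification (Theorem \ref{kingthm}(i)) of $\Rep^s_\th(Q,\bv)$ with the Mumford-stable locus $X^s_{\chi_\th}$, and then Theorem \ref{GIT}(ii): the image $F(X^s_{\chi_\th})$ is Zariski open in $X/\!/_{\chi_\th}G_\bv=\R_\th(\bv)$ and its fibers are single orbits, so it coincides with $\R^s_\th(\bv)$.

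For part (ii), the ``if'' direction is immediate from the stabilizer computation: a $\th$-stable $V$ satisfies $\End_{\C Q}(V)=\C$, so $\bv=\dim_I V$ is a Schur vector. The dimension formula is then a free-quotient count. Since the stable locus is open (Theorem \ref{GIT}) and nonempty, and $\Rep(Q,\bv)$ is an irreducible vector space, $\dim\Rep^s_\th(Q,\bv)=\dim\Rep(Q,\bv)=A_Q\bv\cdot\bv$ by \eqref{dims}. As the orbit map is a principal $G_\bv/\cm$-bundle over its image (cf.\ Corollary \ref{sss}(ii), Theorem \ref{GIT}(ii)) and $\dim(G_\bv/\cm)=\bv\cdot\bv-1$, we get $\dim\R^s_\th(\bv)=A_Q\bv\cdot\bv-(\bv\cdot\bv-1)=1+A_Q\bv\cdot\bv-\bv\cdot\bv$.

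The main obstacle is the ``only if'' direction: from a Schur vector $\bv$ — equivalently, from the fact that the \emph{general} representation $V$ of dimension $\bv$ is a brick, $\End_{\C Q}(V)=\C$ — I must produce a single $\th\in\Z^I$ with $\th\cdot\bv=0$ admitting a $\th$-stable representation. The subtlety is that a \emph{fixed} brick need not be stable for any $\th$: if $V$ possessed subrepresentations of complementary dimension vectors $\bw$ and $\bv-\bw$, stability would force both $\th\cdot\bw<0$ and $\th\cdot(\bv-\bw)<0$, contradicting $\th\cdot\bv=0$. The remedy is to exploit genericity: the dimension vectors occurring as subrepresentations of a general $V$ form a finite set $S$, controlled by Schofield's criterion for general subrepresentations (this is where the no-edge-loops hypothesis enters, ensuring the criterion applies cleanly via the Euler form). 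I would then look for $\th$ on the hyperplane $\{\th\cdot\bv=0\}$ with $\th\cdot\bw<0$ for every proper nonzero $\bw\in S$; such a $\th$ renders the general representation $\th$-stable, and since the defining conditions are strict and have integral data one may scale to arrange $\th\in\Z^I$. Existence of $\th$ is a linear-separation (Gordan/Farkas) question that fails exactly when some positive combination of the $\bw$'s is proportional to $\bv$. I expect the technical heart of the argument to be showing that the brick property of the general representation rules this incompatibility out, after which a suitable $\th$ is chosen in the interior of the resulting cone.
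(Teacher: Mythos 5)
Your part (i) and the easy half of part (ii) are correct. For the freeness in (i) you go through Proposition \ref{ab} and Schur's lemma: a $\th$-stable $V$ is simple in the abelian category of semistables, so $\End_{\C Q}(V)$ is a finite-dimensional division algebra over $\C$, hence $\C$, and the isotropy group is its unit group $\gm$. The paper argues more directly: if $g$ stabilizes $V$ and $g\notin\gm$, then $\Ker(g-c\,\Id)$, for an eigenvalue $c$ of $g$, is a nonzero proper subrepresentation whose existence violates stability. The two arguments are essentially interchangeable. Your openness claim via Theorem \ref{kingthm}(i) and Theorem \ref{GIT}(ii), the ``if'' direction of (ii), and the dimension count $\dim\Rep^s_\th(Q,\bv)-\dim(G_\bv/\gm)=A_Q\bv\cdot\bv-(\bv\cdot\bv-1)$ are all fine. (You are also right to read ``Schur vector'' as ``there exists a representation with $\End=\C$'', i.e.\ a brick; taken literally, ``simple representation'' would make the statement false already for the $A_2$ quiver with $\bv=(1,1)$.)

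The genuine gap is the ``only if'' direction of (ii), which you explicitly leave open. Your separation strategy reduces, by a theorem of the alternative, to showing that no positive combination of the dimension vectors of generic proper nonzero subrepresentations of the general $\bv$-dimensional representation is a positive multiple of $\bv$; you then say you ``expect'' the brick property to rule this out, but that is precisely the nontrivial content, and it does not follow formally from $\End(V)=\C$ --- ruling out arbitrary positive relations $\sum n_\bw\,\bw=c\,\bv$ requires real input from Schofield's theory, not just the single complementary pair $\bw,\ \bv-\bw$ that you discuss. The standard argument (King, \cite{Ki}, using Schofield's results on general subrepresentations) sidesteps the duality step entirely: one writes down $\th$ explicitly as the antisymmetrized Euler form paired with $\bv$, i.e.\ $\th\cdot\al=\langle\bv,\al\rangle-\langle\al,\bv\rangle$, which automatically satisfies $\th\cdot\bv=0$, and then verifies directly that the general representation is $\th$-stable, using the vanishing of $\Ext^1$ between a generic subrepresentation and its quotient together with $\End(V)=\C$. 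The paper itself offers no proof of part (ii) (it is quoted from King), so you are not contradicting the text; but as a self-contained proof your proposal is incomplete at exactly this step, and I would recommend replacing the Gordan/Farkas plan by King's explicit choice of $\th$.
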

\begin{proof}[Proof of (i)] Let
$g$ be an element of the isotropy group of $V$,
such that $g\notin\gm$, and let $c\in\C$ be an eigenvalue of 
$g$. Then $N:=\Ker(g-c\Id)$ is a nontrivial subrepresentation
of $V$. Clearly, the group $\gm$ acts trivially on $N$.
Hence, we have  $\dim_I N\cdot\bv=0,$  contradicting
the definition of stability. It follows
that the group $G_\bv/\gm$ acts freely  on  $\Rep^s_\th(Q,\bv)$.
\end{proof}

According to  Example \ref{triv}, we get
\begin{cor}\label{trivcor} In the special case $\th=0$, one has
$$\R_0(\bv)\ =\ \Rep(Q,\bv)/\!/G_\bv\ =\ \Spec\C[\Rep(Q,\bv)]^{G_\bv}.$$

For any $\th\in\Z^I$, there is a canonical projective morphism
$\pi:\
\R_\th(Q,\bv)\to \Rep(Q,\bv)/\!/G_\bv$. \qed
\end{cor}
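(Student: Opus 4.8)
The plan is to obtain both assertions as direct specializations of the GIT machinery already in place: the first is an instance of Example~\ref{triv}, and the second is the canonical projective morphism $\Proj A_\chi\to\Spec\C[X]^G$ constructed earlier in general. Accordingly I do not expect to prove anything new, only to unwind the relevant definitions in the case $X=\Rep(Q,\bv),\ G=G_\bv$.

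First I would dispose of the case $\th=0$. The character $\chi_\th:\ g=(g_i)\ii\ \mto\ \prod\ii\det(g_i)^{-\th_i}$ becomes, for $\th=0$, the trivial character $\chi_0=1$. Moreover, when $\th=0$ the slope inequality of Definition~\ref{stab} is satisfied by every subrepresentation, so every representation is $0$-semistable and $\Rep^{ss}_0(Q,\bv)=\Rep(Q,\bv)$. Feeding $X=\Rep(Q,\bv),\ G=G_\bv$ into Example~\ref{triv}, which identifies $X/\!/_1 G\iso X/\!/G=\Spec\C[X]^G$, and combining with the definition $\R_\th(\bv):=\Rep^{ss}_\th(Q,\bv)/\!/_{\chi_\th}G_\bv$, I get the chain $\R_0(\bv)=\Rep(Q,\bv)/\!/_1 G_\bv=\Spec\C[\Rep(Q,\bv)]^{G_\bv}=\Rep(Q,\bv)/\!/G_\bv$, which is the first claim.

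For the second claim I would invoke the general construction recalled above: for any affine $G$-variety $X$ and character $\chi$, the imbedding of the degree-zero part $\C[X]^G\hookrightarrow A_\chi$ induces a projective morphism $\Proj A_\chi\to\Spec\C[X]^G=X/\!/G$. Specializing to $X=\Rep(Q,\bv),\ G=G_\bv,\ \chi=\chi_\th$ yields a projective morphism $\pi:\ \R_\th(Q,\bv)=\Proj A_{\chi_\th}\to\Spec\C[\Rep(Q,\bv)]^{G_\bv}=\Rep(Q,\bv)/\!/G_\bv$. The point to stress is that the target is the degree-zero summand $\C[X]^{\chi^0}=\C[\Rep(Q,\bv)]^{G_\bv}$, which does not depend on $\th$; this independence is exactly what makes $\pi$ canonical.

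There is no genuine obstacle: the statement is a formal corollary. The only bookkeeping requiring care is the identification $\R_\th(\bv)=\Proj A_{\chi_\th}$ --- that the GIT quotient of the semistable locus $\Rep^{ss}_\th$ by $\chi_\th$ coincides with $\Proj$ of the full algebra $A_{\chi_\th}$ of semi-invariants over all of $\Rep(Q,\bv)$ --- together with the collapse $\Proj(\C[X]^G[z])=\Spec\C[X]^G$ for the trivial character. Both are already contained in the GIT discussion and in Example~\ref{triv}, so the proof amounts to citing them.
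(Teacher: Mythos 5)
Your proposal is correct and follows exactly the route the paper intends: the case $\th=0$ is read off from Example \ref{triv} applied to $X=\Rep(Q,\bv)$, $G=G_\bv$, and the projective morphism $\pi$ is the general map $\Proj A_\chi\to\Spec\C[X]^{G}$ induced by the degree-zero inclusion $\C[X]^{G}\into A_\chi$. The one bookkeeping point you flag, identifying $\Rep^{ss}_\th(Q,\bv)/\!/_{\chi_\th}G_\bv$ with $\Proj A_{\chi_\th}$, is indeed already built into the paper's definition of the GIT quotient, so nothing further is needed.
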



\begin{rem}\label{opp}
Note that a representation $V$ of $Q$ is
 $\th$-semistable if and only if  $V^*$, the dual representation of $Q^\op$,
is  $(-\th)$-semistable. Thus, taking the dual representation yields
canonical isomorphisms
$$\Rep^{ss}_\th(Q,\bv)\iso\Rep^{ss}_{-\th}(Q^\op,\bv),\quad\text{resp.}
\quad
\R_\th(Q,\bv)\iso\R_{-\th}(Q^\op,\bv).
$$
\end{rem}

\section{Framings}

Our exposition in this section will be close to the one given by Nakajima in
\cite{Na5}.

\subsection{}\label{cb_frame}
The set $\R_\th(Q,\bv)$ is often empty in various interesting
cases of
quivers $Q$ and dimension vectors $\bv$.
Introducing a framing is a way to remedy the situation.

To explain this, fix a quiver $Q$ with vertex set $I$. We introduce another quiver
$\fq$, called the {\em framing of}, $Q$ as follows.
The set of vertices of $\fq$ is defined to be
$I\sqcup I'$, where $I'$ is another copy of the set $I$,
 equipped with the bijection $I\iso I',\ i\mto i'.$
The set of edges of $\fq$ is, by definition,
a disjoint union of the set of edges of $Q$ and
a set of 
additional edges $\bj_i: i\to i',$ from the vertex $i$ to the
corresponding vertex $i'$, one for each vertex $i\in I$.

Thus, giving a representation of $\fq$ amounts to giving
a representation $\bx$, of the original quiver $Q$, in a vector space
$V=\oplus\ii V_i$
 together with
a collection of linear maps
$V_i\to W_i, \ i\in I,$ where  $W=\oplus\ii W_i$ is
an additional collection of finite dimensional vector spaces, where
$W_i$
is `placed'
at the vertex $i'\in I'$. We let $\bw:=\dim_I W\in\Z^I_{\geq0}$ denote the
corresponding dimension vector,
and write  $\bj: V\to W$ to denote a collection
of linear maps $\bj_i:\ V_i\to W_i,\ i\in I,$ as above.

With this notation, a representation of $\fq$ 
is a pair $(\bx,\bj)$, where $\bx$ is a representation of $Q$ in
$V=\oplus_i\ V_i$, 
 and $\bj: V\to W$ is arbitrary additional collection of linear maps.
Accordingly, dimension vectors for the quiver $\fq$ are elements
$\bv\times\bw\in \Z^I\times \Z^I=\Z^{I\sqcup I'}.$
We write $\Rep(\fq,\bv,\bw):=\Rep(\fq, \bv\times\bw)$ for the space of representations
 $(\bx,\bj)$, of $\fq$, of dimension $\dim_I V=\bv,\ \dim_I W=\bw$.

We define a $G_\bv$-action on  $\Rep(\fq,\bv,\bw)$ by
$g:\ (\bx,\bj)\mto (g\bx g\inv,\bj\ccirc g\inv),$ where we write
$\bj\ccirc g\inv$ for the collection of maps $ V_i\stackrel{(g_i)\inv}\too
V_i\stackrel{\bj_i}\too W_i.$

\begin{rem} The group $G_\bv=\prod\ii\ GL(V_i)$ 
may be viewed as a subgroup in $G_\bv\times G_\bw=
\prod\ii\ GL(W_i)\times \prod\ii\ GL(W_i)$.
The later group acts on $\Rep(\fq,\bv,\bw)$
according to the general rule of \S\ref{reminder}
applied in the case of the quiver $\fq$.
The $G_\bv$-action defined above is nothing but
the restriction of the  $G_\bv\times G_\bw$-action to
the subgroup $G_\bv$.

From now on, we will view $\Rep(\fq,\bv,\bw)$
as a $G_\bv$-variety, and will ignore the action of the other factor,
the group $G_\bw$.
\erem

There is a slightly different but equivalent point of view
on framings, discovered by Crawley-Boevey \cite{CB1}, p.261.
Given a quiver $Q$, with vertex set $I$, and a dimension vector
$\bw=(w_i)\ii\in\Z^I_{\geq0}$,  Crawley-Boevey  considers a quiver
$Q^\bw$ with the vertex set equal to $I\sqcup\{\infty\}$,
where $\infty$ is a new additional vertex.
The set of edges of the quiver $Q^\bw$ is obtained from the set of edges of
$Q$ by adjoing $w_i$ additional edges $i\to \infty$, for each vertex
$i\in I$. 

Next, associated with any dimension vector $\bv=(v_i)\ii\in\Z^I_{\geq0}$,
introduce a dimension vector $\wh\bv\in \Z^{I\sqcup\{\infty\}}_{\geq0}$
such that $\wh v_i:=v_i$ for any $i\in I$, and $v_\infty:=1.$
We have a natural group imbedding $G_\bv\into G_{\wh\bv}$
that sends an element $g=(g_i)\in\prod\ii GL(v_i)$ to the element
$\wh g=(\wh g_i)_{i\in I\sqcup\{\infty\}}\in \prod_{i\in I\sqcup\{\infty\}} GL(\wh v_i)$,
where
$\wh g_i:=g_i$ for any $i\in I$ and $g_\infty:=\Id$.
Note that this imbedding induces an isomorphism
$G_\bv\iso G_{\wh\bv}/\gm$.
Thus, we may (and will) view 
$\Rep(Q^\bw,\wh\bv)$ as a $G_\bv$-variety via that isomorphism.

Let
$\wh V=\oplus_{\{I\sqcup\{\infty\}}\ V_i$, resp.  $W=\oplus\ii W_i$, be  a vector space
such that $\dim_I \wh V=\wh\bv,$ resp.  $\dim_I W=\bw$.
We identify $V_\infty$ with $\C$, a 1-dimensional vector space
with a fixed base vector.

For each  $i\in I$, we choose a basis of the
vector space $W_i$. Then, given
any collection of $w_i$ linear maps $V_i\to V_\infty$
(corresponding to the $w_i$ edges $i\to\infty$
of the quiver $Q^\bw$),
  one can use the basis in $W_i$ to assemble these maps into a single linear map
$\bj_i: V_i\to W_i$. This way, we see that
any representation of the quiver $Q^\bw$ in the vector space $\wh V$ gives
rise to a representation of  the quiver $\fq$,
that is, to a point in $\Rep(\fq,V,W)$.
The resulting map $\Rep(Q^\bw,\wh\bv)\to \Rep(\fq,\bv,\bw)$
is a $G_\bv$-equivariant  vector space isomorphism that depends
on the choice of basis  of the vector space $W$.
However, the morphisms corresponding to different choices of basis
are obtained from each other by composing with an invertible
linear map $g: \Rep(\fq,\bv,\bw)\to \Rep(\fq,\bv,\bw)$
that comes from the action on  $\Rep(\fq,\bv,\bw)$ of
an element $g$ of
the group $G_\bw$.

\subsection{Stability for framed representations}
We may apply the general notion of stability in GIT,
cf.  Definition \ref{def_stab}, in the special case of the
$G_\bv$-action on the variety  $\Rep(\fq,\bv,\bw)$
and a character $\chi_\th:\ G_\bv\to\gm$.

The notion of (semi)stability for {\em framed}
representations
of the quiver $\fq$
in the sense of Definition  \ref{def_stab} may {\em not} agree with
the notion of (semi)stability  for  
representations
of $\fq$  in the sense of Definition \ref{stab}. This is because
the general  Definition  \ref{def_stab} refers to a choice
of group action. Considering a representation of $\fq$ as an
ordinary representation without framing refers implicitly
to the action of the group $G=G_\bv\times G_\bw$,
while considering the same representation as a framed representation
refers to the action of the group $G=G_\bv$.

Let $\th\in\BR^I$.
A convenient way to relate the $\th$-stability of framed representations to
King's results is to use the $G_\bv$-equivariant isomorphism
  $\Rep(Q^\bw,\wh\bv)\to \Rep(\fq,\bv,\bw)$ described at the end of the
previous subsection.
Recall that we have $G_\bv\cong G_{\wh\bv}/\gm$,
where the subgroup $\gm\sset G_{\wh\bv},$ of scalar matrices, acts trivially
on $\Rep(Q^\bw,\wh\bv)$. Further,
define a vector
$\wh\th\in\BR^{I\sqcup\{\infty\}}$ by
$\wh\th_i:=\th_i$ for any $i\in I$ and
$\wh\th_\infty:=-\sum_{i\in I}\th_i\cdot v_i$.
In this way, all the results
 of  \S\ref{king_sec} concerning $\wh\th$-stability for the
$G_{\wh\bv}$-action on  $\Rep(Q^\bw,\wh\bv)$
may be transferred into corresponding results concerning $\th$-stability for the
$G_\bv$-action on $\Rep(\fq,\bv,\bw)$.

\begin{rem}
Note  that the  $G_\bv$-action on $\Rep(\fq,\bv,\bw)$ does {\em not}
factor through the quotient $G_\bv/\cm$. 

Observe also that our definition of the vector
$\wh\th$ insures that one has $\wh\bv\cdot\wh\th=0.$
\erem

Below, we restrict ourselves to the special case
of the vector
\beq{th}\th^+:=(1,1,\ldots,1)\in \Z^I_{>0}.
\eeq

We write
`semistable' for `$\th^+$-semistable', and let $\Rep^{ss}(\fq,\bv,\bw)$
denote the set of semistable representations of $\fq$ of dimension $(\bv,\bw)$.
Further let $\R(\bv,\bw):=\Rep^{ss}(\fq,\bv,\bw)/\!/_{\chi_{\th^+}}G_\bv$ be
the corresponding
GIT quotient.

We have the following result.

\begin{lem}\label{stabfq} \vi A representation
$(\bx,\bj)\in\Rep(\fq,\bv,\bw),$
in vector spaces $(V,W)$,
is semistable (with respect to the  $G_\bv$-action on
$\Rep(\fq,\bv,\bw)$)
if and only if  there is no nontrivial subrepresentation
$V'\sset V$, of the quiver $Q$, contained in $\Ker \bj$.

\vii The group $G_\bv$ acts freely on the set  $\Rep^{ss}(\fq,\bv,\bw)$, 
moreover,
any semistable  representation is automatically stable.

\viii $\R(\bv,\bw)$ is a smooth quasi-projective variety and the canonical map
$\Rep^{ss}(\fq,\bv,\bw)/G_\bv$
$\to\R(\bv,\bw)$ is a bijection of sets.
\end{lem}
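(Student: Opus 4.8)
The plan is to reduce everything to King's results of \S\ref{king_sec} through Crawley-Boevey's identification $\Rep(\fq,\bv,\bw)\cong\Rep(Q^\bw,\wh\bv)$ of $G_\bv$-varieties. Under this dictionary the character $\chi_{\th^+}$ corresponds to the integral vector $\wh\th$ with $\wh\th_i=1$ for $i\in I$ and $\wh\th_\infty=-\sum_i v_i$, normalized so that $\wh\th\cdot\wh\bv=0$. By Theorem \ref{kingthm} applied to $Q^\bw$, $\chi_{\th^+}$-semistability of $(\bx,\bj)$ in the sense of Definition \ref{def_stab} is then equivalent to $\wh\th$-semistability of the associated $Q^\bw$-representation in the sense of Definition \ref{stab}; and since $\wh\th\cdot\wh\bv=0$, the slope of $\wh V$ is $0$, so the latter says exactly that $\wh\th\cdot\dim_I N\le 0$ for every subrepresentation $N\subseteq\wh V$.

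For (i), I would split subrepresentations $N$ of $\wh V$ according to $\dim N_\infty\in\{0,1\}$. If $N_\infty=V_\infty$, then $\wh\th\cdot\dim_I N=\sum_i\dim N_i-\sum_i v_i\le 0$ holds automatically because $N_i\subseteq V_i$, so no constraint arises. If $N_\infty=0$, then (the edges $i\to\infty$ forcing $\bj(N)\subseteq N_\infty=0$) $N$ is precisely a $Q$-subrepresentation $V'\subseteq V$ with $\bj(V')=0$, and $\wh\th\cdot\dim_I N=\sum_i\dim N_i$, which is $\le 0$ only when $N=0$. Hence semistability is equivalent to the nonexistence of a nonzero $Q$-subrepresentation contained in $\Ker\bj$, exactly the asserted criterion. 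This slope bookkeeping at the vertex $\infty$ is the one real computation; everything else is formal.

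For (ii), the equality semistable $=$ stable drops out of the same analysis: for a semistable $(\bx,\bj)$ every nonzero subrepresentation $N$ must have $N_\infty=V_\infty$ (the case $N_\infty=0$ being excluded by (i)), and a \emph{proper} such $N$ has $N_i\subsetneq V_i$ for some $i$, forcing the strict inequality $\sum_i\dim N_i<\sum_i v_i$; thus every proper nonzero subrepresentation is strictly destabilized, which is stability. For freeness I would argue directly rather than cite Corollary \ref{stabrep}: if $g=(g_i)\in G_\bv$ fixes $(\bx,\bj)$ then $g$ commutes with $\bx$ and $\bj_i\circ(g_i-\Id)=0$, so, using $\bx_x(g_i-\Id)=(g_j-\Id)\bx_x$, the space $V':=\oplus_i\Image(g_i-\Id)$ is a $Q$-subrepresentation contained in $\Ker\bj$; by (i) it must vanish, giving $g=\Id$.

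Finally, (iii) follows by applying Corollary \ref{sss}(ii) to $X=\Rep(\fq,\bv,\bw)$, a smooth affine variety (a vector space), with $G=G_\bv$. The two hypotheses to verify are that all isotropy groups are connected and that $G_\bv$ acts freely on $X^{ss}$; the latter is (ii), and the former follows from Lemma \ref{connected} for $Q^\bw$ together with $G_\bv\cong G_{\wh\bv}/\gm$, since the quotient of a connected isotropy group by the central $\gm$ stays connected. Corollary \ref{sss}(ii) then gives smoothness of $\R(\bv,\bw)=X/\!/_{\chi_{\th^+}}G_\bv$ (quasi-projectivity being automatic from the $\Proj$ construction), and the bijection $\Rep^{ss}(\fq,\bv,\bw)/G_\bv\iso\R(\bv,\bw)$ comes from Theorem \ref{GIT}(i): since semistable $=$ stable, $S$-equivalence coincides with orbit-equivalence, so the geometric points of the quotient biject with $G_\bv$-orbits. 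I expect the only genuine fiddliness to be the connectedness bookkeeping needed to invoke Corollary \ref{sss}; the conceptual heart is the slope computation of (i), and the rest is a direct reading of King's theorem through the $Q^\bw$ dictionary.
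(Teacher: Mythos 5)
Your proof is correct and follows the same route as the paper: part (i) by transporting Theorem \ref{kingthm} through the Crawley--Boevey quiver $Q^\bw$ (you simply write out the slope bookkeeping at the vertex $\infty$ that the paper leaves implicit), part (ii) by producing a destabilizing subrepresentation from a nontrivial stabilizer, and part (iii) via Corollary \ref{sss} and Theorem \ref{GIT}. One point in your favor: the paper's proof of (ii) takes $V':=\Ker(g-\Id)$, which is neither necessarily nonzero nor contained in $\Ker\bj$; your choice $V'=\bigoplus_i\Image(g_i-\Id)$ is the correct object (nonzero when $g\neq\Id$, a $Q$-subrepresentation because $g$ commutes with $\bx$, and contained in $\Ker\bj$ since $\bj\ccirc g=\bj$), so your version repairs what is evidently a slip in the text.
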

\begin{proof} Part (i) follows by directly
applying  Theorem
\ref{kingthm} to the $G_{\wh\bv}$-action on  $\Rep(Q^\bw,\wh\bv)$. To prove (ii),  let  $g\neq \Id$
be an element of  the isotropy group
of a representation $V\in \Rep^{ss}(\fq,\bv,\bw)$.
Then, $V':=\Ker(g-\Id)$ is a subrepresentation of $V$ that violates
the condition of part (i).
Part (ii) follows from this. Part (iii) follows from
(ii) by Corollary \ref{sss}.
\end{proof}

\begin{prop}[King]\label{king3} \vi Assume that $Q$ has no edge loops and the set of
$\th$-stable $(\bv,\bw)$-dimensional framed representations of $Q$  is
nonempty. Then, we have
\beq{dimR} \dim \R_\th(\bv,\bw)=\bv\cd\bw+A_Q\bv,\bv-\bv\cdot\bv,
\eeq

\vii If $Q$ has no oriented cycles
then the scheme $\R_\th(\bv,\bw)$ is a (smooth) projective
variety.
\end{prop}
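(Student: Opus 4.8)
The plan is to prove both parts by transferring the corresponding statements from the unframed setting of \S\ref{king_sec}, using the $G_\bv$-equivariant isomorphism $\Rep(Q^\bw,\wh\bv)\iso\Rep(\fq,\bv,\bw)$ and the relation $G_\bv\cong G_{\wh\bv}/\gm$ established at the end of \S\ref{cb_frame}. First I would address the dimension formula in (i). Since we assume the set of $\th$-stable framed representations is nonempty, the corresponding set $\Rep^s_{\wh\th}(Q^\bw,\wh\bv)$ is nonempty as well. By Corollary \ref{stabrep}(ii) applied to the quiver $Q^\bw$ and the dimension vector $\wh\bv$ (noting that $\wh\bv\cdot\wh\th=0$ and that $Q^\bw$ has no edge loops, since $Q$ has none and the added edges all run between distinct vertices $i$ and $\infty$), the vector $\wh\bv$ is a Schur vector, and
\beq{dimRhat}
\dim\R^s_{\wh\th}(\wh\bv)=1+A_{Q^\bw}\wh\bv,\wh\bv-\wh\bv\cdot\wh\bv.
\eeq
The main computational step is then to unwind the two quadratic quantities on the right in terms of the original data $\bv,\bw$ and $Q$. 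Using $\wh v_\infty=1$ one gets $\wh\bv\cdot\wh\bv=\bv\cdot\bv+1$, which will cancel the additive $1$. For the adjacency term, the edges of $Q^\bw$ consist of the edges of $Q$ together with $w_i$ edges from each $i\in I$ to $\infty$; hence $A_{Q^\bw}\wh\bv,\wh\bv=A_Q\bv,\bv+\sum_{i\in I}w_i\,\wh v_i\,\wh v_\infty=A_Q\bv,\bv+\bv\cd\bw$, since the new edges have tail $i$ and head $\infty$ with $\wh v_\infty=1$. Substituting these into \eqref{dimRhat} yields exactly $\dim\R_\th(\bv,\bw)=\bv\cd\bw+A_Q\bv,\bv-\bv\cdot\bv$, which is \eqref{dimR}, provided one checks that $\dim\R^s_{\wh\th}(\wh\bv)=\dim\R_\th(\bv,\bw)$; this follows because the equivariant isomorphism of varieties intertwines the two GIT quotients and, by Lemma \ref{stabfq}, semistable equals stable on the framed side.

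For part (ii), I would argue that when $Q$ has no oriented cycles the GIT quotient becomes projective by showing that the affine base of the canonical projective morphism reduces to a point. The scheme $\R_\th(\bv,\bw)$ is quasi-projective and smooth by Lemma \ref{stabfq}(iii); the issue is properness. Applying Corollary \ref{trivcor} to the quiver $\fq$, there is a canonical projective morphism $\pi:\R_\th(\bv,\bw)\to\Rep(\fq,\bv,\bw)/\!/G_\bv=\Spec\C[\Rep(\fq,\bv,\bw)]^{G_\bv}$. The key point is that when $Q$ has no oriented cycles, the framing quiver $\fq$ also has no oriented cycles: any cycle in $\fq$ would have to use one of the framing edges $\bj_i:i\to i'$, but the vertices $i'\in I'$ are sinks (they have no outgoing edges), so no edge leaves $I'$ and no cycle can return. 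By Corollary \ref{pt} applied to $\fq$ we therefore have $\C[\Rep(\fq,\bv,\bw)]^{G_\bv}=\C$, so the base $\Rep(\fq,\bv,\bw)/\!/G_\bv$ is a point and $\pi$ is a projective morphism onto a point. Hence $\R_\th(\bv,\bw)$ is projective, completing the proof.

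The main obstacle I anticipate is purely bookkeeping rather than conceptual: correctly tracking the relation between $\R^s_{\wh\th}(\wh\bv)$ on the Crawley--Boevey side and $\R_\th(\bv,\bw)$ on the framing side, and making sure the quadratic-form identities absorb the vertex $\infty$ exactly so that the spurious $+1$ and the $\wh v_\infty$-contributions combine into the clean formula \eqref{dimR}. One must also be slightly careful that the stability parameter $\wh\th$ satisfies $\wh\bv\cdot\wh\th=0$ (which was arranged by the definition $\wh\th_\infty:=-\sum_{i\in I}\th_i v_i$), so that Corollary \ref{stabrep} applies verbatim. Everything else follows formally from results already established, so the heart of the argument is this transfer of the dimension count through the equivariant isomorphism.
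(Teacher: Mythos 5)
Your argument is correct, and for part (i) it is at bottom the same dimension count as the paper's: stable locus open and nonempty, group acts freely, so $\dim\R=\dim\Rep-\dim(\text{group})$. The paper runs this directly on $\Rep(\fq,\bv,\bw)$ with the group $G_\bv$, obtaining $\bw\cdot\bv+A_Q\bv\cdot\bv-\bv\cdot\bv$ in one line; you route it through the Crawley--Boevey quiver $Q^\bw$ and quote Corollary \ref{stabrep}(ii), which costs you the bookkeeping $\wh\bv\cdot\wh\bv=\bv\cdot\bv+1$ and $A_{Q^\bw}\wh\bv\cdot\wh\bv=A_Q\bv\cdot\bv+\bv\cdot\bw$ (both correct) but buys you nothing the direct computation does not already give, since Corollary \ref{stabrep}(ii) is itself proved by the same count with the group $G_{\wh\bv}/\gm\cong G_\bv$. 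One small misattribution: you invoke Lemma \ref{stabfq} to get ``semistable $=$ stable,'' but that lemma is stated only for $\th=\th^+$; you do not actually need it, since $\R^s_\th$ is a nonempty Zariski open subset of the irreducible variety $\R_\th$, so the two have the same dimension.

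For part (ii), which the paper does not prove, your strategy (projective morphism onto $\Spec\C[\Rep(\fq,\bv,\bw)]^{G_\bv}$, then show the base is a point) is the right one, but the appeal to Corollary \ref{pt} ``applied to $\fq$'' is imprecise: that corollary concerns invariants under the \emph{full} group of the quiver, which for $\fq$ is $G_\bv\times G_\bw$, whereas you are quotienting only by $G_\bv$, so a priori the invariant ring could be larger. The gap is easily closed: the diagonal $\gm\sset G_\bv$ acts on $(\bx,\bj)$ by fixing $\bx$ and rescaling $\bj$, so any $G_\bv$-invariant polynomial is independent of $\bj$ and descends to a $G_\bv$-invariant on $\Rep(Q,\bv)$, which is constant by Corollary \ref{pt} since $Q$ has no oriented cycles. (Alternatively, apply Corollary \ref{pt} to $Q^\bw$ --- which also has no oriented cycles, as $\infty$ is a sink --- and transfer through the equivariant isomorphism.) With that repair the argument is complete.
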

\proof[Sketch of proof of formula \eqref{dimR}]
Observe first that we have
$$\dim\Rep(\fq,\bv,\bw)=\bw\cdot\bv+A_Q\bv\cdot\bv.$$
Furthermore, one shows that, for $\th$ as in the
statement of the proposition.
the set $\Rep^s_\th(\fq,\bv,\bw)$ is Zariski open in
$\Rep(\fq,\bv,\bw)$. The $G_\bv$-action on
$\Rep^s_\th(\fq,\bv,\bw)$ being free, we compute
\begin{multline*}
\dim  \R_\th(\bv,\bw)=\dim\big(\Rep^s(\fq,\bv,\bw)/G_\bv\big)\\
=
\dim\Rep^s(\fq,\bv,\bw)-\dim G_\bv=
\dim\Rep(\fq,\bv,\bw)-\dim G_\bv\\=
\bw\cdot\bv+A_Q\bv\cdot\bv-\bv\cdot\bv.
\tag*{$\Box$}
\end{multline*}

\begin{examp}[Jordan quiver] Let $Q$ be a quiver with a single vertex
and a single edge-loop at this vertex.
For any positive integers $n,m\in\Z^I=\Z,$ we have $\Rep(Q,n)=\End \C^n$.
Further, we have
$$ \fq:\qquad
\xymatrix{
\bullet\ar@(ul,dl)\ar[r]^<>(0.5){\bj}&\bullet.
}
$$
Hence, we get $\Rep(\fq,n,m)=\End \C^n\times \Hom(\C^n,\C^m)$.

First, let $m=0$, so we are considering representations of
$Q$, not of $\fq$.
It is clear that, for $\th=\th^+=1$, any $n$-dimensional representation
of $Q$  is $\th$-semistable.
There are no stable representations unless $n\leq 1.$.

Let $\BS_n$ denote the Symmetric group and let
$\C^n/\BS_n$ be
the set of unordered
$n$-tuples of complex numbers viewed as an affine variety.
The
map sending an $n\times n$-matrix to the (unordered)
$n$-tuple of its eigenvalues yields an isomorphism
$\R(n)=\Rep(Q,n)/\!/GL_n\iso\C^n/\BS_n$.

Now, take $m=1$, so we get $\Rep(\fq,n,m)=\End \C^n\times
\Hom(\C^n,\C)$. A pair $(\bx,\bj)\in \End \C^n\times(\C^n)^*$
is semistable if and only if  the linear function $\bj: \C^n\to\C$ is a
{\em cyclic vector} for $\bx^*: (\C^n)^*\to (\C^n)^*$, the dual
operator. 

It is known that the $GL_n$-action on the
set $\Rep^s(\fq, n,1)$, of such pairs
$(\bx,\bj)$, is free. Moreover,
sending $(\bx,\bj)$ to  the unordered
$n$-tuple of the eigenvalues of $\bx$ yields
a bijection between the set of $GL_n$-orbits
in $\Rep^s(\fq, n,1)$ and $\C^n/\BS_n$.
Thus, in this case, we have isomorphisms
$\R(\fq, n,1)\cong\R^s(n)\cong\C^n/\BS_n$.
\end{examp}

\begin{examp}[Type $\textbf{A}$ Dynkin quiver]\label{dyn}
$$ Q:\
\xymatrix{
\ \stackrel{1}\bullet\ &\ \stackrel{2}\bullet\
\ar@<1ex>[l]&\ldots\ar@<1ex>[l]&\ 
\stackrel{n-2}\bullet\ \ar@<1ex>[l]&
\ \stackrel{n-1}\bullet\ \ar@<1ex>[l]&\ \stackrel{n}\bullet\ \ar@<1ex>[l]
}
$$

In this case, we have $I=\{1,2,\ldots,n\}$ and $\Rep(Q,\bv)/\!/GL_\bv=pt,$
since $Q$ has no oriented cycles. 

We let $\bv=(v_1,v_2,\ldots,v_n)$ and $\bw=(r,0,0,\ldots,0)$,
where $r>v_1>v_2>\ldots>v_n>0,$ is a strictly decreasing sequence
of positive integers. An element of $\Rep(\fq,\bv,\bw)$
has the form
 $(\bx,\bj)$,
where $\bx=(x_{i-1,i}: \C^{v_i}\to\C^{v_{i-1}})_{i=2,\ldots,n}$,
and  the only nontrivial component
of $\bj$ is a linear map $j:=\bj_1: \C^{v_1}\to\C^r$.

Observe that the collection of vector spaces
$$F_i:=\Image(j\ccirc \bx_{21}\ccirc \ldots\ccirc  \bx_{i-1,i})\sset \C^r,
\qquad
i=1,\ldots,n,
$$
form an $n$-step partial flag, $F_1\sset F_2\sset\ldots\sset F_n=\C^r$,
in $\C^r$.
Now, the stability condition amounts, in this case, to the {\em injectivity} of
 each of the maps
$j,\ \bx_{12},\ldots,\ \bx_{n-1,n}$.
It follows that  we have
$$\dim F_i=\dim\Image(j\ccirc \bx_{21}\ccirc \ldots\ccirc  \bx_{i-1,i})=
\dim \C^{v_i}=v_i.
$$

Let $\FF(n, W)$ be the
variety formed by $n$-step partial flags $F=(F_1\sset
F_2\sset\ldots\sset F_n=W),$ such that
$\dim F_i=v_i,\, \forall i\in I.$
In this way, for the corresponding
moduli space, one obtains an isomorphism 
$\R(\bv,\bw)\cong\FF(n, W).$ 
In particular, $\R(\bv,\bw)$ is a smooth projective variety,
in accordance with Proposition \ref{king3}(ii).
\end{examp}

\section{Hamiltonian reduction for representations of quivers}\label{cot_sec}
\subsection{Symplectic geometry} To motivate later constructions, we first
remind a few basic definitions.

Let $X$ be a smooth manifold,
 write $T^*X\to X$ for the  the cotangent bundle on $X$.
The
total
space $T^*X$, of the cotangent bundle, comes equipped
with a canonical symplectic structure, i.e. there is
a canonically defined nondegenerated closed 2-form
$\om$ on $T^*X$. 

In the case where $X$ is a vector space,
the only case we will use below, we have $T^*X=X\times X^*$,
where $X^*$ denotes the vector space dual to $X$.
The canonical symplectic structure on $X\times X^*$
is given, in this special case, by a constant 2-form defined by the formula
\beq{omega}
\om(x\times x^*,\ y\times y^*):=
\langle y^*,x\rangle-\langle x^*,y\rangle,\quad\forall x,y\in X,\ x^*,y^*\in X^*,
\eeq
where $\langle-,-\rangle$ stands for the canonical pairing between
a vector space and the dual vector space.

Now, let a Lie group $G$ act on  an arbitrary smooth manifold $X$.
Let $\g$ be the  Lie algebra of $G$.
Given $u\in \g$, write $\arr{u}$ for the
 vector field on $X$ corresponding to the
`infinitesimal $u$-action' on $X$, and let
$\arr{u}_x$ be the value of that vector field
at a point $x\in X$.

Associated with the $G$-action on $X$,  there is a
natural $G$-action on $T^*X$ and a
canonical {\em moment map}
\beq{moment}
\mu:\ T^*X \to\g^*,\quad \al_x\mto \mu(\al),
\quad\text{defined by}\en \g^*\ni\mu(\al_x): u\mto \langle\al,\arr{u}_x\rangle,
\eeq
where $\al_x\in T^*_xX$ stands for a covector at  a point $x\in X$.

The following properties of the map \eqref{moment} are straightforward  consequences of the
definitions.

\begin{prop}\label{moment_basic}
\vi If the group $G$ is {\em connected} then the moment map 
is $G$-equivariant, i.e. it intertwines 
the $G$-action on $T^*X$ and the coadjoint $G$-action on $\g^*$.
\vs

\vii Writing $T^*_YX$ for the conormal bundle of a submanifold $Y\sset
X$,
one has
\beq{conormal}
\mu\inv(0)=\bigcup_{Y\in X/G}\ T^*_Y(X).
\eeq
\end{prop}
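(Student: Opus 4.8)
The plan is to verify the two assertions directly from the definition \eqref{moment} of the moment map. For part \vi, recall that equivariance is the statement $\mu(g\cdot\al_x)=\Ad^*(g)\,\mu(\al_x)$ for all $g\in G$. The infinitesimal version of this, which is what one checks first, is that $\mu$ intertwines the Lie-algebra actions: for $u,w\in\g$ one has $\langle \mu(\al_x),[u,w]\rangle = $ (Poisson bracket of the corresponding Hamiltonians), equivalently the defining covector transforms under the adjoint action because the assignment $u\mapsto\arr{u}$ is a Lie-algebra antihomomorphism (or homomorphism, depending on sign conventions) from $\g$ to vector fields on $X$. Concretely, I would differentiate $\langle \al,\arr{u}_{g\inv x}\rangle$ along a one-parameter subgroup and use the identity $g_*\arr{u} = \arr{(\Ad(g)u)}$ relating the pushforward of the fundamental vector field to the adjoint action. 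Then connectedness of $G$ is exactly what promotes the infinitesimal equivariance to genuine $G$-equivariance: the set of $g$ satisfying the equivariance identity is a closed subgroup containing a neighborhood of the identity (by the infinitesimal computation), hence all of $G$ when $G$ is connected.

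For part \vii, the plan is to compute $\mu\inv(0)$ fiberwise over each $x\in X$. Fix $x$ and a covector $\al_x\in T^*_xX$. By \eqref{moment}, $\mu(\al_x)=0$ means precisely $\langle\al_x,\arr{u}_x\rangle=0$ for every $u\in\g$, i.e. $\al_x$ annihilates the subspace $T_x(G\cdot x)=\{\arr{u}_x \mid u\in\g\}\sset T_xX$, which is the tangent space to the $G$-orbit through $x$. But a covector annihilating the tangent space of a submanifold is exactly an element of the conormal bundle: if $Y=G\cdot x$ is the orbit (a $G$-invariant submanifold, so an element of $X/G$), then $\al_x\in(T_xY)^\perp = (T^*_YX)_x$. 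Running over all $x$ and collecting the orbits $Y$ yields \eqref{conormal}.

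The main obstacle is a careful treatment of the orbit-space indexing in \eqref{conormal}. The notation $\bigcup_{Y\in X/G}T^*_Y(X)$ tacitly treats the orbits $Y$ as submanifolds, which is unproblematic only where the orbits are locally closed submanifolds of constant rank; I would either restrict attention to a stratum on which orbits have locally constant dimension, or interpret $T_x(G\cdot x)$ directly as the image of the linear map $\g\to T_xX,\ u\mapsto\arr{u}_x$ (which is always well defined regardless of orbit smoothness) and phrase the conormal condition pointwise as $\al_x\perp\Image(u\mapsto\arr{u}_x)$. The latter pointwise reformulation is the cleanest and avoids any smoothness hypothesis on the orbits, so I would make that the substance of the argument and read \eqref{conormal} as its geometric repackaging. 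Both parts are then short once the single identity $g_*\arr{u}=\arr{(\Ad(g)u)}$ is in hand, so I would isolate and prove that identity at the outset.
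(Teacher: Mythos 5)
Your proof is correct; the paper in fact offers no argument at all (it introduces the proposition as a ``straightforward consequence of the definitions''), and your verification is exactly the intended one --- in particular your pointwise reading of the conormal condition in (ii), as $\al_x\perp\Image(u\mto\arr{u}_x)$, is the right way to make \eqref{conormal} precise where orbits fail to be submanifolds of constant dimension. One small simplification for (i): the identity $g_*\arr{u}=\arr{(\Ad(g)u)}$ holds for \emph{every} $g\in G$ (not merely infinitesimally), so equivariance of the canonical cotangent-bundle moment map follows for all $g$ from the one-line computation $\langle g\cdot\al_x,\arr{u}_{gx}\rangle=\langle\al_x,\arr{(\Ad(g\inv)u)}_x\rangle=\langle\Ad^*(g)\mu(\al_x),u\rangle$, and the detour through infinitesimal equivariance plus the open-closed-subgroup argument (which is valid, but is really only needed when $\mu$ is characterized abstractly up to constants rather than given by the explicit formula \eqref{moment}) can be skipped.
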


\noindent
Here, $X/G$ stands for the set  of
$G$-orbits on $X$. 

From the last formula one easily
derives the following result.

\begin{cor} Assume that the Lie group $G$ acts freely on $X$, and that
the orbit space $X/G$ is a well defined smooth manifold.
Then, \vs

\npb{The $G$-action on $T^*X$ is free, and the moment
map \eqref{moment} is a submersion.} \vs

\npb{For any  coadjoint orbit $\O\sset\g^*$, the orbit space
$\mu\inv(\O)/G$ has a natural structure of smooth symplectic manifold.} \vs

\npb{For $\O=\{0\},$ there is, in addition, a canonical symplectomorphism}
\beq{zer}T^*(X/G)\cong\mu\inv(0)/G.
\eeq
\end{cor}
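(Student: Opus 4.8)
The plan is to establish the three claimed statements in order, using the explicit description of the moment map given in \eqref{moment} together with Proposition~\ref{moment_basic}. First I would prove the submersion statement. Since $G$ acts freely on $X$ and $X/G$ is a smooth manifold, at each point $x\in X$ the infinitesimal action map $\g\to T_xX,\ u\mapsto \arr{u}_x$, is injective; its image is the tangent space to the $G$-orbit through $x$. Dualizing, the map $T^*_xX\to\g^*,\ \al_x\mapsto(u\mapsto\langle\al_x,\arr{u}_x\rangle)$, which is exactly the fibrewise restriction of $\mu$, is surjective. Hence $\mu$ is a submersion, and in particular $\mu\inv(0)$ is a smooth submanifold of $T^*X$. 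The freeness of the $G$-action on $T^*X$ is immediate, since the projection $T^*X\to X$ is $G$-equivariant and $G$ already acts freely on $X$.

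For the second statement, I would combine the submersion property with $G$-equivariance of $\mu$ (Proposition~\ref{moment_basic}(i), using that $G$ is connected). Given a coadjoint orbit $\O\sset\g^*$, equivariance makes $\mu\inv(\O)$ a $G$-stable smooth submanifold on which $G$ acts freely, so $\mu\inv(\O)/G$ is a smooth manifold. To equip it with a symplectic form I would invoke the standard Marsden--Weinstein reduction: one checks that the restriction of the ambient form $\om$ to $\mu\inv(\O)$ is basic for the $G$-action (its kernel along the $G$-directions matches the orbit directions), so it descends to a well-defined nondegenerate closed $2$-form on the quotient. This is a textbook symplectic-reduction argument and I would cite it rather than rederive the descent in detail.

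The third statement is the concrete payoff and where I would spend the most care. For $\O=\{0\}$, formula \eqref{conormal} identifies $\mu\inv(0)$ with the union of conormal bundles $\bigcup_{Y\in X/G}T^*_Y(X)$ to the $G$-orbits. Since the action is free and $X/G$ is smooth, the orbits are the fibres of the smooth submersion $q:X\to X/G$. The conormal bundle to an orbit is precisely the annihilator of the tangent space to that orbit, i.e. the pullback of a cotangent vector on the base $X/G$; thus there is a canonical identification $\mu\inv(0)=q^*\bigl(T^*(X/G)\bigr)$, the pullback bundle. Quotienting by the free $G$-action collapses each orbit to its point in $X/G$ and yields the isomorphism of vector bundles $\mu\inv(0)/G\cong T^*(X/G)$ covering $\id_{X/G}$. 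Finally I would verify that this identification is a symplectomorphism: the reduced form on $\mu\inv(0)/G$ from the previous step must agree with the canonical cotangent form on $T^*(X/G)$.

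The main obstacle is this last compatibility of symplectic forms. It is not hard conceptually, but it requires tracking the tautological $1$-form through the reduction: one shows that the restriction of the canonical $1$-form on $T^*X$ to $\mu\inv(0)$ is $G$-basic and descends to the tautological $1$-form of $T^*(X/G)$ under the bundle isomorphism above, whence taking $\d$ identifies the symplectic forms. The cleanest route is to work in local trivializations of the principal bundle $q:X\to X/G$, where $T^*X$ splits as $q^*T^*(X/G)\oplus(\text{fibre directions})$ and the vanishing of the moment map precisely kills the fibre covectors, so the tautological form reduces transparently. I would carry out this local computation to pin down the symplectomorphism and leave the remaining bookkeeping to the reader.
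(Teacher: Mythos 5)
Your argument is correct in outline and is exactly the standard route the paper has in mind: the text offers no proof of this corollary beyond the remark that it follows from formula \eqref{conormal}, and your treatment of the submersion property (injectivity of $u\mto\arr{u}_x$ dualizing to surjectivity of $\mu$ on each fiber), of the identification $\mu\inv(0)=\bigcup_Y T^*_Y(X)$ with the pullback of $T^*(X/G)$ along $q:X\to X/G$, and of the descent of the tautological $1$-form, is the intended one.

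One step as written would fail, though it is easily repaired. For a coadjoint orbit $\O$ of positive dimension, the restriction of $\om$ to $\mu\inv(\O)$ is \emph{not} basic for the $G$-action: at a point $\al$ with $\mu(\al)=\la$, the kernel of the restricted form is the tangent space to the $G_\la$-orbit (namely $T_\al\mu\inv(\O)\cap\bigl(T_\al\mu\inv(\O)\bigr)^{\perp_\om}=\Lie G_\la\cdot\al$), not to the full $G$-orbit, so the form you propose to descend to $\mu\inv(\O)/G$ would be degenerate there. The correct statement of orbit reduction either passes through the point reduction $\mu\inv(\O)/G\cong\mu\inv(\la)/G_\la$ or corrects $\om|_{\mu\inv(\O)}$ by the pullback of the Kirillov--Kostant form on $\O$. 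Since you in any case defer to the textbook Marsden--Weinstein theorem, the conclusion stands; only your one-line explanation of the descent is accurate solely for orbits fixed by the coadjoint action, such as $\O=\{0\}$, which is the case actually needed for \eqref{zer}.
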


Formula \eqref{zer} explains the importance of the zero fiber of the moment map.
Later on, we will consider quotients of $\mu\inv(0)$ by
the group action in situations where the group action on $X$ is no longer
free, so the naive orbit set $X/G$ can not be equipped with a reasonable
structure of a manifold. In those cases, various quotiets
of $\mu\inv(0)$ by $G$ involving stability conditions serve
as substitutes for the contangent bundle on a nonexisting 
space $X/G$.

The above discussion was in the framework of differential geometry,
where  `manifold' means a $C^\infty$-manifold. 
There are similar constructions and results in the algebraic geometric framework
where $G$ now stands for an  affine algebraic group and
$X$ stands for a $G$-variety.

 For any affine algebraic group $G$, the differential of a rational group homomorphism
$G\to\gm$ gives a linear function $\g\to\C$, i.e.
a point  $\la\in\g^*$. The points of $\g^*$ arising in this way
are automatically fixed by the coadjoint action of $G$ on $\g^*$.
If the group $G$ is connected, then the 
 corresponding fiber $\mu\inv(\la)$ is necessarily
a $G$-stable subvariety,
by Proposition \ref{moment_basic}(i). 
The varieties of that form play the
role of `twisted cotangent bundles' on $X/G$, cf. \cite{CG}, Proposition
1.4.14 and discussion
after it.
These varieties share many features
of the zero fiber of the moment map.

The following elementary result will be quite useful in applications
to quiver varieties.

\begin{lem}\label{smooth_fiber} {\em Let $\la\in\g^*$ be a fixed point of the
 coadjoint action of a connected group $G$, and let $G$ act on 
a manifold $X$ with an associated moment map $\mu$ as in \eqref{moment}.
Then, the following holds:}

A geometric point $\al\in\mu\inv(\la)$ is a smooth point  of the scheme 
theoretic fiber $\mu\inv(\la)$
 if and only if  $\al$
has finite isotropy in $G$. In such a case, the symplectic form on $T^*X$
induces
a  nondegenerate  bilinear form on the vector space
 $T_\al(T^*X)/\Lie G^\al$.
\end{lem}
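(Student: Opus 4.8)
The plan is to reduce everything to a single linear-algebra computation of the differential $d\mu_\alpha$ at $\alpha$, using the defining property of the moment map. Recall that for each $u\in\g$ the function $\langle\mu,u\rangle$ on $T^*X$ satisfies $d\langle\mu,u\rangle=\iota_{\vec u}\,\om$, where $\vec u$ is the vector field generated by $u$; evaluating at $\alpha$ gives, for every $v\in T_\alpha(T^*X)$, the identity $\langle d\mu_\alpha(v),u\rangle=\om_\alpha(\vec u_\alpha,v)$. Writing $a\colon\g\to T_\alpha(T^*X),\ u\mapsto\vec u_\alpha$ for the infinitesimal action map and $\om^\flat\colon T_\alpha(T^*X)\iso T^*_\alpha(T^*X)$ for the isomorphism furnished by nondegeneracy of $\om$, this identity says precisely that $d\mu_\alpha=a^{*}\circ\om^\flat$, with $a^{*}$ the transpose of $a$. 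First I would record the two immediate consequences: $\Ker a=\{u:\vec u_\alpha=0\}=\Lie G^\alpha$ is the isotropy Lie algebra, and $\Image a=T_\alpha(G\cdot\alpha)$ is the tangent space to the orbit.

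Since $\om^\flat$ is an isomorphism, the factorization gives $\Image d\mu_\alpha=\Image a^{*}=\Ann(\Ker a)=\Ann(\Lie G^\alpha)$ and, dually, $\Ker d\mu_\alpha=(\om^\flat)^{-1}\big(\Ann(\Image a)\big)=T_\alpha(G\cdot\alpha)^{\perp}$, the $\om$-orthogonal of the orbit tangent space. Hence $d\mu_\alpha$ is surjective if and only if $\Lie G^\alpha=0$, and since $G^\alpha$ is an algebraic group over $\C$ this is equivalent to $G^\alpha$ being finite. This already yields one direction: if the isotropy is finite then $d\mu_\alpha$ is onto, so $\mu$ is a submersion at $\alpha$ and the scheme-theoretic fiber $\mu^{-1}(\la)$ is smooth at $\alpha$, of dimension $\dim T^*X-\dim\g$.

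For the converse — a smooth point forces finite isotropy — I would argue as follows, and this is the step I expect to be the main obstacle. Since $\la$ is fixed by the coadjoint action and $G$ is connected, Proposition \ref{moment_basic}(i) gives $G\cdot\alpha\sset\mu^{-1}(\la)$, so $T_\alpha(G\cdot\alpha)\sset T_\alpha\mu^{-1}(\la)=\Ker d\mu_\alpha$; combined with $\Ker d\mu_\alpha=T_\alpha(G\cdot\alpha)^{\perp}$ this shows the orbit is isotropic. The fiber is cut out in $T^*X$ by the $\dim\g$ equations $\langle\mu-\la,u_i\rangle$, for $u_i$ a basis of $\g$, so $\dim_\alpha\mu^{-1}(\la)\ge\dim T^*X-\dim\g$ always, whereas at a smooth point $\dim_\alpha\mu^{-1}(\la)=\dim\Ker d\mu_\alpha=\dim T^*X-\dim\g+\dim\Lie G^\alpha$. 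The delicate point is to exclude a smooth fiber of dimension strictly larger than the expected $\dim T^*X-\dim\g$: here I would invoke upper-semicontinuity of the isotropy dimension $\beta\mapsto\dim\Lie G^\beta$ (for an effective action the generic nearby fiber point has trivial isotropy, hence local fiber dimension $\dim T^*X-\dim\g$) together with the fact that a scheme smooth at $\alpha$ is locally pure-dimensional there; comparing the two dimension counts then forces $\dim\Lie G^\alpha=0$. Equivalently, the differentials $\iota_{\vec{u_i}}\om$ are independent at $\alpha$ exactly when $a$ is injective, so at a smooth point of the complete-intersection fiber the Jacobian must have full rank.

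Finally, for the induced form I would compute the radical of $\om_\alpha$ restricted to $T_\alpha\mu^{-1}(\la)=\Ker d\mu_\alpha$. Using the double-orthogonal identity $(\Ker d\mu_\alpha)^{\perp}=T_\alpha(G\cdot\alpha)$ and the isotropy of the orbit established above, this radical is $\Ker d\mu_\alpha\cap(\Ker d\mu_\alpha)^{\perp}=T_\alpha(G\cdot\alpha)^{\perp}\cap T_\alpha(G\cdot\alpha)=T_\alpha(G\cdot\alpha)$. Therefore $\om$ descends to a nondegenerate bilinear form on $T_\alpha\mu^{-1}(\la)/T_\alpha(G\cdot\alpha)$, the reduced symplectic space; in the finite-isotropy case $T_\alpha(G\cdot\alpha)$ has dimension $\dim\g$ and this is exactly the asserted nondegenerate form. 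The only genuinely nontrivial ingredient in the whole argument is the dimension control in the converse; everything else is a formal consequence of the moment-map identity $d\langle\mu,u\rangle=\iota_{\vec u}\om$ and nondegeneracy of $\om$.
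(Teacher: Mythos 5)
Your proof follows the paper's own route: everything is extracted from the single identity $\langle d_\al\mu(v),u\rangle=\om(\arr{u}_\al,v)$, which is exactly the paper's equation \eqref{seq}, and the paper likewise turns this into the chain ``finite isotropy $\iff$ $\Lie G^\al=0$ $\iff$ $d_\al\mu$ surjective $\iff$ $\al$ smooth,'' leaving the nondegeneracy statement to ``similar arguments'' that you have simply written out (the radical of $\om$ on $\Ker d_\al\mu=T_\al(G\cdot\al)^{\perp}$ is the orbit direction, using that the orbit is isotropic because it lies in the fiber). The one place you diverge is the converse implication, smooth point $\Rightarrow$ finite isotropy: the paper disposes of it by fiat, reading smoothness of the scheme-theoretic fiber as the Jacobian criterion, whereas you correctly flag it as the delicate step and try to close it with a dimension count. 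Be aware that the auxiliary claim you invoke there --- that for an effective action the generic nearby point of the fiber has trivial isotropy --- is not automatic, and indeed the biconditional fails as literally stated when the action has a positive-dimensional kernel (then $\mu$ can be constant, every point of the fiber is smooth, and no isotropy group is finite); this is precisely the situation of the diagonal $\gm\sset G_\bv$ in the applications, where the lemma is in practice applied to $G_\bv/\gm$. So your patch does not fully repair the converse, but neither does the paper attempt to, and only the direction you prove completely (finite isotropy $\Rightarrow$ surjectivity of $d_\al\mu$ $\Rightarrow$ smoothness, plus the induced nondegenerate form on $\Ker d_\al\mu/T_\al(G\cdot\al)$) is ever used later.
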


\begin{proof} Put $M:=T^*X$, for short,
let $\al\in M$, and write  $G^\al\sset G$
for the isotropy group of the point $\al$.
Further, let $d_\al\mu:\ T_\al M\to\g^*$ stand for the differential of
the moment map
$\mu$ at the point $\al$.

Now let  $u\in\g$
and write $\arr{u}_\al\in T_\al M$ for the tangent vector
corresponding to the infinitesimal $u$-action on $M$.
Also, one may view $u\in\g$ as a linear function on $\g^*$.
The crucial observation, that follows directly from the definition
of the moment map, cf. \eqref{moment},  is that one has 
\beq{seq}\langle d_\al\mu(v),\ u\rangle=d_\al(u\ccirc\mu)(v)=
\om(\arr{u}_\al, v),\quad\forall u\in\g,\ v\in T^*_\al M.
\eeq

Using this, we deduce
\begin{align*}
\text{$G^\al$ is finite}\quad&\Longleftrightarrow\quad
\Lie G^\al=0\\
&\Longleftrightarrow\quad
\text{$\arr{u}_\al\neq0$ for any $u\neq
0$}\\
&\Longleftrightarrow\quad
\text{There is no $u\in\g,\ u\neq0,$ such that $\langle d_\al\mu(v),\ u\rangle=\om(\arr{u}_\al, v)=0$}\\
&\Longleftrightarrow\quad
\text{$d_\al\mu$ is surjective}\\
&\Longleftrightarrow\quad
\text{$\al$ is a smooth point.}
\end{align*}

This proves the first statement of the lemma. 
The  second
statement easily follows from \eqref{seq}
by similar arguments. We leave details to the
reader.
\end{proof}

\subsection{} Fix a finite set $I$ and a dimension vector
$\bv=(v_i)_{i\in I}\in\Z^I$.

From now on, we specialize to the case where the algebraic group $G$ is a product of general
linear groups, ie. is a group of the form $G_\bv=\prod_{i\in I}\ GL(v_i).$ Thus, we have 
$\g_\bv:=\Lie G_\bv=\oplus_{i\in I}\ \gl(v_i).$
The center of each summand $\gl(v_i)$ is a 1-dimensional
Lie algebra of scalar matrices.
Therefore, the center of $\g_\bv$ may be identified with the vector space
$\C^I$. 

Observe further that any Lie algebra homomorphism
$\g_\bv\to\C$ has the form
$\bx=(x_i)\ii\mto \sum\ii \la_i\cdot\Tr x_i$.
We deduce that the fixed point set of the coadjoint $G_\bv$-action
on $\g_\bv^*$ is a vector space  $\C^I\sset \g_\bv^*$.
Explicitly, an element
$\la=(\la_i)\ii\in \C^I$ corresponds to the point
in $\g^*_\bv$ given by the linear function
$\bx\mto \la\cdot\bx=  \sum\ii \la_i\cdot\Tr x_i$, on $\g_\bv$.

\subsection{The double $\dq$} Given a quiver $Q$, let $\dq= Q\sqcup Q^\op$
be the {\em double} of $Q$, the quiver that has
the same vertex set as $Q$ and whose set of edges is 
a disjoint union of the sets of edges of $Q$ and of $Q^\op$,
an opposite quiver.
Thus, for any edge $x\in Q$, there is a reverse edge
$x^*\in Q^\op\sset\dq$.

\begin{defn}\label{preproj} For any $\la=(\la_i)\ii\in\C^I$,
let $\Pi_\la=\Pi_\la(\dq)$ be a quotient of the
path algebra  $\C\dq$, of the double quiver $\dq$, by the two-sided ideal
generated by the following element
$$ \sum_{x\in Q}\ (xx^*-x^*x) -
\sum_{i\in I}\ \la_i\cd1_i\ \in\,\C \dq.
$$
Thus, $\Pi_\la(\dq)$ is an associative algebra called 
{\em preprojective algebra} of $Q$ with parameter $\la$.
\end{defn}

The defining relation for the  preprojective algebra
may be rewritten more explicitly
as a collection of relations,
one for each vertex $i\in I$, as follows:
$$
\sum_{\{x\in Q:\ \text{head}(x)=i\}} xx^*\en-\en\sum_{\{x\in Q:\
\text{tail}(x)=i\}}x^*x
\en=\en\la_i\cd1_i,
\qquad i\in I.
$$

Clearly, one has
$\Rep(\dq,\bv)\cong\Rep(Q,\bv)\times \Rep(Q^\op,\bv)$.
We will write a point of $\Rep(\dq,\bv)$
as a pair $(\bx,\by)\in \Rep(Q,\bv)\times \Rep(Q^\op,\bv)$.

Recall that, for any pair, $E,F$, of finite dimensional vector spaces,
there is a canonical perfect pairing
$$\Hom(E,F)\times\Hom(F,E)\to\C,\quad
f\times f' \mto \Tr(f\ccirc f')=\Tr(f'\ccirc f).
$$
Using this pairing, one obtains  canonical
isomorphisms of vector spaces
\beq{pairing}\Rep(Q^\op,\bv)\cong\Rep(Q,\bv)^*,
\quad\text{resp.}
\quad
\g_\bv\cong\g_\bv^*.
\eeq

We deduce the following isomorphisms
\beq{dq}
\Rep(\dq,\bv)
\cong\Rep(Q,\bv)\times 
\Rep(Q,\bv)^*\cong T^*\big(\Rep(Q,\bv)\big).
\eeq

The  natural $G_\bv$-action on $\Rep(\dq,\bv)$ corresponds,
via the  isomorphisms above, to the 
 $G_\bv$-action on the cotangent bundle induced by the
 $G_\bv$-action on $\Rep(Q,\bv)$.
Associated with the latter action, there is a moment map $\mu$.
It is given by the following explicit formula,
a special case of formula \eqref{moment}:
$$
\mu:\ 
\Rep(\dq,\bv)= T^*\big(\Rep(Q,\bv)\big)\too
\g_\bv^*=\g_\bv,\quad
(\bx,\by)\mto\sum\ (x\ccirc y-y\ccirc x)\ \in\ \g_\bv.
$$

We explain the above formula in the simplest case of
the Jordan quiver.
 
\begin{examp}\label{comm_examp1} Let $Q$ be a quiver with one vertex and one edge-loop.
Then, $\dq$ is a quiver with a single vertex and two edge-loops at that vertex.
Thus, given a positive
 integer $\bv\in\Z^I=\Z$, we have $\Rep(\dq,\bv)=\gl_\bv\times\gl_\bv$.
The action of the group $G_\bv$ on the space
$\Rep(\dq,\bv)$ becomes, in this case, the
$\Ad GL_\bv$-diagonal action on pairs of $(\bv\times\bv)$-matrices.

Further, the isomorphism  $\g_\bv\iso\g_\bv^*,$
resp. $\Rep(Q^\op,\bv)\iso\Rep(Q,\bv)^*,$  sends
a matrix $\bx\in \g_\bv$ to a linear function $\by\mto\Tr(\bx\cdot\by).$
Hence, in the notation of \S\ref{cot_sec},
for any $u\in \gl_\bv$, we have $\arr{u}=\ad u$.

Now, according to  definitions, see formula \eqref{moment},  the moment map 
sends a point $(\bx,\by)\in T^*(\gl_\bv)=\gl_\bv\times\gl_\bv$
to a linear function 
$$\mu(\bx,\by):\
\gl_\bv\to\C,\quad u\mto \langle \by,\  \arr{u}_\bx\rangle=
\langle \by,\  \ad u(\bx)\rangle=\Tr\big(\by\cdot [u,\bx]\big)=
\Tr\big([\bx,\by]\cdot u\big).
$$

We see that the linear function 
$\mu(\bx,\by)\in\gl_\bv^*$ corresponds, under
the isomorphism $\gl_\bv^*\iso\gl_\bv$, to the
matrix $[\bx,\by]$. 
We conclude
that the moment map for the $\Ad GL_\bv$-diagonal action on 
$T^*(\gl_\bv)=\gl_\bv\times\gl_\bv$ has the following final form
$$
\mu:\
\gl_\bv\times\gl_\bv\too\gl_\bv,\quad
\bx\times\by\mto [\bx,\by].
$$
This is nothing but the general formula \eqref{dq} in 
a special case of the Jordan quiver $Q$.\qq
\end{examp}

In general, it is clear from Definition \ref{preproj} that,  inside
 $\Rep(\bv,\dq)$, one has an
equality:
\beq{pirep}
\Rep(\Pi_\la, \bv)=\mu\inv(\la):=\{(\bx,\by)\in \Rep(\dq,\bv)\mid
[\bx,\by]=\la\},\qquad\la\in\C^I.
\eeq
This is, in fact, an isomorphism of schemes. 

\begin{rem}
Observe  that, for any 
$\Pi_\la$-representation $V$ of dimension $\bv$, in view
of the defining relation for the preprojective algebra, one 
must have
$$
\la\cdot\bv=\sum_{i\in I}\ \la_i\cdot\Tr_V(1_i)\\
=\Tr_V\left(\sum_{i\in I}\ \la_i1_i\right)=
\Tr_V\left(\sum_{x\in Q}\ (xx^*-x^*x)\right)
=0,
$$
where in the last equation we have used that the trace of any
commutator
vanishes. We deduce that the algebra $\Pi_\la$ has no
$\bv$-dimensional representations unless $\la\cdot\bv=0.$

This is consistent with \eqref{pirep}. Indeed,
the group $\gm\sset G_\bv$ acts trivially on 
$\Rep(\dq,\bv)$, hence
the image of the moment map $\mu$ is contained
in the hyperplane $(\Lie\gm)^\perp\sset\g^*_\bv$.
Therefore, the fiber $\mu\inv(\la)$ over a point
$\la\in \C^I\sset\g^*_\bv$ is empty  unless
we have $\la\cdot\bv=0$.
\erem

\begin{rem} It is important to
emphasize that, up to a relabelling $\la\mto\la'$ of parameters, one
has:

{\textbf{The quiver $\dq$, hence also the scheme $\mu\inv(\la)$ and the algebra $\Pi_\la(\dq)$,
depend only on the underlying graph of $Q$,
and do not depend on the orientation of the quiver $Q$.}}
\end{rem}

\subsection{}\label{structure}
The cotangent bundle projection
$p:\ T^*\big(\Rep(Q,\bv)\big)\to\Rep(Q,\bv)$
may be clearly identified with the natural projection
to $\Rep(\dq,\bv)\to\Rep(Q,\bv),\ (\bx,\by)\mto\bx$,
cf. \eqref{dq}. 
Restricting
the latter projection  to a fiber of the moment map
one obtains a map $p_\la:\ \Rep(\Pi_\la,\bv)=\mu\inv(\la)\to
\Rep(Q,\bv)$.

Observe further that
 the  composite $\C Q \into \C \dq \onto \Pi_\la$
 yields an algebra imbedding $\C Q \into \Pi_\la$.
In terms of the latter imbedding, the map
$p_\la$ amounts to restricting representations
of the algebra $\Pi_\la$ to the subalgebra $\C Q$.
Thus, we obtain, cf. \cite[Lemma 4.2]{CBH},

\begin{prop}\label{extend} For any  $\bx\in \Rep(Q,\bv),$
the set $p_\la\inv(X)$ is canonically identified
with the set of extensions 
of $\bx$ to a $\Pi_\la$-module $(\bx,\by)\in \Rep(\Pi_\la,\bv)$.\qed
\end{prop}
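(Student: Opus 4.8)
The plan is to observe that the assertion is, in essence, an unraveling of the scheme-theoretic identification $\Rep(\Pi_\la,\bv)=\mu\inv(\la)$ recorded in \eqref{pirep}, combined with the description of $p_\la$ as the cotangent bundle projection $(\bx,\by)\mapsto\bx$. The one point requiring care is to reconcile the two a priori different descriptions of $p_\la$: as restriction of $\Pi_\la$-modules along the embedding $\C Q\into\Pi_\la$, and as the geometric projection $(\bx,\by)\mapsto\bx$ of \eqref{dq}.

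First I would spell out what it means to extend a representation. Since $\C\dq=T_{\C I}(\C E_{\dq})$ is the tensor algebra on the edge set $E_{\dq}$ of $\dq=Q\sqcup Q^\op$, a $\C\dq$-module structure on the $I$-graded space $V$ (with each $1_i$ acting as the projector onto $V_i$) is precisely the assignment of an operator to each arrow of $\dq$, i.e. a pair $(\bx',\by)\in\Rep(Q,\bv)\times\Rep(Q^\op,\bv)=\Rep(\dq,\bv)$. Restriction along $\C Q\into\C\dq$ discards the operators attached to the reverse arrows $x^*$, hence records only $\bx'$. Consequently, a $\C\dq$-module whose restriction to $\C Q$ equals the given $\bx$ is exactly a choice of $\by$, packaged as the pair $(\bx,\by)$.

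Next I would pass from $\C\dq$ to $\Pi_\la$. A $\C\dq$-module descends to a $\Pi_\la$-module precisely when the generator $\sum_{x\in Q}(xx^*-x^*x)-\sum_{i\in I}\la_i\cd 1_i$ of the defining two-sided ideal acts as zero on $V$; by \eqref{pirep} this condition is exactly $(\bx,\by)\in\mu\inv(\la)=\Rep(\Pi_\la,\bv)$. Since the composite $\C Q\into\C\dq\onto\Pi_\la$ computes restriction to $\C Q$ as above, the map $p_\la$ indeed sends $(\bx,\by)\mapsto\bx$, in agreement with the cotangent projection of \eqref{dq}. Assembling the two steps: the extensions of $\bx$ to a $\Pi_\la$-module structure on $V$ are in bijection with those $\by$ for which $(\bx,\by)\in\mu\inv(\la)$, and these pairs are exactly the points of the fiber $p_\la\inv(\bx)$; the correspondence $\by\leftrightarrow(\bx,\by)$ is manifestly canonical.

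The argument has no genuine obstacle — it is definitional — so the only diligence required is notational: to verify that the restriction functor along $\C Q\into\Pi_\la$ coincides with the geometric projection, and (so that the word \emph{extension} is literally justified) to recall that $\C Q\into\Pi_\la$ is an honest embedding, as noted just before the statement. Both are immediate from the presentation of $\Pi_\la$ as a quotient of the tensor algebra $\C\dq$.
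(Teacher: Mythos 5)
Your proposal is correct and follows essentially the same route as the paper: the paper states the proposition with a \qed precisely because it is the content of the two observations immediately preceding it, namely that $p_\la$ is the restriction $(\bx,\by)\mto\bx$ of the cotangent projection to $\mu\inv(\la)=\Rep(\Pi_\la,\bv)$ and that, under the composite $\C Q\into\C\dq\onto\Pi_\la$, this is restriction of $\Pi_\la$-modules to $\C Q$. Your write-up merely makes explicit the unraveling (a $\C\dq$-structure extending $\bx$ is a choice of $\by$, and it descends to $\Pi_\la$ exactly when the moment map equation holds), which is exactly the intended argument.
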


In some important cases, one can say quite a bit about the structure of the
variety \eqref{pirep}. To explain this, we need to introduce some
notation.

Let $R_Q\sset \Z^I$ be the set of roots for $Q,$ as defined eg. in  \cite{CB1}, p. 262.
Given $\la\in\C^I,$ we put $R^+_\la:=\{\al\in R_Q\mid \al\geq 0\ \&\
\la\cdot\al=0\}$ where, in general, we 
write $\bv\geq\bv'$ whenever
$\bv-\bv'\in\Z^I_{\geq 0}$.
Further, for any $\bv\in\Z^I$, we define
$${\mathsf{p}}(\bv):=1+A_Q\bv\cdot\bv-\bv\cdot\bv.$$

Recall the hyperplane $(\Lie\gm)^\perp\sset\g^*_\bv$
that corresponds to the diagonal imbedding $\gm\sset G_\bv$.
One has the the following result.

\begin{thm}\label{gg} Fix $\la\in\C^I$. 
Let $\bv$ be a dimension vector such that $\la\cdot\bv=0$ and,
for any decomposition $\bv=\al_1+\ldots+\al_r,\
\al_j\in R^+_\la$, the following
inequality holds
\beq{inequality}
{\mathsf{p}}(\bv)\geq {\mathsf{p}}(\al_1)+\ldots+{\mathsf{p}}(\al_r).
\eeq

Then, we have\vs

\vi The moment map $\mu:\Rep(\dq,\bv)\to(\Lie\gm)^\perp$ is flat
and the scheme $\Rep(\Pi_\la,\bv)$, in \eqref{pirep}, is a complete intersection in
$\Rep(\dq,\bv)$.\vs

\vii The irrducible components of  $\Rep(\Pi_\la,\bv)$ are in one-to-one
correspondence with decompositions $\bv=\al_1+\ldots+\al_r,\
\al_j\in R^+_\la,$ such that the corresponding inequality \eqref{inequality}
is an equality. Each irreducible component has dimension
$1+2A_Q\bv\cdot\bv-\bv\cdot\bv$.\vs

\viii If the inequality  \eqref{inequality} is strict for
any $\bv=\al_1+\ldots+\al_r,\
\al_j\in R^+_\la,$ with $r>1$, then the scheme  $\Rep(\Pi_\la,\bv)$
is reduced and irreducible, moreover, the general
point in this scheme is a simple representation of the
algebra $\Pi_\la$.
\end{thm}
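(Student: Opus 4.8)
The plan is to prove Theorem \ref{gg} following the strategy of Crawley-Boevey \cite{CB1}, whose deformed preprojective algebras provide exactly the framework needed here. The three parts are logically linked: the dimension count in (ii) rests on the complete-intersection statement (i), and (iii) is essentially the extremal case of (ii) together with an openness argument for simplicity. I would prove them in order, but with the understanding that the heart of the matter is establishing the correct dimension of $\Rep(\Pi_\la,\bv)=\mu\inv(\la)$.

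\textbf{Part (i): flatness and complete intersection.} First I would recall that $\mu:\Rep(\dq,\bv)\to(\Lie\gm)^\perp$ is a morphism from a smooth affine variety of dimension $2A_Q\bv\cdot\bv$ to an affine space of dimension $\bv\cdot\bv-1$ (the target being $(\Lie\gm)^\perp$, which accounts for the trace-zero constraint forced by $\la\cdot\bv=0$). Since the source is Cohen-Macaulay, the standard criterion says $\mu$ is flat precisely when every nonempty fiber has the expected dimension $2A_Q\bv\cdot\bv-(\bv\cdot\bv-1)=1+2A_Q\bv\cdot\bv-\bv\cdot\bv$, and in that case each fiber is a complete intersection. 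So the entire content of (i) reduces to the upper bound $\dim\mu\inv(\la)\leq 1+2A_Q\bv\cdot\bv-\bv\cdot\bv$. The plan here is to bound the dimension of the fiber by stratifying $\Rep(\Pi_\la,\bv)$ according to the isomorphism type of the representation, or more precisely by the dimensions of $\Ext$ and $\Hom$ spaces for the preprojective algebra, and to use Crawley-Boevey's key homological identity for $\Pi_\la$: for $\Pi_\la$-modules $M,N$ of dimension vectors $\al,\be$, one has $\dim\Hom(M,N)-\dim\Ext^1(M,N)+\dim\Hom(N,M)=2(\al,\be)_Q$ (the symmetrized Euler form), reflecting the Calabi-Yau-2 nature of $\Pi_\la$. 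This is precisely where the combinatorial inequality \eqref{inequality} enters: decomposing a general point of a stratum into its $\Pi_\la$-composition factors, whose dimension vectors must lie in $R^+_\la$, the inequality $\mathsf{p}(\bv)\geq\sum_j\mathsf{p}(\al_j)$ is exactly the condition guaranteeing that no stratum of decomposable representations exceeds the expected dimension.

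\textbf{Parts (ii) and (iii).} Granting (i), each irreducible component of the complete intersection $\Rep(\Pi_\la,\bv)$ has dimension $1+2A_Q\bv\cdot\bv-\bv\cdot\bv$, so I would identify the components via the same stratification: a stratum corresponding to a decomposition type $\bv=\al_1+\ldots+\al_r$ with $\al_j\in R^+_\la$ contributes a component of full dimension exactly when \eqref{inequality} is an equality, and this sets up the claimed bijection. For (iii), when \eqref{inequality} is strict for all proper decompositions ($r>1$), every stratum of non-simple (hence decomposable up to the $\Pi_\la$-Jordan-H\"older structure) representations has strictly smaller dimension, so the locus $\Rep(\Pi_\la,\bv)^{\text{simple}}$ of simple $\Pi_\la$-modules is a nonempty Zariski-open dense subset of a single component; its density forces irreducibility, and being a complete intersection that is generically reduced (the smooth simple locus, by Lemma \ref{smooth_fiber}, since simple modules have isotropy exactly $\gm$, hence are smooth points of $\mu\inv(\la)$) forces the whole scheme to be reduced.

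\textbf{Main obstacle.} The crux is the dimension estimate underlying (i): showing that the locus of representations whose socle/composition structure forces a genuine decomposition $\bv=\sum\al_j$ has dimension at most $1+2A_Q\bv\cdot\bv-\bv\cdot\bv-(\text{defect from \eqref{inequality}})$. This requires carefully counting, for each decomposition type, the dimension of the space of representations together with the dimension of the orbit, and balancing these against the $\Ext^1$ contributions via the Crawley-Boevey formula. The delicate point is that one must work with $R^+_\la$-decompositions (roots orthogonal to $\la$) rather than arbitrary ones, and verify that the generic representation in each extremal stratum actually exists and has the predicted stabilizer; this existence is itself a nontrivial root-theoretic input that I would import from Crawley-Boevey's analysis of which dimension vectors support simple representations of $\Pi_\la$.
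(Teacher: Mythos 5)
The paper does not actually prove this theorem: it states it with attributions only --- parts (i) and (iii) to Crawley-Boevey \cite{CB1}, Theorems 1.1 and 1.2, and part (ii) to \cite{GG}, Theorem 3.1 --- so your proposal can only be measured against those sources, not against an argument in the text. Measured that way, your skeleton is the right one: miracle flatness (Cohen--Macaulay source, regular target) does reduce (i) to an upper bound on fiber dimension; that bound is obtained by a stratification together with a homological count in which the inequality \eqref{inequality} enters; (ii) then reads off the top-dimensional strata; and your step ``generically reduced plus complete intersection implies reduced'' in (iii) is the standard Serre-criterion argument and is correct (with the caveat that smoothness of the simple locus comes from Lemma \ref{smooth_fiber} applied to the $G_\bv/\gm$-action, since the isotropy of a simple module is $\gm$ itself, which is not finite).

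Two genuine gaps remain. First, bounding only $\dim\mu\inv(\la)$ yields, via miracle flatness, flatness of $\mu$ at the points of $\mu\inv(\la)$ and the complete-intersection property of that one fiber; it does not yield flatness of $\mu$ over all of $(\Lie\gm)^\perp$ as stated. One must either run the estimate for every fiber or reduce to the zero fiber using the scaling action $\la\mto t\la$ together with upper semicontinuity of fiber dimension (in \cite{CB1} the flatness criterion is accordingly phrased in terms of decompositions into arbitrary positive roots, not only those in $R^+_\la$). Second, the entire content of the theorem sits in the dimension estimate you defer to ``the crux,'' and your chosen route has a circularity: stratifying $\Rep(\Pi_\la,\bv)$ by $\Pi_\la$-composition type requires knowing the dimension of the locus of simple $\Pi_\la$-modules of each dimension $\al_j$, which is precisely part (iii) for smaller dimension vectors; the argument must therefore be organized as an induction on $\bv$, with the determination of the set of dimension vectors supporting simple $\Pi_\la$-modules as input. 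Crawley-Boevey's own proof avoids this by stratifying the base $\Rep(Q,\bv)$ by representation type of the underlying $\C Q$-module and computing the fibers of $p_\la$ (cf. Proposition \ref{extend}) as cosets of dimension $A_Q\bv\cdot\bv-\bv\cdot\bv+\dim\End_{\C Q}(\bx)$ when nonempty. Also, your homological identity should read $\dim\Hom(M,N)+\dim\Hom(N,M)-\dim\Ext^1(M,N)=C_Q\al\cdot\be$ (the symmetrized form once, not twice). As a plan the proposal is sound and faithful to the cited approach; as a proof it is not yet one.
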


Here, parts (i) and (iii) are due to Crawley-Boevey,
\cite{CB1}, Theorems 1.1 and 1.2. Part (ii) is
\cite{GG}, Theorem 3.1.

\subsection{Hamiltonian reduction}
For any $\la\in\C^I$ such that $\la\cdot\bv=0$, the fiber $\mu\inv(\la)$
is a nonempty closed $G_\bv$-stable subscheme of $\Rep(\dq,\bv)$,
not necessarily reduced, in general. Thus, given $\th\in\BR^I$ 
such that $\th\cdot\bv=0$,
one may consider the following GIT quotient
\beq{M}
\M_{\la,\th}(\bv):=\mu\inv(\la)/\!/_{\chi_\th}G_\bv=
\Rep^{ss}(\Pi_\la,\bv)/\text{$S$-equivalence},
\qquad\forall\, \la\cdot\bv=\th\cdot\bv=0.
\eeq

\begin{rem}\label{hyperkahler} One may identify $\C^I\times\BR^I=\BR^3\o \BR^I$ and view
a pair $(\la,\th)\in \C^I\times\BR^I$ as a point in
$\BR^3\o \BR^I$. Further, given
$\bv=(v_i)\ii$, view $\C^{v_i}$ as a hermitian vector space
with respect to the standard euclidean (hermitian) inner product.
These inner products induce
hermitian inner products on the spaces
$\Hom(\C^{v_i},\C^{v_j})$.
The resulting 
 hermitian inner product on
$\Rep(\dq,\bv)$
combined with the ($\C$-bilinear) symplectic 2-form, see \eqref{omega},
give  $\Rep(\dq,\bv)$ the structure of a 
hyper-K\"ahler vector space.

One can show, cf. \cite{Kro} for a special case,
that the Hamiltonian
reduction $\mu\inv(\la)/\!/_{\chi_\th}G_\bv$ may be
identified with a {\em hyper-K\"ahler reduction} of
 $\Rep(\dq,\bv)$
with respect to the maximal compact subgroup
of the complex algebraic group $G_\bv$ formed by the
elements which  preserve
the metric.
\end{rem}

To proceed further, we need to introduce  the {\em Cartan matrix} of the underlying graph of $Q$
defined as follows $C_Q:=2\Id-A_{\dq}.$ This is a symmetric Cartan
matrix in the sense of Kac, \cite{Ka}, provided $Q$ has no edge loops.

\begin{cor}\label{U/G} \vi Any simple $\Pi_\la$-module of dimension
$\bv$ corresponds to a point in $\mu\inv(\la)^{\oper{reg}}$, the smooth locus of
the  scheme \eqref{pirep}\vs

\vii The group $G_\bv/\gm$  acts freely on $\mu\inv(\la)^{\oper{reg}}$.\vs

\viii Let $T_{G_\bv\al}(\mu\inv(\la))$ be the normal
space, at $\al \in \mu\inv(\la)^{\oper{reg}}$,
 to the orbit $G_\bv\al\sset \mu\inv(\la)^{\oper{reg}}$.\vs

Then,
the vector space $T_{G_\bv\al}(\mu\inv(\la))$
has a canonical symplectic structure and,  we have
$$\dis\dim T_{G_\bv\al}(\mu\inv(\la))=2-C_Q\bv\cdot\bv.$$
\end{cor}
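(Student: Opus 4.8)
The plan is to reduce all three parts to the infinitesimal smoothness criterion of Lemma \ref{smooth_fiber}, combined with the identification (from the proof of Lemma \ref{connected}) of the isotropy group $G^\al_\bv$ of a point $\al=(\bx,\by)\in\mu\inv(\la)=\Rep(\Pi_\la,\bv)$ with the unit group $(\End_{\Pi_\la}V)^\times$ of the endomorphism algebra of the corresponding $\Pi_\la$-module $V$. The one subtlety to keep in view throughout is that the scalar subgroup $\gm\sset G_\bv$ acts trivially, so $\mu$ takes values in $(\Lie\gm)^\perp$ and $d_\al\mu$ can be surjective only onto this hyperplane; accordingly the relevant reading of ``finite isotropy'' in Lemma \ref{smooth_fiber} is finiteness of $G^\al_\bv/\gm$, equivalently $\Lie G^\al_\bv=\Lie\gm$. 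Recognizing this is the main obstacle: every dimension count and every ``finite isotropy'' statement must be taken modulo $\Lie\gm$, and conflating $G_\bv$ with $G_\bv/\gm$ would shift the count by one in exactly the place that produces the ``$2-$'' in the final formula.

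For part (i) I would invoke Schur's lemma: since $V$ is simple and the ground field is $\C$, one has $\End_{\Pi_\la}V=\C$, whence $G^\al_\bv=(\End_{\Pi_\la}V)^\times=\gm$. Thus $\Lie G^\al_\bv=\Lie\gm$, the infinitesimal action $\g_\bv/\Lie\gm\to T_\al\Rep(\dq,\bv)$ is injective, and so by (the above reading of) Lemma \ref{smooth_fiber} the point $\al$ is smooth, i.e. $\al\in\mu\inv(\la)^{\reg}$. For part (ii), at any smooth point the same criterion gives $\Lie G^\al_\bv=\Lie\gm$; since $G^\al_\bv$ is connected by Lemma \ref{connected} and contains the connected group $\gm$ with the same Lie algebra, I conclude $G^\al_\bv=\gm$, so $G_\bv/\gm$ has trivial stabilizer at $\al$ and therefore acts freely on $\mu\inv(\la)^{\reg}$.

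For part (iii) the symplectic form comes from Marsden--Weinstein reduction. Writing $\om$ for the symplectic form on $\Rep(\dq,\bv)=T^*\Rep(Q,\bv)$, the identity \eqref{seq} shows that $\Ker d_\al\mu$ is exactly the $\om$-orthogonal complement of the orbit tangent space $T_\al(G_\bv\al)=\{\arr{u}_\al\mid u\in\g_\bv\}$; because $\la$ is a coadjoint fixed point the orbit lies inside $\mu\inv(\la)$, so $T_\al(G_\bv\al)\sset\Ker d_\al\mu$. Hence the radical of $\om|_{\Ker d_\al\mu}$ is $\Ker d_\al\mu\cap(\Ker d_\al\mu)^{\perp}=\Ker d_\al\mu\cap T_\al(G_\bv\al)=T_\al(G_\bv\al)$, and $\om$ descends to a nondegenerate form on the normal space $T_{G_\bv\al}(\mu\inv(\la))=\Ker d_\al\mu/T_\al(G_\bv\al)$; this refines the second assertion of Lemma \ref{smooth_fiber}.

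For the dimension I would compute directly from smoothness: since $d_\al\mu$ is a submersion onto $(\Lie\gm)^\perp$ one has $\dim\Ker d_\al\mu=\dim\Rep(\dq,\bv)-\dim(\Lie\gm)^\perp=2A_Q\bv\cdot\bv-(\bv\cdot\bv-1)$, consistent with the component dimension in Theorem \ref{gg}(ii), while $\dim T_\al(G_\bv\al)=\dim G_\bv-\dim G^\al_\bv=\bv\cdot\bv-1$ by (ii). Subtracting gives $\dim T_{G_\bv\al}(\mu\inv(\la))=2+2A_Q\bv\cdot\bv-2\bv\cdot\bv$, and the Cartan-matrix identity $C_Q\bv\cdot\bv=2\,\bv\cdot\bv-2A_Q\bv\cdot\bv$ (from $C_Q=2\Id-A_{\dq}$ together with $A_{\dq}=A_Q+A_Q^{\mathsf T}$ and the symmetry $A_Q^{\mathsf T}\bv\cdot\bv=A_Q\bv\cdot\bv$) rewrites this as $2-C_Q\bv\cdot\bv$, as claimed.
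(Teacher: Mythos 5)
Your proposal is correct and follows essentially the same route as the paper: parts (i) and (ii) via Schur's lemma together with Lemma \ref{connected} and Lemma \ref{smooth_fiber}, and part (iii) via the Marsden--Weinstein descent of $\om$ to $\Ker d_\al\mu/T_\al(G_\bv\al)$ plus the dimension count from surjectivity of $d_\al\mu$ onto $(\Lie\gm)^\perp$. Your explicit handling of the trivially acting subgroup $\gm$ (reading ``finite isotropy'' modulo $\gm$, which is where the ``$2-$'' comes from) is exactly what the paper does implicitly by setting $G:=G_\bv/\gm$ in its computation.
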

\begin{proof} Part (i) follows,  thanks to Schur's
lemma, from Lemma \ref{connected} and
Lemma \ref{smooth_fiber}. The last lemma also yields part
(ii).

To prove (iii), 
put $U:=\mu\inv(\la)^{\oper{reg}}$, let 
$G:=G_\bv/\gm$ and $\g:=\Lie G$. Thus, we have $\dim\g=\dim G_\bv-1$.

For any $\al\in U$, the
tangent space to $U/G$ at the point corresponding to
the image of $\al$ equals  $(T_\al U)/\g$,
where we identify the Lie algebra $\g$
with its image under the action map
$\g\to T_\al U,\ u\mto\arr{u}_\al$.
Furthermore, the (proof of)  Lemma \ref{smooth_fiber} implies
that this last map is injective.
Also, the  symplectic structure on $(T_\al U)/\g$
is provided by the last statement of Lemma \ref{smooth_fiber}.

Now, using the surjectivity of
the differential of the moment map
$d_\al\mu: T_\al\Rep(\dq,\bv)\to\g^*$ is surjective by   Lemma \ref{smooth_fiber},
 we compute 
\begin{align}
\dim U/G&=\dim\big((T_\al U)/\g\big)\nonumber\\
&=\dim\Ker(d_\al\mu)-\dim\g\label{Uf}\\
&=\big[\dim\Rep(\dq,\bv)-\dim\g^*\big]-\dim\g=\dim\Rep(\dq,\bv)-2\dim\g\nonumber\\
&=\dim\Rep(\dq,\bv)-2(\dim G_\bv -1).\nonumber
\end{align}

Finally,
from formula \eqref{dims} applied to the quiver $\dq$,
we find
\beq{cartan}
2\dim G_\bv-\dim\Rep(\dq,\bv)=2\bv\cdot\bv-A_{\dq}\bv\cdot\bv=
C_Q\bv\cdot\bv.
\eeq

The last formula of Corollary \ref{U/G}
now follows by combining \eqref{Uf} with \eqref{cartan}.
\end{proof}

Many of the results concerning stability of quiver representations
carry over in a straightforward way to $\Pi_\la$-modules.
 In particular, we have

\begin{thm}\label{Mthm} \vi For $\th=0$, the scheme
$\M_{\la,0}(\bv)=\Spec\C[\mu\inv(\la)]^{G_\bv}$ is a normal affine
variety, cf. \cite[Theorem 1.1]{CB3}; geometric
points of this 
scheme
correspond to semisimple $\Pi_\la$-modules.\vs

\vii Geometric points of the scheme
$\M_{\la,\th}(\bv)$ correspond to $S$-equivalence classes
of $\th$-semistable  $\Pi_\la$-modules. \vs

\viii The group $G_\bv$ acts freely on the
set $\mu\inv(\la)^s$, of $\th$-stable points;
isomorphism classes of 
$\th$-stable $\Pi_\la$-modules form a Zariski open subset
$\M^s_{\la,\th}(\bv)\sset \M_{\la,\th}(\bv),$ of dimension
$2-(\bv,C_Q\bv)$.\vs

\iv The canonical map $\pi:\
\M_{\la,\th}(\bv)\to \M_{\la,0}(\bv)$ is
a projective morphism that takes
a $\Pi_\la$-module $V$ to its semi-simplification.
\end{thm}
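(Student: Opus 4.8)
The guiding principle is that everything reduces to the GIT machinery of \S\ref{king_sec} together with Corollaries \ref{U/G} and \ref{sss}, applied to the closed $G_\bv$-stable subscheme $\mu\inv(\la)=\Rep(\Pi_\la,\bv)\sset\Rep(\dq,\bv)$ of \eqref{pirep}; the one structural observation one uses repeatedly is that a $\Pi_\la$-submodule of $V$ is the same datum as a $\dq$-subrepresentation of the underlying representation of $\dq$. For part (i), I would apply Example \ref{triv} with $X=\mu\inv(\la)$, $G=G_\bv$ and trivial character to identify $\M_{\la,0}(\bv)$ with the affine categorical quotient $\Spec\C[\mu\inv(\la)]^{G_\bv}$; normality is then quoted from \cite[Theorem 1.1]{CB3}, and the description of geometric points as semisimple $\Pi_\la$-modules is the $\Pi_\la$-analogue of Theorem \ref{ssreps}, closed $G_\bv$-orbits being exactly the semisimple modules.

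For part (ii), the plan is to first invoke Theorem \ref{kingthm} for the quiver $\dq$ and the character $\chi_\th$ --- legitimate because $\th\cdot\bv=0$ in \eqref{M} --- to match $\chi_\th$-(semi)stability of a point of $\Rep(\dq,\bv)$ with $\th$-(semi)stability of the associated $\dq$-representation, hence, by the observation above, with $\th$-(semi)stability of the corresponding $\Pi_\la$-module. The point requiring care is that the GIT quotient in \eqref{M} uses semi-invariants on the subscheme $\mu\inv(\la)$, not on the ambient $\Rep(\dq,\bv)$; here reductivity of $G_\bv$ saves the day, since the functor $N\mapsto N^{\chi_\th^n}$ of $\chi_\th^n$-semi-invariants is exact, so the surjection $\C[\Rep(\dq,\bv)]\onto\C[\mu\inv(\la)]$ stays surjective on semi-invariants and the two semistable loci, as well as the two $S$-equivalence relations, agree on $\mu\inv(\la)$. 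Theorem \ref{GIT}(i) then yields the asserted bijection between geometric points of $\M_{\la,\th}(\bv)$ and $S$-equivalence classes of $\th$-semistable $\Pi_\la$-modules.

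For part (iii), a $\th$-stable module is by Proposition \ref{ab} a simple object of the abelian category of $\th$-semistable $\Pi_\la$-modules, so Schur's lemma gives $\End_{\Pi_\la}(V)=\C$ and its isotropy in $G_\bv$ is precisely $\gm$; thus $G_\bv/\gm$ acts freely on $\mu\inv(\la)^s$, exactly as in Corollary \ref{stabrep}(i). Since the isotropy in $G_\bv/\gm$ is then trivial, Lemma \ref{smooth_fiber} places every stable point in the smooth locus $\mu\inv(\la)^{\reg}$; openness of $\M^s_{\la,\th}(\bv)$ in $\M_{\la,\th}(\bv)$ is Theorem \ref{GIT}(ii), smoothness follows from Corollary \ref{sss} applied to the free action on the smooth locus (isotropy being connected by Lemma \ref{connected}), and the value $2-C_Q\bv\cdot\bv$ of the dimension is the normal-space computation of Corollary \ref{U/G}(iii). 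For part (iv), the inclusion of the degree-zero part $\C[\mu\inv(\la)]^{G_\bv}\into A_{\chi_\th}$ furnishes, as in the GIT discussion of \S\ref{king_sec}, the canonical projective morphism $\pi:\Proj A_{\chi_\th}=\M_{\la,\th}(\bv)\to\Spec\C[\mu\inv(\la)]^{G_\bv}=\M_{\la,0}(\bv)$; on points it records the values of all $G_\bv$-invariants, hence sends $V$ to the unique closed orbit in $\overline{G_\bv\cdot V}\sset\mu\inv(\la)$, which by part (i) is the semisimple module $S$-equivalent to $V$ in the ordinary sense, namely the semisimplification obtained by refining a $\th$-Jordan--H\"older filtration of $V$ to a full composition series.

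The step I expect to be the real obstacle is the comparison inside part (ii): verifying that semistability and $S$-equivalence computed intrinsically on the closed, possibly non-reduced, subscheme $\mu\inv(\la)$ coincide with the notions inherited from $\Rep(\dq,\bv)$. Once the exactness of the semi-invariants functor for the reductive group $G_\bv$ is used to reduce this to the already-proved statements for the double quiver $\dq$, the remaining parts are direct applications of the general results established earlier.
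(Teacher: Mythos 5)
Your proposal is correct and follows essentially the same route as the paper's own sketch: part (i) via the trivial-character case of GIT (Corollary \ref{trivcor}), parts (ii) and (iv) by applying Theorem \ref{kingthm} to the double quiver $\dq$ together with the $\Proj\to\Spec$ projectivity, and part (iii) via Schur's lemma to pin the isotropy down to $\gm$, then Lemma \ref{smooth_fiber} and Corollary \ref{U/G} for smoothness and the dimension count. The one place you go beyond the paper's sketch --- using reductivity of $G_\bv$ to see that semi-invariants on $\Rep(\dq,\bv)$ surject onto those on the closed subscheme $\mu\inv(\la)$, so that (semi)stability and $S$-equivalence may be tested ambiently --- is exactly the detail the paper leaves implicit, and you handle it correctly.
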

\begin{proof}[Sketch of Proof]
Part (i) of the theorem is a consequence of Corollary \ref{trivcor}.

To prove (iii), let $V\in \mu\inv(\la)^s$ be a  stable $\Pi_\la(\dq)$-module. A version
of Corollary \ref{stabrep}(ii) implies that
the isotropy group of $V$ is equal to $\gm$.
It follows that $V$ gives a smooth point of the fiber
$\mu\inv(\la)$, by Lemma \ref{smooth_fiber}.
Furthermore, Corollary \ref{U/G} applies and 
we find
$$\dim \M^s_{\la,\th}(\bv)= 2(\dim \Rep(Q,\bv)-\dim(G_\bv/\gm))=
2-(\bv,C_Q\bv).
$$
Other statements of the theorem are obtained
 by applying Theorem
\ref{kingthm} to the quiver ~$\dq$.
\end{proof}
\begin{cor} If the set $\M^s_{\la,\th}(\bv)$ is nonempty
then, we have $C_Q\bv\cdot\bv\leq
2$.\qed
\end{cor}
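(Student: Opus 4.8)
The plan is to read off the inequality directly from the dimension count already performed in Corollary~\ref{U/G}; the essential point is that nonemptiness of $\M^s_{\la,\th}(\bv)$ is precisely what supplies a point at which that count can be evaluated, after which the inequality reduces to the tautology that a vector space has nonnegative dimension.

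First I would unwind the hypothesis. By Theorem~\ref{Mthm}(iii), a geometric point of $\M^s_{\la,\th}(\bv)$ is the isomorphism class of a $\th$-stable $\Pi_\la$-module $V$ with $\dim_I V=\bv$ (so in particular $\la\cdot\bv=0$). By Corollary~\ref{U/G}(i) such a $V$ corresponds to a point $\al\in\mu\inv(\la)^{\reg}$ in the smooth locus of the fiber, and by Corollary~\ref{U/G}(ii) the group $G_\bv/\gm$ acts freely there, so the orbit $G_\bv\al$ is honest and the normal space $T_{G_\bv\al}(\mu\inv(\la))$ is well defined.

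Next I would invoke Corollary~\ref{U/G}(iii): this normal space carries a symplectic structure and has dimension exactly $2-C_Q\bv\cdot\bv$. Since it is a genuine finite-dimensional vector space, its dimension is a nonnegative integer, whence $2-C_Q\bv\cdot\bv\geq 0$, that is $C_Q\bv\cdot\bv\leq 2$. As a consistency check, $C_Q=2\Id-A_{\dq}$ has even diagonal (each loop of $Q$ contributes a loop and its reverse to $\dq$), so $C_Q\bv\cdot\bv$ is even and $2-C_Q\bv\cdot\bv$ is an even nonnegative integer, as it must be for the dimension of a symplectic space.

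There is no real obstacle here: the content lies entirely in the preceding dimension computation, and the only thing to watch is the logical role of the hypothesis. Nonemptiness is used solely to produce a point $\al$ at which the formula of Corollary~\ref{U/G}(iii) applies; without such a point the quantity $2-C_Q\bv\cdot\bv$ is merely a formal expression and may well be negative. Equivalently, one could argue through Theorem~\ref{Mthm}(iii), which gives $\dim\M^s_{\la,\th}(\bv)=2-C_Q\bv\cdot\bv$, and then use that a nonempty scheme has dimension $\geq 0$.
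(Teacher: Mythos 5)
Your proposal is correct and matches the paper's (implicit) argument: the corollary is stated with no proof precisely because it follows at once from the dimension formula $\dim\M^s_{\la,\th}(\bv)=2-C_Q\bv\cdot\bv$ of Theorem \ref{Mthm}(iii) (equivalently, from the normal-space computation in Corollary \ref{U/G}(iii)) together with the nonnegativity of the dimension of a nonempty variety. Your identification of the exact logical role of the nonemptiness hypothesis, and the evenness consistency check, are both accurate.
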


In the special case $\la=0$, using  isomorphism \eqref{conormal},
we deduce

\begin{prop} The variety $\M_{0,\th}(\bv)$ contains
$T^*\R^s_\th(\bv)$, the cotangent space to the moduli space 
$\R^s_\th(\bv)$, as an open (possibly empty) subset of the
smooth locus of $\M_{0,\th}(\bv)$.
\end{prop}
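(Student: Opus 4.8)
The plan is to exhibit $T^*\R^s_\th(\bv)$ as a chart of the reduction $\M_{0,\th}(\bv)$ by applying the identification \eqref{zer} to the free action of $G_\bv/\gm$ on the locus of $\th$-stable base points, and then recognizing the resulting quotient as an open piece of the $\th$-stable locus of $\M_{0,\th}(\bv)$.

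First I would set $X^s:=\Rep^s_\th(Q,\bv)$, the open $G_\bv$-stable subvariety on which, by Corollary \ref{stabrep}(i), the group $G_\bv/\gm$ acts freely with smooth geometric quotient $\R^s_\th(\bv)$. Writing $p\colon \Rep(\dq,\bv)=T^*\Rep(Q,\bv)\to\Rep(Q,\bv)$ for the cotangent projection of \S\ref{structure}, I put $\Omega:=p\inv(X^s)\cap\mu\inv(0)$, an open $G_\bv$-stable subset of $\mu\inv(0)$. Since $\gm\sset G_\bv$ acts trivially on $\Rep(Q,\bv)$, the component of $\mu$ along $\Lie\gm$ vanishes, so $\mu$ is the moment map for the residual $G_\bv/\gm$-action, valued in $(\Lie\gm)^\perp$; consequently $\Omega$ is precisely the zero fiber of that moment map over the open piece $T^*X^s$. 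As each point of $X^s$ has trivial $G_\bv/\gm$-isotropy, every $(\bx,\by)\in\Omega$ has finite $G_\bv/\gm$-isotropy, so Lemma \ref{smooth_fiber} shows that all points of $\Omega$ are smooth points of the scheme $\mu\inv(0)$ and that the action of $G_\bv/\gm$ on $\Omega$ is free. Formula \eqref{zer}, which rests on the conormal description \eqref{conormal}, applied to this free action then produces a symplectomorphism $T^*\R^s_\th(\bv)\iso\Omega/(G_\bv/\gm)$.

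Next I would verify that $\Omega$ lands inside the $\chi_\th$-stable locus $\mu\inv(0)^s$, so that the preceding quotient is genuinely cut out of $\M_{0,\th}(\bv)$. The crux is that any $\dq$-subrepresentation of $(\bx,\by)$ (equivalently, any $\Pi_0$-submodule) is in particular a $Q$-subrepresentation of $\bx$ carrying the same dimension vector, hence the same $\th$-slope. Therefore $\th$-stability of $\bx$ as a representation of $Q$ forces the strict slope inequality of Definition \ref{stab} for every proper nonzero $\dq$-subrepresentation, so $(\bx,\by)$ is $\th$-stable as a $\Pi_0$-module, and by King's Theorem \ref{kingthm} this coincides with $\chi_\th$-stability. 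Thus $\Omega\sset\mu\inv(0)^s$. By Theorem \ref{Mthm}(iii) the map $F\colon\mu\inv(0)^s\to\M^s_{0,\th}(\bv)$ is a geometric quotient by the free $G_\bv/\gm$-action onto the open smooth locus $\M^s_{0,\th}(\bv)\sset\M_{0,\th}(\bv)$, so the saturated open set $\Omega$ has open image $F(\Omega)$ with $F(\Omega)\cong\Omega/(G_\bv/\gm)$. Combining with the previous step gives $T^*\R^s_\th(\bv)\iso F(\Omega)$, an open subvariety of the smooth locus of $\M_{0,\th}(\bv)$, as asserted; when $\R^s_\th(\bv)=\emptyset$ the chart is empty.

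The steps that are merely routine are checking that $\mu$ descends to the $G_\bv/\gm$-moment map and that $F(\Omega)$ is open and saturated, both immediate from the triviality of the $\gm$-action. The hard part will be the stability comparison together with the matching of scheme structures: I must ensure that stability of the base point $\bx$ yields honest $\th$-stability (not merely semistability) of the pair $(\bx,\by)$, which the slope argument above secures, and that the set-theoretic bijection $T^*\R^s_\th(\bv)\iso F(\Omega)$ is an isomorphism of varieties. The latter holds because over $\Omega$ both the GIT quotient of \eqref{M} and the quotient of \eqref{zer} are geometric quotients for one and the same free $G_\bv/\gm$-action on the smooth variety $\Omega$, so the two scheme structures agree.
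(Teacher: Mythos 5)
Your proof is correct and follows the route the paper intends: the proposition is stated there as an immediate consequence of the conormal description \eqref{conormal} and the reduction isomorphism \eqref{zer}, with no further details written out. Your two supplementary verifications --- that $\th$-stability of the underlying $Q$-representation $\bx$ forces $\chi_\th$-stability of the pair $(\bx,\by)$ because every $\dq$-subrepresentation is in particular a $Q$-subrepresentation, and that Lemma \ref{smooth_fiber} must be applied to the residual $G_\bv/\gm$-action since the full $G_\bv$-isotropy always contains $\gm$ --- are exactly the steps the paper leaves implicit.
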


\begin{examp}[Dynkin quivers]\label{ADEb} Let $Q$ be an ADE quiver, and
fix a dimension vector $\bv$.

The number of $G_\bv$-orbits in $\Rep(Q,\bv)$ is finite by
the Gabriel theorem. Thus,
we see from \eqref{conormal} that $\mu\inv(0)$ is in this case
a finite union of conormal bundles, hence a Lagrangian subvariety
in $T^*\Rep(Q,\bv).$

We claim next that the zero representation $0\in \Rep(\dq,\bv)$
is contained in the closure of any $G_\bv$-orbit.
This is clear for the  $G_\bv$-orbit of  the point corresponding to an indecomposable
representation (if such a represntation of dimension $\bv$ exists),
since the corresponding orbit is Zariski dense by
the Gabriel theorem. From this, one deduces easily that our claim 
must hold for the orbit of a point corresponding to a direct sum of  indecomposable
representations as well.

Our claim implies that the conormal bundle on 
any $G_\bv$-orbit is a subset of $\Rep(\dq,\bv)$ which is
stable under the $\gm$-action on the vector space  $\Rep(\dq,\bv)$ by dilations
(this is not the action obtained by restricting
the natural $G_\bv$-action on $\Rep(\dq,\bv)$
to the subgroup $\gm\sset G_\bv$; the latter $\gm$-action
is trivial). It follows that the set $\mu\inv(0)$,
the union of conormal bundles,
is also $\gm$-stable under the dilation action.

We conclude that any homogeneous $G_\bv$-invariant polynomial
on $\mu\inv(0)$ of positive degree vanishes. Thus, in the Dynkin case,
we have
$
\M_0(\bv)=pt.
$
\end{examp}

\subsection{The McKay correspondence}\label{mksec} 
Associated to any finite group $\Ga$ and a finite dimensional
representation $\Ga\to GL(E)$, there is a quiver
$Q_\Ga$, called the {\em McKay quiver} for $\Ga$
(that depends on the representation $E$ as well).
The vertex set of this quiver is defined to be
the set $I$  of equivalence classes
of irreducible representations of $\Ga$. We write
$L_i$ for the  irreducible representation corresponding
to a vertex $i\in I$.
In particular, there is a distinguished vertex $o\in I$ corresponding
to the trivial representation.

Further, the adjacency matrix $A_{Q_\Ga}=\|a_{ij}\|$,
of the McKay quiver, is
defined by the formula
\beq{mck}a_{ij}:=\dim\Hom_\Ga(L_i, L_j\o E).
\eeq

The matrix $A_Q$ is symmetric if and only if
$E$ is a self-dual representation of $\Ga$.
In such a case, one can write
$Q_\Ga=\dq,$ for some quiver $Q$.
Note also that the quiver $Q_\Ga$ has no edge-loops
 if and only if
$E$ does not contain the trivial representation
of $\Ga$ as a direct summand.

Now, fix a 2-dimensional
symplectic vector space $(E,\om),$ and a
finite subgroup
$\Ga\sset Sp(E,\om)=SL_2(\C)$. The imbedding
$\Ga\into GL(E)$ gives a self-dual representation of $\Ga$.

It follows from the well known classification of
platonic solids that   conjugacy classes of finite subgroups of the group
$SL_2(\C)$ are in  one-to-one  correspondence with
Dynkin graphs of $A,D,E$ types.
McKay  observed that this correspondence may be
obtained by assigning to $\Ga\sset Sp(E,\om)$
its McKay quiver $Q_\Ga$ (so that  $Q_\Ga$ then becomes
the double of the corresponding  extended Dynkin graph
of type $\wt A,\ \wt D,\ \wt E,$
 equipped with any choice of orientation).

Associated with  the   extended Dynkin diagram,
there is a root system $R\sset \Z^I$.
The vector $\delta=(\delta_i)\ii\in\Z^I$,
where  $\delta_i:=\dim L_i,$ turns out to be equal
to the minimal imaginary root of that root system.

Let $\C \Ga$ be the group algebra of $\Ga$ and, for each $i\in I$,
choose a minimal idempotent $e_i\in\C \Ga$ such that
$\cg\cdot e_i\cong L_i$. Put $e=\sum\ii e_i,$ an idempotent in
$\cg$. The $\Ga$-action on $E$ induces one on
$\Sym E$, the symmetric algebra of $E$. We write $(\Sym E)\rtimes \Ga$ for the
corresponding cross-product algebra, resp.
$(\Sym E)^\Ga\sset \Sym E$ for the subalgebra of $\Ga$-invariants.
Note that the self-duality of $E$ implies that one has
algebra isomorphisms
 $(\Sym E)^\Ga=\C[E]^\Ga=\C[E/\Ga]$.

One way of stating the McKay correspondence is as follows,
cf. \cite{CBH}, Theorem 0.1.
\begin{thm} \vi There is an
algebra isomorphism $\Pi_0(Q_\Ga)\cong
e\big[(\Sym E)\rtimes \Ga\big]e$. In particular, 
the
 algebras $\Pi_0(Q_\Ga)$ and $(\Sym E)\rtimes \Ga$
are Morita equivalent.

\vii There is a canonical algebra isomorphism
$1_o\cdot\Pi_0(Q_\Ga)\cdot1_o \cong (\Sym E)^\Ga$.
\end{thm}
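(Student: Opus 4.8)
The plan is to realize $\Pi_0(Q_\Ga)$ as a corner of the skew group algebra $A:=(\Sym E)\rtimes\Ga$ by writing down an explicit algebra map out of the path algebra and then identifying its kernel. I would exploit the grading $A=\bplus_{n\geq0}(\Sym^n E)\o\cg$ coming from the symmetric power, which restricts to a grading $eAe=\bplus_{n\geq0}e[(\Sym^n E)\o\cg]e$ on the corner algebra. The first task is to compute the two lowest graded pieces. In degree $0$, the standard identity $e_iAe_j\cong\Hom_\Ga(\cg e_j,\cg e_i)=\Hom_\Ga(L_j,L_i)$ together with Schur's lemma gives $e(\C\Ga)e\cong\C^I$, matching the vertex idempotents $1_i\leftrightarrow e_i$ of $\C\dq$. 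In degree $1$, the same bookkeeping yields $\dim e_i(E\o\cg)e_j=\dim\Hom_\Ga(L_j,E\o L_i)$, which by the defining formula \eqref{mck} of the McKay adjacency matrix (and the self-duality $E\cong E^*$, which makes that matrix symmetric) equals the number of arrows of $\dq=Q_\Ga$ joining the corresponding vertices. This matches the degree-$1$ part $\C E$ of $\C\dq$, so a choice of bases produces a graded algebra homomorphism $\Phi:\C\dq\to eAe$.

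Next I would check surjectivity and locate the relations. Surjectivity of $\Phi$ is immediate, since $\Sym E$ is generated by $\Sym^1 E=E$, so $eAe$ is generated by its degree $0$ and degree $1$ parts, both of which lie in the image of $\Phi$. For the relations, the essential point is that $E$ is a \emph{two-dimensional} symplectic space: fixing a symplectic basis $\{x,y\}$ with $\om(x,y)=1$, the only relation needed to pass from the tensor algebra to $\Sym E$ is the commutator $xy-yx=0$. Under $\Phi$ the reverse-arrow identification $\Rep(Q^\op,\bv)\cong\Rep(Q,\bv)^*$ is precisely the one induced by $\om$, so the preprojective generator $\sum_{x\in Q}(xx^*-x^*x)$ of Definition \ref{preproj} (the $\la=0$ moment-map element) is sent to the $\Ga$-averaged image of this commutator, hence to $0$ in $eAe$. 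Therefore $\Phi$ descends to an algebra map $\bar\Phi:\Pi_0(Q_\Ga)\to eAe$.

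The hard part is to show $\bar\Phi$ is injective, i.e. that the preprojective ideal is \emph{all} of $\Ker\Phi$ and no further relations intervene. I would establish this by a graded dimension count: both sides carry the grading above, and I would compare their Hilbert series degree by degree, checking that $\dim_\C e[(\Sym^n E)\o\cg]e$ agrees with the dimension of the degree-$n$ component of $\Pi_0(Q_\Ga)$. The right-hand side is computed from the $\Ga$-module decomposition of $\Sym^n E$, while the left-hand side is controlled by the structural results available for preprojective algebras of affine Dynkin type — the flatness/complete-intersection statements of Theorem \ref{gg} together with the known Koszulity of $\Pi_0$ in this case. Once the two Hilbert series coincide, the surjection $\bar\Phi$ is forced to be an isomorphism, proving (i). The Morita statement then follows because $e=\sum_i e_i$ is a \emph{full} idempotent: its degree-$0$ part meets every block $\End_\Ga(L_i)$ of the semisimple algebra $\C\Ga$, so $\C\Ga\, e\,\C\Ga=\C\Ga\ni1$ and hence $AeA=A$.

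Finally, part (ii) is a formal consequence of (i). Under that isomorphism the trivial-representation vertex $o$ corresponds to the averaging idempotent $e_o=\tfrac1{|\Ga|}\sum_{g\in\Ga}g$, so $1_o\,\Pi_0(Q_\Ga)\,1_o\cong e_o A e_o$. For the averaging idempotent of any finite group acting on an algebra $S$ one has the standard isomorphism $e_o(S\rtimes\Ga)e_o\cong S^\Ga$, realized by $f\mapsto f e_o$ for $\Ga$-invariant $f$; applying this with $S=\Sym E$ gives $1_o\,\Pi_0(Q_\Ga)\,1_o\cong(\Sym E)^\Ga$. The identification is canonical because the vertex $o$ is canonically attached to the trivial representation, and it exhibits the corner at $o$ as the coordinate ring $\C[E/\Ga]$ of the Kleinian singularity.
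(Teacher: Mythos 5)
Your construction of the surjection $\bar\Phi:\Pi_0(Q_\Ga)\onto e\big[(\Sym E)\rtimes\Ga\big]e$ is sound: the degree $0$ and degree $1$ computations via $\Hom_\Ga(L_j,E\o L_i)$, the vanishing of the image of $\sum_{x}(xx^*-x^*x)$ (it lands in the image of $\Lambda^2E\o\C\Ga$, which dies in $\Sym E$), the fullness of $e$, and the treatment of part (ii) via the averaging idempotent all match what is needed, and part (ii) is handled exactly as in the paper. The problem is your injectivity step. You propose to compare Hilbert series, and for that you need the graded dimensions of $\Pi_0(Q_\Ga)$ itself. Theorem \ref{gg} does not supply these: it is a statement about flatness of the moment map and about dimensions and components of the representation schemes $\Rep(\Pi_\la,\bv)$, not about the graded vector space $\Pi_0$ as an algebra. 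What you actually need is the Koszulity (equivalently, the matrix Hilbert series $(1-A_{\dq}t+t^2)^{-1}$) of the preprojective algebra of an extended Dynkin quiver. That is a genuine theorem, considerably less elementary than the statement you are proving, and in several standard treatments it is deduced \emph{from} the Crawley--Boevey--Holland isomorphism, so you must be careful that your import is not circular. As written, the injectivity of $\bar\Phi$ is not established.

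The paper avoids this entirely by working one level up: it first produces an \emph{isomorphism} of graded algebras $\phi:\C Q_\Ga=T_{\C I}(\C E_{Q_\Ga})\iso e\big[(T_\C E)\rtimes\Ga\big]e$ at the level of tensor algebras, where the degree-$n$ comparison is an elementary character computation (the number of length-$n$ paths from $i$ to $j$ equals $\dim\Hom_\Ga(L_j,E^{\o n}\o L_i)$ by iterating \eqref{mck}), and only then passes to quotients, checking that $\phi$ matches the two-sided ideal generated by $\sum_x(xx^*-x^*x)$ with the ideal $J$ generated by the commutators $e_1\o e_2-e_2\o e_1$. Injectivity of the induced map on quotients is then automatic, with no Hilbert series input. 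If you reorganize your argument to factor $\Phi$ through the tensor algebra $(T_\C E)\rtimes\Ga$ in this way, your degree $0$ and degree $1$ computations already give you everything you need, and the gap disappears.
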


\begin{proof}[Outline of Proof]
Let $T_\C V$ denote the tensor algebra of the vector
space $E$.
An elementary argument based on formula \eqref{mck}
yields an algebra isomorphism
$\phi:\ T_{\C I}(\C E_{Q_\Ga})
\iso
e\big[(T_\C  E)\rtimes \Ga\big]e$, see \cite[\S2]{CBH}.
Recall that, for the path algebra
of any quiver $Q$, we have
$\C Q=T_{\C I}(\C E_Q),$ see \S\ref{reminder}.
Thus, we may identify the algebra on the left hand side of the
 isomorphism $\phi$
with the path algebra $\C Q_\Ga$. 

Next, one verifies that the
two-sided
ideal of  $\C Q_\Ga$ generated by the
element $\sum_{x\in Q_\Ga} (xx^*-x^*x)$,
cf. Definition \ref{preproj},
goes under the isomorphism $\phi$ to the
two-sided
ideal $J$ generated by the elements
$e_1\o e_2-e_2\o e_1\in T^2_\C E,\ e_1,e_2\in E$.
The isomorphism of part (i) of the theorem is now 
induced by the isomorphism $\phi$ using that one has
$$\C Q_\Ga/\big(\sum_{^{x\in Q_\Ga}} (xx^*-x^*x)\big)=\Pi_0(Q_\Ga),\quad
\text{and}\quad
[(T_\C E)\rtimes \Ga]/J=(\Sym E)\rtimes \Ga.
$$

To complete the proof, we observe that
the  isomorphism $\phi$ constructed above
sends the  idempotent $1_o\in \C I$ to
$\mathsf{p}:=\frac{1}{|\Ga|}\sum_{g\in\Ga} g\in \C \Ga$,
 the averaging idempotent.
Furthermore, it is easy to show that
the natural imbedding $(\Sym E)^\Ga\into
\Sym E$ induces 
an algebra isomorphism
$(\Sym E)^\Ga\iso$ $\mathsf{p}[(\Sym E)\rtimes \Ga]\mathsf{p}.$
Part (ii) of the theorem follows from these observations
using the isomorphism of part (i).
\end{proof}

The orbit space $\C^2/\Ga=\Spec\C[x,y]^\Ga$ is an irreducible normal
2-dimensional variety with an isolated singularity at the origin.
Such a variety is known to have a minimal resolution, unique
up to isomorphism.

The following result is a reformulation of a result of 
P. Kronheimer in the language of quiver moduli,
cf. \cite{CS}.

\begin{thm}\label{kronh} \vi There is a natural
isomorphism $\M_0(\delta)\cong\C^2/\Ga,$ of algebraic
varieties.

\vii Assume that $\th\in\Z^I$ does not belong to root
hyperplanes of the affine root system. Then,
the variety $\M_\th(\delta)$ is smooth and the
canonical map $\pi:\ \M_\th(\delta)\to \M_0(\delta)=\C^2/\Ga$
is the minimal resolution of $\C^2/\Ga$.
\end{thm}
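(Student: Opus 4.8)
The plan is to derive (i) from the McKay correspondence theorem of \cite{CBH} together with Theorem \ref{Mthm}(i), and then to obtain (ii) by combining the dimension count of Corollary \ref{U/G} with a wall-avoidance argument and the crepancy supplied by the symplectic form.

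For part (i), recall that by Theorem \ref{Mthm}(i) the scheme $\M_0(\delta)=\Spec\C[\mu\inv(0)]^{G_\delta}$ is a normal affine variety whose geometric points parametrize isomorphism classes of semisimple $\Pi_0(Q_\Ga)$-modules of dimension $\delta$. First I would transport this classification across the Morita equivalence of the McKay theorem: the idempotent $e=\sum_i e_i$ identifies $\Pi_0(Q_\Ga)$ with $e\big[(\Sym E)\rtimes\Ga\big]e$, and under the induced equivalence a $\Pi_0$-module of dimension vector $\delta=(\dim L_i)_i$ corresponds to an $A:=(\Sym E)\rtimes\Ga$-module $M$ whose restriction to $\C\Ga$ is the regular representation (this matches dimensions since $\sum_i(\dim L_i)^2=|\Ga|$). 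A \emph{semisimple} such $M$ is supported on a single $\Ga$-orbit $\Ga\cdot v\subset E=\Spec\Sym E$ and is then forced to be $\C[\Ga\cdot v]$ with its natural $A$-action; these are classified by $\Ga$-orbits in $E$, that is, by points of $E/\Ga=\C^2/\Ga$. This gives the bijection on geometric points. To upgrade it to an isomorphism of varieties I would use that $\delta_o=1$: evaluating a path $a\in 1_o\Pi_0 1_o$ on a representation $(\bx,\by)$ yields a scalar endomorphism of the line $V_o$, hence a $G_\delta$-invariant regular function, giving a homomorphism $1_o\Pi_0 1_o\to\C[\mu\inv(0)]^{G_\delta}=\C[\M_0(\delta)]$. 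Precomposing with the isomorphism $(\Sym E)^\Ga\iso 1_o\Pi_0 1_o$ of part (ii) of the McKay theorem produces a ring map $\C[\C^2/\Ga]=(\Sym E)^\Ga\to\C[\M_0(\delta)]$ realizing the above bijection, and normality of both sides then promotes it to the desired isomorphism.

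For part (ii), the key numerical input is that $\delta$ is the minimal imaginary root, so $C_Q\delta=0$ and Corollary \ref{U/G} gives $\dim T_{G_\delta\al}(\mu\inv(0))=2-C_Q\delta\cdot\delta=2$; hence $\M^s_\th(\delta)$ is smooth of dimension $2$ by Theorem \ref{Mthm}(iii). I would then show that, for $\th$ off the real root hyperplanes (with $\th\cdot\delta=0$, as required by the construction \eqref{M}), every $\th$-semistable $\delta$-dimensional $\Pi_0$-module is in fact $\th$-stable, so that $\M_\th(\delta)=\M^s_\th(\delta)$ is smooth: a strictly semistable module would, by Proposition \ref{ab}, contain a proper $\th$-stable submodule $S$ with $\th\cdot\dim S=0$ and $0<\dim S<\delta$; by the bound of Theorem \ref{Mthm}(iii) such a $\dim S$ satisfies $C_Q(\dim S)\cdot(\dim S)\le 2$, and being strictly below $\delta$ it must be a real root, which places $\th$ on the corresponding root hyperplane, a contradiction.

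Finally I would identify $\pi$ as the minimal resolution. The canonical map $\pi\colon\M_\th(\delta)\to\M_0(\delta)=\C^2/\Ga$ is projective by Corollary \ref{trivcor}, and it is birational because over the complement of the origin the relevant $\Ga$-orbit is free, the associated module is simple hence $\th$-stable for every $\th$, and $\pi$ restricts to an isomorphism there. Since $\M_\th(\delta)$ is a smooth symplectic surface (the reduced symplectic form of Remark \ref{hyperkahler}, equivalently the nondegenerate form on normal spaces from Corollary \ref{U/G}), its canonical bundle is trivial, so $\pi$ is crepant; as $\C^2/\Ga$ is a Kleinian (Du Val) singularity whose crepant resolution coincides with its minimal resolution, $\pi$ must be the minimal resolution. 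The main obstacle I anticipate is the scheme-theoretic step in part (i): proving that the ring map $(\Sym E)^\Ga\to\C[\M_0(\delta)]$ is an isomorphism rather than merely a bijection on points. This is exactly where $\delta_o=1$ and the precise Morita dictionary must be exploited, with surjectivity coming from a Le Bruyn--Procesi--type generation of the invariants by cycles routed through the vertex $o$ and injectivity/reducedness from normality.
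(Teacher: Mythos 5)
The paper does not actually prove Theorem \ref{kronh}: it is stated as ``a reformulation of a result of P.~Kronheimer in the language of quiver moduli'' and the reader is sent to \cite{Kro} and \cite{CS}, where the result is obtained by hyper-K\"ahler reduction. Your proposal therefore takes a genuinely different (purely algebraic) route, essentially the one of Crawley-Boevey--Holland \cite{CBH} and Cassens--Slodowy: part (i) via the Morita dictionary $\Pi_0(Q_\Ga)\cong e[(\Sym E)\rtimes\Ga]e$ together with the identification of closed points of $\M_0(\delta)$ with semisimple modules (Theorem \ref{Mthm}(i)), and part (ii) via the dimension count $2-C_Q\delta\cdot\delta=2$ of Corollary \ref{U/G}, the ``semistable $=$ stable off real root hyperplanes'' argument, and crepancy of a symplectic resolution of a Du Val singularity. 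The skeleton is sound and the wall-avoidance argument in (ii) is correct (positive semidefiniteness of the affine form forces any $0<\al<\delta$ with $C_Q\al\cdot\al\le 2$ to be a real root). What this route buys is independence from the hyper-K\"ahler machinery, at the price of the scheme-theoretic work in (i) that you correctly single out as the crux.

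Three points need more care than you give them. First, at the origin the semisimple module of dimension vector $\delta$ is $\bigoplus_i L_i^{\oplus\delta_i}$ with $\Sym E$ acting through the augmentation, not $\C[\Ga\cdot 0]$ (which is one-dimensional); the bijection with $\C^2/\Ga$ survives, but the statement as written is wrong at that one orbit. Second, for the map $(\Sym E)^\Ga\to\C[\M_0(\delta)]$ to be an isomorphism you will in practice want to argue that it is injective (evaluate on the simple modules attached to free orbits), finite, and birational, and then invoke normality of both sides --- normality of $\M_0(\delta)$ being exactly \cite{CB3}, quoted in Theorem \ref{Mthm}(i); surjectivity directly from Proposition \ref{BP} is delicate because traces of cycles based at vertices $i$ with $\delta_i>1$ are not visibly evaluations of elements of $1_o\Pi_01_o$. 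Third, ``resolution'' requires $\M_\th(\delta)$ to be irreducible, which you never address; it follows here because every irreducible component is $2$-dimensional while fibers of $\pi$ are isotropic (Proposition \ref{wiz}(ii)), hence of dimension $\le 1$, so every component dominates $\C^2/\Ga$, and Zariski's main theorem over the normal base then gives connectedness. With these repairs the argument is complete.
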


\section{Nakajima varieties}\label{sec_M}
\subsection{} We now combine together all the previous constructions.
Thus, we fix a quiver $Q$ and
consider
the quiver $\ddq$, the double of $\dq$. Given any dimension vector
$(\bv,\bw)\in\Z^I\times Z^I$, 
choose a pair of $I$-graded vector spaces $V=\oplus\ii V_i$ and
$W=\oplus\ii W_i$ such that $\dim_I V=\bv$ and $\dim_I W=\bw.$
By definition, we have
\begin{multline*}
\Rep(\ddq,\bv,\bw)=T^*\Rep(\fq,\bv,\bw)\\=
\Rep(Q,\bv)\times\Rep(Q^\op,\bv)\times\Hom_{\C I}(V,W)\times\Hom_{\C I}(W,V).
\end{multline*}

Thus, one may view an element of  $\Rep(\ddq,\bv,\bw)$
as a quadruple $(\bx,\by,\bi,\bj)$, where
$\bx\in \Rep(Q,\bv),$
$\by\in \Rep(Q^\op,\bv),\
\bi\in \Hom_{\C I}(W,V),$ and $\bj\in\Hom_{\C I}(V,W).$

In particular, we find
\beq{mdim}
\dim\Rep(\ddq,\bv,\bw)=A_{\dq}\bv\cdot\bv+2\bv\cdot\bw.
\eeq

The vector space  $\Rep(\ddq,\bv,\bw)$ has  the symplectic structure of a cotangent bundle and
the group
$G_\bv$ acts on $\Rep(\ddq,\bv,\bw)$ by symplectic
automorphisms, via $G_\bv\ni g:\ (\bx,\by,\bi,\bj)\mto (g\bx g\inv,g\by
g\inv,g\ccirc\bi,\bj\ccirc g\inv).$
The associated moment map
is given by
\beq{moment_nak}
\mu:\
\Rep(\ddq,\bv,\bw)\to\g_\bv^*=\g_\bv,\quad
(\bx,\by,\bi,\bj)\mto \sum\ [x,y] + \bi\ccirc\bj\ \in\ \g_\bv.
\eeq
Here, we write
$\bi\ccirc\bj:=\sum\ii\ \bi_i\ccirc\bj_i$ where, for each $i$,
$\bi_i\ccirc\bj_i:\ V_i\to V_i$ is a rank one operator.

For any $\la\in\C^I$ we have
\beq{adhm}
\mu\inv(\la)=\{(\bx,\by,\bi,\bj)\in \Rep(\ddq,\bv,\bw)\mid
[\bx,\by]+\bi\o\bj=\la\}.
\eeq

From now on, whenever we discuss varieties involving
\eqref{adhm}, {\textbf{we will always assume that $\bw\neq0.$}}

\begin{rem}\label{adhmcb} Recall  the quiver $Q^\bw$
introduced by Crawley-Boevey, see \S\ref{cb_frame}.
One can use the isomorphism in
\eqref{pirep} for the quiver $Q^\bw$ to identify the
scheme \eqref{adhm} with $\Rep(\Pi_{\wh\la},\wh\bv)$,
the representation scheme of the preprojective algebra for the
the quiver $Q^\bw$ where $\wh\la$ is an appropriate parameter.
\erem

Given $\th\in\Z^I$, we may apply general Definition
\ref{stab} to the variety $\mu\inv(\la)$ and the character
$\chi_\th$ of the group $G_\bv$. This way, one proves

\begin{prop}\label{stab_nak}  {\em  A quadruple $(\bx,\by,\bi,\bj)\in
\mu\inv(\la)$ is $\th$-semistable if and only if  the following holds:}\vs

\noindent
 For any collection of vector subspaces $S=(S_i)\ii\
\sset\ V=(V_i)\ii$
which is stable under the maps $\bx$ and $\by$, we have
\begin{align}
S_i\sset \Ker \bj_i,\en \forall i\in I\quad&\Longrightarrow\quad
\th(\dim_I S)\leq 0;\label{cond1}\\
S_i\supset \Image \bi_i,\en  \forall i\in I\quad&\Longrightarrow\quad
\th(\dim_I S)\leq\th(\dim_I V).\label{cond2}
\end{align}
\end{prop}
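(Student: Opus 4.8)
The plan is to reduce the assertion to King's numerical stability criterion by means of the Crawley-Boevey framing trick. First I would use Remark \ref{adhmcb} to identify the point $(\bx,\by,\bi,\bj)\in\mu\inv(\la)$ with a representation $\wh V$, of dimension vector $\wh\bv$ (where $\wh v_i=v_i$ for $i\in I$ and $\wh v_\infty=1$), of the double quiver $\overline{Q^\bw}$, equivalently with a module over the preprojective algebra $\Pi_{\wh\la}$ of $Q^\bw$. Under this identification the arrows of $Q$ and $Q^\op$ carry the maps $\bx$ and $\by$, the (doubled) arrows $i\to\infty$ carry the components of $\bj_i\colon V_i\to W_i$, and the arrows $\infty\to i$ carry the components of $\bi_i\colon W_i\to V_i$. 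Following the recipe at the end of \S\ref{cb_frame}, I set $\wh\th_i:=\th_i$ for $i\in I$ and $\wh\th_\infty:=-\sum_{i\in I}\th_i v_i$, so that $\wh\th\cd\wh\bv=0$, and recall that under the isomorphism $G_\bv\cong G_{\wh\bv}/\gm$ the character $\chi_\th$ corresponds to $\chi_{\wh\th}$.

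Next I would apply King's theorem (Theorem \ref{kingthm}) to the quiver $\overline{Q^\bw}$ with dimension vector $\wh\bv$ and character $\chi_{\wh\th}$: since $\wh\th\cd\wh\bv=0$, the point is $\chi_{\wh\th}$-semistable if and only if $\wh V$ is $\wh\th$-semistable in the sense of Definition \ref{stab}, which by the remark following that definition amounts to the single condition $\wh\th\cd\dim_I\wh S\leq 0$ for every subrepresentation $\wh S\sset\wh V$. At this point I would record the one reduction needed to pass from the ambient space $\Rep(\overline{Q^\bw},\wh\bv)$ to the closed $G_{\wh\bv}$-stable subvariety $\mu\inv(\wh\la)$: the $\Pi_{\wh\la}$-submodules of $\wh V$ are exactly its $\overline{Q^\bw}$-subrepresentations, so King's criterion is a statement about submodule slopes and is intrinsic to $\wh V$. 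Equivalently, any destabilizing one-parameter subgroup of $v\in\mu\inv(\wh\la)$ has its limit again in the closed set $\mu\inv(\wh\la)$, so the Hilbert-Mumford criterion detects the same (in)stability whether computed in the big space or in the subvariety.

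The heart of the argument is then the combinatorial analysis of a subrepresentation $\wh S$, consisting of subspaces $S_i\sset V_i\ (i\in I)$ and $S_\infty\sset V_\infty$ stable under all arrows. Stability under the arrows from $Q$ and $Q^\op$ says exactly that $S=(S_i)\ii$ is $\bx$- and $\by$-stable. Since $\dim V_\infty=1$ (here $\bw\neq0$ is used), there are only two cases. If $S_\infty=0$, then stability under $\infty\to i$ is automatic while stability under $i\to\infty$ forces $\bj_i(S_i)\sset S_\infty=0$, i.e. $S_i\sset\Ker\bj_i$; here $\wh\th\cd\dim_I\wh S=\th(\dim_I S)$, so the inequality is precisely \eqref{cond1}. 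If $S_\infty=V_\infty$, then stability under $i\to\infty$ is automatic while stability under $\infty\to i$ forces $\Image\bi_i=\bi_i(V_\infty)\sset S_i$; here $\wh\th\cd\dim_I\wh S=\th(\dim_I S)+\wh\th_\infty=\th(\dim_I S)-\th\cd\bv$, so, using $\dim_I V=\bv$, the inequality is precisely \eqref{cond2}.

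I expect the only real obstacle to be bookkeeping rather than conceptual: pinning down the orientation conventions so that $\bi$ and $\bj$ are attached to the correct doubled framing arrows, and verifying the two harmless reductions invoked above (that GIT-semistability is inherited from $\Rep(\overline{Q^\bw},\wh\bv)$ to the closed subvariety $\mu\inv(\la)$, and that $\Pi_{\wh\la}$-submodules coincide with subrepresentations of the double). Granting these, the equivalence is exactly the split into the two cases $S_\infty=0$ and $S_\infty=V_\infty$, producing \eqref{cond1} and \eqref{cond2} respectively, and no further computation is required.
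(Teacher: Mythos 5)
Your proposal is correct and follows exactly the route the paper indicates (the paper gives no written proof, only the pointer to Remark \ref{adhmcb} and the transfer of King's criterion via the Crawley-Boevey quiver $Q^\bw$ sketched at the end of \S\ref{cb_frame}): pass to $\overline{Q^\bw}$ with $\wh v_\infty=1$ and $\wh\th_\infty=-\th\cd\bv$, invoke Theorem \ref{kingthm}, and split subrepresentations according to $S_\infty=0$ or $S_\infty=V_\infty$, which yields \eqref{cond1} and \eqref{cond2} respectively. The two reductions you flag (surjectivity of restriction of semi-invariants to the closed $G_{\wh\bv}$-stable subvariety $\mu\inv(\la)$, and the identification of $\Pi_{\wh\la}$-submodules with subrepresentations of the double) are indeed the only points needing verification, and both hold.
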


\begin{examp} In the case $\th=0$, any point in $\mu\inv(\la)$ is
$\th$-semistable.
Such a  point is $\th$-stable if and only if the only collection of  subspaces $S=(S_i)\ii\
\sset\ V=(V_i)\ii$
which is stable under the maps $\bx$ and $\by$, is $S=0$ or $S=V$.
\end{examp}

The  above proposition implies, in particular, the following result
\begin{cor}\label{th_cor} In the special case where $\th=\pm\th^+$,
the point $(\bx,\by,\bi,\bj)\in
\mu\inv(\la)$ is $\th$-semistable if and only if, in the notation of Proposition
\ref{stab_nak}$\operatorname{(i)}$,
  we have
\begin{align*}
S_i\sset \Ker \bj_i,\en \forall i\in I\quad&\Longrightarrow\quad
S= 0\quad\text{if}\en\th=\th^+,\en\text{resp.}\\
S_i\supset \Image \bi_i,\en  \forall i\in I\quad&\Longrightarrow\quad
S= V\quad\text{if}\en\th=-\th^+.
\end{align*}
\end{cor}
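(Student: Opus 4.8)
The plan is to derive the corollary purely as a specialization of Proposition~\ref{stab_nak}; no genuinely new argument is needed. The single observation that makes everything collapse is that for the parameter $\th^+=(1,1,\ldots,1)$ one has, for every $I$-graded subspace $S=(S_i)\ii$,
\[
\th^+(\dim_I S)=\sum\ii \dim S_i=\dim_\C S,
\]
the total complex dimension of $S$. So once I substitute $\th=\pm\th^+$ into the two implications \eqref{cond1}--\eqref{cond2} of Proposition~\ref{stab_nak}, each inequality $\th(\dim_I S)\leq\ast$ becomes a statement about $\pm\dim_\C S$, and the entire task reduces to checking which of the two conditions survives and which is rendered automatic by the containment $S\sset V$.

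First I would treat $\th=\th^+$. Here condition \eqref{cond2} reads: whenever $S_i\supset\Image\bi_i$ for all $i$, one has $\dim_\C S\leq\dim_\C V$; but this holds unconditionally since $S\sset V$, so \eqref{cond2} is vacuous and imposes nothing. Condition \eqref{cond1} reads: whenever the $(\bx,\by)$-stable $S$ satisfies $S_i\sset\Ker\bj_i$ for all $i$, one has $\dim_\C S\leq 0$, i.e. $S=0$. Thus $\th^+$-semistability reduces exactly to the implication ``$S_i\sset\Ker\bj_i\ \forall i\Rightarrow S=0$'', which is the first claim. Then for $\th=-\th^+$ I would run the mirror-image computation: now $\th(\dim_I S)=-\dim_\C S$, so \eqref{cond1} reads $-\dim_\C S\leq 0$, which holds for every $S$ and is therefore vacuous, while \eqref{cond2} reads $-\dim_\C S\leq-\dim_\C V$, i.e. $\dim_\C S\geq\dim_\C V$; combined with $S\sset V$ this forces $\dim_\C S=\dim_\C V$, hence $S=V$. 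This yields the second claim, ``$S_i\supset\Image\bi_i\ \forall i\Rightarrow S=V$''.

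I expect no real obstacle here: the content is entirely book-keeping on top of Proposition~\ref{stab_nak}. The only points requiring care are tracking the direction of the inequalities under the sign change $\th^+\mapsto-\th^+$, and making the (trivial but essential) use of $S\sset V$ to discard the redundant condition in each case---for $\th=\th^+$ it is the ``$\Image\bi$'' condition \eqref{cond2} that is automatic, whereas for $\th=-\th^+$ it is the ``$\Ker\bj$'' condition \eqref{cond1}. If anything, the one thing worth double-checking is that the non-strict inequalities in Proposition~\ref{stab_nak} correctly give \emph{semistability} (rather than stability) in both specializations, so that the equalities $S=0$ and $S=V$ in the statement are the right boundary cases.
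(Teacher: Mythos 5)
Your proof is correct, and it is precisely the specialization argument the paper intends (the corollary is stated there without proof, as an immediate consequence of Proposition \ref{stab_nak}): substituting $\th=\pm\th^+$ turns $\th(\dim_I S)$ into $\pm\dim_\C S$, one of the two conditions becomes vacuous because $S\sset V$, and the other forces $S=0$ or $S=V$. Nothing further is needed.
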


\begin{defn}\label{def_M}
The variety $\M_{\la,\th}(\bv,\bw):=\mu\inv(\la)^{ss}_\th/_{\chi_\th}
G_\bv$
is called the {\em Nakajima variety} with parameters $(\la,\th)$.
Let $\M^s_{\la,\th}(\bv,\bw)\sset \M_{\la,\th}(\bv,\bw)$ denote the
Zariski open subset corresponding to stable points.
\end{defn}

Thanks to the general formalism of Hamiltonian reduction,
the symplectic structure on the manifold $\Rep(\ddq,\bv,\bw)$
gives $\M_{\la,\th}(\bv,\bw)$ the canonical structure of
a (not necessarily smooth) algebraic Poisson variety.

\begin{rem}
The equation $[\bx,\by]+\bi\o\bj=\la$, in \eqref{adhm}, is often called the
{\em moment map equation}, or
the {\em ADHM-equation}, since an equation of this form was first
considered
by Atiyah, Hitchin, Drinfeld, and Manin in their work
on instantons on ${\mathbb P}^2$, cf. \cite{ADHM} and also \cite{Na3}.

 From that point of view,
it is natural to view  \eqref{adhm} as part of
a larger system of  hyper-K\"ahler moment map equations,
cf. Remark \ref{hyperkahler}. Accordingly,
we will refer to the pair  $(\la,\th)$, viewed as an element of the real vector
space $\BR^3\o \BR^\bv=(\C\oplus\BR)\o \BR^\bv=\C^\bv\oplus \BR^\bv$,
as a   `hyper-K\"ahler parameter'.
\erem

\subsection{}\label{group}  To formulate the main properties of Nakajima
varieties, fix a quiver $Q$ and  write  $C_Q$ for the Cartan
matrix of $Q$. 
We  introduce the following set,
$$R':=\{\bv\in\Z^I\sminus \{0\}\en\;\big|\en\; C_Q\bv\cdot\bv\leq
2,\ \forall i\in I\}.
$$

If $Q$ is a quiver of either finite Dynkin or
extended Dynkin types, then 
$R'_Q=R_Q$ is the set of roots 
 associated with the Cartan matrix $C_Q$.
This is not necessarily true for more general quivers.

For $\al\in\BR^I$, write $\bv^\perp:=\{\la\in\BR^I\mid \la\cdot\bv=0\}$.

Given a dimension vector $\bv\in\Z^I_{\geq 0},$ the  parameter $(\la,\th)\in \C^I\times\Z^I$
 will be called $\bv$-{\em regular}
 if, viewed as a hyper-K\"ahler parameter $(\la,\th)\in \BR^3\o \BR^I$,
 it satisfies, cf. \cite[\S1(iii)]{Na6},
\beq{reg}
(\la,\th)\in  (\BR^3\o \BR^I)\
\sminus\
\bigcup_{\{\al\in R'_Q\,\ |\,\ 0\leq\al\leq\bv\}}\
\BR^3\o \al^\perp.
\eeq

We note that $(\la,\th):=(0,\th^+)$ is a $\bv$-regular parameter
for any dimension vector.

\begin{thm}\label{nak_thm} Fix $\la\in\C^I,\
\th\in\Z^I$, where $\la\cdot\bv=0$.  Then, we have

\vi We have $\M_{\la,0}(\bv,\bw)=
\mu\inv(\la)/\!/G_\bv$; this is an affine variety and
there is a canonical projective morphism
$\pi:\ \M_{\la,\th}(\bv,\bw)\to\M_{\la,0}(\bv,\bw)$,
which respects the Poisson brackets.

\vii Let
the  parameter $(\la,\th)$ be $\bv$-regular.
Then any $\th$-semistable point in $\mu\inv(\la)$ is
$\th$-stable, so 
  $\M_{\la,\th}(\bv,\bw)=\M^s_{\la,\th}(\bv,\bw),$ 
cf. \cite[\S2.8]{Na1},\cite[\S3(ii)]{Na1}. Furthermore,
this variety is 
smooth and connected variety of dimension
$$\dim\M_{\la,\th}(\bv,\bw)\;=\; 2\bw\cdot\bv-C_Q\bv\cdot\bv.$$
The Poisson structure on  $\M_{\la,\th}(\bv,\bw)$ is nondegenerate
making it an algebraic symplectic manifold.

\viii The variety $\M_{0,\th^+}(\bv,\bw)$
contains $T^*\R_{\th^+}(\bv,\bw)$ as a Zariski open subset.
\end{thm}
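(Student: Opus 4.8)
The plan is to realize $T^*\R_{\th^+}(\bv,\bw)$ as the Hamiltonian reduction of the cotangent bundle taken only over the locus of framed-stable representations, and then to check that this locus is carried into the $\th^+$-semistable locus defining the Nakajima variety. First I would set $U:=\Rep^{ss}(\fq,\bv,\bw)$, the $\th^+$-semistable locus of framed representations. By Lemma \ref{stabfq}, $U$ is Zariski open in $\Rep(\fq,\bv,\bw)$, semistable $=$ stable, the group $G_\bv$ acts freely on $U$, and $U\to U/G_\bv=\R_{\th^+}(\bv,\bw)$ is a principal $G_\bv$-bundle onto a smooth quasi-projective variety (Corollary \ref{sss}(ii)). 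Under the identification $\Rep(\ddq,\bv,\bw)=T^*\Rep(\fq,\bv,\bw)$, the preimage $T^*U$ of $U$ under the cotangent projection is a $G_\bv$-stable Zariski open subset, and the Nakajima moment map \eqref{moment_nak} restricts on $T^*U$ to the canonical cotangent moment map for the free $G_\bv$-action on $U$.

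Next I would apply the cotangent reduction isomorphism \eqref{zer}, in its algebraic-geometric form, to the free $G_\bv$-action on $U$. This yields a canonical symplectomorphism
$$
\left(\mu\inv(0)\cap T^*U\right)/G_\bv\;\iso\;T^*\left(U/G_\bv\right)=T^*\R_{\th^+}(\bv,\bw).
$$
It then remains to identify the left-hand side with a Zariski open subset of $\M_{0,\th^+}(\bv,\bw)=\mu\inv(0)^{ss}_{\th^+}/_{\chi_{\th^+}}G_\bv$, cf. Definition \ref{def_M}.

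The key compatibility, and the heart of the argument, is the inclusion $\mu\inv(0)\cap T^*U\sset\mu\inv(0)^{ss}_{\th^+}$. By Corollary \ref{th_cor} (with $\th=\th^+$) a quadruple $(\bx,\by,\bi,\bj)$ is $\th^+$-semistable precisely when the only $(\bx,\by)$-stable subspace $S\sset V$ with $S\sset\Ker\bj$ is $S=0$. If $(\bx,\bj)\in U$, then by Lemma \ref{stabfq}(i) there is already no nonzero $\bx$-stable subspace of $V$ contained in $\Ker\bj$; a fortiori there is no such $(\bx,\by)$-stable subspace, so the quadruple is in fact $\th^+$-stable. Hence $\mu\inv(0)\cap T^*U$ is a $G_\bv$-stable open subset of $\mu\inv(0)^{ss}_{\th^+}$ consisting of stable points on which $G_\bv$ acts freely. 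Since the fiber of the good quotient map through a stable point is exactly its orbit, this subset is saturated, so its image in $\M_{0,\th^+}(\bv,\bw)$ is Zariski open and the quotient map restricts to it as a geometric (principal $G_\bv$-bundle) quotient by Theorem \ref{GIT}(ii). Combining this with the symplectomorphism above identifies $T^*\R_{\th^+}(\bv,\bw)$ with a Zariski open subset of $\M_{0,\th^+}(\bv,\bw)$.

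I expect the main obstacle to be exactly this matching of two a priori different notions of (semi)stability: framed stability of $(\bx,\bj)$ constrains only $\bx$-stable subspaces, whereas Nakajima stability constrains $(\bx,\by)$-stable subspaces, and one must verify the implication runs in the direction that places the framed-stable cotangent locus \emph{inside} the Nakajima-semistable locus. One should also note that this inclusion is in general proper, since an $\bx$-stable subspace of $\Ker\bj$ need not be $\by$-stable; this is why only an open subset, rather than all of $\M_{0,\th^+}(\bv,\bw)$, is obtained. A secondary technical point, handled by the good-quotient properties in Theorem \ref{GIT}, is to justify that the image of the saturated open subset is Zariski open and that the induced map from the geometric quotient is an open immersion.
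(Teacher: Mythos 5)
Your argument addresses only part (iii) of the theorem, and for that part it is correct and essentially the mechanism the paper itself relies on: the identification \eqref{zer} of the reduced conormal locus $\bigl(\mu\inv(0)\cap T^*U\bigr)/G_\bv$ with $T^*(U/G_\bv)$ over the free locus $U=\Rep^{ss}(\fq,\bv,\bw)$, combined with the observation that an $(\bx,\by)$-stable subspace of $\Ker\bj$ is in particular $\bx$-stable, so framed semistability of $(\bx,\bj)$ forces $\th^+$-stability of the quadruple via Corollary \ref{th_cor}. Your remarks on the direction of the implication, on the inclusion being proper in general, and on saturation of the stable open locus are all to the point; this matches the unnumbered proposition after Theorem \ref{Mthm}, which the paper deduces from \eqref{conormal}.

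The genuine gap is that parts (i) and (ii) — the bulk of the theorem — are not addressed at all. Part (i) requires identifying $\M_{\la,0}(\bv,\bw)$ with the affine quotient $\mu\inv(\la)/\!/G_\bv$ and producing the projective Poisson morphism $\pi$, which the paper gets from the GIT formalism (the inclusion $\C[X]^G\into A_\chi$ inducing $\Proj A_\chi\to\Spec\C[X]^G$, cf. Example \ref{triv} and Corollary \ref{trivcor}). Part (ii) is where the real work lies: for a $\bv$-regular parameter one must show every $\th$-semistable point of $\mu\inv(\la)$ has trivial isotropy, deduce that all semistable orbits are closed of dimension $\dim G_\bv$ and hence stable, invoke Lemma \ref{smooth_fiber} to get smoothness of $\mu\inv(\la)^{ss}$ and nondegeneracy of the induced form on the quotient, and run the dimension count from \eqref{mdim} (note the paper's warning that here one divides by $\dim G_\bv$, not $\dim G_\bv/\gm$, since the framing prevents the action from factoring through $G_\bv/\gm$); connectedness is a separate, harder input due to Crawley-Boevey. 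None of these steps follows from, or is replaced by, your cotangent-bundle argument, so the proposal as written proves only the last and easiest of the three assertions.
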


\begin{proof}[Sketch of Proof] Part (i) is clear. 
To prove (ii), one shows that the isotropy group
of any point  $(\bx,\by,\bi,\bj)\in \mu\inv(\la)$ that
satisfies conditions \eqref{cond1}-\eqref{cond2} is
trivial, provided the parameter $(\la,\th)$ is $\bv$-regular.
It follows, in particular, that the $G_\bv$-orbit
of a semistable point   $(\bx,\by,\bi,\bj)\in \mu\inv(\la)$ 
must
be an orbit of maximal dimension equal to $\dim G_\bv$. We conclude that
one
 semistable orbit
can not be contained in the closure of another semistable orbit.
Thus, all  semistable  orbits are closed in $\mu\inv(\la)^{ss}$,
hence any  semistable point is actually stable.
 
Further, by Corollary \ref{smooth_fiber}, the triviality of stabilizers
implies
that the set $\mu\inv(\la)^{ss}$ of $\th$-stable points  is smooth
and $\mu\inv(\la)^{ss}/G_\bv$ is a symplectic manifold.
Therefore, using the dimension formula \eqref{mdim}, we compute
$$\dim\big(\mu\inv(\la)^{ss}/G_\bv\big)=
2\bw\cdot\bv+(2\Id-C_Q)\bv\cdot\bv-2\dim
G_\bv=
2\bw\cdot\bv-C_Q\bv\cdot\bv.$$
(note that unlike the situation considered in Theorem \ref{Mthm}
the $G_\bv$-action on $\Rep(\ddq, \bv,\bw)$ does not factor
through the quotient $G_\bv/\gm$. Therefore, it is the
dimension of the group $G_\bv$, rather than that of
$G_\bv/\gm$, that enters the dimension count above).

Finally, the connectedness of the varieties $\M_{\la,\th}(\bv,\bw)$ is
a much more difficult result proved by Crawley-Boevey.
The proof is based on the irreducibility statement in Theorem
\ref{gg}(iii)
and on a `hyper-K\"ahler rotation' trick (Remark \ref{rotat} below).
For more details see comments
at \cite{CB1}, p.261.
\end{proof}

\begin{rem}\label{rotat} For a $\bv$-regular parameter $(\la,\th)$, the Nakajima
variety $\M_{\la,\th}(\bv,\bw)$ comes equipped with
a structure of hyper-K\"ahler manifold, cf. Remark \ref{hyperkahler}.
In particular, one can show that there is a choice of complex structure on
the $C^\infty$-manifold
 $\M_{\la,\th}(\bv,\bw)$ that makes it a smooth
and {\em affine} algebraic variety, see \cite[\S\S3.1, 4.2]{Na1}.
\erem

Observe next that the group $G_\bw=\prod\ii GL(w_i)$ acts naturally
on $\Rep(\fq,\bv,\bw)$ and on $\Rep(\ddq,\bv,\bw)$.
Furthermore, the $G_\bw$-action on the latter space is
Hamiltonian and  each fiber $\mu\inv(\la)$ of the moment map
\eqref{moment} is a $G_\bw$-stable subvariety.
Also,  the $G_\bw$-action clearly preserves any stability condition
hence descends, for any  $(\la,\th)$, to a well defined   $G_\bw$-action on
the Nakajima variety $\M_{\la,\th}(\bv,\bw)$ by Poisson automorphisms.

Assume now that $\la=0$. In this special case, there are two natural ways to
define an additional
$\gm$-action on  $\M_{0,\th}(\bv,\bw)$ that makes it
a $G_\bw\times\gm$-variety. Each of these actions comes from
a  $\gm$-action on  $\Rep(\ddq,\bv,\bw)$ that keeps the fiber
$\mu\inv(0)$
stable and commutes with the $G_\bv$-action on the fiber.
The first  $\gm$-action on  $\Rep(\ddq,\bv,\bw)$
is the dilation action given  by the formula
$\gm\ni t:\ (\bx,\by,\bi,\bj)\mto
(t\cdot\bx,t\cdot\by,t\cdot\bi,t\cdot\bj)$. This action rescales the symplectic 2-form
$\omega$ as $t:\ \omega\mto t^2\cdot\omega$.

The second  $\gm$-action on  $\Rep(\ddq,\bv,\bw)$ 
 corresponds, via the identification
$\Rep(\ddq,\bv,\bw)$ $=T^*\Rep(\fq,\bv,\bw)$, to the natural $\gm$-action
by dilations along the fibers of the contangent
bundle.
This $\gm$-action is defined by the formula
$\gm\ni t:\ (\bx,\by,\bi,\bj)\mto(\bx, t\cdot \by,\bi, t\cdot \bj)$.
The latter action keeps the
subvariety $\mu\inv(0)$ stable and commutes with the
$G_\bv$-action on $\Rep(\ddq,\bv,\bw)$. Therefore,
for any $\th$, there is an
induced $G_\bv$-action $\gm\ni t:\
z\mto t(z),$ on $\M_{0,\th}(\bv,\bw)$.
Furthermore, the map
$\pi$ becomes a $G_\bv$-equivariant morphism
of $G_\bv$-varieties, and the fiber
$\pi\inv(0)\sset \M_{0,\th}(\bv,\bw)$
becomes a  $G_\bv$-stable subvariety.

The symplectic form 
$\om$ on $T^*\Rep(\fq,\bv,\bw)$ gets rescaled under the above $\gm$-action
as follows $\gm\ni t:\ \om\mto t\cdot\om$.
Hence, the induced symplectic form on $\M_{0,\th}(\bv,\bw)$,
to be denoted by $\om$ again, transforms in a similar way.

For any parameters $(\la,\th)$, the canonical projective
morphism $\pi: \M_{\la,\th}\to\M_{\la,0}$  is $G_\bw$-equivariant.
In the case $\la=0$ this morphism is also $\gm$-equivariant with
respect to either of the two $\gm$-actions defined above.

\subsection{}\label{strat}
Let $\mu\inv(\la)^\circ\sset \mu\inv(\la)$ be the subset of points with
trivial isotropy group. We let
$\M_{\la,0}^\circ(\bv,\bw)$
$\sset \M_{\la,0}(\bv,\bw)$ be
the image of this set in $\mu\inv(\la)/\!/G_\bv$.
Nakajima uses the notation $\M_{\la,0}^\text{reg}(\bv,\bw)$
for $\M_{\la,0}^\circ(\bv,\bw),$
cf. \cite[\S3(v)]{Na2}.
He verifies
that $\M_{\la,0}^\circ(\bv,\bw)$ is a Zariski open (possibly
empty)
subset of $\M_{\la,0}(\bv,\bw)$.

One has the following result, cf. \cite{Na2}, Proposition 3.24.

\begin{prop}\label{strat_prop} Assume the quiver $Q$ has no edge loops and that $(\la,\th)$ is a
$\bv$-regular parameter. Then, one has
\vs

\vi Any point in $\mu\inv(\la)^\circ$ is $\th$-stable.

\vii If  the set  $\M_{\la,0}^\circ(\bv,\bw)$ is
nonempty then it is dense in $\M_{\la,0}(\bv,\bw)$ and, we have:
\vs

\npb{The canonical projective morphism $\pi: \M_{\la,\th}\to\M_{\la,0}$  is a symplectic
resolution;}
\vs

\npb{The set $\pi\inv\big(\M_{\la,0}^\circ(\bv,\bw)\big)$ is dense
in $\M_{\la,\th}(\bv,\bw)$, and  the map $\pi$ restrics
to an isomorphism}
$$\pi:\
\pi\inv\big(\M_{\la,0}^\circ(\bv,\bw)\big)\
\iso\
\M_{\la,0}^\circ(\bv,\bw).\eqno\Box$$
\end{prop}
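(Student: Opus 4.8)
The plan is to work throughout in Crawley--Boevey's description of the fiber, Remark \ref{adhmcb}: the identification $\mu\inv(\la)\cong\Rep(\Pi_{\wh\la},\wh\bv)$ turns a quadruple $(\bx,\by,\bi,\bj)$ into a module $M$ over the preprojective algebra of the quiver $Q^\bw$ of dimension $\wh\bv$ (with $\wh v_\infty=1$), and makes the $G_\bv$-action the base-change action under $G_\bv\cong G_{\wh\bv}/\gm$. First I would record the algebraic meaning of the isotropy: restriction of a $\Pi_{\wh\la}$-endomorphism to the one-dimensional space at $\infty$ gives an exact sequence whose kernel is the Lie algebra of the $G_\bv$-stabilizer, so by Lemma \ref{connected} (connectedness of isotropy) together with Schur's lemma a point lies in $\mu\inv(\la)^\circ$ exactly when $\End_{\Pi}(M)=\C$, i.e. $M$ is a brick. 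In the same dictionary the $\bx,\by$-invariant subspaces in the two positions of the $\infty$-coordinate are precisely the submodules entering conditions \eqref{cond1}--\eqref{cond2} of Proposition \ref{stab_nak}, so $\th$-stability of the point is King-stability of $M$ as a $\Pi_{\wh\la}$-module.

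For part (i) I would first pin down which free points matter for (ii), namely those whose $G_\bv$-orbit is closed, since these are the ones mapping into $\M^\circ_{\la,0}$. By Theorem \ref{Mthm}(i) a closed orbit corresponds to a semisimple $\Pi_{\wh\la}$-module, and a semisimple brick is simple; hence the relevant locus $\M^\circ_{\la,0}(\bv,\bw)$ is exactly the set of orbits of \emph{simple} $\Pi_{\wh\la}$-modules of dimension $\wh\bv$. A simple module has no proper nonzero submodule, so the slope inequalities defining King-stability hold vacuously and such a module is $\th$-stable for \emph{every} $\th$. The role of the hypothesis that $Q$ has no edge loops, and of $\bv$-regularity \eqref{reg}, is to guarantee through Theorem \ref{nak_thm}(ii) and the root inequalities of Theorem \ref{gg} that on the semistable locus $\th$-stability coincides with freeness and that the dimension count gives the expected $2\bw\cdot\bv-C_Q\bv\cdot\bv$.

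Then for (ii): the morphism $\pi:\M_{\la,\th}(\bv,\bw)\to\M_{\la,0}(\bv,\bw)$ is projective by Theorem \ref{nak_thm}(i), the source is smooth, connected and carries an algebraic symplectic form by Theorem \ref{nak_thm}(ii), the target is normal and affine by Theorem \ref{nak_thm}(i), and $\pi^*$ is Poisson by Theorem \ref{nak_thm}(i); so to present $\pi$ as a symplectic resolution it remains only to prove it birational, which I extract from the fiber computation. Over a point $[N]\in\M^\circ_{\la,0}$ with $N$ simple, the fiber $\pi\inv([N])$ consists of the $\th$-stable $M$ whose semisimplification $\gr^s M$ equals $N$ (Theorem \ref{nak_thm}(iv)); since $N$ is simple, $\gr^s M\cong N$ forces $M\cong N$, so the fiber is the single point $[N]$. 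Thus $\pi$ is bijective over $\M^\circ_{\la,0}$; as $\pi$ is proper, $\M_{\la,\th}$ smooth (hence normal), and $\M^\circ_{\la,0}$ open in the normal variety $\M_{\la,0}$, a proper bijective morphism in characteristic zero onto a normal target is an isomorphism, giving $\pi:\pi\inv(\M^\circ_{\la,0})\iso\M^\circ_{\la,0}$. Finally, once $\M^\circ_{\la,0}$ is nonempty it is dense: it is Zariski open in $\M_{\la,0}$, which is irreducible as the image under the projective surjection $\pi$ of the connected $\M_{\la,\th}$; hence $\pi\inv(\M^\circ_{\la,0})$ is open and nonempty in the irreducible $\M_{\la,\th}$, so dense, and $\pi$ is birational.

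The step I expect to be hardest is not the fiberwise bijection---which is the clean heart of the argument---but the irreducibility input making $\M^\circ_{\la,0}$ dense: it rests on Crawley--Boevey's connectedness theorem for $\M_{\la,\th}$ invoked in the proof of Theorem \ref{nak_thm}(ii), together with the identification of $\M^\circ_{\la,0}$ with the simple locus and the verification (due to Nakajima) that this locus is Zariski open. A subtlety to treat carefully is that the literal image of the free locus also contains classes of semisimple modules of smaller dimension, whose closed orbits are \emph{not} free; these belong to lower strata $\M^\circ_{\la,0}(\bv',\bw)$ with $\bv'<\bv$, and $\pi$ fails to be an isomorphism there, so the identification of $\M^\circ_{\la,0}(\bv,\bw)$ with the simple modules of dimension exactly $\wh\bv$ is precisely what makes the asserted isomorphism correct. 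Controlling when this simple locus is nonempty---so that the resolution is genuinely birational---is exactly where the inequalities \eqref{inequality} and the regularity condition \eqref{reg} are indispensable.
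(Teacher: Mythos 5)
Your skeleton is the right one, and it is essentially the argument behind Nakajima's Proposition 3.24, which the paper itself only cites rather than proves: pass to the Crawley--Boevey quiver $Q^\bw$, identify $\M^\circ_{\la,0}(\bv,\bw)$ with the locus of simple $\Pi_{\wh\la}$-modules of dimension $\wh\bv$, observe that a simple module is $\th$-stable for vacuous reasons and that the fiber of $\pi$ over a simple $N$ is a single point because $\gr^s M\cong N$ forces $M\cong N$, then apply Zariski's main theorem. However, your density step is circular. You derive irreducibility of $\M_{\la,0}(\bv,\bw)$ from the claim that it is ``the image under the projective surjection $\pi$ of the connected $\M_{\la,\th}(\bv,\bw)$'', but $\pi$ is \emph{not} surjective in general --- the paper warns of exactly this right after Proposition \ref{kpp}, where the map $T^*\R_{\th^+}(\bv,\bw)\to X$ can fail to be onto. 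The deduction must run in the opposite direction: nonemptiness of $\M^\circ_{\la,0}$ means a simple $\Pi_{\wh\la}$-module of dimension $\wh\bv$ exists; by Crawley--Boevey this forces the strict form of inequality \eqref{inequality}, so Theorem \ref{gg}(iii) makes $\mu\inv(\la)$ reduced and irreducible with a dense open locus of simples; hence $\M_{\la,0}$ is irreducible and $\M^\circ_{\la,0}$ is dense in it, and only then do properness of $\pi$ and the inclusion $\M^\circ_{\la,0}\sset\pi\big(\M_{\la,\th}(\bv,\bw)\big)$ give surjectivity of $\pi$ --- which you also need before you may call $\pi$ a resolution at all. You gesture at Theorem \ref{gg} only for the nonemptiness question; it is in fact the engine of the density claim.

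Concerning part (i): what you actually prove is that every \emph{closed} free orbit (equivalently, every simple module) is $\th$-stable, not that every point of $\mu\inv(\la)^\circ$ as literally defined in \S\ref{strat} is. For part (ii) this restriction is exactly what is needed, and it is in fact forced: a brick whose orbit is not closed need not even be $\th$-semistable (already for the $A_1$ quiver with $\bv=\bw=(1)$ and $\th=\th^+$, the free point with $\bi\neq 0$, $\bj=0$ violates \eqref{cond1}). So your reading is the correct one --- it matches what Nakajima's $\M^{\mathrm{reg}}_0$ actually denotes --- but you should say explicitly that you are proving the statement for closed free orbits, since as written your argument never addresses a brick that is not simple.
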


There is a combinatorial criterion for the set
$\M_{\la,0}^\circ$ to be nonempty, see \cite[Proposition ~10.5 and
Corollary ~10.8]{Na2}. Also, using Theorem \ref{Zthm}
Nakajima proves, see \cite[Corollary 6.11]{Na1},

\begin{prop}\label{semismall} If the quiver $Q$ has no loop edges then the map
$\pi:\ \M_{\la,\th}(\bv,\bw)\to \pi\big(\M_{\la,0}(\bv,\bw)\big)$
is semismall for any $\bv$-regular parameter $(\la,\th)$.
\end{prop}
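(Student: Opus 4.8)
The plan is to reduce semismallness to a dimension estimate on a fibre product, and then to control that dimension by means of the symplectic form, in the spirit of the argument for property~(i) of symplectic resolutions given in the introduction.

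First I would set up notation and reformulate the statement. Write $Y:=\M_{\la,\th}(\bv,\bw)$ and $X:=\M_{\la,0}(\bv,\bw)$, so $\pi\colon Y\to X$ is the projective morphism of Theorem~\ref{nak_thm}(i), and the map in the statement is $\pi$ onto its image $\pi(Y)$. By Theorem~\ref{nak_thm}(ii), since $(\la,\th)$ is $\bv$-regular and $Q$ has no loops, $Y$ is smooth and connected, hence irreducible of pure dimension $d:=2\bw\cdot\bv-C_Q\bv\cdot\bv$. Because the fibre products $Y\times_{\pi(Y)}Y$ and $Y\times_X Y$ coincide, the Goresky--MacPherson criterion says $\pi$ is semismall precisely when every component of $Y\times_X Y$ has dimension $\leq d$; equivalently, stratifying the base by $X_k:=\{x\mid\dim\pi\inv(x)=k\}$, one needs $\dim X_k+2k\leq d$ for all $k$, since $\dim(Y\times_X Y)=\max_k(\dim X_k+2k)$. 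The diagonal always contributes a component of dimension exactly $d$, so the real task is the upper bound $\dim(Y\times_X Y)\leq d$.

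Second, I would bring in the symplectic form. By Theorem~\ref{nak_thm}(ii), $Y$ carries a nondegenerate symplectic form $\omega$, so $Y\times Y$ is a symplectic manifold of dimension $2d$ for the form $\Omega:=p_1^*\omega-p_2^*\omega$, exactly as in the treatment of property~(i) in the introduction. The heart of the plan is to prove that $Y\times_X Y$ is \emph{isotropic} for $\Omega$: once this is known, an isotropic subvariety of a $2d$-dimensional symplectic manifold has every component of dimension $\leq d$, which is precisely the bound needed above, and hence gives semismallness (with equality $\dim(Y\times_X Y)=d$ from the diagonal).

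Third, and this is where the real work lies, I would establish the isotropy of the fibre product. One cheap input is available: each individual fibre $\pi\inv(x)$ is isotropic by Proposition~\ref{wiz}(ii) (its hypotheses hold, as $X$ is affine by Theorem~\ref{nak_thm}(i) and $\pi$ is proper), but this only yields $k\leq d/2$ and says nothing about $\dim X_k$, so it does not suffice. To get isotropy of the whole fibre product I would use that $\pi$ is a \emph{Poisson} morphism (Theorem~\ref{nak_thm}(i)): pulling $\Omega$ back along the quotient map to $\mu\inv(\la)^{ss}\times_X\mu\inv(\la)^{ss}$, where $\omega$ becomes the restriction of the constant symplectic form on $\Rep(\ddq,\bv,\bw)$, the vanishing of $\Omega$ on tangent vectors to the fibre product should follow from the fact that $\pi$ intertwines the Poisson bivectors. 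Conceptually this fibre product is the diagonal case $\bv^1=\bv^2=\bv$ of the Steinberg-type variety $Z(\bv^1,\bv^2;\bw)=\M_{\la,\th}(\bv^1,\bw)\times_{\M_{\la,0}}\M_{\la,\th}(\bv^2,\bw)$, whose being Lagrangian is exactly Theorem~\ref{Zthm}. I expect proving this Lagrangian property to be the single genuine obstacle; everything else is formal.

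Finally, regarding the mechanism behind that estimate: the contracting $\gm$-action of \S\ref{group} (the cotangent-fibre dilation, under which $\pi$ is equivariant and $\omega$ rescales) retracts $Y$ onto the central fibre, which should let one reduce the global dimension bound to a statement about that Lagrangian fibre; a hyper-K\"ahler rotation (Remark~\ref{rotat}) then permits passage to a convenient parameter, handling general $\la$ with $\la\cdot\bv=0$ rather than only $\la=0$. Granting Theorem~\ref{Zthm}, the dimension count of the first paragraph gives $\dim(Y\times_X Y)=d$ and the semismallness of $\pi$ at once.
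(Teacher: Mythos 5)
Your reduction is precisely the one the paper intends: it deduces Proposition \ref{semismall} from Theorem \ref{Zthm}(ii) in the diagonal case $\bv'=\bv$, where $Z(\bv,\bv,\bw)=\M(\bv,\bw)\times_{\M_0(2\bv,\bw)}\M(\bv,\bw)$ is exactly the fibre product $Y\times_X Y$ and the bound $\dim Z\leq\frac{1}{2}(\dim\M+\dim\M)$ is the Goresky--MacPherson criterion, the paper offering no independent proof of that Lagrangian/dimension estimate beyond citing Nakajima. One caveat on your exploratory middle step: the isotropy of $Y\times_X Y$ does \emph{not} follow merely from $\pi$ being a proper Poisson morphism to an affine Poisson variety (a compact symplectic surface mapping to a point is a counterexample); Nakajima's proof of Theorem \ref{Zthm} genuinely needs the stratification of the image by the sets $\M^\circ_{0}(\bv',\bw)$ together with the isotropy of the fibres over each stratum --- but since you ultimately grant Theorem \ref{Zthm}, your logical chain coincides with the paper's.
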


\begin{examp}[Type $\textbf{A}$ Dynkin quiver]\label{dyn2}
Let $Q$ be an $\textbf{A}_n$-quiver, and let  $\bv=(v_1,v_2,\ldots,v_n)$ and $\bw=(r,0,0,\ldots,0)$,
where $r>v_1>v_2>\ldots>v_n>0,$
as in Example \ref{dyn}. Thus, a representation of the quiver
$\ddq$ looks like %
$$ \Rep(\ddq,\bv,\bw):\
\xymatrix{\ \stackrel{W_1}\hd\ \ar@<0.5ex>[r]^<>(0.5){\bi}&
\ \stackrel{V_1}\hd\ \ar@<0.5ex>[r]^<>(0.5){\by}\ar@<1ex>[l]^<>(0.5){\bj}&\ \stackrel{V_2}\hd\
\ar@<0.5ex>[r]^<>(0.5){\by}\ar@<1ex>[l]^<>(0.5){\bx}&\ldots
\ar@<0.5ex>[r]^<>(0.5){\by}\ar@<1ex>[l]^<>(0.5){\bx}&\ 
\stackrel{V_{n-2}}\hd\ \ar@<0.5ex>[r]^<>(0.5){\by}\ar@<1ex>[l]^<>(0.5){\bx}&
\ \stackrel{V_{n-1}}\hd\ \ar@<0.5ex>[r]^<>(0.5){\by}
\ar@<1ex>[l]^<>(0.5){\bx}&\ \stackrel{V_n}\hd\ \ar@<1ex>[l]^<>(0.5){\bx}
}
$$

Write $W:=W_1$.
The assignment $(\bx,\by,\bi,\bj)\mto \bj\ccirc\bi$
gives a map $\varpi:\ \Rep(\ddq,\bv,\bw)\to \g{\mathfrak{l}}(W)$.
Let $\varpi$ denote the restriction of this map
to
$\mu\inv(0)\sset \Rep(\ddq,\bv,\bw),$ the zero fiber of the moment map,
and let $X:=\varpi(\mu\inv(0))$ be the image of $\varpi$.

Recall that, according to the discussion 
in  Example \ref{dyn},
 we have $\R_{\th^+}(\bv,\bw)\cong \FF(n,W).$

\begin{prop}\label{kpp} \vi The map $\varpi$ induces an isomorphism
$\M_{0,0}(\bv,\bw)=\mu\inv(0)/\!/G_\bv\iso X$, cf. \cite{Na1} and
\cite[Theorem 2.1]{Sh}.

\vii One has an isomorphism $\M_{0,\th^+}(\bv,\bw)\cong
T^*\R_{\th^+}(\bv,\bw)=T^*\FF(n,W)$ such that the canonical
map $\pi:\ \M_{0,\th^+}(\bv,\bw)\to\M_{0,0}(\bv,\bw)$
gets identified with natural moment map
$T^*\FF(n,W)\to X\sset\g{\mathfrak{l}}(W)$, see \cite[Theorem 7.2]{Na1}.
\end{prop}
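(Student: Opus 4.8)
The plan is to build both statements around the single $G_\bv$-invariant map $\varpi$ and to prove (i) by invariant theory and (ii) by a properness-plus-connectedness argument. First I would record that $\varpi(\bx,\by,\bi,\bj)=\bj\ccirc\bi$ is $G_\bv$-invariant: since $G_\bv=\prod_i GL(V_i)$ acts trivially on $W$ and sends $\bi\mapsto g\ccirc\bi$, $\bj\mapsto \bj\ccirc g\inv$, the composite $\bj\bi\in\gl(W)$ is unchanged. Hence $\varpi$ descends to a morphism $\bar\varpi\colon \M_{0,0}(\bv,\bw)=\mu\inv(0)/\!/G_\bv\to \gl(W)$ whose image is $X$ by definition, so $\bar\varpi\colon\M_{0,0}(\bv,\bw)\to X$ is dominant. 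Because $\M_{0,0}(\bv,\bw)$ is affine with $\C[\M_{0,0}(\bv,\bw)]=\C[\mu\inv(0)]^{G_\bv}$ (Theorem \ref{nak_thm}(i)), it suffices to show that the pullback $\bar\varpi^*\colon\C[X]\to\C[\mu\inv(0)]^{G_\bv}$ is an isomorphism; injectivity is automatic from dominance, so the content is surjectivity, i.e. that the invariant ring is generated by the matrix coefficients of $\bj\bi$.

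For that surjectivity I would invoke Le Bruyn--Procesi (Proposition \ref{BP}) for the framed quiver $\ddq$, most conveniently in Crawley-Boevey's reformulation (Remark \ref{adhmcb}), where the extra vertex $\infty$ carries dimension $1$. Then $\C[\mu\inv(0)]^{G_\bv}$ is spanned by traces of oriented cycles: because $V_\infty$ is one-dimensional, every cycle visiting $\infty$ contributes a single matrix coefficient of $\bj\,p(\bx,\by)\,\bi$ for a path $p$ along the type $\mathbf A$ chain, while cycles avoiding $\infty$ contribute traces of closed paths in the chain. The key computation is that, on $\mu\inv(0)$, the preprojective relations --- $[\bx,\by]=0$ at the interior vertices and $[\bx,\by]=-\bi\bj$ at the framing vertex $1$ --- let one push every commutator toward vertex $1$ and thereby rewrite all of these traces as polynomials in the entries of $\bj\bi$; this is exactly the reduction of \cite[Theorem 2.1]{Sh} and \cite{Na1}. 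This makes $\bar\varpi^*$ surjective, hence $\bar\varpi$ a closed immersion and an isomorphism onto the closed subvariety $X\sset\gl(W)$, which one recognizes as the closure $\overline{\O}$ of the nilpotent orbit in $\gl(W)$ determined by $\bv$.

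For (ii) I would start from the open immersion $T^*\R_{\th^+}(\bv,\bw)\into\M_{0,\th^+}(\bv,\bw)$ of Theorem \ref{nak_thm}(viii) together with the identification $\R_{\th^+}(\bv,\bw)\cong\FF(n,W)$ of Example \ref{dyn}; thus $j\colon T^*\FF(n,W)\into\M_{0,\th^+}(\bv,\bw)$ is an open immersion of smooth connected varieties of the same dimension, since $2\bw\cdot\bv-C_Q\bv\cdot\bv=2\dim\FF(n,W)$ as one checks from $C_Q=2\Id-A_{\dq}$. The next step is to identify the composite $\pi\ccirc j\colon T^*\FF(n,W)\to\M_{0,0}(\bv,\bw)=X$. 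Over a $\th^+$-stable point the framed data $(\bx,\bj)$ reconstructs the flag $F_\bullet$ exactly as in Example \ref{dyn}, while $(\by,\bi)$ provide the cotangent directions; a direct linear-algebra computation shows that $\bar\varpi$ sends this point to $\bj\bi$, which coincides (up to sign) with the image of the cotangent vector under the natural $GL(W)$-moment map $T^*\FF(n,W)\to\gl(W),\ (F_\bullet,\xi)\mapsto\xi$. Hence $\pi\ccirc j$ is precisely this moment map, the Springer-type resolution onto $X=\overline{\O}$.

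Finally I would upgrade $j$ to an isomorphism by a properness argument. The moment map $T^*\FF(n,W)\to\gl(W)$ is proper, being the Springer-type resolution of the orbit closure $\overline{\O}=X$, so $\pi\ccirc j$ is proper; as $\pi$ is projective and hence separated (Theorem \ref{nak_thm}(i)), the cancellation property forces $j$ itself to be proper. A proper open immersion has image both open and closed, and $\M_{0,\th^+}(\bv,\bw)$ is connected (Theorem \ref{nak_thm}(ii)), so $j$ is surjective and therefore an isomorphism $T^*\FF(n,W)\iso\M_{0,\th^+}(\bv,\bw)$ intertwining $\pi$ with the moment map, as claimed. I expect the surjectivity of $j$ to be the main obstacle: a priori the $\th^+$-stability condition of Corollary \ref{th_cor} (no nonzero $\bx,\by$-stable subspace inside $\Ker\bj$) is weaker than the $\bx$-stability defining the flags, and it is the moment-map relations special to the type $\mathbf A$ chain --- encoded above in the properness of the Springer map, and directly provable by showing the $\by$-saturation of an $\bx$-stable subspace of $\Ker\bj$ stays inside $\Ker\bj$ --- that force the two loci to coincide.
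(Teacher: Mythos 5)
The paper itself offers no proof of this proposition --- it defers entirely to \cite{Na1} and \cite[Theorem 2.1]{Sh} --- so your proposal can only be measured against the standard arguments in those references. For part (i) you follow essentially the same route they do: descend $\varpi$ to the categorical quotient and reduce the surjectivity of $\varpi^*$ onto $\C[\mu\inv(0)]^{G_\bv}$ to the statement that traces of oriented cycles in the doubled Crawley--Boevey quiver can be rewritten, using the ADHM relations, as polynomials in the entries of $\bj\ccirc\bi$; since you explicitly outsource that rewriting to \cite[Theorem 2.1]{Sh}, your part (i) is a correct framing at the same level of detail as the paper's citation, not an independent proof. Part (ii) is where you genuinely diverge: Nakajima's argument in \cite[\S 7]{Na1} is a direct stability analysis showing that every $\th^+$-semistable quadruple on $\mu\inv(0)$ has all the maps $\bx_{i-1,i}$ and $\bj$ injective (so the semistable locus coincides with the flag locus on the nose), whereas you deduce surjectivity of the open immersion $j\colon T^*\FF(n,W)\into\M_{0,\th^+}(\bv,\bw)$ abstractly, from properness of the moment map $T^*\FF(n,W)\to\gl(W)$, cancellation against the separated morphism $\pi$, and connectedness of $\M_{0,\th^+}(\bv,\bw)$. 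That argument is correct and pleasantly short, but note what it costs: it leans on Crawley--Boevey's connectedness theorem (which the paper flags as the hardest input to Theorem \ref{nak_thm}(ii)) and on the open-immersion statement of Theorem \ref{nak_thm}(iii), neither of which is proved in these notes, while Nakajima's direct computation needs neither. It does have the virtue of making transparent why $\pi$ need not be surjective onto $X$ when the inequalities \eqref{strong} fail, which is exactly the caveat the paper records in Proposition \ref{kp}.

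Two small points to fix. You cite ``Theorem \ref{nak_thm}(viii)''; the statement you want is part (iii) of that theorem. And the identification of $\pi\ccirc j$ with the $GL(W)$-moment map, which your properness argument genuinely needs, is asserted as ``a direct linear-algebra computation''; it is true (it is reduction in stages for the commuting $G_\bv$- and $G_\bw$-actions on $\Rep(\ddq,\bv,\bw)$, under which $\varpi$ is, up to sign, the $G_\bw$-moment map), but you should say at least that much, since without it the properness of $\pi\ccirc j$ has no source.
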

Here, the isomorphism  $\M_{0,\th^+}(\bv,\bw)\cong
T^*\R_{\th^+}(\bv,\bw)$, of part (ii), is a particularly nice
case of the situation considered in Theorem \ref{nak_thm}(iii).

Following an observation made by Shmelkin, we alert the reader 
that the map $T^*\R_{\th^+}(\bv,\bw)$ $\to X$ need not 
be surjective, in general. More precisely, one has the
following result, which is essentially
due Kraft and Procesi \cite{KP}, cf.
also \cite[Proposition 2.2(ii)]{Sh} and \cite[\S7]{Na1}:

\begin{prop}\label{kp} In the above setting, assume in addition
that the following (stronger) inequalities hold
\beq{strong}
r-v_1\geq v_1-v_2\geq v_2-v_3\geq \ldots\geq v_{n-1}-v_n\geq v_n.
\eeq

Then,  the map $T^*\R_{\th^+}(\bv,\bw)\to X$
is  surjective. Furthermore, the set $\M_{0,0}^\circ(\bv,\bw)$ gets identified,
under the  isomorphism
$\M_{0,0}(\bv,\bw)\cong X$, with
the unique Zariski open and dense
$GL(W)$-conjugacy class in $X$.
\end{prop}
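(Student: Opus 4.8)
Throughout write $\xi:=\varpi(\bx,\by,\bi,\bj)=\bj\ccirc\bi\in\gl(W)$, and recall from Proposition \ref{kpp} that, under the isomorphisms $\M_{0,\th^+}(\bv,\bw)\cong T^*\FF(n,W)$ and $\M_{0,0}(\bv,\bw)\cong X$, the map in question is nothing but the canonical projective morphism $\pi$; in particular its image is closed in $X$. The plan is to prove that, under \eqref{strong}, both $X$ and $\Image\pi$ coincide with the closure of one and the same nilpotent orbit in $\gl(W)$. First I would read off the numerics of $\xi$ from the moment map equation $[\bx,\by]+\bi\ccirc\bj=0$. At the framing vertex this reads $\bi_1\bj_1=\bx_2\by_2$ (up to sign), at each interior vertex $2\le k\le n-1$ it gives $\by_k\bx_k=\bx_{k+1}\by_{k+1}$, and at the terminal vertex it gives $\by_n\bx_n=0$. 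Substituting $\xi^k=\bj_1(\bi_1\bj_1)^{k-1}\bi_1$ and telescoping these identities down the chain, one finds that $\xi^{k}$ factors through $V_k$ for $1\le k\le n$ and that $\xi^{n+1}=0$. Hence every $\xi\in X$ is nilpotent and satisfies the unconditional bounds $\rk(\xi^k)\le v_k$, where $v_0:=r$ and $v_{n+1}:=0$.

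Set $\mathbf p:=(r-v_1,\,v_1-v_2,\,\dots,\,v_{n-1}-v_n,\,v_n)$, a composition of $r$. The hypothesis \eqref{strong} is precisely the assertion that $\mathbf p$ is weakly decreasing, i.e. a genuine partition; a nilpotent of Jordan type $\mathbf p^{t}$ then saturates every bound, $\rk(\eta^k)=v_k$, so the inequalities above force $X\subseteq\overline{\O_{\mathbf p^{t}}}$. Next I would identify $\Image\pi$. Under $\M_{0,\th^+}\cong T^*\FF(n,W)$ the morphism $\pi$ becomes the moment map $T^*\FF(n,W)\to\gl(W)$ of Proposition \ref{kpp}(ii); by the Kraft--Procesi analysis its image is the closure of the Richardson orbit attached to the parabolic stabilizing a flag in $\FF(n,W)$, whose Jordan type is the transpose of the multiset of flag jumps arranged into a partition. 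Those jumps are exactly the entries of $\mathbf p$, and under \eqref{strong} they are already sorted, so $\Image\pi=\overline{\O_{\mathbf p^{t}}}$. Combining the two inclusions gives $\overline{\O_{\mathbf p^{t}}}=\Image\pi\subseteq X\subseteq\overline{\O_{\mathbf p^{t}}}$, whence $\Image\pi=X$ and $\pi$ is surjective. (Alternatively, one may avoid quoting the Richardson description and instead exhibit, for a fixed $\xi_0$ of type $\mathbf p^{t}$, an explicit semistable quadruple with $\bj\ccirc\bi=\xi_0$ built from the $\xi_0$-stable descending flag $\Image\xi_0^{\,k}$, whose successive dimensions are the $v_k$ by saturation.)

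For the final assertion, note that $X=\overline{\O_{\mathbf p^{t}}}$ is an irreducible nilpotent orbit closure, hence carries a unique Zariski open dense $GL(W)$-conjugacy class, namely $\O_{\mathbf p^{t}}$; it remains to match it with $\M_{0,0}^\circ(\bv,\bw)$. Since $\varpi$ is $GL(W)=G_\bw$-equivariant and the condition of trivial $G_\bv$-isotropy is $G_\bw$-invariant, the set $\M_{0,0}^\circ$ is $GL(W)$-stable; by Proposition \ref{strat_prop} it is Zariski open and, once shown nonempty, dense in $X$. Nonemptiness follows from the lift constructed above, whose stabilizer is trivial, and density then forces $\O_{\mathbf p^{t}}\subseteq\M_{0,0}^\circ$. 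For the reverse inclusion I would show that any quadruple lying over a boundary orbit $\O_\nu$ with $\nu<\mathbf p^{t}$ carries a nontrivial stabilizer: when $\rk(\xi^k)<v_k$ the telescoped maps do not span $V_k$, and the resulting slack yields a nonscalar endomorphism commuting with $\bx,\by,\bi,\bj$, i.e. a nontrivial element of $G_\bv^{\,(\bx,\by,\bi,\bj)}$. Thus $\M_{0,0}^\circ=\O_{\mathbf p^{t}}$.

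The main obstacle is the exact determination $X=\overline{\O_{\mathbf p^{t}}}$: the bounds $\rk(\xi^k)\le v_k$ hold in all cases, but they single out one orbit closure only when the saturating sequence $\mathbf p$ is itself a partition, which is exactly \eqref{strong}. When \eqref{strong} fails, $\Image\pi$ is still the Richardson closure $\overline{\O_{\mathbf p^{t}}}$ (with $\mathbf p$ now the \emph{sorted} jumps), but this is strictly smaller than the dense orbit closure of $X$, and surjectivity genuinely breaks down — the phenomenon flagged before the proposition. The one truly computational point, where the Kraft--Procesi degeneration theory is indispensable, is the converse isotropy statement of the last paragraph: that the stabilizer is trivial over the dense orbit and nontrivial everywhere on its boundary.
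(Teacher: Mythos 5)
The paper itself contains no proof of Proposition \ref{kp}: it is stated as a known result and deferred to Kraft--Procesi \cite{KP}, \cite{Sh} and \cite{Na1}, so there is no in-text argument to measure yours against. Judged on its own, your treatment of the surjectivity claim is correct and is exactly the classical route: the telescoped moment-map identities give $\xi^{n+1}=0$ and $\rk(\xi^k)\le v_k$, hence $X\sset\overline{\O_{\mathbf{p}^{t}}}$ by the rank description of nilpotent orbit closures in $\gl(W)$ --- and this is precisely where \eqref{strong} enters, since the rank bounds are saturated by a single orbit only when $\mathbf{p}$ is weakly decreasing --- while the Richardson-orbit description of the image of the moment map $T^*\FF(n,W)\to\gl(W)$ of Proposition \ref{kpp}(ii) gives $\overline{\O_{\mathbf{p}^{t}}}=\Image\pi\sset X$. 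Sandwiching these two inclusions is a clean and complete argument for the first assertion.

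The second assertion is where you have a genuine gap, which you half-acknowledge. The inclusion $\O_{\mathbf{p}^{t}}\sset\M^\circ_{0,0}(\bv,\bw)$ does follow from nonemptiness, openness, $GL(W)$-stability and density (modulo actually checking that your explicit lift has trivial $G_\bv$-isotropy). But a $GL(W)$-stable open dense subset of $\overline{\O_{\mathbf{p}^{t}}}$ need not equal the open orbit, so the reverse inclusion carries all the weight, and ``the resulting slack yields a nonscalar endomorphism commuting with $\bx,\by,\bi,\bj$'' is an assertion, not a proof. Note that for a non-semisimple module a proper subrepresentation does not by itself produce a nontrivial automorphism: a non-simple brick has $\End=\C$ and hence trivial $G_\bv$-isotropy (the scalar is pinned to $1$ on $V_\infty$), so the failure of the telescoped maps to span $V_k$ does not immediately yield a stabilizer. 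The correct statement to prove is about the \emph{closed} orbit over a boundary point $\xi$: via the Crawley--Boevey identification of Remark \ref{adhmcb}, Lemma \ref{connected} and Schur's lemma, a point of $X$ lies in $\M^\circ_{0,0}$ exactly when the semisimple $\Pi_0(Q^{\bw})$-module of dimension $\wh\bv$ representing it is simple, and one must show that simplicity forces $\rk(\xi^k)=v_k$ for every $k$ (for a simple module the canonical composites $W\to V_k$ are surjective and $V_k\to W$ injective, and the moment-map relations identify $\xi^k$ with their composition). This is the part of the Kraft--Procesi degeneration analysis that cannot be waved through; it is also what pins down, in terms of the ranks $\rk(\xi^k)$, which stratum $\M^\circ_0(\bv',\bw)$ of the decomposition \eqref{good} a given $\xi$ lies in. Everything else in your write-up I would accept.
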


Thus, in this case the affine variety $\M_{0,0}(\bv,\bw)$
 gets identified with the closure of a nilpotent
$GL(W)$-conjugacy class in $\g{\mathfrak{l}}(W)$.
\end{examp}
\subsection{A Lagrangian subvariety}\label{lagr_sec}
 We recall the following standard

\begin{defn}
A locally closed subvariety $\L$ of a symplectic manifold $(M,\om)$ is called
{\em Lagrangian} if the tangent space to $\L$ at any smooth 
point $x\in \L$ is a maximal isotropic subspace
of $T_xM$ (the tangent space to $M$ at $x$)
with respect to the
symplectic 2-form $\om$.
\end{defn}

From now on, we fix a quiver $Q$, and we let
$\la=0$. Below, we will use the
second of the two
 $\gm$-actions on  Nakajima 
varieties, introduced in \S\ref{group}.
Recall that this  action is given by the formula
$\gm\ni t:\ \phi=(\bx,\by,\bi,\bj)\,\mto\, t(\phi)=(\bx, t\cdot
\by,\bi, t\cdot \bj)$.
Thus, for any $\th\in\Z^I,$ we
have  the canonical $\gm$-equivariant projective morphism
$\pi:\
\M_{0,\th}(\bv,\bw)\to\M_{0,0}(\bv,\bw)$.

We define 
$\L_\th(\bv,\bw):=
[\pi\inv\big(\M_{0,0}(\bv,\bw)^{\gm}\big)]_{\oper{red}},$
the preimage of the $\gm$-fixed point set
equipped with reduced scheme structure.
Thus, 
$\L_\th(\bv,\bw)\sset \M_{0,\th}(\bv,\bw)$
is a reduced closed subscheme.

\begin{thm}\label{lagr} For a $\bv$-regular parameter $(0,\th)$,
we have:\vs

\vi Each irreducible component of the 
variety $\L_\th(\bv,\bw)$ is a Lagrangian subvariety
of $\M_{0,\th}(\bv,\bw)$, a symplectic manifold.\vs

\vii Assume, in addition, that
 $\th=\th^+$ and the quiver  $Q$ has no oriented cycles.
 Then $\L_\th(\bv,\bw)=\pi\inv(0)$; furthermore,
 the   $G_\bv$-orbit of a quadruple
$(\bx,\by,\bi,\bj)\in\mu\inv(0)^{ss}$ represents a point of  $\L_\th(\bv,\bw)$
if and only if  we have $\bi=0$ and 
the $G_\bv$-orbit of the pair $(\bx,\by)\in \Rep(\dq,\bv)$
contains the pair $(0,0)\in \Rep(\dq,\bv)$ in its closure.
\end{thm}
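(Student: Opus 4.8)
The plan is to analyse both parts through the second $\gm$-action of \S\ref{group}, under which $\om$ has weight $1$ (that is $t\colon\om\mapsto t\,\om$) and which makes $\pi\colon\M_{0,\th}(\bv,\bw)\to\M_{0,0}(\bv,\bw)$ equivariant. By Theorem \ref{nak_thm}(ii) the source is a smooth symplectic manifold and by Theorem \ref{nak_thm}(i) the target is affine; moreover $\bx,\bi$ have weight $0$ while $\by,\bj$ have weight $1$, so the coordinate ring of $\M_{0,0}(\bv,\bw)$ is \emph{nonnegatively} graded. The first thing I would record is the identification
$$\L_\th(\bv,\bw)=\pi\inv\big(\M_{0,0}(\bv,\bw)^{\gm}\big)=\M_{0,\th}(\bv,\bw)^{-},$$
where $\M^{-}:=\{x\mid \lim_{t\to\infty}t(x)\text{ exists}\}$ is the Bialynicki--Birula repelling set. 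Indeed, since the grading on the base is nonnegative, $\lim_{t\to\infty}t\big(\pi(x)\big)$ exists iff $\pi(x)$ is $\gm$-fixed; as $\pi$ is proper and every orbit over a fixed point lies in a projective fibre, this lifts to the stated equality of closed sets.

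For (i) I would run Bialynicki--Birula on the smooth variety $\M_{0,\th}(\bv,\bw)$: the fixed locus $\M^{\gm}$ is smooth, and $\M^{-}=\bigsqcup_\alpha \M^{-}_\alpha$ decomposes into smooth locally closed cells, each a $\gm$-equivariant affine bundle over a fixed component $F_\alpha$ with strictly negative fibre weights. At $p\in F_\alpha$ write $T_p\M=\bigoplus_k T^{(k)}$ for the weight decomposition; since $\om$ has weight $1$ it pairs $T^{(k)}$ nondegenerately with $T^{(1-k)}$. The tangent space to the cell is $T_p\M^{-}_\alpha=\bigoplus_{k\le 0}T^{(k)}$, and $\om(T^{(a)},T^{(b)})=0$ whenever $a,b\le 0$ because then $a+b\le 0<1$; hence the cell is isotropic. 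The same pairing gives $\dim\bigoplus_{k\le 0}T^{(k)}=\tfrac12\dim\M_{0,\th}(\bv,\bw)$, since the involution $k\mapsto 1-k$ matches $\{k\le 0\}$ with $\{k\ge 1\}$. Thus each cell, hence each irreducible component of $\L_\th(\bv,\bw)$ (a closure of such a cell), is Lagrangian. This is consistent with, but sharper than, the statement of Proposition \ref{wiz}(ii) that the individual fibres of $\pi$ are isotropic.

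For (ii), assume $\th=\th^+$ and $Q$ has no oriented cycles. I would first show $\M_{0,0}(\bv,\bw)^{\gm}$ is the single point $0$, whence $\L_{\th}(\bv,\bw)=\pi\inv(0)$. The weight-zero part of $\C[\M_{0,0}(\bv,\bw)]=\C[\mu\inv(0)]^{G_\bv}$ consists of $G_\bv$-invariants in the coordinates $\bx,\bi$ alone; by the Le~Bruyn--Procesi description (Proposition \ref{BP}) these are generated by traces of oriented cycles built from the edges of $Q$ (acting on $V$) and the maps $\bi\colon W\to V$. As $Q$ has no oriented cycles and no edge returns to $W$ in weight $0$, there are no such cycles, so the only weight-$0$ invariants are constants; a nonnegatively graded affine variety with this property has a unique $\gm$-fixed point, the cone point $0$. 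Consequently a semistable quadruple $y=(\bx,\by,\bi,\bj)$ represents a point of $\L_\th(\bv,\bw)$ iff $\pi(y)=0$, which (all nonconstant invariants being of positive weight) holds iff $0\in\overline{G_\bv\cdot y}$.

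It remains to identify $\{y\mid 0\in\overline{G_\bv\cdot y}\}$ by the Hilbert--Mumford criterion: $0\in\overline{G_\bv\cdot y}$ iff some one-parameter subgroup $\la$ satisfies $\lim_{s\to0}\la(s)\cdot y=0$. Such a $\la$ yields a grading $V=\bigoplus_m V(m)$ on which $\bx,\by$ strictly raise the weight, with $\bi(W)\subset\bigoplus_{m\ge1}V(m)$ and $\bigoplus_{m\ge0}V(m)\subset\Ker\bj$. The subspace $S:=\bigoplus_{m\ge0}V(m)$ is $\bx,\by$-stable and contained in $\Ker\bj$, so semistability for $\th^+$ (Corollary \ref{th_cor}) forces $S=0$; hence all weights are negative, $\bi=0$, and $\bx,\by\to 0$ shows $0\in\overline{G_\bv\cdot(\bx,\by)}$. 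Conversely, given $\bi=0$ and $0\in\overline{G_\bv\cdot(\bx,\by)}$, I would pick $\la$ with $\la(s)(\bx,\by)\la(s)\inv\to0$ and twist it by a sufficiently negative power of the central $\gm\subset G_\bv$: this leaves the conjugation on $(\bx,\by)$ unchanged, keeps $\bi=0$, and forces $\bj\ccirc\la(s)\inv\to0$, so $\lim_{s\to0}\la(s)\cdot y=0$. I expect the main obstacle to be the Bialynicki--Birula input in (i)---specifically verifying that $\om$ restricts to zero on the whole repelling cell rather than only along $F_\alpha$, which I would settle via the local $\gm$-equivariant normal-bundle model near $F_\alpha$---while in (ii) the delicate point is the weight bookkeeping, where semistability is precisely what rules out $\bi\ne 0$.
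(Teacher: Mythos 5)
Your argument for part (i) and for the identification $\L_{\th^+}(\bv,\bw)=\pi\inv(0)$ is essentially the paper's own proof: the same Bialynicki--Birula decomposition of $\pi\inv\big(\M_{0,0}(\bv,\bw)^{\gm}\big)$ into repelling cells (Lemma \ref{lag_lem}), the same weight-decomposition argument using that $\om$ has weight $+1$ so that $H_k$ pairs with $H_{1-k}$ (Proposition \ref{lagr_prop}), and your vanishing of weight-zero invariants is just a rephrasing of the closed-orbit argument of Lemma \ref{fixed}. The one genuine addition is your Hilbert--Mumford treatment of the second half of (ii) --- the criterion $\bi=0$ together with $(0,0)\in\overline{G_\bv\cdot(\bx,\by)}$ --- which the paper states but does not actually prove; your argument (semistability forcing $S=\bigoplus_{m\geq 0}V(m)$ to vanish in one direction, and twisting the one-parameter subgroup by a large negative power of the central $\gm\sset G_\bv$ to kill $\bj$ in the other) is correct and fills that gap.
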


\begin{rem} The statement in  (ii)  motivates the name `nilpotent
variety'
for the variety $\pi\inv(0)$.\qq
\end{rem}

We will now proceed with the proof of Theorem \ref{lagr}(i). 

First of all, observe that for any representation
$\phi=(\bx,\by,\bi,\bj)\in\Rep(\ddq,\bv,\bw)$,
we have
$\underset{^{t\to0}}\lim\,t(\phi)=$
$\underset{^{t\to0}}\lim\,(\bx,t\cdot\by,\bi,t\cdot\bj)=(\bx,0,\bi,0).$ The image of the point
$(\bx,0,\bi,0)$ in $\Rep(\ddq,\bv,\bw)/\!/G_\bv,$ the categorical quotient,
is clearly  a $\gm$-fixed point.
Thus, we conclude that the $\gm$-action provides 
 a contraction of $\M_{0,0}(\bv,\bw)$ to 
$\M_{0,0}(\bv,\bw)^{\gm}$, the fixed point set.

Further, the fixed point set of the $\gm$-action in the smooth variety
$\M_{0,\th}(\bv,\bw)$  is a (necessarily smooth) 
subvariety  $F:=\M_{0,\th}(\bv,\bw)^{\gm}\ \sset \
\M_{0,\th}(\bv,\bw).$
We write $F_1,\ldots,F_r$ for the connected components of $F$,
and introduce the following sets
\beq{lim}
\L_s:=\{z\in \M_{0,\th}(\bv,\bw)\mid 
\underset{^{t\to\infty}}\lim\, t(z)\en\text{exists, and we have}\en
\underset{^{t\to\infty}}\lim\, t(z)\in F_s\},\quad
s=1,\ldots,r.
\eeq

\begin{lem}\label{lag_lem} For any quiver $Q$,
the set $F$ is contained in 
$\pi\inv\big(\M_{0,0}(\bv,\bw)^{\gm}\big),$ and
there is  a decomposition 
$\pi\inv\big(\M_{0,0}(\bv,\bw)^{\gm}\big)=\bigsqcup_{1\leq s\leq r}\ \L_s.$
\end{lem}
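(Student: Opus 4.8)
The plan is to deduce the statement from the $\gm$-equivariance of $\pi$ together with two limit properties of the $\gm$-action: that it contracts the affine base $\M_{0,0}(\bv,\bw)$ to its fixed locus as $t\to 0$ (established just above), and that the fibres of the projective morphism $\pi$ are complete. The inclusion $F\sset\pi\inv(\M_{0,0}(\bv,\bw)^{\gm})$ is immediate, since $z\in F=\M_{0,\th}(\bv,\bw)^{\gm}$ forces $\pi(z)$ to be $\gm$-fixed by equivariance. For the decomposition I would verify three claims: each $\L_s$ lies in $\pi\inv(\M_{0,0}(\bv,\bw)^{\gm})$; every point of $\pi\inv(\M_{0,0}(\bv,\bw)^{\gm})$ lies in some $\L_s$; and the $\L_s$ are pairwise disjoint. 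Disjointness is automatic, because when $\lim_{t\to\infty}t(z)$ exists it is a single $\gm$-fixed point and therefore lies in exactly one connected component $F_s$ of $F$.

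The point I expect to require the most care is a fixed-point criterion on the affine base: for $p\in\M_{0,0}(\bv,\bw)$, the limit $\lim_{t\to\infty}t(p)$ exists if and only if $p$ is $\gm$-fixed. This is where the \emph{direction} of the contraction matters. Contraction as $t\to 0$ means that for every regular function $f$ the map $t\mapsto f(t(p))$ is a polynomial in $t$ involving only non-negative powers, i.e. the induced grading on $\C[\M_{0,0}(\bv,\bw)]$ is non-negative; writing $f=\sum_{n\geq 0}f_n$ one has $f(t(p))=\sum_{n\geq 0}t^n f_n(p)$. Such an expression admits a limit as $t\to\infty$ for every $f$ precisely when $f_n(p)=0$ for all $n>0$ and all $f$, which is exactly the condition that $p$ be $\gm$-fixed. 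Thus on the base ``the $t\to\infty$ limit exists'' is equivalent to ``fixed'', not merely to ``flows to a fixed point'', and this is the crux of the argument.

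Granting this criterion, I would obtain the first claim by applying $\pi$: if $z\in\L_s$ then $\lim_{t\to\infty}t(z)$ exists, so by continuity and equivariance $\lim_{t\to\infty}t(\pi(z))=\pi\big(\lim_{t\to\infty}t(z)\big)$ exists, whence $\pi(z)\in\M_{0,0}(\bv,\bw)^{\gm}$. For the second claim I would use completeness of the fibres: if $\pi(z)=p\in\M_{0,0}(\bv,\bw)^{\gm}$, then $\pi(t(z))=t(p)=p$ for all $t$, so the whole orbit $\gm\cdot z$ lies in the fibre $\pi\inv(p)$, which is projective because $\pi$ is a projective morphism. By the valuative criterion of properness the orbit map $\gm\to\pi\inv(p)$ extends to a morphism ${\mathbb P}^1\to\pi\inv(p)$, so $\lim_{t\to\infty}t(z)$ exists; being $\gm$-fixed it lies in some $F_s$, giving $z\in\L_s$. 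Assembling the three claims yields the stated decomposition.
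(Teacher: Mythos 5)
Your proof is correct and follows essentially the same route as the paper: $\gm$-equivariance of $\pi$ for the inclusion $F\sset\pi\inv\big(\M_{0,0}(\bv,\bw)^{\gm}\big)$, completeness of the fibres of the projective morphism $\pi$ for one containment, and the contraction of the affine base for the other. The only cosmetic difference is that where you establish the fixed-point criterion on the base by expanding regular functions in the non-negative grading of $\C[\M_{0,0}(\bv,\bw)]$, the paper packages the same fact as ``a regular map ${\mathbb P}^1\to\M_{0,0}(\bv,\bw)$ to an affine variety is constant.''
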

\begin{proof} Since $\pi$ is a $\gm$-equivariant morphism,
we have $\pi\big(\M_{0,\th}(\bv,\bw)^{\gm}\big)\sset
\M_{0,0}(\bv,\bw)^{\gm}$. In particular, one has
$F\sset \pi\inv\big(\M_{0,0}(\bv,\bw)^{\gm}\big).$

Now, fix $\wt z\in 
\M_{0,\th}(\bv,\bw)$ and let $z=\pi(\wt z)\in \M_{0,0}(\bv,\bw).$
We consider
the maps $\gm\to\M_{0,0}(\bv,\bw),\ t\mto t(z)$,
resp. $\gm\to\M_{0,\th}(\bv,\bw),\ t\mto t(\wt z)$.
It is
clear that if the limit of $t(z),\ t\to\infty$,
exists then this limit is  a $\gm$-fixed point.

Assume first that
$\wt z\in L_\theta(\bv,\bw).$
Then, $z$ is a $\gm$-fixed point
and $t(\wt z)\in \pi\inv(z)$ for any $t$.
It follows, since $\pi\inv(z)$ is a complete variety,
that the map $t\mto t(\wt z)$  extends to a regular map
${\mathbb P}^1\to\M_{0,\th}(\bv,\bw).$
Thus, for any $\wt z\in \pi\inv\big(\M_{0,0}(\bv,\bw)^{\gm}\big)$,
 the limit of $t(\wt z),\ t\to\infty$,
exists and we have $\underset{^{t\to\infty}}\lim\, t(\wt z)\in F.$
We conclude that
$L_\theta(\bv,\bw)\sset\cup_{1\leq s\leq r}\ \L_s.$

Assume next that $\wt z\in\cup_{1\leq s\leq r}\ \L_s,$
so  we have $\underset{^{t\to\infty}}\lim\, t(\wt z)\in F.$
It follows that  the map  $t\mto \pi(t(\wt z))=t(z)$ also has a
  limit as $t\to\infty$. Therefore, 
the map $\gm\to\M_{0,0}(\bv,\bw),\ t\mto t(z)$
 extends to the point
$t=\infty$. On the other hand,
by the observation made before 
Lemma \ref{lag_lem}, the
latter map automatically
 extends to the point
$t=0$. Therefore, we get a regular map
${\mathbb P}^1\to\M_{0,0}(\bv,\bw)$. Such a map
must be a constant map, since $\M_{0,0}(\bv,\bw)$
is affine. Thus, we must have
$\pi(\wt z)=z\in \M_{0,0}(\bv,\bw)^{\gm}.$
We conclude that
$\wt z\in  L_\theta(\bv,\bw)$.
The result follows.
\end{proof}

\begin{rem} We have shown that
the $\gm$-action provides a
contraction of the variety $\M_{0,\th}(\bv,\bw)$ to
the fixed point set
$F$.
\end{rem}

Theorem \ref{lagr}(i) is clearly a consequence of the
following more precise result
\begin{prop}\label{lagr_prop} Each piece $\L_s$
is a smooth, connected, locally closed Lagrangian subvariety of
$\M_{0,\th}(\bv,\bw)$.

Furthermore, the closures
$\obar{\L}_s,\ s=1,\ldots,r,$ are precisely the irreducible 
components of $\L$.
\end{prop}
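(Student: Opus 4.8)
The plan is to identify each $\L_s$ as a stratum of the Bialynicki--Birula decomposition of the smooth symplectic manifold $M:=\M_{0,\th}(\bv,\bw)$ attached to the $\gm$-action $t\mapsto\phi_t$ of \S\ref{group}, and to extract the Lagrangian property from the one structural fact recorded there, namely that this action rescales the form with weight one, $\phi_t^*\om=t\,\om$. Write $\L:=\L_\th(\bv,\bw)$. By Theorem \ref{nak_thm}(ii) the variety $M$ is smooth, and by definition \eqref{lim} the limit $q_s(z):=\lim_{t\to\infty}t(z)$ exists for every $z\in\L_s$ and lies in the connected fixed-point component $F_s$. The Bialynicki--Birula theorem then shows that each $\L_s$ is a smooth connected locally closed subvariety and that $q_s\colon\L_s\to F_s$ is a $\gm$-equivariant affine bundle whose fibre over $p$ is modelled on the negative-weight subspace $T_pM^{<0}$ of the linearised action on $T_pM=\bigoplus_{n\in\Z}T_pM^{(n)}$, where $T_pM^{(n)}$ is the subspace on which $t$ acts by $t^n$. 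In particular $T_p\L_s=T_pM^{\le0}:=\bigoplus_{n\le0}T_pM^{(n)}$ and $T_pF_s=T_pM^{(0)}$.

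Next I would run the weight bookkeeping at a fixed point $p\in F_s$. Evaluating $\phi_t^*\om=t\,\om$ on $u\in T_pM^{(a)}$, $v\in T_pM^{(b)}$ gives $t^{a+b}\om_p(u,v)=t\,\om_p(u,v)$ for all $t$, so $\om_p$ pairs $T_pM^{(a)}$ nondegenerately with $T_pM^{(1-a)}$ and kills every other pair of weight spaces. For $u,v\in T_p\L_s=T_pM^{\le0}$ one has $a+b\le0<1$, hence $\om_p(u,v)=0$ and $T_p\L_s$ is isotropic; moreover the pairing forces $\dim T_pM^{(a)}=\dim T_pM^{(1-a)}$, and since the integers split as $\{n\le0\}\sqcup\{n\ge1\}$ we obtain $\dim T_p\L_s=\tfrac12\dim M$. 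Thus at fixed points $T_p\L_s$ is Lagrangian, and in particular $\dim\L_s=\tfrac12\dim M$.

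The heart of the matter, and the step I expect to demand the most care, is propagating isotropy from the fixed points to all of $\L_s$; here I would again exploit the weight-one scaling rather than any integrability argument. Fix $z\in\L_s$ and $u,v\in T_z\L_s$. For every $t\in\gm$ the relation $\phi_t^*\om=t\,\om$ reads $\om_z(u,v)=t^{-1}\,\om_{\phi_t(z)}\big((d\phi_t)u,(d\phi_t)v\big)$. As $t\to\infty$ the point $\phi_t(z)$ tends to $q_s(z)\in F_s$, and in the affine-bundle coordinates of the first step $(d\phi_t)u$ and $(d\phi_t)v$ converge, their vertical (negative-weight) parts being contracted to $0$ while their horizontal parts limit into $T_{q_s(z)}F_s$; hence the factor $\om_{\phi_t(z)}(\cdots)$ stays bounded while $t^{-1}\to0$. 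Since the left-hand side is independent of $t$, it must vanish. Thus $T_z\L_s$ is isotropic, and being of dimension $\tfrac12\dim M$ it is maximal isotropic, so $\L_s$ is a smooth locally closed Lagrangian subvariety. This is the first assertion of the Proposition, and Theorem \ref{lagr}(i) follows, since isotropy of the tangent space is a closed condition on the smooth locus of $\obar{\L}_s$ and holds on the dense open subset $\L_s$.

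Finally, for the identification of components, each $\obar{\L}_s$ is irreducible of dimension $\tfrac12\dim M$, being the closure of a smooth connected variety, and $\L$ is closed with $\L=\bigsqcup_s\L_s$ by Lemma \ref{lag_lem}, whence $\L=\bigcup_s\obar{\L}_s$. Distinct strata are disjoint while each $\L_s$ is open and dense in $\obar{\L}_s$, so the closures $\obar{\L}_s$ are pairwise distinct; being irreducible of equal dimension, none is contained in another, so they are exactly the irreducible components of $\L$. The only genuinely delicate inputs are the Bialynicki--Birula affine-bundle structure in this non-complete setting, which is legitimate precisely because the relevant $t\to\infty$ limits exist on $\L$ by Lemma \ref{lag_lem}, and the attendant convergence of $(d\phi_t)u$ invoked in the isotropy step.
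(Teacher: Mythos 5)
Your proposal is correct and follows essentially the same route as the paper: the Bialynicki--Birula decomposition gives smoothness, connectedness and local closedness of each $\L_s$, and the weight-one scaling $t^*\om=t\cdot\om$ forces the weight spaces $H_a$ and $H_b$ at a fixed point to pair trivially unless $a+b=1$, so that $\bplus_{m\le0}H_m=T_\phi(\L_s)$ is Lagrangian, after which the component statement follows from equidimensionality. The one place you go beyond the paper is the explicit propagation of isotropy from the fixed points to arbitrary $z\in\L_s$ via $\om_z(u,v)=t^{-1}\om_{t(z)}\big((dt)u,(dt)v\big)\to0$; the paper verifies the Lagrangian condition only at points of $F_s$ and leaves this step implicit, so your addition tightens rather than changes the argument.
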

\begin{proof}
The pieces defined by equation \eqref{lim} are known
as the Bialinicki-Birula pieces. The Bialinicki-Birula pieces are shown
by  Bialinicki-Birula to be  smooth, connected, and locally closed
subsets, for any $\gm$-action on a smooth quasi-projective
variety $X$, provided the action contracts $X$ to
$X^{\gm}$.
The first statement of the proposition follows.

Next, we fix a connected component
$F_s$ and a point $\phi\in F_s$. The tangent space to
$\M_{0,\th}(\bv,\bw)$ at $\phi$ has a weight decomposition with respect to
the $\gm$-action
\beq{weight}
T_\phi\big(\M_{0,\th}(\bv,\bw)\big)=\bplus_{m\in\Z}\ H_m,
\eeq
such that $t\in\gm$ acts on the direct summand
$H_m$ via multiplication by $t^m$.
 In particular, we see that $H_0=T_\phi F$,
is the tangent space to the fixed
point set $F$.

Recall that the symplectic 2-form $\om$ on
$\M_{0,\th}(\bv,\bw)$ has weight $+1$ with respect to the $\gm$-action.
Hence, a pair of  direct summands
$H_k$ and $H_l$ are $\om$-orthogonal unless $k+l=1$;
furthermore,
the 2-form  gives a perfect pairing $\om:\
H_m\times H_{1-m}\to\C,$
for any $m\in\Z$. We see, in particular, that
$\bplus_{m\leq 0}\ H_m$ is a Lagrangian subspace in $\bplus_{m\in\Z}\
H_m$.

To complete the proof, pick $z\in\L_s$ such that
$\underset{^{t\to\infty}}\lim\, t(z)=\phi.$ 
It is clear that, for the curve  $t\mto t(z)$ to have a limit
as $t\to\infty$,
the tangent vector to the curve 
at $t=\infty$ must belong to the span of nonpositive weight
subspaces. In other words, we must have
$$\frac{d(t(z))}{dt}\big|_{t=\infty}\
\in\ \bplus_{m<0}\ H_m.
$$

Since $\L_s$ is smooth at $\phi$,
we deduce the equation $T_\phi(\L_s)=\bplus_{m\leq 0}\ H_m.$
It follows,
by the above, that $T_\phi(\L_s)$
 is a Lagrangian subspace in 
$T_\phi\big(\M_{0,\th}(\bv,\bw)\big)$, and the first statement
of the proposition is proved.

Now, the decomposition of
Lemma \ref{lag_lem} presents $\L$ as a union of irreducible
varieties of equal dimensions, and
the second statement of the proposition  follows.
\end{proof}

We also prove the following result
which is part of statement (ii) of Theorem \ref{lagr}.

\begin{lem}\label{fixed} 
If the quiver $Q$ has no oriented cycles, then
one has
$\M_{0,0}(\bv,\bw)^{\gm} =\{0\},$ the only fixed point.

Thus, in this case, we have $L_\th(\bv,\bw)=\pi\inv(0).$
\end{lem}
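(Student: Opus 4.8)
The plan is to realize $\M_{0,0}(\bv,\bw)$ as an affine cone for the given $\gm$-action, so that its fixed locus is forced to be a single point. Concretely, I would show that the grading induced by $t:(\bx,\by,\bi,\bj)\mto(\bx,t\by,\bi,t\bj)$ on the coordinate ring $A:=\C[\M_{0,0}(\bv,\bw)]=\C[\mu\inv(0)]^{G_\bv}$ is non-negative with degree-zero part equal to $\C$. For a finitely generated non-negatively graded $\C$-algebra $A$ with $A_0=\C$, the scheme $\Spec A$ has a unique $\gm$-fixed point, namely the one cut out by the augmentation ideal $\bigoplus_{d>0}A_d$: a $\gm$-fixed $\C$-point is an equivariant homomorphism $A\to\C$, which must kill every $A_d$ with $d\neq 0$ and restrict to the identity on $A_0=\C$, hence is the augmentation. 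Since the zero quadruple $0=(0,0,0,0)\in\mu\inv(0)$ is manifestly both $G_\bv$- and $\gm$-fixed and maps to this cone point, this will identify $\M_{0,0}(\bv,\bw)^{\gm}=\{0\}$, in agreement with the contraction observation made before Lemma \ref{lag_lem}.

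To control the grading I would first exhibit a generating set of invariants. Using the Crawley--Boevey identification of $\mu\inv(0)\sset\Rep(\ddq,\bv,\bw)$ with the preprojective representation scheme of the quiver $Q^\bw$ (Remark \ref{adhmcb}), and noting that $G_\bv\cong G_{\wh\bv}/\gm$ with the extra scalar $\gm$ acting trivially, one has $\C[\mu\inv(0)]^{G_\bv}=\C[\mu\inv(0)]^{G_{\wh\bv}}$. By reductivity of $G_{\wh\bv}$ the restriction $\C[\Rep(\overline{Q^\bw},\wh\bv)]^{G_{\wh\bv}}\onto\C[\mu\inv(0)]^{G_{\wh\bv}}$ is surjective, so Proposition \ref{BP} applied to the doubled quiver $\overline{Q^\bw}$ shows that $A$ is generated by traces of oriented cycles in $\overline{Q^\bw}$. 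Since the Crawley--Boevey vertex $\infty$ carries a one-dimensional space, a cycle visiting $\infty$ factors as a product of ``open paths'' $\bj\ccirc p\ccirc\bi$ with $p$ a path in $\dq$; thus every generator is a product of factors each of which is either the trace of an oriented cycle supported on $I$ and built from $\bx,\by$, or such an open path.

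The key computation, and the only place the hypothesis enters, is the weight count. Each factor $\by$ and each factor $\bj$ scales by $t$, while $\bx$ and $\bi$ are unchanged. A trace of an oriented cycle on $I$ using no $\by$ would be an oriented cycle of $Q$ itself, which is impossible since $Q$ has no oriented cycles; hence every nontrivial such cycle contains a reverse edge and has weight $\geq 1$, while the length-zero cycles give the constants $\dim V_i$. An open path $\bj\ccirc p\ccirc\bi$ carries the factor $\bj$ and so also has weight $\geq 1$. Therefore every nonconstant generator has strictly positive weight, the grading on $A$ is non-negative, and a weight-zero monomial in the generators can only be constant, giving $A_0=\C$. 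The cone argument of the first paragraph then yields $\M_{0,0}(\bv,\bw)^{\gm}=\{0\}$, and consequently $\L_\th(\bv,\bw)=[\pi\inv(\M_{0,0}(\bv,\bw)^{\gm})]_{\oper{red}}=\pi\inv(0)$.

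The step I expect to be the main obstacle is pinning down the generation statement with the correct equivariance: one must verify that $G_\bv$ acts trivially on $W$ (equivalently through the scalar $\gm$ at the vertex $\infty$), so that the $W$-to-$W$ open paths are genuinely $G_\bv$-invariant and do appear among the Le Bruyn--Procesi generators, and that passage to the closed subscheme $\mu\inv(0)$ preserves generation, which is exactly where reductivity is invoked.
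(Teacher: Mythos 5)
Your argument is correct, and it reaches the conclusion by a route that differs in packaging, though not in its essential inputs, from the one in the text. The paper argues geometrically, orbit by orbit: a $\gm$-fixed point of $\M_{0,0}(\bv,\bw)$ is represented by a closed $\gm$-stable $G_\bv$-orbit $\O$; taking the limit of $(\bx,t\by,\bi,t\bj)$ forces $\O$ into the zero section $\Rep(\fq,\bv,\bw)$; then Proposition \ref{BP} (applied to $Q^\bw$, which has no oriented cycles) shows $\C[\Rep(\fq,\bv,\bw)]^{G_\bv}=\C$, so by surjectivity of restriction of invariants every positive-degree homogeneous invariant vanishes on $\O$, and separation of closed orbits by invariants gives $\O=\{0\}$. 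You instead grade the whole ring $A=\C[\mu\inv(0)]^{G_\bv}$ by the $\gm$-weight and prove it is an affine cone, which requires the finer bookkeeping that every Le~Bruyn--Procesi generator of $\C[\Rep(\ddq,\bv,\bw)]^{G_\bv}$ carries at least one factor of $\by$ or $\bj$ (your cycle analysis through the Crawley--Boevey vertex is accurate, and the needed equivariance works out because the extra $GL(V_\infty)=\gm$ in $G_{\wh\bv}$ acts through the trivially-acting diagonal scalars). What your version buys is the elimination of the orbit-limit step and of the separation-of-closed-orbits fact, replaced by the elementary observation that a non-negatively graded finitely generated algebra with $A_0=\C$ has a unique homogeneous maximal ideal; what it costs is that you must control the $\gm$-weights of all generators of the invariants of the full doubled quiver, whereas the paper only needs the cruder statement that the invariants of the framed (undoubled) representation space are constants. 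Both proofs invoke Proposition \ref{BP}, the absence of oriented cycles, and reductivity at exactly analogous points, so I would call this a legitimate, slightly more self-contained variant rather than a fundamentally new proof.
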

\begin{proof} A  $\gm$-fixed point 
in $\M_{0,0}(\bv,\bw)=\mu\inv(0)/G_\bv$
is represented, by definition, by a $\gm$-stable and closed 
$G_\bv$-orbit  $\O\sset\mu\inv(0)$. Hence, for any point
$(\bx,\by,\bi,\bj)\in \O$, in this closed orbit, we must have
$\underset{^{t\to\infty}}\lim\,(\bx,t\cdot\by,\bi,t\cdot\bj)\in \O$.
Thus, we get $(\bx,0,\bi,0)\in \O$.
We conclude that the orbit $\O$
is contained in $\Rep(\fq,\bv,\bw)$, the zero section of
the cotangent bundle $T^*\Rep(\fq,\bv,\bw)= \Rep(\ddq,\bv,\bw)$.

Observe next that, for any homogeneous polynomial
$f\in \C[\Rep(\fq,\bv,\bw)]^{G_\bv}$, of positive degree, we have
$f|_{\Rep(\fq,\bv,\bw)}=0,$ since $Q$ has no oriented cycles,
see Proposition \ref{BP}. Also, the restriction 
map $\C[\Rep(\fq,\bv,\bw)]^{G_\bv}\to
\C[\mu\inv(0)]^{G_\bv}$ is a surjection,
since $\mu\inv(0)$ is a closed subvariety and the group
$G_\bv$ is reductive. It follows from this that
any homogeneous invariant polynomial  $f\in \C[\mu\inv(0)]^{G_\bv},$
 of positive degree, vanishes on the orbit $\O$.
But $G_\bv$-invariant polynomials are known to separate
closed $G_\bv$-orbits. Thus, $\O=\{0\}.$
\end{proof}

\subsection{Proof of Proposition \ref{wiz}}\label{proofwiz}
The cohomology vanishing
is a standard application of the Grauert-Riemenschneider theorem. The
latter
theorem says that higher derived direct images of the canonical sheaf
under a
proper morphism vanish. We apply this to the proper morphism
$\pi:\ \x\to X$. The canonical sheaf of $\x$ is isomorphic to
$\oo_\x$ since $\x$ is a symplectic manifold. Hence, 
 the Grauert-Riemenschneider theorem yields
$R^i\pi_*\oo_\x=0$ for all $i>0$.
It follows that the complex $R\pi_*\oo_\x$ representing
the  derived direct image is quasi-isomorphic
to $\pi_*\oo_\x$, an ordinary direct image.

Now, using the above, for any $i>0$, we compute
$$H^i(\x,\oo_\x)={\mathbb H}^i(X, R\pi_*\oo_\x)=
H^i(X, \pi_*\oo_\x)=0,$$
where ${\mathbb H}^i(-)$ stands for the hyper-cohomology of a complex of
sheaves
and where the rightmost equality above holds since $\pi_*\oo_\x$ is a coherent sheaf on
an {\em affine} variety.

To prove the second statement of  Proposition \ref{wiz}
one uses the following argument due to  Wierzba.
Write $\om$ for the algebraic symplectic 2-form on $\x$
and let $\overline\om$ be its complex conjugate, an
anti-holomorphic 2-form. This 2-form gives a Dolbeau  cohomology class
$[\overline\om]\in H^2(\x,\oo_\x)$. The latter class is in fact equal
to zero since we have shown that $H^2(\x,\oo_\x)=0.$

Now, let $x\in X$. We must prove that
the restriction of the 2-form $\om$, equivalently,
the  restriction of the 2-form $\overline\om$, to
$\pi\inv(x)$ vanishes. To this end, let
$Y\onto \pi\inv(x)$ be a resolution of singularities of
the fiber, and write $f: Y\to \x$ for the composite
$Y\onto \pi\inv(x)\into \x$. Thus, $f^*\overline\om$
is an anti-holomorphic 2-form on $Y$ and,
in  Dolbeau  cohomology of $Y$, we have
$[f^*\overline\om]=f^*[\overline\om]=0$.

On the other hand, $Y$ is a smooth and projective variety.
Hence, by Hodge theory, we have
$H^2(Y,\oo_Y)\iso H^{0,2}(Y,\C)\sset H^2(Y,\C)$.
It is clear that the Dolbeau  cohomology class of
the 2-form $f^*\overline\om$ goes, under this isomorphism,
to the de Rham cohomology class of  $f^*\overline\om$.
Thus, in de Rham cohomology of $Y$, we have
$[f^*\overline\om]=0$. But any nonzero  anti-holomorphic 
differential form on a K\"ahler manifold gives a nonzero de Rham
cohomology class, thanks to Hodge theory.
It follows that the 2-form $f^*\overline\om$ vanishes,
hence $\overline\om|_{\pi\inv(x)}=0$,
and we are done.\qed
\medskip

We remark that, for a quiver without edge-loops,
the  inequality 
$\dim\pi\inv(0)\leq \frac{1}{2}\dim\M_{0,\th}(\bv,\bw)$
is an immediate consequence of Proposition \ref{wiz}
and Proposition \ref{strat_prop}.

 Here is another approach to the proof of this
inequality (in the special case of quiver varieties). The argument 
below,
based on the `hyper-K\"ahler rotation' trick,
was suggested to me by Nakajima.

In more detail, using a hyper-K\"ahler rotation, cf. Remark \ref{rotat}, we may view
$\M_{0,\th}(\bv,\bw)$ as a smooth and affine
algebraic variety of complex dimension $2d$, say.
It follows 
that singular homology groups, $H_i(\M_{0,\th}(\bv,\bw),\ \BR)$,
vanish for all $i>2d,$
by a standard argument from Morse theory.
On the other hand, $\L_\th(\bv,\bw)$ is a compact subset
of $\M_{0,\th}(\bv,\bw).$ Hence, each irreducible component
of $\L_\th(\bv,\bw)$ of complex dimension $n$ (in the original complex
structure)
gives a nonzero homology class in $H_{2n}(\M_{0,\th}(\bv,\bw),\ \BR)$.
Thus, we must have $2n\leq 2d,$ and the  required dimension inequality 
follows.

\subsection{Hilbert scheme of points}\label{hilb}
Let $Q$ be the Jordan quiver, and let $\bv\in\Z$ be a positive integer.

 In the setting of Example 
\ref{comm_examp1}, the fiber of the moment map over
a central element $\la\cdot\Id\in\gl_\bv$ equals
$$\mu\inv(\la\cdot\Id)=\{(\bx,\by)\in\gl_\bv\times\gl_\bv\mid
[\bx,\by]=\la\cdot\Id\}.$$
This variety is empty for $\la\neq 0$ since we have
$\Tr([\bx,\by])=0$.
For $\la=0$, we get $\mu\inv(0)={\mathcal Z},$
the {\em commuting variety}
of the Lie algebra $\gl_\bv.$

Let $\imath: \C^\bv\into\gl_\bv$ be the imbedding of diagonal
matrices. Since any two diagonal matrices commute, we get
a closed imbedding
$\imath\times\imath:\  \C^\bv\times\C^\bv\into {\mathcal Z}.$
The group $\BS_\bv\sset GL_\bv,$ of
permutation matrices, acts diagonally on ${\mathcal Z}\sset \gl_\bv\times\gl_\bv,$
 and clearly preserves the
image of the map
$\imath\times\imath.$
Therefore, restriction of $\Ad GL_\bv$-invariant functions induces
an algebra map $(\imath\times\imath)^*:\
\C[{\mathcal Z}]^{\Ad GL_\bv}\to \C[\C^\bv\times\C^\bv]^{\BS_\bv}$.
The latter map can be shown to be an algebra isomorphism.

Thus, we deduce
\beq{Z/G}
\M_{0,0}(\bv)=\mu\inv(0)/\!/GL_\bv=\Spec\C[{\mathcal Z}]^{\Ad GL_\bv}
=\Spec\C[\C^\bv\times\C^\bv]^{\BS_\bv}=
(\C^\bv\times\C^\bv)/\BS_n.
\eeq

Next, we study Nakajima varieties $\M_{\la,\th}(\bv,\bw)$ for the Jordan
quiver $Q$. We have
$$\ddq\en=\quad
\xymatrix{
{\;{\C^\bv}^{\;}}\ar@(ul,ur)^<>(0.5){\bx}\ar@(dl,dr)_<>(0.5){\by}\ar@/^/[rr]^<>(0.5){\bj}&&\en\C^\bw
\en\ar@/^/[ll]^<>(0.5){\bi}
}
$$

Therefore, writing $M_\la(\bv,\bw):=\mu\inv(\la\cdot\Id)$ for the corresponding
fiber of the moment map, we get
$$
M_\la(\bv,\bw)=\{(\bx,\by,\bi,\bj)\in\gl_\bv\times\gl_\bv
\times\Hom(\C^\bw,\C^\bv)\times\Hom(\C^\bv,\C^\bw)\mid
[\bx,\by]+\bi\o\bj=\la\cdot\Id\}.
$$
Here, $\bi\o\bj$ denotes a rank one linear operator
$\C^\bv\to\C^\bv,\ u\mto \langle \bj,u\rangle\cdot\bi$.

The above variety $
M_\la(\bv,\bw)$ is nonempty for any $\la\in\C$. Below, we restrict
ourselves
to the special case $\bw=1$. In this case, we may view $\bi$ as
a vector in $\C^\bv=\Hom(\C,\C^\bv),$ resp.
$\bj$ as a covector in $(\C^\bv)^*=\Hom(\C^\bv,\C)$.

Assume first that $\la\neq0$.
Then, one proves that there is no proper subspace
$0\neq S\not\subseteq\C^\bv$ such that          $\bi\in S$ and such
that $S$ is stable under
the maps $\bx,\by$.
It follows, by Corollary \ref{U/G}(i) that $M_\la(\bv,1)$ is a smooth
affine
variety and
that
the $GL_\bv$-action on this variety is free.
Therefore, each  $GL_\bv$-orbit in   $M_\la(\bv,1)$ is
closed. We conclude 
$$\M_{\la,0}(\bv,1)=M_\la(\bv,1)/\!/GL_\bv
=M_\la(\bv,1)/GL_\bv\,
=:\,
\text{Calogero-Moser
space}.
$$

Also, we compute
$$\dim\M_{\la,0}(\bv,1)=\dim M_\la(\bv,1)-
\dim GL_\bv =(2\bv^2 +2\bv-\bv^2)-\bv^2=2\bv.
$$

Next, let $\la=0$. Then, from the equation
$[\bx,\by]+\bi\o\bj=0$ we deduce 
$$\langle
\bj,\bi\rangle=\Tr(\bi\o\bj)=-\Tr([\bx,\by])=0.$$

It follows that $\bi\o\bj$ is a nilpotent rank one linear 
operator in $\C^\bv$.

The following  result of linear algebra will play an important role in
our analysis, see \cite[Lemma ~12.7]{EG}.

\begin{lem}\label{rud}
Let $\bx,\by\in\gl_\bv$ be a pair of  linear operators such that
$[\bx,\by]$ is a nilpotent rank one operator. Then there exists a basis of
$\C^\bv$
such 
the matrices of $\bx$ and $\by$ in that basis are both
upper-triangular.\qed
\end{lem}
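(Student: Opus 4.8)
The plan is to prove the statement by induction on $n:=\dim_\C\C^\bv$, reducing at each stage to the production of a proper nonzero subspace of $\C^\bv$ invariant under both $\bx$ and $\by$. Upper-triangularizing the pair $(\bx,\by)$ in a common basis is the same as exhibiting a complete flag $0=V_0\sset V_1\sset\cdots\sset V_n=\C^\bv$, with $\dim V_k=k$, stable under both operators; so once a common invariant subspace $S$ with $0\neq S\neq\C^\bv$ is found, I would apply the inductive hypothesis separately to the pair of operators induced on $S$ and to the pair induced on the quotient $\C^\bv/S$, and then concatenate the two flags. This is legitimate because the commutator $[\bx,\by]$ both restricts to $S$ and descends to $\C^\bv/S$ (both being $\{\bx,\by\}$-invariant), and passing to an invariant subspace or to a quotient can only decrease the rank of an operator; hence the induced commutators are again nilpotent of rank $\le 1$, so the hypothesis of the lemma is inherited. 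The base case $n=1$ is vacuous.

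The heart of the argument is therefore the claim that, for $n\ge 2$ and $z:=[\bx,\by]$ of rank $\le 1$, the operators $\bx$ and $\by$ have a common invariant subspace strictly between $0$ and $\C^\bv$. When $z=0$ the operators commute, and I would take a common eigenvector (which exists over $\C$): every eigenspace of $\bx$ is $\by$-stable, and it contains a $\by$-eigenvector, whose span is the desired subspace. When $z\neq 0$ its rank is exactly $1$ and, being a commutator, it has vanishing trace, so it is automatically nilpotent; I would then write $z=w\otimes\phi$, meaning $z(u)=\phi(u)\,w$, with $0\neq w\in\C^\bv$, $0\neq\phi\in(\C^\bv)^*$ and $\phi(w)=0$. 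Choosing an eigenvalue $\la$ of $\bx$ and setting $\bx':=\bx-\la\cdot\Id$ (which changes neither $z$ nor the common invariant subspaces of the pair), I record that $\Ker\bx'\neq 0$ and $\Image\bx'\neq\C^\bv$ because $\la$ is an eigenvalue, and that $\bx'\neq 0$ since otherwise $z=[\bx',\by]=0$; consequently $\Ker\bx'$ and $\Image\bx'$ are both proper and nonzero.

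Now two cases arise. If $\phi$ vanishes on $\Ker\bx'$, then for $u\in\Ker\bx'$ one has $\bx'(\by u)=z u=\phi(u)\,w=0$, so $\by$ preserves $\Ker\bx'$, and this subspace works. The remaining case — where $\phi$ is nonzero on every eigenspace of $\bx$ — is the genuine obstacle, since then no eigenspace is $\by$-stable and the naive search for a common eigenvector fails. The resolution I would use is to pass from kernels to images: pick $v\in\Ker\bx'$ with $\phi(v)=1$; then $w=z v=\bx'(\by v)\in\Image\bx'$, and hence for every $u$ one computes $\by(\bx' u)=\bx'(\by u)-z u=\bx'(\by u)-\phi(u)\,w\in\Image\bx'$. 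Thus $\Image\bx'$ is invariant under $\by$ (and trivially under $\bx$), giving the required proper nonzero common invariant subspace. In either case the inductive step closes. The one point to verify carefully when writing this out is the bookkeeping that the induced commutators on $S$ and on $\C^\bv/S$ still have rank $\le 1$, which I indicated above; everything else is elementary linear algebra and uses no result from earlier in the text.
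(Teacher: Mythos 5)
Your proof is correct. The paper gives no argument of its own for this lemma --- it is quoted with a reference to \cite[Lemma 12.7]{EG} --- and your induction via a common invariant proper nonzero subspace (taking $\Ker(\bx-\la\cdot\Id)$ when the functional $\phi$ vanishes on it, and $\Image(\bx-\la\cdot\Id)$ otherwise, after checking the rank $\leq 1$ hypothesis passes to subspaces and quotients) is essentially the standard argument given in that reference.
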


In the case  $\la=0$, the variety  $M_0(\bv,1)$ is neither smooth nor
irreducible. Thus, to get a good quotient  one has to impose a
stability
condition. First, let $\th=0$, so $\M_{0,0}(\bv,1)$ is an affine
algebraic variety, by Theorem \ref{nak_thm}.

To describe this variety explicitly, one  shows using the above lemma, that the
assignment sending a quadruple $(\bx,\by,\bi,\bj)\in M_0(\bv,1)$ 
to the joint spectrum $(\Spec \bx,\Spec\by)\in\C^\bv\times\C^\bv$,
of the operators $\bx$ and $\by$, written in an upper-triangular
form provided by lemma \ref{rud}, gives a well-defined morphism
$M_0(\bv,1)\to (\C^\bv\times\C^\bv)/\BS_\bv$,
of algebraic varieties. Moreover, 
this morphism turns out to induce an algebra isomorphism
$\C[\C^\bv\times\C^\bv]^{\BS_\bv}\iso\C[M_0(\bv,1)]^{G_\bv}$.

We conclude that the Nakajima variety
 with parameters $(\la,\th)=(0,0)$
is an affine variety
\beq{M0}
\M_{0,0}(\bv,1)\cong(\C^\bv\times\C^\bv)/\BS_\bv.
\eeq

\begin{rem} The $\gm$-action on
$\M_{0,0}(\bv,1)$ that has been defined
in the previous subsection goes under the above isomorphsm
to a $\gm$-action on
$(\C^\bv\times\C^\bv)/\BS_\bv$. The latter action is given by
$\gm\ni t:\ (u,v)\mto (u,t\cdot v)$. The fixed points
of that action form a subset
$(\C^\bv/\BS_\bv)\times\{0\}\sset(\C^\bv\times\C^\bv)/\BS_\bv$.
Note the subset in question does not reduce to a single point.
Indeed, the Jordan quiver
has an edge loop and, therefore, Lemma \ref{fixed} does not
apply in our present situation.\qq
\end{rem}

Next, we take $\th:=-\th^+=-1\in\Z$.
With this choice of $\th$, a point  $(\bx,\by,\bi,\bj)\in M_0(\bv,1)$ is
stable if and only if  condition \eqref{cond2} holds, and we have

\begin{prop}\label{hilb_stab} The set of $\th$-semistable points
equals  
\beq{triple}M_0(\bv,1)^s_{-\th^+}=\{(\bx,\by,\bi,\bj)\mid
[\bx,\by]=0,\
\bj=0,\ \text{$\bi$ is a cyclic vector for $(\bx,\by)$}\}.
\eeq
\end{prop}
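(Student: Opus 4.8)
The plan is to combine the explicit semistability criterion of Corollary~\ref{th_cor} with an elementary linear-algebra argument resting on Lemma~\ref{rud}. First I would unwind what $\th$-semistability means for $\th=-\th^+$ on $M_0(\bv,1)=\mu\inv(0)$. Since the Jordan quiver has a single vertex, $\Image\bi=\C\bi$, and Corollary~\ref{th_cor} says that a quadruple $(\bx,\by,\bi,\bj)\in\mu\inv(0)$ is semistable precisely when the only $(\bx,\by)$-stable subspace $S\sset\C^\bv$ with $\bi\in S$ is $S=\C^\bv$; equivalently, $\bi$ is a cyclic vector for the pair $(\bx,\by)$. In particular every semistable quadruple has $\bi\neq0$ (otherwise $\{0\}$ would be a proper stable subspace containing $\Image\bi$), so $\bv\geq1$. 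Thus the real content to be proved is that, on $\mu\inv(0)$, cyclicity of $\bi$ forces $\bj=0$, whence the defining relation $[\bx,\by]+\bi\o\bj=0$ immediately yields $[\bx,\by]=0$ as well. The reverse inclusion is painless: if $[\bx,\by]=0$ and $\bj=0$ then the moment map equation $[\bx,\by]+\bi\o\bj=0$ holds automatically, so the quadruple lies in $\mu\inv(0)$, and cyclicity of $\bi$ makes it semistable by the criterion just recalled. (One checks that $(0,-\th^+)$ is a $\bv$-regular parameter, so by Theorem~\ref{nak_thm}(ii) semistable points are automatically stable and there is no discrepancy with the superscript $s$ in \eqref{triple}.)

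The crux, and the step I expect to be the main obstacle, is therefore the forward implication that cyclicity of $\bi$ forces $\bj=0$. A direct attempt — trying to prove $\langle\bj,A\bi\rangle=0$ for all $A$ in the algebra generated by $\bx,\by$ by induction on word length, via the identity $\Tr([\bx,\by]A)=-\langle\bj,A\bi\rangle$ — turns out to be circular, because commuting $\bx$ past $\by$ reintroduces new terms of the same shape $\langle\bj,(\text{word})\bi\rangle$. The way around this is to exploit the nilpotency already recorded in the text: from $\la\cdot\bv=0$ one has $\langle\bj,\bi\rangle=\Tr(\bi\o\bj)=-\Tr([\bx,\by])=0$, so $(\bi\o\bj)^2=\langle\bj,\bi\rangle\,\bi\o\bj=0$ and $[\bx,\by]=-\bi\o\bj$ is a nilpotent operator of rank $\leq1$.

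With nilpotency in hand, Lemma~\ref{rud} supplies a basis $e_1,\dots,e_\bv$ of $\C^\bv$ in which both $\bx$ and $\by$ are upper triangular. In such a basis the flag subspaces $V_k:=\langle e_1,\dots,e_k\rangle$ are all $(\bx,\by)$-stable, while the commutator $[\bx,\by]$ is strictly upper triangular; hence $\Image[\bx,\by]\sset V_{\bv-1}$, where $V_{\bv-1}$ is a \emph{proper} $(\bx,\by)$-stable subspace.

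Finally I would derive the contradiction. Suppose $\bj\neq0$; since also $\bi\neq0$, the operator $\bi\o\bj$ has rank exactly one, with image $\C\bi$, so $\C\bi=\Image(\bi\o\bj)=\Image[\bx,\by]\sset V_{\bv-1}$. Thus $\bi$ lies in the proper $(\bx,\by)$-stable subspace $V_{\bv-1}$, contradicting cyclicity of $\bi$. Therefore $\bj=0$, and then $[\bx,\by]=-\bi\o\bj=0$. Combining the two inclusions identifies the semistable locus with the set in \eqref{triple}, as desired.
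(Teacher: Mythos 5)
Your proof is correct, and at the crucial step it takes a route genuinely different from the paper's, although both arguments ultimately rest on the same upper-triangularization Lemma~\ref{rud}. The paper also starts from the identity $\langle \bj, a\bi\rangle=-\Tr\bigl(a\ccirc[\bx,\by]\bigr)$, but it does \emph{not} run the word-length induction you dismiss as circular: it simply notes that for $a\in\C\langle\bx,\by\rangle$ the matrices $\bx,\by,a$ are simultaneously upper triangular by Lemma~\ref{rud}, so $a\ccirc[\bx,\by]$ is strictly upper triangular and has trace zero; hence $\bj$ vanishes on $\C\langle\bx,\by\rangle\bi=\C^\bv$ and so $\bj=0$. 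You instead use the triangular form to place $\Image[\bx,\by]$ inside the proper $(\bx,\by)$-stable subspace $V_{\bv-1}$ and derive a contradiction with cyclicity of $\bi$ when $\bj\neq0$ (since then $\C\bi=\Image(\bi\o\bj)=\Image[\bx,\by]$). Both mechanisms are sound and of comparable length; the paper's trace computation is slightly more direct (one line once the triangular form is in hand), while your invariant-subspace argument makes the geometric reason for the vanishing of $\bj$ more transparent and folds naturally into the stability formalism. Your additional remarks --- the verification of the reverse inclusion, the observation that $\langle\bj,\bi\rangle=0$ forces $[\bx,\by]$ to be nilpotent of rank $\leq 1$ so that Lemma~\ref{rud} applies, and the reconciliation of ``semistable'' with the superscript $s$ via $\bv$-regularity of $(0,-\th^+)$ --- are all points the paper leaves implicit, and they are handled correctly.
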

\begin{proof} According to Theorem \ref{stab_nak},
 a point  $(\bx,\by,\bi,\bj)\in M_0(\bv,1)$ is
stable if and only if  condition \eqref{cond2} holds.
The condition means that $\bi$ is a cyclic vector for $(\bx,\by)$,
i.e., we have $\C\langle\bx,\by\rangle\bi=\C^\bv$.

We claim that the last equation implies $\bj=0$.
To see this, we observe that for any $a\in\gl_\bv$, we have
\beq{tr}
\langle \bj,\
a\bi\rangle=\Tr\big(a\ccirc(\bi\o\bj)\big)=-\Tr\big(a\ccirc[\bx,\by]\big),
\qquad\forall a\in\gl_\bv.
\eeq

Assume now that $a\in\C\langle\bx,\by\rangle$, is a noncommutative
polynomial in $\bx$ and $\by$. Then, we may write the matrices
$\bx,\by,$ and $a$  in an upper-triangular form, by Lemma \ref{rud}.
In this form, the principal diagonal of the matrix
$[\bx,\by]$ vanishes, and we get $\Tr\big(a\ccirc[\bx,\by]\big)=0$.
Thus, \eqref{tr} implies that the linear
function $\bj$ vanishes on the vector space
$\C\langle\bx,\by\rangle\bi=\C^\bv$, and the proposition follows. 
\end{proof}

For any commuting  pair $(\bx,\by)\in\gl_\bv$ and any vector
$\bi\in\C^\bv$, we introduce a set of polynomials
in two indeterminates, $x$ and $y$, as follows
$$J_{\bx,\by,\bi}:=\{f\in\C[x,y]\mid
f(\bx,\by)\bi=0\}.
$$

It is clear that $J_{\bx,\by,\bi}$ is an ideal of the algebra
$\C[x,y]$. Furthermore, this  ideal has codimension $\bv$ in $\C[x,y]$ if and only if 
the map $\C[x,y]/J_{\bx,\by,\bi}\to\C^\bv,\ f\mto f(\bx,\by)\bi,$
is surjective. The latter holds if and only if  $\bi$ is a cyclic vector
for the pair $(\bx,\by)$. In fact, one proves

\begin{cor} The assigment $(\bx,\by,\bi)\mto J_{\bx,\by,\bi}$
establishes a bijection between the orbit
set $M_0(\bv,1)^s_{-\th^+}/GL_\bv$ and
the set of   ideals
$J\sset \C[x,y]$ such that $\dim \C[x,y]/J=\bv$.\qed
\end{cor}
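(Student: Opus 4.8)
The plan is to recognise a stable triple $(\bx,\by,\bi)$ (recall $\bj=0$ on the stable locus, by Proposition~\ref{hilb_stab}) as the data of a cyclic $\C[x,y]$-module of dimension $\bv$ together with a choice of cyclic generator, and then to match such pairs with their annihilator ideals. First I would check the map is well defined on orbits. Since $[\bx,\by]=0$, the assignment $f\mto f(\bx,\by)$ is an algebra homomorphism $\C[x,y]\to\gl_\bv$, so the evaluation $f\mto f(\bx,\by)\bi$ is a $\C[x,y]$-module map and its kernel $J_{\bx,\by,\bi}$ is a genuine ideal, of codimension $\bv$ exactly because $\bi$ is cyclic, as observed before the corollary. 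For $g\in GL_\bv$ the action gives $f(g\bx g\inv,g\by g\inv)(g\bi)=g\cdot f(\bx,\by)\bi$, whence $J_{g\bx g\inv,g\by g\inv,g\bi}=J_{\bx,\by,\bi}$; thus the assignment descends to the orbit set $M_0(\bv,1)^s_{-\th^+}/GL_\bv$.

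For injectivity, the engine is the evaluation map $\phi\colon\C[x,y]/J_{\bx,\by,\bi}\to\C^\bv$, $[f]\mto f(\bx,\by)\bi$, which is well defined by the definition of $J_{\bx,\by,\bi}$, surjective by cyclicity, and hence a linear isomorphism as both sides have dimension $\bv$. By construction $\phi$ intertwines multiplication by $x$, resp.\ $y$, with the operators $\bx$, resp.\ $\by$, and sends the class of $1$ to $\bi$. So if two triples share a common ideal $J$, I would compose the two evaluation isomorphisms to obtain $g:=\phi'\ccirc\phi\inv\in GL_\bv$ satisfying $g\bx=\bx'g$, $g\by=\by'g$, $g\bi=\bi'$; that is, $g$ carries one triple to the other, proving injectivity.

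For surjectivity, given an ideal $J\sset\C[x,y]$ with $\dim\C[x,y]/J=\bv$, I would set $A:=\C[x,y]/J$ and fix a linear isomorphism $A\iso\C^\bv$. Transporting multiplication by $x$ and by $y$ across this isomorphism yields commuting operators $\bx,\by\in\gl_\bv$, and taking $\bi$ to be the image of the class of $1$ makes $\bi$ cyclic since $1$ generates $A$ as an algebra. Then $(\bx,\by,\bi,0)\in M_0(\bv,1)$, as the moment-map equation $[\bx,\by]+\bi\o\bj=0$ holds with $\bj=0$, and this quadruple is semistable by Proposition~\ref{hilb_stab}; by construction $f(\bx,\by)\bi=0$ iff $f\in J$, so $J_{\bx,\by,\bi}=J$ and the map is onto.

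I do not expect a serious obstacle: the whole statement is a dictionary between linear-algebra data and finite-colength ideals of $\C[x,y]$. The one point deserving care is the verification that $\phi$ intertwines the $x,y$-multiplications with $\bx,\by$ and sends $[1]$ to $\bi$, since it is precisely this compatibility that both forces the orbit-invariance used for well-definedness and recovers the triple from its ideal in the surjectivity step.
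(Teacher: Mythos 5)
Your proof is correct and follows exactly the route the paper intends: the discussion preceding the corollary (codimension $\bv$ if and only if $\bi$ is cyclic, together with the description of the stable locus in Proposition \ref{hilb_stab}) is precisely the setup for the evaluation-isomorphism dictionary you spell out, and the paper itself leaves the remaining verification to the reader. Your three steps (orbit-invariance, injectivity via $g=\phi'\ccirc\phi\inv$, surjectivity by transporting the multiplication operators on $\C[x,y]/J$) supply that verification completely; the only cosmetic slip is that $\bi$ is cyclic because $1$ generates $\C[x,y]/J$ as a $\C[x,y]$-\emph{module}, not as an algebra, which is what you in fact use.
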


The set of   codimension $\bv$ ideals in the algebra
$\C[x,y]$ has a natural scheme structure. The resulting scheme
$\Hilb^n(\C^2)$ 
turns out to be a smooth connected variety of dimension $2n$,
called the {\em Hilbert scheme} of $n$ points in the
plane. Thus, we see that, for $\bw=1$ and $\la=0,\ \th=-\th^+,$ one has
a natural  isomorphism
$$\M_{0,-\th^+}(\bv,1)\cong\Hilb^n(\C^2).$$

In this case, the canonical projective morphism $\pi$, cf. \eqref{M0},
$$
\pi:\
\M_{0,-\th^+}(\bv,1)=\Hilb^n(\C^2)\en\too\en
\M_{0,0}(\bv,1)=(\C^\bv\times\C^\bv)/\BS_\bv,
$$
turns out to be a resolution of singularities,
called the {\em Hilbert-Chow morphism}.

\begin{rem} One can show that
changing our choice of stability condition from $\th=-\th^+$
to  $\th=\th^+$ leads to isomorphic quiver varieties,
because of the isomorphisms 
of Remark \ref{opp}.
\end{rem}

\section{Convolution in homology}\label{cg}
In this section, we review a machinery that produces
associative, not necessarily commutative, algebras
from certain geometric data. The algebras in question
are realized as either homology or $K$-groups of an
appropriate variety, and the corresponding
algebra structure is given by an operation on
homology, resp. on $K$-theory, known as `convolution'.

We refer the reader  to \cite{CG}, ch. 2 and 5, see also  \cite{Gi1},
for more information about the convolution  operation
and for other applications of this construction in
representation theory.

In the next section, the formalism developed below will be applied to quiver
varieties.

\subsection{Convolution}
Let $\C[X]$ denote the vector space of $\C$-valued functions on a
finite set $X$. Characteristic functions of one element
subsets form a $\C$-base of  $\C[X]$.

Let $X_r,\ i=1,2,$ be a pair of finite sets. A linear operator
$K: \C[X_1]\to \C[X_2]$ is given, in the bases of characteristic
functions, by a rectangular $|X_1|\times |X_2|$-matrix
$|K(x_2,x_1)|_{x_i\in X_i}.$ 
We may view this matrix as a  $\C$-valued function
$(x_1,x_2)\mto K(x_1,x_2),$ on $X_1\times X_2$,
called the {\em kernel of the operator} $K$.

The action of $K$ is then given, in terms
of that kernel,
by the formula
\beq{conv1}K:\ f\mto K*f,\quad\text{where}\quad
(K*f)(x_2):=\sum_{x_1\in X_1}\ K(x_2,x_1)\cd f(x_1).
\eeq

Now, let  $X_i,\ i=1,2,3,$ be a triple of finite sets, 
and let  $K: \C[X_1]\to \C[X_2]$ and $K':\
\C[X_2]\to \C[X_3]$ be a pair of operators, with kernels
$K_{32}\in \C[X_3\times X_2]$ and $K_{21}\in
\C[X_2\times X_1]$, respectively.
One may form the composite operator
$K\ccirc K':\ \C[X_1]\to  \C[X_3],\ f\mto K(K'(f))$.

Explicitly, in terms of the kernels, for any $f\in\C[X_1],$ the function
$K(K'(f))$ is given by
\begin{multline*}
x_3\ \mto \ K(K'(f))(x_3)=
\sum_{x_2\in X_2}\ K_{32}(x_3,x_2)\cd
\left(\sum_{x_1\in X_1}\ K_{21}(x_2,x_1)\cd f(x_1)\right)\\
=
\sum_{x_1\in X_1}\ \left(\sum_{x_2\in X_2}\
 K_{32}(x_3,x_2)\cd K_{21}(x_2,x_1)\right)\cdot f(x_1).
\end{multline*}

Thus, the kernel of the composite operator $K\ccirc K'$ is
a function $K_{32}*K_{21}$, on $X_3\times X_1$,
given by the formula
\beq{conv2}
(x_3,x_1)\ \mto\
(K_{32}*K_{21})(x_3,x_1)\ :=\
\sum_{x_2\in X_2}\
 K_{32}(x_3,x_2)\cd K_{21}(x_2,x_1).
\eeq

The operation 
\beq{conv3}
*:\en
\C[X_3\times X_2]\times\C[X_2\times X_1]\too
\C[X_3\times X_1],\quad
K_{32}\times K_{21}\mto K_{32}*K_{21}
\eeq
is called {\em convolution} of kernels. Thinking of 
kernels as of rectangular matrices, the convolution
becomes nothing but matrix multiplication.
Thus, formula \eqref{conv2}
corresponds to the standard matrix multiplication for
$|X_3|\times |X_2|$-matrix by a  $|X_2|\times|X_1|$-matrices.
So, all we have done so far was a reinterpretation of
the fact that composition of linear operators corresponds
to a product of corresponding matrices.

\begin{rem}\label{conv_rem} A
There is an equivalent, but slightly more elegant, way to write
formula \eqref{conv2}  as follows.

For any map $p:\ X\to Y,$ of finite sets, one has
a pull-back map $p^*:\ \C[Y]\to\C[X]$, of functions given by
$(p^*f)(x):=p(f(x)),\ \forall x\in X$.
We also define
a {\em push-forward} linear map on functions by
\beq{push}
p_*:\
\C[X]\to\C[Y],\quad
f\mto p_*f,
\quad\text{where}
\quad(p_*f)(y):=\sum_{\{x\in p\inv(y)\}}\ f(x).
\eeq

For any pair $i,j\in\{1,2,3\},$ let
$p_{ij}:\
X_3\times X_2\times X_1\to X_i\times X_j
$
be the projection along the factor not named.
It is clear that, with the above notation, formula  \eqref{conv2}
may be rewritten as follows
\beq{conv}
K_{32}*K_{21}\ :=\
 (p_{31})_*\,\big((p_{32}^*K_{32})\cdot (p_{21}^*K_{21})\big).
\eeq
\end{rem}

We will be especially interested in a special case of convolution 
\eqref{conv} where
 $X_1=X_2=X_3=X$ is a set with $n$ elements. Then, the convolution product
\eqref{conv} makes $\C[X\times X]$ an 
associative algebra. According to
the preceeding discussion, this algebra is isomorphic to
the algebra of $n\times n$-matrices.

One may  get more interesting examples of convolution algebras by considering
an equivariant version of the above construction, where there is a group
$G$ acting on a finite set $X$. We let $G$ act
diagonally on $X\times X$ and let 
 $\C[X\times X]^G\sset  \C[X\times X]$ be the subspace
of $G$-invariant functions.
This space is clearly isomorphic to
$\C[(X\times X)/G]$, the space of functions on the
set of $G$-diagonal orbits in $X\times X$.

It is immediate to check that  the convolution product
\eqref{conv2}-\eqref{conv3} is $G$-equivariant, hence it
makes  $\C[X\times X]^G$ a subalgebra of
$\C[X\times X]$. 
The resulting
algebra $\bigl\C[X\times X]^G,\ *\bigr$ may be shown to be always
semisimple. Such an algebra need not be simple, so it is not necessarily
isomorphic to a matrix algebra, in general.

\begin{examp}[Group algebra] Given a finite group $G$,
we take $X=G$. We let $G$ act on $X$ by left translations,
and act diagonally on $G\times G$, as before.
Observe that the map $G\times G\to G,\
(g_1,g_2)\mto g\inv_1\cdot g_2$ descends to a well
defined map
$(G\times G)/G\to G$. Moreover, the latter map
is easily seen to be a bijection.

We deduce the following chain of vector space isomorphisms
\beq{iso1}
\C[G\times G]^G\iso \C[(G\times G)/G]\iso\C[G].
\eeq

It is straightforward to check that the restriction 
of convolution \eqref{conv2}-\eqref{conv3}
to $\C[G\times G]^G$ goes, under the 
 composite isomorphism in \eqref{iso1},
to the standard convolution on a group. The latter is given by
$$(f*f')(g)=\sum_{h\in G}\ f(gh\inv)\cd f'(h),
\qquad\forall f,f'\in\C[G].
$$

We conclude that the algebra  $\bigl\C[G\times G]^G,\ *\bigr)$,
with  convolution product
\eqref{conv}, is isomorphic to the
{\em group algebra} of  $G$.
\end{examp}

\begin{examp}[Hecke algebra]
Let $G=G(\F)$ be a split reductive group over
a finite field $\F=\F_q$. Let $B\sset G$ be a Borel
subgroup of $G$. We put $X:=G/B$, and let
$G$ act on $X$ by left translations.
It is known, thanks to the Bruhat decomposition, that
$G$-diagonal orbits in $G/B\times G/B$
are labelled by the elements of
$W$, the Weyl group of $G$.

The resulting
convolution algebra $H_q(G):=\big(\C[G/B\times G/B]^G,\ *\big)$
is called the {\em Hecke algebra} of $G$.
\end{examp}

\subsection{Borel-Moore homology}
We are going to extend the
constructions of the previous subsection to the case where
finite sets are replaced by smooth $C^\infty$-manifolds.

Thus, we  let  $X_i,\ i=1,2,3,$ be a triple of 
smooth manifolds. One might try to replace the summation  in
formula \eqref{conv2}  by integration to get
a convolution product of the form
$\dis
*:\
C^\infty(X_3\times X_2)\times C^\infty(X_2\times X_1)\to
C^\infty(X_3\times X_1)$, cf \eqref{conv3}.

To make this work, one still needs additional ingredients.
One such ingredient is  a {\em measure} on $X_2$
that is necessary in order to define the integral
that replaces summation  in
formula \eqref{conv2}. 

An alternate approach, 
that does not require introducing a measure, is
to replace functions by differential forms.
In this way, one defines a convolution product
\beq{conv5}
\Om^p(X_3\times X_2)\times \Om^q(X_2\times X_1)\to
\Om^{p+q-\dim X_2}(X_3\times X_1),\en
K_{32}\times K_{21}\mto \int_{X_2}
(p_{32}^*K_{32})\wedge (p_{21}^*K_{21}).
\eeq

To insure the convergence of the integral in \eqref{conv5}
one may  assume, for instance, that the manifold $X_2$ is compact.
A slightly weaker assumption,
that is sufficient for \eqref{conv5} to make sence,
is to restrict considerations to
 differential forms with certain  support condition that would
insure, in particular, that the set
\beq{supp}
p_{32}\inv(\supp K_{32})\ \cap\ p_{21}\inv(\supp K_{21})\en
\text{be compact.}
\eeq 

Unfortunately, none of the above works 
in the examples arising from quiver varieties that we would like
 to consider below. In those examples, the manifolds
$X_i,\ i=1,2,3,$ are the quiver varieties, which are {\em noncompact}
complex algebraic varieties. It turns out that
the only natural support condition one could make in those
cases in order
for \eqref{supp} to hold, is to require supports
of $K_{32}$ and $K_{21}$, in  \eqref{conv5}, be contained in appropriate 
{\em closed} algebraic subvarieties. 

Obviously, any  $C^\infty$-differential
form on a manifold whose support is contained
in a closed (proper) submanifold must vanish identically. There are, however,
plenty of `distribution-like' differential forms, called  {\em currents},
which may be supported on closed  submanifolds.  Indeed, replacing 
differential forms by currents 
resolves the convergence problem for integration.
Unfortunately,  introducing currents creates another
problem: the wedge-product operation, which is used in \eqref{conv5},
is not well defined for currents.

All  the above difficulties  may be resolved
by introducing homology. Recall that there is the
 de Rham differential acting on the (graded) vector space
$\Om^\hdot(X)$,
of differential forms
on a manifold $X$.
The homology of the resulting de Rham complex
$\bigl\Om^\hdot(X),\ d\bigr$
is isomorphic to $H^\hdot(X,\C)$, the singular cohomology
of $X$ with complex coefficients.
Similarly, there is a natural  de Rham differential
 on the (graded) vector space
of currents on $X$, and the homology
of the resulting complex is known to be
isomorphic to $H^{BM}_\idot(X,\C)$,
the {\em Borel-Moore homology} of $X$
  with complex coefficients.
The latter is the homology theory that we are going to use.

For practical purposes, it is more convenient to
use a different ({\em a posteriori} equivalent) definition of
Borel-Moore homology
based on Poincar\'e duality rather than on the de Rham complex of currents.
We now recall this definition.

Let $M$ be a smooth {\em oriented} 
$C^\infty$-manifold of real dimension $m$. One defines  Borel-Moore homology
of a closed subset $X\sset M$ to be the following relative cohomology
\beq{BM}H^{BM}_\idot(X):= H^\hdot(M, M\sminus X;\C).
\eeq

It can be shown that the group on the right is, in fact,
independent of the choice of a closed imbedding of $X$ 
into a smooth manifold.

\begin{notation} {\textbf{From now on, we drop the superscript `BM' and
let $H_\idot(X)$ stand for Borel-Moore homology (rather than ordinary
homology)
of $X$.}}
\end{notation}

A property that makes Borel-Moore homology so useful for our purposes is
 that, for any $X$, which is either a smooth connected, and {\em oriented} 
$C^\infty$-manifold or an
 irreducible {\em complex} algebraic variety,
the space $H_m(X),$ where $ m:=\dim_\BR X,$ is 1-dimensional;
furthermore, there is a canonical base element
$[X]\in H_m(X)$,
called the {\em fundamental class} of $X$.

\begin{rem} Note that, in the ordinary homology theory,
fundamental classes  only exist for
{\em compact} manifolds, while  such a compactness condition is
not necessary for the fundamental class to exist in
Borel-Moore homology.\qq
\end{rem}

We record a few basic properties of the Borel-Moore homology theory.
First, for any {\em proper} map $p:  X\to Y$, there is a push-forward
functor $p_*:\ H_\idot(X)\to H_\idot(Y).$

Second, there is a cap-product on  Borel-Moore homology.
In more detail, given two closed subsets $X,Y\sset M,$ where
$M$ is a smooth oriented manifold of real dimension $m$,
one has a cup product
$$\cup:\
H^{m-i}(M,M\sminus X;\ \C)\times
H^{m-j}(M,M\sminus Y;\ \C)\to H^{2m-i-j}(M,M\sminus (X\cup Y);\ \C).
$$

We define a cap-product on  Borel-Moore homology by 
transporting the above cup product  via formula \eqref{BM};
this way we obtain a cap-product pairing
\beq{cap}\cap:\
H_i(X)\times H_j(Y)\to H_{i+j-m}(X),\qquad m=\dim_\BR M.
\eeq
It should be emphasized that the cap-product so defined
{\em does} depend on the ambient smooth
manifold $M$.

\subsection{Convolution  in Borel-Moore homology}\label{conv_sec}
There is  a convolution product in Borel-Moore
homology that provides an adequate generalization,
from the  case of finite sets to the case of
manifolds,
 of the convolution product \eqref{conv}.

To define the convolution product , fix
 $M_i,\ i=1,2,3,$  a triple of smooth oriented
manifolds, and let $p_{ij}:\
M_1\times M_2\times M_3\to M_i\times M_j$ denote the projection along the
factor not named,
cf. \eqref{conv}.

\begin{defn}\label{corr} A pair of
 closed subsets  $Z_{12}\sset M_1\times M_2$ and $Z_{23}\sset
 M_2\times M_3$ is said to be {\em composable} if the following
map \eqref{proper}  is {\em proper}
\beq{proper}
p_{13}:\
(p\inv_{12}Z_{12})\,\cap\,(p\inv_{23} Z_{23})\to M_1\times M_3.
\eeq

Given composable subsets as above, we define their  {\em composite}
to be
$$Z_{12}\circ Z_{23}:=p_{13}\big[(p\inv_{12}Z_{12})\,\cap\,(p\inv_{23}
Z_{23})\big]
\en\sset\en M_1\times M_3.$$
\end{defn}

Now, let
  $Z_{12}\sset M_1\times M_2$ and $Z_{23}\sset
 M_2\times M_3$ be as above, and put $m_i:=\dim M_i$.

We use $M:=M_1\times M_2\times M_3$ as an ambient manifold and  apply formula \eqref{cap}.
In this way,
we get a cap product map
$$ 
\cap:\ 
H_{i+m_3}(p\inv_{12}Z_{12})\times H_{j+m_1}(p\inv_{23}Z_{23})\too
H_{i+j-m_2}((p\inv_{12}Z_{12})\,\cap\,(p\inv_{23} Z_{23})).
$$

Assume further that  $Z_{12}$ and $Z_{23}$ are composable.
Then,
we have a push-forward morphism $(p_{13})_*$, on Borel-Moore homology,  induced by
the {\em proper} map \eqref{proper}.

One defines the 
convolution in Borel-Moore homology as the following map, cf. \eqref{conv}, \eqref{conv5},
\begin{multline}\label{star}
*:\en
H_i(Z_{12}) \times H_j(Z_{23})\
\too\ H_{i+j-\dim M_2}(Z_{12}\circ Z_{23}),\\
c_{12}\times c_{23}\en \mto\en c_{12}* c_{23}:=
(p_{13})_*\Big((c_{12}\boxtimes[M_3])\ \cap\
([M_1]\boxtimes
  c_{23})\Big).
\end{multline}

\subsection{Convolution algebra}\label{sec_b}
Fix $M$, a smooth complex algebraic variety, not necessarily connected,
in general. Further, let  $Y$ be a (not necessarily smooth) algebraic
variety
and 
$\pi: M \to Y$, a {\em proper} morphism.
Thus, we may form a fiber product $Z:=M\times_{Y}M,$ a closed
subvariety of $M\times M$.

One may apply the convolution in Borel-Moore homology operation
in a  special
case where   $M_1=M_2=M_3=M$, and
 $Z_{12}=Z_{23}=Z$.
The assumption the morphism $\pi$ be proper insures that
the set $Z$ is composable with itself in the sense of Definition
\ref{corr}. Furthermore, it is immediate to check that one has
$Z\circ Z=Z$. Thus, the convolution product
\eqref{star} gives $H_\idot(Z)$, the total Borel-Moore homology group
of $Z$, a structure of associative algebra. The
fundamental class $[\Delta]$, of the diagonal
$\Delta\sset M\times M$, is the unit of the algebra
 $\bigl H_\idot(Z),\ *\bigr$.

Next, pick a point $y\in Y$ and put $M_y:=\pi\inv(y).$
Consider the setting of section \ref{conv_sec} in the
special case where $M_1=M_2=M$, and where $M_3=pt$ is a point.
Thus, we have $M_2\times M_3=M_2\times pt=M$, and  put
$Z_{12}:=M\times_YM=Z$, as before,
and $Z_{23}:=M_y=\pi\inv(y),$ viewed as
a closed subset in $M_2\times M_3=M$. 

It is immediate to check that the sets
$Z$ and  $M_y$ are composable and, moreover,
one has $Z\circ M_y=M_y$. Therefore, convolution
in BM homology gives
the space $H_\idot(M_y)$ an $H_\idot(Z)$-module
structure.

Let $\V$ denote a set that provides a labelling for  connectected components of the manifold $M$.
We write $M^{(r)}$ for the  connectected component with label $r\in\V$.
For any pair
$M^{(r)},M^{(s)}$, of connectected components,
we put $Z^{(r,s)}:= Z\cap (M^{(r)}\times M^{(s)}).$ 
Similarly,  we put $M_y^{(r)}:=M_y\cap M^{(r)},$ for any $r\in\V$.
Clearly, we have $H_\idot(Z)=\bplus_{r,s\in\V}\ H_\idot(Z^{(r,s)}),$
resp. $H_\idot(M_y)=\bplus_{r\in\V}\ H_\idot(M_y^{(r)}).$

We write $H\top(M_y^{(r)})$
for the top Borel-Moore homology group of $M_y^{(r)}$.
 This group 
has a  natural basis formed by the fundamental classes 
of irreducible components 
 of the variety $M_y^{(r)}$ of maximal
dimension.

Next, for each pair $(r,s)$, we  introduce a new $\Z$-grading
on the vector space $H_\idot(Z^{(r,s)})$ as follows
\beq{grad} H_{[i]}(Z^{(r,s)}):=H_{d-i}(Z^{(r,s)})
\quad\text{where}\quad d:=\half(\dim_\BR M^{(r)} +\dim_\BR M^{(s)}).
\eeq
We extend this grading to $H_\idot(Z)$
by setting $H_{[i]}(Z)=\bplus_{r,s\in\V}\ H_{[i]}(Z^{(r,s)}).$

The following result is an
immediate consequence of formula \eqref{star}.

\begin{lem}\label{top} \vi The new grading  makes $H_{[\bullet]}(Z)$
a graded algebra with respect to the convolution product,
i.e., we have $H_{[i]}(Z)*H_{[j]}(Z)\sset H_{[i+j]}(Z)$,
for any $i,j\in \Z$. In particular,
$H_{[0]}(Z)$ is a subalgebra  of the  convolution 
 algebra
 $\bigl H_{[\bullet]}(Z),\ *\bigr$.

\vii  For any $y\in Y$, the vector space $H\top(M_y):=\bplus_{r\in\V}\
H\top(M_y^{(r)})$
is stable under the convolution-action of the subalgebra $H_{[0]}(Z)\sset
H_{[\bullet]}(Z)$ on $H_\idot(M_y).$
\qed
\end{lem}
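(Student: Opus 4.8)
The plan is to prove both parts by pushing a single homological degree through the convolution formula \eqref{star}: part (i) is formal bookkeeping, and the only substantive content is a dimension estimate buried in part (ii). Throughout I write $D_u:=\dim_\BR M^{(u)}=2\dim_\C M^{(u)}$.

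For part (i) I would fix three labels $r,s,t\in\V$ and take $c_{12}\in H_{[i]}(Z^{(r,s)})$ and $c_{23}\in H_{[j]}(Z^{(s,t)})$; by definition of \eqref{grad} these lie in $H_{\frac12(D_r+D_s)-i}(Z^{(r,s)})$ and $H_{\frac12(D_s+D_t)-j}(Z^{(s,t)})$. Applying \eqref{star} with $M_1=M^{(r)},\,M_2=M^{(s)},\,M_3=M^{(t)}$ carries the pair into $H_{\deg}(Z^{(r,t)})$ with
\[
\deg=\Big(\tfrac12(D_r+D_s)-i\Big)+\Big(\tfrac12(D_s+D_t)-j\Big)-D_s=\tfrac12(D_r+D_t)-(i+j),
\]
which is precisely $H_{[i+j]}(Z^{(r,t)})$, giving $H_{[i]}(Z)*H_{[j]}(Z)\subset H_{[i+j]}(Z)$. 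I would then record two routine points: if the middle labels disagree the composite of correspondences is empty, so off-diagonal products vanish and the grading is well defined on the block decomposition $H_\idot(Z)=\bplus_{r,s}H_\idot(Z^{(r,s)})$; and the unit $[\Delta]=\sum_r[\Delta^{(r)}]$ has each $[\Delta^{(r)}]\in H_{D_r}(Z^{(r,r)})=H_{[0]}(Z^{(r,r)})$, so the unit sits in degree $[0]$ and $H_{[0]}(Z)$ is a unital subalgebra.

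For part (ii) I would run the identical computation with $M_3=\mathrm{pt}$ and $Z_{23}=M_y^{(s)}$. For $c\in H_{[0]}(Z^{(r,s)})$, i.e.\ $c\in H_{\frac12(D_r+D_s)}(Z^{(r,s)})$, and a top class $m\in H\top(M_y^{(s)})=H_{2\dim_\C M_y^{(s)}}(M_y^{(s)})$, formula \eqref{star} places $c*m$ in $H_{\deg}(M_y^{(r)})$ with $\deg=2\dim_\C M_y^{(r)}+(\delta_r-\delta_s)$, where $\delta_u:=\dim_\C M^{(u)}-2\dim_\C M_y^{(u)}$. Since the Borel--Moore homology of $M_y^{(r)}$ vanishes strictly above real degree $2\dim_\C M_y^{(r)}$, the product automatically lands in $H\top(M_y^{(r)})$ when $\delta_r=\delta_s$ and vanishes when $\delta_r>\delta_s$; in either case the spanning fundamental classes of top-dimensional components are sent to combinations of such classes.

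The one genuine obstacle is excluding the remaining case $\delta_r<\delta_s$, where $c*m$ could land strictly below top degree and escape $H\top(M_y)$. Equivalently, I must show that $H\top(M_y)$ occupies a single degree of the module grading $H_{[b]}(M_y^{(r)}):=H_{\frac12 D_r-b}(M_y^{(r)})$ that is preserved by $H_{[0]}(Z)$; this amounts to $\delta_r$ being constant over the components $r$ that actually contribute a nonzero operator. This is exactly a semismallness-type estimate: once $\dim_\C Z^{(r,s)}\le\frac12(\dim_\C M^{(r)}+\dim_\C M^{(s)})$ forces $H_{[0]}(Z^{(r,s)})$ to be the top homology of $Z^{(r,s)}$, and the fibre bound $2\dim_\C M_y^{(u)}=\dim_\C M^{(u)}-\dim_\C Y_\alpha$ holds for $y$ in a stratum $Y_\alpha$, one gets $\delta_u=\dim_\C Y_\alpha$ independently of $u$. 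In the quiver-variety application this estimate is furnished by the semismallness of $\pi$ in Proposition \ref{semismall}; in the abstract convolution setting it is the input one verifies in each instance, and granting it the stability of $H\top(M_y)$ follows at once from the degree count above.
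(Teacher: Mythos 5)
Your degree count in part (i) is exactly the content of the paper's argument: the lemma is stated there as an ``immediate consequence of formula \eqref{star}'' with no further justification, so your explicit bookkeeping over the blocks $Z^{(r,s)}$, together with the observation that the unit $[\Delta]$ lies in degree $[0]$, is the paper's proof written out in full. The added value of your write-up is the caveat in part (ii), and it is a real one: the degree count alone only shows that $H_{[0]}(Z)$ preserves each piece $H_{[b]}(M_y):=\bplus_{r\in\V}\ H_{\frac{1}{2}\dim_\BR M^{(r)}-b}(M_y^{(r)})$ of the shifted grading, and $H\top(M_y)$ coincides with a single such piece only when the defect $\delta_r:=\dim_\C M^{(r)}-2\dim_\C M_y^{(r)}$ is independent of $r$ among the components that interact; otherwise a class in $H_{[0]}(Z^{(r,s)})$ with $\delta_r<\delta_s$ could carry a top class on $M_y^{(s)}$ strictly below the top degree of $M_y^{(r)}$, exactly as you say. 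The paper passes over this in silence, and in the generality in which the lemma is phrased the extra hypothesis is genuinely needed; in the intended applications it holds automatically because the fibers are Lagrangian ($\delta_r=0$ for all $r$) or, more generally, because $\pi$ is semismall over the stratum through $y$ (Proposition \ref{semismall}, Theorem \ref{repthm}(i)), so your identification of where the missing input comes from is correct and well placed.
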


\begin{rem} In the especially important case where $M$ is connected
and the map $\pi: M\to Y$ is semismall, eg. the case
where $\pi$ is a symplectic resolution, for the integer
$d$ appearing in \eqref{grad},
we have $d=\dim_\BR M=\dim_\BR Z.$  Thus, in such a case,
one has $H_{[0]}(Z)=H\top(Z)$. This  group 
has a  natural basis formed by the fundamental classes 
of irreducible components of the variety $Z$
 of maximal
dimension.
\erem

\begin{rem} The material of \S\S\ref{conv_sec}-\ref{sec_b}
is taken from  \cite{Gi3}.
The general notion of convolution algebra in 
Borel-Moore homology,
as well as the geometric construction of its irreducible
representations, was discovered in that paper.
\end{rem}
\section{Kac-Moody algebras and Quiver varieties}\label{km}
\subsection{}\label{st0} 
Throughout this subsection,  we fix  a quiver $Q$, without edge loops,
and a dimension vector $\bw\in\Z^I$.
We also fix a stability parameter $\th\in\BR^I$ and use simplified
notation $\M_\th(\bv,\bw):=\M_{0,\th}(\bv,\bw),$ resp.
$\M_0(\bv,\bw)=\M_{0,0}(\bv,\bw)$.
Recall that we write $\bv\geq\bv'$ whenever $\bv-\bv'\in\Z^I_{\geq 0}$.

Given a pair $0\leq \bv'\leq\bv$, of dimension vectors,
we choose $I$-graded vector spaces $V,V',$ and $W$ such that
$\dim_I V=\bv,\ \dim_I V'=\bv',$ and $\dim_I W=\bw$.
Thus, we identify  Nakajima's varieties of relevant dimesions
with  corresponding Hamiltonian reductions of the representation
spaces $\Rep(\ddq, V,W)$, resp. $\Rep(\ddq, V',W)$.
Therefore, a choice of $I$-graded vector space
isomorphism $\phi:\ V'\oplus V''\iso V$ clearly
induces a vector space imbedding
$\jmath_\phi:\ \Rep(\ddq, V',W)\to\Rep(\ddq, V,W),\
(\bx',\by',\bi',\bj')\mto(\bx',\by',\bi',\bj')\oplus 0'',$
where $0''\in \Rep(\ddq, V'',W)$ denotes  the zero quadruple.
The latter imbedding induces a morphism
$\M_0(\bv',\bw)\to\M_0(\bv,\bw)$,
of the corresponding categorical quotients.

\begin{rem} Note that the map $\jmath_\phi$ does {\em not}
give rise to any natural morphism
$\M_\th(\bv',\bw)\to\M_\th(\bv,\bw)$ because the stability conditions
involved in the definitions of these spaces are not compatible,
in general.
\erem

We observe that, for any other $I$-graded  vector space
isomorphism $\psi:\ V'\oplus V''\iso V$,
there exists an element $g\in G_\bv$ such
that one has $\jmath_\psi=g\ccirc\jmath_\phi$.
It follows, that the maps $\jmath_\phi$
and $\jmath_\psi$ induce the {\em same}
morphism  $\imath_{\bv',\bv}:\ \M_0(\bv',\bw)\to\M_0(\bv,\bw)$.
Thus, the latter morphism is defined canonically.
Furthermore, according to \cite[Lemma 2.5.3]{Na4},
one has

\begin{lem} For any
dimension vectors $\bv'\leq\bv$,
the canonical morphism $\imath_{\bv',\bv}:
\M_0(\bv',\bw)$ $\to\M_0(\bv,\bw)$
is a closed imbedding.\qed
\end{lem}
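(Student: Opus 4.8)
The plan is to reduce the statement to surjectivity of a map of coordinate rings and then verify that surjectivity by tracking generators. Since both $\M_0(\bv,\bw)=\Spec A_\bv$ and $\M_0(\bv',\bw)=\Spec A_{\bv'}$ are affine, where $A_\bv:=\C[\mu\inv(0)]^{G_\bv}$ and $A_{\bv'}:=\C[\mu\inv(0)]^{G_{\bv'}}$ by Theorem \ref{nak_thm}(i), the morphism $\imath_{\bv',\bv}$ is a closed imbedding if and only if the corresponding algebra map $\imath^*:\ A_\bv\to A_{\bv'}$ is surjective. First I would observe that $\imath^*$ is simply pullback along $\jmath_\phi$: the imbedding $\jmath_\phi$ is equivariant for the block-diagonal inclusion $G_{\bv'}\into G_\bv,\ g'\mapsto g'\oplus\Id_{V''}$, and it carries $\mu\inv(0)\sset\Rep(\ddq,\bv',\bw)$ into $\mu\inv(0)\sset\Rep(\ddq,\bv,\bw)$, since the moment-map value of $(\bx',\by',\bi',\bj')\oplus 0''$ vanishes on both the $V'$- and $V''$-blocks. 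Hence restriction of functions carries $G_\bv$-invariants to $G_{\bv'}$-invariants, and this restriction is exactly the map $\imath^*$ that induces $\imath_{\bv',\bv}$.

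Next I would pin down generators of $A_\bv$. Using the framed analogue of Le Bruyn--Procesi, that is, applying Proposition \ref{BP} to the Crawley--Boevey quiver $Q^\bw$ of Remark \ref{adhmcb}, the algebra $\C[\Rep(\ddq,\bv,\bw)]^{G_\bv}$ is generated by the trace functions $\Tr(p,-)$ attached to oriented cycles $p$ in $\ddq$ (cycles avoiding the Crawley--Boevey vertex $\infty$), together with the framed trace functions coming from cycles through $\infty$, that is functions of the form $\bj\ccirc q\ccirc\bi$ for $q$ a path in $\ddq$. Because $\mu\inv(0)$ is a closed $G_\bv$-stable subvariety and $G_\bv$ is reductive, the restriction map $\C[\Rep(\ddq,\bv,\bw)]^{G_\bv}\onto A_\bv$ is surjective, exactly as in the proof of Lemma \ref{fixed}; thus $A_\bv$ is generated by the restrictions of these same trace functions, and likewise $A_{\bv'}$ is generated by the trace functions for the very same cycles $p$ evaluated on $(\bv',\bw)$-representations.

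Finally I would compute $\imath^*$ on a generator. For $M'=(\bx',\by',\bi',\bj')$, each arrow of $\ddq$ acts on $V=V'\oplus V''$ through $\jmath_\phi(M')$ by a block matrix whose only nonzero entry is the $V'$-block, and the framing maps $\bi,\bj$ respectively land in, and are supported on, the block $V'$. Consequently the operator attached to any ordinary or framed cycle $p$ is block-triangular with vanishing $V''$-block, so $\Tr(p,\jmath_\phi(M'))=\Tr(p,M')$; that is, $\imath^*\big(\Tr(p,-)\big)$ is precisely the corresponding trace function on $\mu\inv(0)\sset\Rep(\ddq,\bv',\bw)$. Since the same cycles $p$ generate $A_{\bv'}$, the map $\imath^*$ hits every generator and is therefore surjective, which yields the lemma. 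I expect the only genuine obstacle to be the bookkeeping in the second step: carefully justifying that the invariants are generated by traces of ordinary and framed cycles (through the vertex $\infty$) and that this generation survives restriction to the moment-map fiber. The geometric input of the third step is just the vanishing of the $V''$-block, which is immediate.
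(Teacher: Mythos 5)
Your argument is correct, and in fact it supplies a proof where the paper gives none: the text simply cites \cite[Lemma 2.5.3]{Na4} and states the lemma with a \qed. Your route --- both quotients are affine by Theorem \ref{nak_thm}(i), a morphism of affine schemes is a closed imbedding iff the comorphism is surjective, and surjectivity is checked on the standard generators of the invariant ring --- is essentially the argument behind Nakajima's cited lemma, so there is no conflict of method. Two small points deserve explicit care. First, the generation statement you invoke is Le Bruyn--Procesi applied to the \emph{double} $\overline{Q^\bw}$ of the Crawley--Boevey quiver, for the group $G_{\wh\bv}$; one then uses the isomorphism $G_\bv\cong G_{\wh\bv}/\gm$ (and the triviality of the central $\gm$-action) to identify $G_{\wh\bv}$-invariants with $G_\bv$-invariants, and the fact that $\dim V_\infty=1$ to rewrite cycles through $\infty$ as products of scalars $\bj\circ q\circ\bi$. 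You flag this, and it is exactly the bookkeeping that needs to be done. Second, a cycle in Proposition \ref{BP} contains at least one edge, so the only functions whose pullback does \emph{not} match the corresponding function on the smaller space are the traces of trivial paths, $\Tr(1_i,V)=v_i$ versus $v_i'$; these are constants and hence irrelevant to surjectivity. With those two points in place, the vanishing of the $V''$-block of every arrow (and of $\bi$, $\bj$) gives $\Tr\big(p,\jmath_\phi(M')\big)=\Tr(p,M')$ on generators, and the lemma follows. Note also that surjectivity of $\imath^*$ subsumes both the injectivity and the closedness of $\imath_{\bv',\bv}$, so no separate point-set argument is needed.
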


Thus, for any $\bv'\leq\bv$, one has
natural inclusions
$\M^\circ_0(\bv',\bw)\into\M_0(\bv',\bw)\into
\M_0(\bv,\bw).$ Here, the first inclusion is an open,
resp. the second inclusion $\imath_{\bv'\bv}$ is a closed, imbedding.
Below, we will identify $\M^\circ_0(\bv',\bw)$ with a subset
of  $\M_0(\bv,\bw)$ via the  composite imbedding and put
\beq{good}\M_0^\go(\bv,\bw)\,:=\,\cup_{0\leq\bv'\leq\bv}\
\M^\circ_0(\bv',\bw).
\eeq

\begin{rem}\label{ADE} \vi If $Q$ is a 
finite Dynkin quiver of type $\mathbf{A},\mathbf{D},\mathbf{E}$,
cf. Example \ref{ADEb}, then,
according to \cite[Remark 3.28]{Na2},  one has
$\M_0(\bv,\bw)=\M_0^\go(\bv,\bw).$

\vii Let $Q$ be  an extended Dynkin quiver and let $\Gamma\sset SL_2(\C)$
be the finite subgroup associated with $Q$ via the McKay correspondence,
cf. \S\ref{mksec}.
Then, one can show that there is a natural decomposition
$$\M_0(\bv,\bw)=\bigcup_{\{\bv'\in
\Z^I_{\geq 0},\
k\geq 0\en|\en\bv'+k\cdot\delta\leq\bv\}}\ 
\M^\circ_0(\bv',\bw)\times\Sym^k(\C^2/\Gamma),
$$
where $\delta$ denotes the minimal imaginary root.

Furthermore, the set $\M_0^\go(\bv,\bw)$
equals the union of pieces in the above decomposition
corresponding to $k=0$.
Thus, in this case, we have $\M_0(\bv,\bw)\neq\M_0^\go(\bv,\bw),$
in general.
\end{rem}

\subsection{A Steinberg type variety}\label{st} 
Let $\bv',\bv\in \Z^I_{\geq 0}$ be a pair of dimension vectors, and
 identify $\M_0(\bv',\bw)$
with a closed subset of $\M_0(\bv+\bv',\bw)$ via the canonical imbedding.
Thus, we get a well defined composite
$\M_\th(\bv,\bw)\to\M_0(\bv,\bw)\into\M_0(\bv,\bw)$,
where the first map is the canonical projective morphism.

\begin{defn}
Given $\th\in\Z^I$ and any pair $\bv,\bv'\in\Z^I$, of dimension vectors,
we define an associated {\em Steinberg variety}
\beq{ZZ}Z_\th(\bv,\bv',\bw)\ :=\ \M_\th(\bv,\bw)\times_{\M_0(\bv+\bv',\bw)}
\M_\th(\bv',\bw)\ \sset \ \M_\th(\bv,\bw)\times \M_\th(\bv',\bw),
\eeq
as a fiber product in the following diagram
\beq{Z}
\xymatrix{
&&Z(\bv,\bv',\bw)
\ar[dll]\ar[drr]&&\\
\M_\th(\bv,\bw)\ar[d]^<>(0.5){\pi}&&&&
\M_\th(\bv',\bw)\ar[d]^<>(0.5){\pi}\\
\M_0(\bv,\bw)\ar@{^{(}->}[rr]_<>(0.5){\imath_{\bv,\bv+\bv'}}&&\M_0(\bv+\bv',\bw)&&
\M_0(\bv',\bw)\ar@{_{(}->}[ll]^<>(0.5){\imath_{\bv',\bv+\bv'}}
}
\eeq
\end{defn}

By definition, the morphisms $\pi$ induce a natural projective morphism
$\pi_Z:\ 
Z_\th(\bv,\bv',\bw)\to\M_0(\bv+\bv',\bw)$.

The Steinberg variety is typically quite singular and has many 
irreducible components. In the special case
where $\bv=\bv'$, the diagonal
$\M_\th(\bv,\bw)\sset \M_\th(\bv,\bw)\times \M_\th(\bv,\bw)$
is one such component, which is smooth provided $\th$ is $\bv$-regular.

Assume now that $\th=\th^+$, and write
$\M(\bv,\bw):=\M_{\th^+}(\bv,\bw)$, resp. $Z(\bv,\bv',\bw)
:=Z_\th(\bv,\bv',\bw).$
Then, 
 $\M(\bv,\bw)$, resp. $\M(\bv',\bw),$
is a smooth symplectic algebraic variety with symplectic 2-form $\om$, resp.
$\om'$. We equip the cartesian product
 $\M(\bv,\bw)\times \M(\bv',\bw)$
 with the
symplectic 2-form $\om+(-\om').$

Next, recall the set introduced in \eqref{good} and put
$$Z^\go(\bv,\bv',\bw):=Z(\bv,\bv',\bw)\
\sminus\
\overline{\pi_Z\inv\big(\M_0(\bv+\bv',\bw)\sminus
\M_0^\go(\bv,\bw)\big)},
$$
where bar stands for the closure.
Thus, $Z^\go(\bv,\bv',\bw)$ is an open subset of $Z(\bv,\bv',\bw)$.

According to \cite{Na2}, Theorem 7.2, one has

\begin{thm}\label{Zthm} \vi Any irreducible component
of   $Z^\go(\bv,\bv',\bw)$ is a
(locally closed) Lagrangian subvariety
of $\M(\bv,\bw)\times \M(\bv',\bw).$

\noindent
\vii The dimension of any irreducible component
of 
$Z(\bv,\bv',\bw)$ is 
$\leq\frac{1}{2}\big(\dim\M(\bv,\bw)+\dim\M(\bv',\bw)\big)$.
\end{thm}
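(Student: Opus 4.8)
The plan is to derive both parts from a single geometric fact: the Steinberg variety $Z(\bv,\bv',\bw)$ is \emph{isotropic} in $M_1\times M_2$, where $M_1:=\M(\bv,\bw)$ and $M_2:=\M(\bv',\bw)$ carry their symplectic forms $\om,\om'$ and the product carries $\Om$, the form $\om+(-\om')$ introduced above. Granting isotropy, part (ii) is immediate: an isotropic subvariety of a symplectic manifold of dimension $\dim M_1+\dim M_2$ has dimension at most $N:=\half(\dim M_1+\dim M_2)$, and since the argument below runs over \emph{every} symplectic leaf of the base it bounds all components of $Z$. For part (i) I would then show in addition that each component of the good locus $Z^\go(\bv,\bv',\bw)$ attains the maximal dimension $N$; half-dimensional together with isotropic forces Lagrangian.

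For the isotropy, write $Y:=\M_0(\bv+\bv',\bw)$ and let $\pi_1\colon M_1\to Y$, $\pi_2\colon M_2\to Y$ be the two morphisms whose fibre product is $Z$; these are proper (Theorem \ref{nak_thm}(i) together with the closed imbeddings $\imath$) and Poisson. Since $Y$ is affine and the $M_i$ are smooth symplectic, Proposition \ref{wiz}(ii) shows every fibre $F_i:=\pi_i\inv(y)$ is isotropic in $M_i$. I would also use that $Y$ is a finite union of symplectic leaves, the strata $\M^\circ_0(\bv'',\bw)$, cf. Remark \ref{ADE}, over each of which $\pi_i$ restricts to a submersion $\pi_i\inv(L)\to L$ with smooth fibres (the stratified structure underlying Propositions \ref{strat_prop} and \ref{semismall}). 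Fix a smooth point $(z_1,z_2)$ of a component of $Z$ over a point $y$ of a leaf $L$, where the $F_i$ are smooth. Any $\xi=(\xi_1,\xi_2)\in T_{(z_1,z_2)}Z$ has $d\pi_1(\xi_1)=d\pi_2(\xi_2)=:v\in T_yL$, and since $L$ is a symplectic leaf we may write $v=\sum_k a_k X_{f_k}(y)$ for functions $f_k$ on $Y$. Setting $h_i:=\sum_k a_k X_{\pi_i^*f_k}(z_i)$, the Poisson property of $\pi_1,\pi_2$ makes the associated pair of Hamiltonian flows preserve the fibre product, so $(h_1,h_2)\in T_{(z_1,z_2)}Z$ and $u:=\xi-(h_1,h_2)$ is a vertical vector tangent to $F_1\times F_2$. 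Thus $T_{(z_1,z_2)}Z$ is spanned by vertical vectors and by Hamiltonian lifts $(X_{\pi_1^*f},X_{\pi_2^*f})$, and on these $\Om$ vanishes: vertical--vertical pairings vanish because the $F_i$ are isotropic; a vertical--Hamiltonian pairing equals $\om_i(X_{\pi_i^*f},u_i)=d(\pi_i^*f)(u_i)=df(d\pi_i\,u_i)=0$; and for two Hamiltonian lifts the two contributions are $\om_i(X_{\pi_i^*f},X_{\pi_i^*g})(z_i)=\pi_i^*\{f,g\}_Y(z_i)=\{f,g\}_Y(y)$, which cancel in $\Om$ since $\pi_1(z_1)=\pi_2(z_2)=y$. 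This establishes isotropy, hence (ii).

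For the half-dimensionality needed in (i), I would work leaf by leaf over the good locus $\M_0^\go(\bv,\bw)$, whose symplectic leaves are the smooth strata $\M^\circ_0(\bv'',\bw)$. Over such a leaf $L$ the corresponding piece of $Z^\go$ fibres over $L$ with fibre $F_1^{(L)}\times F_2^{(L)}$, so has dimension $\dim L+\dim F_1^{(L)}+\dim F_2^{(L)}$; semismallness of each $\pi_i$ (Proposition \ref{semismall}) gives $\dim F_i^{(L)}\le\half(\dim M_i-\dim L)$, whence dimension $\le N$, with equality exactly when both fibres have top dimension over $L$. The definition of $Z^\go$, removing the closure of the preimage of $\M_0\setminus\M_0^\go$, retains precisely the strata on which this equality holds, so every component of $Z^\go$ has dimension $N$. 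Combined with the isotropy above, this proves (i).

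The affineness of $Y$, properness of the $\pi_i$ and Poisson compatibility (Theorem \ref{nak_thm}) are routine inputs. I expect the main obstacle to be the tangent-space decomposition at the heart of the isotropy argument: verifying at a generic point of each component of $Z$ that $\ker d\pi_i$ is exactly the tangent space to the isotropic fibre and that $d\pi_i$ surjects onto $T_yL$, so that the splitting into vertical and Hamiltonian directions is legitimate. This rests on the stratified smoothness of the $\pi_i$ over each symplectic leaf, namely that $\pi_i\inv(L)$ is smooth and $\pi_i|_{\pi_i\inv(L)}$ is a fibre bundle over $L$, which for quiver varieties is the substantive geometric input (due to Nakajima) behind both Proposition \ref{strat_prop} and the semismallness of Proposition \ref{semismall}. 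The secondary difficulty is the sharp dimension equality on $Z^\go$ required for (i), i.e.\ that semismallness becomes an equality precisely on the components lying over the good strata.
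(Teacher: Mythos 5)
The paper does not actually prove this theorem: it quotes it from \cite{Na2}, Theorem 7.2. That said, your overall strategy --- show $Z(\bv,\bv',\bw)$ is isotropic for $\om\oplus(-\om')$ by working leaf-by-leaf over the Poisson base, splitting tangent vectors into vertical parts (tangent to the isotropic fibres supplied by Proposition \ref{wiz}(ii)) plus Hamiltonian lifts, and then upgrading isotropic to Lagrangian on the good locus by a dimension count --- is exactly the standard Kaledin/Nakajima argument, and it is the same mechanism the paper sketches in the introduction for the semismallness of a general symplectic resolution (isotropy of $\x\times_X\x$ for $p_1^*\omega-p_2^*\omega$). Your isotropy computation is correct, and you rightly identify that its real content is the stratified smoothness of the $\pi_i$ over each symplectic leaf (Nakajima's transversal-slice theorem), which neither you nor this paper proves; granting that input, part (ii) is sound.

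Part (i), however, has a genuine gap. Lagrangian in the paper's sense means isotropic \emph{and} of dimension exactly $N=\half\big(\dim\M(\bv,\bw)+\dim\M(\bv',\bw)\big)$, and your justification of the equality --- that the definition of $Z^\go(\bv,\bv',\bw)$ ``retains precisely the strata on which this equality holds'' --- is a non sequitur: deleting the closed set $\overline{\pi_Z\inv\big(\M_0\sminus\M_0^\go\big)}$ does not change the dimension of any component that survives, so a priori $Z^\go$ could still have components of dimension strictly less than $N$. What is actually required is the exact equality $\dim\pi\inv(x)=\half\big(\dim\M(\bv,\bw)-\dim L\big)$ for $x$ in each good stratum $L=\M^\circ_0(\bv'',\bw)$, together with equidimensionality of the fibres (Theorem \ref{repthm}(i)); this is a substantive theorem of Nakajima about the good locus, not a formal consequence of its definition, and it must be proved or cited. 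Separately, invoking Proposition \ref{semismall} for the upper bound is circular within the paper's own logic, since the paper states that semismallness is \emph{deduced from} Theorem \ref{Zthm}; the circularity is harmless only because your isotropy argument, applied with $\bv=\bv'$, already yields the semismall inequality you use.
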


Nakajima also proves, cf. \cite[Corollary 10.11]{Na2}.

\begin{prop} Let $Q$ be either a finite Dynkin
or an extended Dynkin quiver. Then, each irreducible component
of $Z(\bv,\bv',\bw)$ has dimension equal to
$\frac{1}{2}\big(\dim\M(\bv,\bw)+\dim\M(\bv',\bw)\big)$.
\end{prop}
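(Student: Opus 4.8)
The plan is to supplement the upper bound already supplied by Theorem \ref{Zthm}(ii) with a matching lower bound, obtained from a stratified fibre-dimension count; all that needs proving is that no irreducible component of $Z(\bv,\bv',\bw)$ can fall below half of $\dim\M(\bv,\bw)+\dim\M(\bv',\bw)$. Writing $D_\bu:=\dim\M(\bu,\bw)=2\bw\cdot\bu-C_Q\bu\cdot\bu$ (Theorem \ref{nak_thm}(ii)), I would decompose the affine base $\M_0(\bv+\bv',\bw)$ into the strata provided by Remark \ref{ADE}, pull this stratification back along $\pi_Z$, and estimate $\dim\pi_Z\inv(S)$ for each stratum $S$ lying in both $\M_0(\bv,\bw)$ and $\M_0(\bv',\bw)$. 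Since the fibre of $\pi_Z$ over $y\in S$ is the product $\pi\inv(y)\times(\pi')\inv(y)$, one has $\dim\pi_Z\inv(S)=\dim S+d_S(\pi)+d_S(\pi')$, with $d_S(\pi)$ the generic fibre dimension of $\pi$ over $S$; the whole argument turns on showing that every stratum is \emph{relevant}, i.e.\ that the semismall inequality of Proposition \ref{semismall} is an equality.

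First I would dispose of the finite Dynkin case. By Remark \ref{ADE}(i) one has $\M_0=\M_0^\go$, so every stratum is of the form $\M_0^\circ(\bv'',\bw)$ and all of $Z$ sits over the good locus. The local product structure of a Nakajima variety along such a stratum identifies $\pi\inv(y)$, for $y\in\M_0^\circ(\bv'',\bw)$, with the central fibre of a transverse Nakajima variety, which is Lagrangian and hence of dimension $\tfrac12(D_\bv-D_{\bv''})$ by Theorem \ref{lagr}. Thus $d_S(\pi)=\tfrac12(D_\bv-D_{\bv''})$ and likewise for $\pi'$, so that in the displayed formula the stratum dimension $D_{\bv''}$ cancels and $\dim\pi_Z\inv(S)=\tfrac12(D_\bv+D_{\bv'})$ on each nonempty piece; equivalently, Theorem \ref{Zthm}(i) already exhibits the components of $Z^\go=Z$ as Lagrangian.

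For the extended Dynkin case I would run the same count on the finer decomposition of Remark \ref{ADE}(ii), whose strata have the form $\M_0^\circ(\bv'',\bw)\times\Sym^k(\C^2/\Gamma)$ and dimension $D_{\bv''}+2k$. The $k=0$ strata make up $\M_0^\go$ and are treated exactly as above, the Lagrangian components of $Z^\go$ from Theorem \ref{Zthm}(i) supplying the bound directly. For $k>0$ the transverse slice to such a stratum acquires a factor governed by the minimal resolution of the Kleinian singularity $\C^2/\Gamma$ — itself a symplectic resolution with half-dimensional exceptional fibre (Theorem \ref{kronh}) — and by its $k$-point punctual analogue. Using $C_Q\delta=0$ to evaluate $D_{\bv''+k\delta}=D_{\bv''}+2k\,\bw\cdot\delta$ and comparing it with $\dim S=D_{\bv''}+2k$, relevance of such a stratum reduces to a single fibre-dimension identity for these punctual Kleinian pieces; granting it, the cancellation of $\dim S$ goes through verbatim and again yields $\dim\pi_Z\inv(S)=\tfrac12(D_\bv+D_{\bv'})$.

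The main obstacle is precisely this relevance statement, most delicately on the imaginary-root strata ($k>0$) peculiar to the affine case. Proving it requires the transverse (local product) description of $\M(\bv,\bw)$ along a stratum together with the half-dimensionality of the resulting central fibres, drawn from Theorem \ref{lagr} and the Kleinian resolution of Theorem \ref{kronh}; this is the point at which the hypothesis of finite or extended Dynkin type — which alone furnishes the clean stratification of Remark \ref{ADE} and the degenerate behaviour $C_Q\delta=0$ — enters essentially. Once all strata are relevant, each nonempty $\pi_Z\inv(S)$ is equidimensional of dimension $\tfrac12(D_\bv+D_{\bv'})$, and together with Theorem \ref{Zthm}(ii) this forces every irreducible component of $Z(\bv,\bv',\bw)$ to have exactly this dimension.
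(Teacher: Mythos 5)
Your treatment of the finite Dynkin case is complete and coincides with the paper's: there $\M_0(\bv+\bv',\bw)=\M_0^\go(\bv+\bv',\bw)$ by Remark \ref{ADE}(i), so $Z(\bv,\bv',\bw)=Z^\go(\bv,\bv',\bw)$ and Theorem \ref{Zthm}(i) makes every component Lagrangian, hence of exactly half dimension. Your general framework for the lower bound (stratify the affine base, use the product structure $\pi_Z\inv(y)=\pi\inv(y)\times(\pi')\inv(y)$, and show every stratum is relevant) is also the right one.

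The genuine gap is exactly where you flag it: in the extended Dynkin case you never establish relevance of the imaginary-root strata $\M^\circ_0(\bv'',\bw)\times\Sym^k(\C^2/\Ga)$ with $k>0$, writing ``granting it'' at the decisive moment. This is not a routine verification that can be waved through: the ``single fibre-dimension identity for punctual Kleinian pieces'' you need is precisely the statement of the proposition for the Jordan quiver (equivalently, the equidimensionality in middle degree of the Steinberg variety attached to the Hilbert--Chow resolutions $\Hilb^k(\C^2)\to\Sym^k(\C^2)$ and their $\Ga$-analogues), and it does not follow from Theorem \ref{lagr}, Theorem \ref{kronh}, or anything else quoted in these notes --- Theorem \ref{kronh} only covers $k=1$ and $\bw=0$, and Theorem \ref{Zthm}(ii) gives only the upper bound. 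The paper closes this gap by importing the known Jordan-quiver result (\cite[Remark 1.23]{Na6}) through the transverse-slice description of $\M(\bv,\bw)$ along such a stratum; without either that citation or an independent proof of the punctual statement, your argument establishes the proposition only over $\M_0^\go$, i.e., only the finite Dynkin case.
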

In the case of  finite Dynkin quivers, the result
follows
from
Remark \ref{ADE}.
The extended Dynkin case may be proved using the fact
that a similar result is known to hold for the Jordan quiver,
see \cite[Remark 1.23]{Na6}.\footnote{I am grateful to H. Nakajima for
clarifying this point to me.}

\subsection{Geometric construction of $\wt U(\g)$}\label{Ug_sec}
We keep the assumption and notation
of the previous subsection, in particular, we take $(\la,\th)=(0,\th^+)$.
and write $\M(\bv,\bw):=\M_{0,\th^+}(\bv,\bw),$ etc.
We use simplified notation $\M(\bv,\bw):=\M_{0,\th^+}(\bv,\bw)$,
and $\M_0(\bv,\bw)=\M_{0,0}(\bv,\bw),$ etc.

We introduce the following disconnected varieties
$$ 
M(\bw):=\bigsqcup_{\bv\in\Z^I}
\M(\bv,\bw),\quad M_0(\bw):=\bigsqcup_{\bv\in\Z^I}
\M_0(\bv,\bw),\quad
Z(\bw):=\bigsqcup_{(\bv,\bv')\in\Z^I\times\Z^I} 
 Z(\bw,\bv,\bv').$$

Thus, the morphisms $\pi: \M(\bv,\bw)\to\M_0(\bv,\bw)$
may be assembled together
to give a morphism $M(\bw)\to M_0(\bw)$, and we have
$Z(\bw)=M(\bw)\times_{M_0(\bw)}M(\bw)$. 
Also, we define
$$H_\bw\ :=\  \underset{m\geq 0}\bplus\ 
\left(\prod_{\{(\bv,\bv')\in\Z^I_{\geq 0}\times\Z^I_{\geq 0}\; \big|\;
|\bv-\bv'|\leq m\}}
H_{[0]}\bigl Z(\bw,\bv,\bv')\bigr\right),
$$
where, for any $\bv,\bv'\in\Z^I$, we write
$|\bv-\bv'|:=\sum\ii |v_i-v'_i|$.

Thus, $H_\bw$ is a certain completion of the direct
sum
$\bigoplus_{\bv,\bv'}\ H_{[0]}\bigl Z(\bw,\bv,\bv')\bigr$
whose
elements  are, in general, infinite sums;
at a heuristic level, one has
$H_\bw=H_{[0]}(Z(\bw)).$  It is easy to see that  convolution in
 Borel-Moore homology for various pairs of spaces $Z(\bw,\bv,\bv')$
extends to a well defined operation on $H_\bw$ that
 makes $(H_\bw,\ *)$ an assiciative $\C$-algebra.

We also let $\L(\bv,\bw)=\pi\inv(0)$ be
the zero fiber of the morphism $\pi$, cf. \S\ref{lagr_sec}.
We put 
$$ \L_\bw:=\bigsqcup_{\bv\in\Z^I} \L(\bv,\bw),\en\text{resp.}
\en
L_\bw:=\underset{\bv\in\Z^I}\bplus\
H\top\bigl\L(\bv,\bw)\bigr.
$$
Thus, heuristacally, one has $L_\bw=H\top(\L_\bw)$.
\medskip

Recall next that, associated with the Cartan matrix $C_Q$, of the
quiver $Q$, there is a canonically defined Kac-Moody Lie algebra $\g_Q$,
with   Chevalley generators
$e_i,\ h_i, f_i,\ i\in I,$ see \cite{Ka}. 
 We write ${\mathfrak{h}}$ for the Cartan subalgebra of
$\g_Q$. For each $i\in I$, let $\alpha_i\in{\mathfrak{h}}^*$
denote the corresponding simple root, resp.
$\varpi_i\in{\mathfrak{h}}$ 
denote the corresponding fundamental weight such that $\varpi_i(h_i)=1$ and
$\varpi_i(h_j)=0$ for any $j\neq i$.

Let $U(\g_Q)$ be the universal enveloping algebra of
$\g_Q$. There is a convenient modification of this
algebra  where the Cartan part in  the standard triangular decomposition
of  $U(\g_Q)$ is replaced by the weight lattice.
The resulting algebra
$\wt U(\g_Q)$, called the {\em modified  enveloping algebra},
 was first introduced by Lusztig, cf. \cite{L1}.

One of the main results of Nakajima's theory reads,
see \cite[Theorem 9.4 and \S11]{Na2}
\begin{thm}\label{Ug}
\vi There is a natural algebra homomorphism
$\Psi:\ \wt U(\g_Q)\to  H_\bw$.

\vii The $\wt U(\g_Q)$-action on the
vector space $L_\bw$, induced by the  homomorphism $\Psi$ via
Lemma \ref{top}(ii), makes the latter a simple integrable
$\g_Q$-module with highest weight $\sum\ii w_i\cdot\varpi_i$.
\end{thm}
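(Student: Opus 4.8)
The plan is to build $\Psi$ explicitly on the Chevalley generators by attaching to each one a Borel--Moore homology class on a Steinberg variety, to verify the defining relations of $\wt U(\g_Q)$ by convolution, and then to analyze the resulting action on $L_\bw$. First I would define the correspondences. For a vertex $i\in I$ and a dimension vector $\bv$, let $\mathfrak{P}_i(\bv,\bw)\subset \M(\bv,\bw)\times\M(\bv-\ee_i,\bw)$ be the Hecke correspondence of pairs of framed representations in which the second is obtained from the first by collapsing a one-dimensional subspace at the vertex $i$ compatibly with all the maps $(\bx,\by,\bi,\bj)$, where $\ee_i$ is the $i$-th coordinate vector. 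By Theorem \ref{Zthm}(i) every irreducible component of $\mathfrak{P}_i(\bv,\bw)$ is Lagrangian, hence of real dimension $\tfrac12\big(\dim_\BR\M(\bv,\bw)+\dim_\BR\M(\bv-\ee_i,\bw)\big)$, so its fundamental class lies in $H_{[0]}\big(Z(\bw,\bv,\bv-\ee_i)\big)$ in the grading \eqref{grad}; since $|\bv-(\bv-\ee_i)|=1$, the sum over $\bv$ of these classes is a legitimate element of the completion $H_\bw$. I would set $\Psi(e_i)$ equal to this sum, $\Psi(f_i)$ to the sum of the fundamental classes of the transposes $\mathfrak{P}_i(\bv,\bw)^{t}$, and send each weight-lattice idempotent $1_\lambda\in\wt U(\g_Q)$ to the sum of diagonal classes $[\Delta_{\M(\bv,\bw)}]$ over those $\bv$ with $\sum_i w_i\varpi_i-\sum_i v_i\alpha_i=\lambda$.

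Next I would check the relations under convolution. The idempotent and weight relations $e_i1_\lambda=1_{\lambda+\alpha_i}e_i$, $f_i1_\lambda=1_{\lambda-\alpha_i}f_i$ are immediate bookkeeping, since $\mathfrak{P}_i$ shifts $\bv$ by $\ee_i$ and the component $\M(\bv,\bw)$ carries weight $\sum_i w_i\varpi_i-\sum_i v_i\alpha_i$. The crucial relation is $[\Psi(e_i),\Psi(f_j)]=\delta_{ij}\,\Psi(h_i)$, which requires computing the two composites $\mathfrak{P}_i\circ\mathfrak{P}_j^{t}$ and $\mathfrak{P}_j^{t}\circ\mathfrak{P}_i$ as classes in $H_\idot\big(Z(\bw,\bv,\bv)\big)$. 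For $i\neq j$ the relevant intersections are transverse and the two composites agree, so the commutator vanishes; for $i=j$ the two composites differ by a multiple of the diagonal, and evaluating the Euler-class contribution of the tautological line that parametrizes the collapsed subspace produces exactly the scalar $(\bw-C_Q\bv)_i=\langle\lambda_\bv,\alpha_i^\vee\rangle$ by which $h_i$ acts on the weight-$\lambda_\bv$ component. The Serre relations I would reduce to a rank-two situation, using that the relevant classes are supported on a sub-correspondence controlled by a single adjacent pair of vertices.

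Finally I would analyze $L_\bw=\bigoplus_\bv H\top\big(\L(\bv,\bw)\big)$. By Lemma \ref{top}(ii) the classes above act on $H\top(\L_\bw)$, so $\Psi$ makes $L_\bw$ a $\wt U(\g_Q)$-module. The summand $\bv=0$ gives $\M(0,\bw)=\L(0,\bw)=\mathrm{pt}$, whose fundamental class $v_\bw$ is a weight vector of weight $\sum_i w_i\varpi_i$ killed by every $e_i$ (there is no $\M(-\ee_i,\bw)$), hence a highest weight vector. Integrability is geometric: for fixed $\bv_0$ one has $\dim\M(\bv_0+k\ee_i,\bw)=2\bw\cdot(\bv_0+k\ee_i)-C_Q(\bv_0+k\ee_i)\cdot(\bv_0+k\ee_i)=-2k^2+O(k)$, which is eventually negative (using $(C_Q)_{ii}=2$ as $Q$ has no edge loops), so these varieties are empty for $k\gg0$ and also whenever some $v_i<0$; thus $e_i,f_i$ act locally nilpotently. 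It then remains to prove that $v_\bw$ generates $L_\bw$, which I would establish by induction on $|\bv|$, showing every top class is reached from $v_\bw$ by the operators $\Psi(f_i)$. Granting generation and integrability, the standard fact that an integrable module generated by a highest weight vector of dominant integral weight is the irreducible module of that weight identifies $L_\bw$ with the simple integrable $L\big(\sum_i w_i\varpi_i\big)$, proving (ii).

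The step I expect to be the main obstacle is twofold: the intersection-theoretic evaluation of $[\Psi(e_i),\Psi(f_j)]$, where one must control the fine structure of the composed Hecke correspondences and extract precisely the Euler-class contribution matching $h_i$, together with the generation statement for $L_\bw$, which is the one point requiring genuine geometric input beyond the formal convolution calculus.
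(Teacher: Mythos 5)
Your construction of $\Psi$ --- sending $e_i$ and $f_i$ to the fundamental classes of the Hecke correspondences in $\M(\bv,\bw)\times\M(\bv\pm\ee^i,\bw)$ and their flips, and the Cartan part to weighted diagonal classes $[\M(\bv,\bw)]$ --- is exactly the approach the paper takes in its ``Hint on proof,'' which itself defers the verification of the relations and the identification of $L_\bw$ to Nakajima's original paper. Your outline is consistent with that route, and you correctly isolate the two points of genuine geometric content (the Euler-class evaluation of $[\Psi(e_i),\Psi(f_i)]$ yielding the scalar $(\bw-C_Q\bv)_i$, and the generation of $L_\bw$ from the highest weight vector $[\M(0,\bw)]$) that the paper likewise does not carry out.
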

\begin{rem}
Theorem \ref{lagr} implies that $\L_\bw$ is a (disconnected)
{\em Lagrangian}
subvariety of $M(\bw)$, a disconnected symplectic manifold. It follows that
the fundamental classes of {\em all} irreducible components of
the variety $\L_\bw$ form a natural basis in
the vector space $L_\bw=H\top(\L_\bw)$.
This basis goes, via the identification provided by
Theorem \ref{Ug}(ii),
to a so-called
{\em semicanonical} basis in the corresponding
simple $U(\g_Q)$-module, cf. \cite{L4}.
\end{rem}
 \begin{rem} In the special case where $Q$ is a Dynkin
quiver of type ${\mathbf A}$,  Theorem \ref{Ug}
reduces to an earlier result obtained in \cite{Gi3},
where  the corresponding Steinberg
variety was introduced.

Many interesting interconnections arising
 specifically in the case of quivers of type ${\mathbf A}$
are discussed in \cite{MV}.
\end{rem}

\begin{proof}[Hint on proof of Theorem \ref{Ug}]
The homomorphism $\Psi$, of Theorem
\ref{Ug}(i), is constructed by sending each of the Chevalley generators $e_i,\
h_i, f_i,\ i\in I$
to an appropriate explicit linear combination
of the fundamental classes of some carefully chosen
{\em smooth} irreducible components of the Steinberg
variety $Z(\bw)$.

Specifically, fix $i\in I$ and  let
$\ee^i=(0,\ldots,0,1,0,\ldots,0)\in\Z^I$
denote the $i$-th coordinate vector.
Then, the generator
$h_i$ is sent to a linear combination
of the form $\sum_\bv\ a_\bv\cdot[\M(\bv,\bw)]$,
where $[\M(\bv,\bw)]$ denotes the fundamental
class of the diagonal
$\M(\bv,\bw)\sset\M(\bv,\bw)\times\M(\bv,\bw),$
and $a_\bv\in{\mathbb Q}$ are certain rational coefficients.

The generator $e_i$ is sent to a linear combination
of the form $\sum_\bv\ b_\bv\cdot[Z^i(\bv,\bw)].$
Here $Z^i(\bv,\bw)\sset  \M(\bv,\bw)\times \M(\bv+\ee^i,\bw),$
is a smooth irreducible component 
of the  Steinberg varietiy  $Z(\bv,\bv+\ee^i,\bw),$
and $b_i\in{\mathbb Q}$ are some coefficients.
Similarly, the generator $f_i$ is sent to a linear combination
of the form $\sum_\bv\ c_\bv\cdot[Z^i(\bv-\ee^i,\bw)^\op]$, $c_\bv\in{\mathbb Q}$.
In the last formula,
$Z^i(\bv-\ee^i,\bw)^\op\sset  \M(\bv,\bw)\times \M(\bv-\ee^i,\bw)$
is a subvariety which is obtained from the variety
$Z^i(\bv-\ee^i,\bv)\sset \M(\bv-\ee^i,\bw)\times \M(\bv,\bw),$
involved in the formula for the generator $e_i$,
by the flip-isomorphism
$\M(\bv-\ee^i,\bw)\times \M(\bv,\bw) \cong
\M(\bv,\bw)\times\M(\bv-\ee^i,\bw).$
\end{proof}

The following  result was proved in
\cite[Theorem 10.2]{Na2}

\begin{thm}\label{repthm} Let $x\in \M^\circ_0(\bv',\bw)$
for some $0\leq\bv'\leq\bv,$
and view $x$ as a point in $\M_0(\bv,\bw)$.
Then, one has

\vi The fiber $\M(\bv,\bw)_x=\pi\inv(x)$ is equi-dimensional;

\vii The convolution product makes $H\top(\M(\bv,\bw)_x)$
an $\wt U(\g_Q)$-module. This is an integrable simple
$\wt U(\g_Q)$-module with the highest weight
equal to $\sum\ii\ (w_i\cdot\varpi_i - v'_i\cdot\alpha_i).$
\end{thm}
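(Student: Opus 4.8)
The plan is to build on the machinery already in place: the algebra homomorphism $\Psi\colon \wt U(\g_Q)\to H_\bw$ of Theorem \ref{Ug}(i) and the convolution module structure of Lemma \ref{top}(ii). First I would view the point $x\in\M^\circ_0(\bv',\bw)$ as a point of $\M_0(\bv,\bw)$ for every $\bv\geq\bv'$ via the closed imbeddings $\imath_{\bv',\bv}$, so that its total fiber $M_x:=\pi\inv(x)\subset M(\bw)$ becomes the disjoint union $\bigsqcup_{\bv\geq\bv'}\big(\pi\inv(x)\cap\M(\bv,\bw)\big)$. Applying Lemma \ref{top}(ii) with $y=x$ then equips $H\top(M_x)=\bigoplus_{\bv\geq\bv'}H\top\big(\pi\inv(x)\cap\M(\bv,\bw)\big)$ with an $H_{[0]}(Z(\bw))$-module structure, and composing with $\Psi$ produces the asserted $\wt U(\g_Q)$-action. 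This settles the module claim in (ii) at the formal level, leaving three substantive points: equi-dimensionality, the highest weight, and simplicity together with integrability.

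For part (i) I would prove purity of $\pi\inv(x)$ by reducing to a central fiber. Near $x$ the affine quotient $\M_0(\bv,\bw)$ carries a transversal slice which is itself the affinization of a Nakajima variety of smaller dimension vector, and under this identification $\pi\inv(x)$ becomes the central fiber $\pi\inv(0)$ of that smaller variety; the latter is Lagrangian, hence equi-dimensional, by Theorem \ref{lagr}(i). Alternatively one can invoke semismallness of $\pi$ (Proposition \ref{semismall}) together with the stratification of $\M_0(\bv,\bw)$ by the loci $\M^\circ_0(\bv'',\bw)$: over the single stratum containing $x$ all fibers share the dimension $\tfrac12\big(\dim\M(\bv,\bw)-\dim\M^\circ_0(\bv',\bw)\big)$, which forces purity.

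Next I would pin down the highest weight vector. Under the ${\mathfrak{h}}$-action coming from $\Psi$, the summand $H\top\big(\pi\inv(x)\cap\M(\bv,\bw)\big)$ is a weight space of weight $\sum\ii w_i\varpi_i-\sum\ii v_i\alpha_i$, so the weight decreases as $\bv$ grows and the extremal piece is $\bv=\bv'$. Since $x\in\M^\circ_0(\bv',\bw)$ lies in the good locus where $\pi$ restricts to an isomorphism (Proposition \ref{strat_prop}(ii)), the fiber $\pi\inv(x)\cap\M(\bv',\bw)$ is a single reduced point, so this weight space is one-dimensional of weight $\lambda:=\sum\ii(w_i\varpi_i-v'_i\alpha_i)$. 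The raising operators $e_i$ act by the $\ee^i$-shift correspondences, lowering $\bv$ by $\ee^i$; as no nonempty piece sits below $\bv'$ — because $x$ does not lie in $\M_0(\bv'',\bw)$ for any $\bv''<\bv'$ — this class is annihilated by every $e_i$ and is therefore a highest weight vector of weight $\lambda$.

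It remains to establish integrability and simplicity. Integrability I would obtain by a rank-one reduction: for each $i$ the triple $(e_i,h_i,f_i)$ is realized through Hecke-type correspondences at the vertex $i$, and verifying the $\mathfrak{sl}_2$-relations together with local nilpotency reduces to an explicit computation on the $\mathbf A_1$-quiver, where $\M(\bv,\bw)$ is the cotangent bundle of a Grassmannian and the $\mathfrak{sl}_2$-action is the classical integrable one (local nilpotency of $e_i$ being automatic, since $\bv\in\Z^I_{\geq0}$ is bounded below). To identify $H\top(M_x)$ with the simple module $L(\lambda)$, I would transport the convolution structure through the transversal slice of part (i), which realizes $H\top(M_x)$ as a central-fiber module $L_{\bw'}$ of a quiver variety whose framing $\bw'$ is chosen so that $\sum_i w'_i\varpi_i=\lambda$; Theorem \ref{Ug}(ii) then identifies it with the simple integrable module of highest weight $\lambda$. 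The main obstacle is precisely this last step — verifying that the \emph{global} convolution action of $\wt U(\g_Q)$ is compatible with the \emph{local} transversal-slice identification, so that the integrable highest weight module generated by the vector of weight $\lambda$, a priori only a quotient of $L(\lambda)$, exhausts all of $L(\lambda)$. I expect this factorization property of the correspondences across strata to demand the most care; a character count matching $\dim H\top\big(\pi\inv(x)\cap\M(\bv,\bw)\big)$ against $\dim L(\lambda)_{\lambda-\sum\ii(v_i-v'_i)\alpha_i}$ offers an alternative route to simplicity.
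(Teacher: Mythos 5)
The paper itself gives no proof of this theorem: it is quoted verbatim from Nakajima, with the citation \cite[Theorem 10.2]{Na2} standing in for an argument. So your proposal can only be measured against Nakajima's actual proof, and on that score your skeleton is essentially the right one: the module structure via Lemma \ref{top}(ii) and $\Psi$, the identification of the $\bv=\bv'$ piece as a one-dimensional extremal weight space killed by all $e_i$, and above all the reduction to a central fiber of a smaller quiver variety with new framing $\bw'=\bw-C_Q\bv'$ (so that $\sum_i w'_i\varpi_i=\sum_i(w_i\varpi_i-v'_i\alpha_i)$) are exactly the ingredients of [Na2, \S\S 3, 6, 10].

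Two concrete problems remain. First, your alternative argument for part (i) via semismallness does not work: semismallness of $\pi$ over the stratum containing $x$ yields only the \emph{upper bound} $\dim\pi\inv(x)\leq\frac12\big(\dim\M(\bv,\bw)-\dim\M^\circ_0(\bv',\bw)\big)$ on every irreducible component of the fiber; it says nothing about components of smaller dimension, so it cannot ``force purity.'' Equi-dimensionality genuinely requires the slice argument (the fiber becomes a central fiber $\pi\inv(0)$ of $\M(\bv-\bv',\bw')$, which is Lagrangian by Theorem \ref{lagr}(i), hence pure of half dimension). Second, the transversal slice statement itself --- that near $x$ one has a local isomorphism $\M_0(\bv,\bw)\cong\M^\circ_0(\bv',\bw)\times\M_0(\bv-\bv',\bw')$ lifting compatibly to the resolutions, and that the convolution correspondences restrict compatibly to the slice --- is the entire technical content of the theorem, and your proposal asserts it rather than proves it. You correctly flag this as the main obstacle, but as written the proof of (i), of the identification with $L(\lambda)$, and of simplicity all rest on this unestablished input; in Nakajima's treatment it is proved using the hyper-K\"ahler structure and a Slodowy-slice-type local analysis, which is not a routine verification.
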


\begin{rem}
Note that 
part  (ii) of the above theorem
reduces, in the special
case $\bv'=0$, to  Theorem \ref{Ug}(ii).
\erem

In the paper \cite{Na4}, Nakajima proves
analogues of Theorems \ref{Ug} and \ref{repthm}, where
the algebra $\wt U(\g_Q)$ is replaced by
$\wt U_q(\wh \g_Q)$, the (modified) quantized
enveloping algebra of the {\em affinization} of
the Kac-Moody algebra $\g_Q$. Accordingly,
Borel-Moore homology is replaced in \cite{Na4}
by equivariant $K$-theory; in particular,
the algebra $H_\bw$ is replaced
by (a completion of) $K^{G_\bw\times\gm}\big(Z(\bw)\big)$,
the $G_\bw\times\gm$-equivariant $K$-group of the 
Steinberg variety. 

In the special case where $Q$ is a Dynkin
quiver of type ${\mathbf A}_{n-1}$ we have
$\g=\mathfrak{s}\mathfrak{l}_n.$ 
One can use the description of the
corresponding quiver varieties
in terms of partial flag manifolds
provided by Proposition \ref{kpp}.
The results of Nakajima \cite{Na4} reduce, in this case,
to the results obtained  earlier  in \cite{GV}, cf. also \cite{V}
(in these papers, the authors consider the
algebra 
$\wt U_q(\wh {\mathfrak{g}\mathfrak{l}}_n)$
rather than $\wt U_q(\wh {\mathfrak{s}\mathfrak{l}}_n)$,
but the difference is not very essential).

\begin{rem}
The use of equivariant $K$-theory by Nakajima was
strongly motivated
by a similar approach to representations of
affine Hecke algebras that has been known at the
time, see \cite{KL} and \cite{CG}.
\end{rem}
\subsection{Acknowledgements}{\footnotesize{I would like to thank  Michel Brion
for his hard work to make  the Summer School
in Grenoble (2008) successful and for his
kind invitation to participate in  the Summer School.
I am also very grateful to Hiraku Nakajima for explaining to me several
statements and unpublished proofs and for bringing 
reference \cite{Ru} to my attention.

This work was  supported in part  by the NSF   grant  DMS-0601050.}}

{\small

}

\end{document}